\documentclass{ws-m3as}
\usepackage[a4paper]{geometry}
\usepackage{epstopdf}
\usepackage{amsmath,amssymb,amsfonts}
\usepackage{mathrsfs}
\usepackage{xcolor}
\usepackage[linesnumbered,ruled]{algorithm2e}
\usepackage{algpseudocode}
\usepackage{listings}
\usepackage[colorlinks,
            linkcolor=blue!88!black,
            anchorcolor=green!55!black,
            citecolor=green!66!black,
            ]{hyperref}
\usepackage{units}
\usepackage{multirow}
\usepackage{makecell}
\usepackage{bookmark}
\usepackage{booktabs}
\usepackage{caption}
\usepackage{threeparttable}
\usepackage{tabularx}
\usepackage[section]{placeins}
\usepackage{comment}
\usepackage{tikz}
\usepackage{mathtools}
\usetikzlibrary{arrows.meta, positioning, shadows.blur}

\allowdisplaybreaks[4]

\newenvironment{MyProof}[1][\proofname]{%
  \par\noindent\textbf{#1.}\enspace
}{%
  \hfill\ensuremath{\square}\par
}

\newcommand{\mblue}[1]{\textcolor{blue}{#1}}

\begin{document}

\markboth{F.G. Ma, Z.M. Ouyang, W.Y. Tian \& L. Wu}{Dynamics of chemotactic gliding-aggregation in Myxobacteria on rough bounded domains}

%
\catchline{}{}{}{}{}
%

\title{Dynamics of Chemotactic Gliding-Aggregation in Myxobacteria on Bounded Domains: Stochastic Modeling, Analysis, and Deep Neural Network Simulations}

\author{Fugui Ma$^{\dag,}$\footnote{The first author is the corresponding author.}~~ {and}~~ Zhimeng Ouyang$^\ddag$}

\address{School of Mathematical Sciences, Peking University, 5 Yiheyuan Road, \\
Haidian District, Beijing 100871, China\\
$^\dag$mafugui@math.pku.edu.cn\\
$^\ddag$ouyangzm@math.pku.edu.cn
}

\author{Wenyi Tian}

\address{Center for Applied Mathematics and KL-AAGDM, Tianjin University, 92 Weijin Road,\\
Nankai District, Tianjin 300072, China\\
twymath@gmail.com
}

\author{Lei Wu}

\address{Department of Mathematics, Lehigh University, Bethlehem, 17 Memorial Drive East,\\
Bethlehem, PA 18015, USA\\
lew218@lehigh.edu
}
\maketitle

\begin{history}
\received{(Day Month Year)}
\revised{(Day Month Year)}
\comby{(xxxxxxxxxx)}
\end{history}

\begin{abstract}
Bacterial chemotactic movement and collective aggregation have long attracted substantial interest in mathematical biology and applied modeling. Classical Keller--Segel-type systems, however, are typically formulated under idealized laboratory assumptions, such as smooth agar substrates, and thus cannot adequately capture the gliding dynamics of myxobacteria in naturally rough environments like soil. In this paper, we propose a unified framework that integrates stochastic modeling, rigorous analysis, and deep neural network-based simulation of chemotactic gliding--diffusion and aggregation processes on bounded domains. Starting from a lattice-based discrete agent description and a subordinated Langevin equation driven by an inverse stable subordinator at the microscopic level, we characterize anomalous gliding dynamics on rough surfaces and derive a macroscopic time-nonlocal Keller--Segel-type chemotaxis model with logarithmic sensitivity. We then establish a comprehensive solution theory for the resulting model, covering mass conservation, novel regularity results, local well-posedness in any spatial dimension, and global well-posedness in two and three. The analysis relies on several newly developed ingredients, including a fractional Lyapunov functional, a variational inequality adapted to the time-nonlocal structure, logarithmic Sobolev-type estimates, Bregman distance techniques, and a weighted bootstrap mechanism adapted to the singular sensitivity and time-nonlocal memory. Finally, we design a mesh-free, positivity-preserving, multi-objective, time-marching physics-informed neural network method with separate architectures and tailored variable transformations. Numerical experiments on complex geometries, including a butterfly-shaped domain, demonstrate the robustness, accuracy, and flexibility of the proposed computational framework across a range of Keller--Segel-type systems.
\end{abstract}

\keywords{Chemotaxis-diffusion; time-nonlocal Keller-Segel system; Langevin equation; mild solution; deep neural network (DNN); time-marching PINNs.}

\ccode{AMS Subject Classification: 35A01, 35B65, 35K55, 35Q92, 60K50, 68T07, 92C17}

\section{Introduction}
Unraveling the mechanisms of bacterial motility is essential for advancing our understanding of infection pathogenesis, microbial ecology, and collective cellular behavior \cite{Cremer2019,Keegstra2022,Uar2025}. Driven by these biological imperatives, the mathematical modeling of bacterial movement has evolved into a rigorous quantitative discipline, built upon fundamental studies (see, e.g., Refs.~\refcite{Adler1966,BERG1972,Budrene1995,Jain2025,Mittal2003,Scheidweiler2024}). At the heart of this endeavor lies bacteria chemotaxis, the directed movement of cells along gradients of chemoattractants. A cornerstone model for this phenomenon, the Keller-Segel (KS) system (first introduced in Refs.~\refcite{Keller70} and \refcite{Keller1971}), which has been extensively refined and applied across disciplines to describe diverse biological phenomena (see, e.g., Refs.~\refcite{Arumugam21,Bellomo22,Chen20,Dai23,Estrada18,Hillen13,Keller1971,Li14}). Despite these advancements, a critical limitation remains, most existing models are based on simplified experimental conditions, typically assuming movement on smooth surfaces like agar or within uniform solutions.

In natural ecosystems, bacterial chemotaxis rarely occurs within the pristine, uniform conditions of the laboratory; rather, it unfolds on host surfaces marked by geometric roughness and structural complexity. This physical complexity directly affects how bacteria navigate and search. A prototypical example is soil-dwelling myxobacteria. Unlike the smooth, continuous expansion observed in standard laboratory assays (e.g., E.~coli swimming in liquid or on soft agar\cite{Cremer2019,Kearns2010}), myxobacterial navigation in soil is profoundly constrained by surface roughness and microscale granular obstacles (see, e.g., Refs.~\refcite{Malla2025,Mauriello2010,Wolgemuth2002}), which impose significant physical limits on their `gliding' motility\cite{Dawid2000}.  As cells glide across these rough terrains, they must continually circumvent obstacles and dynamically chart new paths for migration. These constraints drive cells into a discontinuous `stop-and-go' pattern with frequent re-orientations\cite{Islam2015,Nan2011}, an adaptive search strategy that breaks the Markovian assumption of classical diffusion. The accumulated effect of these interactions manifests as anomalous, memory-driven transport, wherein the population's current state remains intrinsically coupled to its historical trajectory\cite{Felix13}. Consequently, existing chemotaxis models lack the capacity to reliably capture such historically dependent motility. To describe these history‑guided dynamics, we introduce a new chemotaxis-diffusion model, a time-nonlocal KS system \eqref{Pro:FKS}, that incorporates a time non-local operator.

\subsection{The Model}
Let $\Omega\subset\mathbb{R}^{d}$ ($d\geq2$) be an open bounded domain with smooth boundary $\partial\Omega$. We model the chemotaxis gliding–diffusion–aggregation behavior of myxobacteria on rough surfaces using the following time-nonlocal KS system with singular sensitivity
\begin{subequations}\label{Pro:FKS}
\begin{align}
&{_{0}^{C}\mathfrak{D}^{\alpha}_t}n(\mathbf{x},t)=\mathcal{D}\Delta n(\mathbf{x},t)-\mathcal{D}\chi\nabla\cdot\Big(\frac{n(\mathbf{x},t)}{c(\mathbf{x},t)}\nabla c(\mathbf{x},t)\Big),
             &(\mathbf{x},t)\in\Omega\times(0,T],\label{Pro:KS1}\\
&{_{0}^{C}\mathfrak{D}^{\alpha}_t}c(\mathbf{x},t)= \mathcal{D}\Delta c(\mathbf{x},t)-\gamma c(\mathbf{x},t)+ n(\mathbf{x},t),
             &(\mathbf{x},t)\in\Omega\times(0,T],\label{Pro:KS2}\\
&n(\mathbf{x},0)= n_{0}(\mathbf{x}),~c(\mathbf{x},0)= c_{0}(\mathbf{x}),
             &\mathbf{x}\in\Omega,\label{Pro:KS3}\\
&\frac{\partial n(\mathbf{x},t)}{\partial \mathbf{\nu}}=0,~~~~~\frac{\partial c(\mathbf{x},t)}{\partial \mathbf{\nu}}=0,
             &(\mathbf{x},t)\in\partial\Omega\times(0,T], \label{Pro:NS4}
\end{align}
\end{subequations}
where $n(\mathbf{x},t)$ and $c(\mathbf{x},t)$ denote the density of myxobacteria and the concentration of slime trail, respectively. With $\mathcal{D}>0$ denoting the diffusion coefficient, $\chi>0$ the chemotactic sensitivity, and $\gamma>0$ the decay rate of the slime, the final time is set as $T>0$.  Initial conditions are given by positive functions $n_{0}(\mathbf{x})$ and $c_{0}(\mathbf{x})$, and $\mathbf{\nu}$ denotes the unit outward normal vector on $\partial\Omega$. Time evolution is described by the Caputo fractional derivative ${_{0}^{C}\mathfrak{D}^{\alpha}_t}$ of order $\alpha\in(0,1)$, which is defined as
\begin{equation}
{_{0}^{C}\mathfrak{D}^{\alpha}_t}u(\mathbf{x},t)
\triangleq\frac{1}{\Gamma(1-\alpha)}\int_{0}^{t}(t-s)^{-\alpha}\frac{\partial u(\mathbf{x},s)}{\partial s}\,\mathrm{d}s.
\end{equation}
This operator mathematically captures the anomalous diffusion and memory effects observed during myxobacterial gliding across rough surfaces. Importantly, when $\alpha\rightarrow 1$, System \eqref{Pro:FKS} simplifies to the classical KS model with singular (logarithmic) sensitivity,  which characterizes chemotaxis under Markovian diffusion conditions.

The analysis of System \eqref{Pro:FKS} is rendered particularly challenging by two core mathematical features: ($i$) the non-local nature of the Caputo fractional derivative operator ${_{0}^{C}\mathfrak{D}^{\alpha}_t}$; and ($ii$) the singular chemotactic sensitivity $\frac{1}{c(\mathbf{x},t)}\nabla c(\mathbf{x},t)=\nabla\log(c(\mathbf{x},t))$, which becomes unbounded as $c(\mathbf{x},t)\to0$. For $\alpha\in(0,1)$, the system no longer possesses a standard local time‑derivative structure; instead, the time non-local derivative operator introduces memory effects that break the Markovian property, while the singular sensitivity couples to this nonlocality, collectively yielding analytical difficulties well beyond those encountered in classical Keller–Segel models.

Despite these challenges, non-local models of this type possess substantial theoretical value, as they inherently capture the memory effects and anomalous transport characteristic of complex biological systems. We will show that the proposed model offers a more faithful description of the adaptive searching behavior of myxobacteria in natural, heterogeneous habitats, thereby providing greater ecological relevance and predictive capacity than conventional models formulated under homogeneous assumptions.

To this end, we systematically investigate the modeling validity of System \eqref{Pro:FKS} and establish its theoretical foundation via rigorous PDE analysis. Moreover, capitalizing on the mesh-free advantage of deep learning algorithms, we conduct numerical simulations to visualize the solution dynamics and verify the global existence of solutions in two spatial dimensions.

\subsection{Connections with previous work and further motivations}
\label{subsec:Prew}
Although bacterial chemotaxis is ubiquitous in nature,  our understanding of it remains largely derived from laboratory studies of model organisms, most notably \emph{E.~coli}\cite{Kearns2010,Keegstra2022}. Research on \emph{E.~coli} has been instrumental in bridging microscopic molecular signaling and macroscopic population behavior\cite{Adler1966,Cremer2019,Scheidweiler2024}. On smooth agar surfaces, \emph{E.~coli} detects temporal chemical gradients via methyl-accepting proteins and modulates its motility through a phosphotransferase signaling cascade. This mechanism results in the characteristic `run-and-tumble' motion directed toward attractants (see, e.g., Refs.~\refcite{Cremer2019,Keegstra2022,Livne2024,Scheidweiler2024}). This behavior is fundamentally Markovian at the microscopic scale.

To mathematically describe such population-level chemotactic behaviors, the Keller-Segel framework is a standard paradigm. In its general form, the system reads\cite{Bellomo22}
\begin{subequations}\label{Pro:cFKS}
\begin{align}
&\partial_t n(\mathbf{x},t)=-\nabla\cdot \mathbf{\mathcal{J}}\big(n(\mathbf{x},t),c(\mathbf{x},t)\big)
+f\big(n(\mathbf{x},t),c(\mathbf{x},t)\big),
&\text{in}~~\Omega\times(0,T],\label{Pro:cKS1}   \\
&\epsilon\,\partial_t c(\mathbf{x},t)=\mathcal{D}\nabla^2 c(\mathbf{x},t)+g\big(c(\mathbf{x},t),n(\mathbf{x},t)\big),
&\text{in}~~\Omega\times(0,T].\label{Pro:cKS2}
\end{align}
\end{subequations}
Here, equation \eqref{Pro:cKS1} governs the evolution of the bacterial density $n(\mathbf{x},t)$.
The total flux $\mathbf{\mathcal{J}}(n,c)$ comprises two main contributions: ($i$) Fickian diffusion driven by the bacterial concentration gradient; and ($ii$) chemotactic drift, i.e., directed movement along chemical gradients (e.g., toward lower attractant or higher repellent concentrations). Equation \eqref{Pro:cKS2} describes the dynamics of the chemical signal $c(\mathbf{x},t)$ (e.g., protons or nutrients), which diffuses with coefficient $\mathcal{D}$. The functions $f(n,c)$ and $g(n,c)$  represent source/sink terms associated with biological processes. $f(n,c)$ typically models cellular proliferation or death, while $g(n,c)$ accounts  chemical production or consumption. By choosing specific constitutive forms for $\mathbf{\mathcal{J}}$ in \eqref{Pro:cFKS}, one recovers a variety of classical and phenomenological KS models (see Table \ref{Tab:ref} for the case $f(n,c)=0$). For a broader survey of KS‑type models, we refer the reader to review articles such as Refs.~\refcite{Arumugam21,Hillen09}.

\begin{table}[htbp]\footnotesize{
    \centering
    \renewcommand{\arraystretch}{1.2}
    \captionsetup{skip=8pt}
    \begin{threeparttable}
    \caption{Several phenomenological KS models satisfying \eqref{Pro:cFKS} with $f(n,c)=0$ and $\chi>0$.}
    \label{Tab:ref}
    \begin{tabular}{p{0.35\textwidth}p{0.35\textwidth}c p{0.20\textwidth}}
      \toprule
      Model Name $^{\mathrm{Refs.}}$ & Determined Flux $\mathbf{\mathcal{J}}(\cdot,\cdot)$ & Source/Sink $g(\cdot,\cdot)$ \\
      \midrule
        Patlak-Keller-Segel model\cite{Chen20,Kavallaris09,Keller70} & $-\mathcal{D}\big(\nabla n+\chi n\nabla c\big)$ & $-c+n$, $\epsilon=1$\\
        \emph{Logistic} model\cite{Hou18,Keller1971,Li14,Winkler16,Winkler22}
        & $-\mathcal{D}\big(\nabla n+\chi\, \frac{n}{c}\nabla c\big)$ & $-cn$, $\epsilon=1$\\
        Modified KS\cite{Livne2024,Livne2025}
        & $-\mathcal{D}\big(\nabla n+\chi\,\frac{n}{c+\beta}\nabla (c+\beta)\big)$  \tnote{\mblue{$\dag$}}  & $-\gamma nc$ or $-\gamma c^{3/2}n$\\
        \emph{Volume-filling} model I\cite{Hillen09}
        & $-\mathcal{D}\big(\nabla n+\chi\, n\big(1-\frac{n}{\Upsilon}\big)\nabla c\big)$  \tnote{\mblue{$\ddag$}}  & $-\gamma c+n$, $\epsilon=1$\\
        \emph{Volume-filling} model II\cite{Velazquez04a,Velazquez04b}
        & $-\mathcal{D}\big(\nabla n+\chi\, \frac{n}{1+\varepsilon n}\nabla c\big)$, $\varepsilon>0$  & $n$, $\epsilon=0$\\
        \emph{Receptor} model\cite{Bouvard2022,Meyer2014,Winkler10}
        & $-\mathcal{D}\big(\nabla n+\chi\, \frac{n}{(1+\bar{\varepsilon} c)^2}\nabla c\big)$, $\bar{\varepsilon}>0$  & $n$, $\epsilon=0$\\
        Nonlinear-diffusion model\cite{Arumugam21,Hillen09}
        & $-\mathcal{D}\big(n^{\kappa}\nabla n+\chi\, n\nabla c\big)$, $\kappa\in \mathbb{N}_{+}\cup\{0\}$\tnote{\mblue{$\P$}} & $-\gamma c+n$, $\epsilon=1$\\
        Nonlocal-diffusion model\cite{Estrada18} & $-\mathcal{D}\big(\nabla^{s-1} n+\chi\, n\nabla c\big)$, $1<s<2$  & $-\gamma c+n$, $\epsilon=1$\\
      \bottomrule
    \end{tabular}
    \begin{tablenotes}
     \item[\mblue{$\dag$}] The model reduces to the KS with logarithmic sensitivity as $\beta\rightarrow0$; \mblue{$^\ddag$} For $\Upsilon, \varepsilon>0$, the limit of $\Upsilon\rightarrow\infty$ or $\varepsilon\rightarrow0$ leads to the KS; \mblue{$^\P$} The limit of $\kappa\rightarrow0$ results in the KS.
     \end{tablenotes}
     \end{threeparttable}}
\end{table}

The KS model with logarithmic (singular) sensitivity has been widely adopted due to its empirical success in capturing bacterial aggregation and pattern formation (see, e.g., Refs.~\refcite{Bouvard2022,Livne2024,Livne2025,Meyer2014,Salek2019,Uar2025}). Mathematically, this system often referred to as the logarithmic KS system, which exhibits rich behavior under Neumann boundary conditions, particularly concerning global solvability and long-time dynamics. Below we outline key theoretical milestones, focusing on the case where $g(n,c)=-\gamma c+n$, $f(n,c)\equiv0$, $\mathcal{D}>0$, and the initial data are sufficiently regular and positive. The global existence of solutions depends critically on the spatial dimension $d$ and the chemotactic sensitivity coefficient $\chi$. In two dimensions ($d=2$), Ref.~\refcite{Aida05} establishes the global existence of classical solutions and characterizes their asymptotic behaviour. For genral $d\geq2$, Ref.~\refcite{Winkler11} proves global classical solvability under the condition $\chi<\sqrt{2/d}$. Moreover, by means of a key inequality (Lemma 2.3 in Ref. \refcite{Winkler11}), the same work also demonstrates the existence of global weak solutions for all $d\geq2$ whenever $0<\chi<\sqrt{(d+2)/(3d+4)}$. A subsequent refinement in Ref.~\refcite{Winkler22} clarifies that the condition $\chi<\sqrt{2/d}$ essentially requires the ratio $\chi^2/\mathcal{D}>0$ to be sufficiently small; under this smallness assumption, classical solutions converge exponentially to the homogeneous steady state $(\bar{n}_0,\bar{c}_0)$, where $\bar{u}_0:=\frac{1}{|\Omega|}\int_{\Omega}u(\mathbf{x},0)\,\mathrm{d}\mathbf{x}$.

The frontier was subsequently pushed to larger values of $\chi$. Through sophisticated analytical techniques, Ref.~\refcite{Stinner11} establishes the existence of global weak solutions for arbitrary $\chi>0$.  Ref.~\refcite{Lankeit17} further demonstrates global solvability for $d=2$ with any finite $\chi$, for $d=3$ with $\chi<\sqrt{8}$, and for $d\geq4$ with $\chi<\frac{d}{d-2}$. These landmark results fundamentally rely on the classical parabolic PDE theory, including the comparison principle, Moser-type estimates, and energy functional methods\cite{Aida05,Lankeit17,Stinner11}, all of which are intrinsically tied to the local-in-time derivative structure of the equations. This foundational framework, however, breaks down when the standard time derivative is replaced by a nonlocal operator. Consequently, the analytical tools developed for classical KS systems are no longer directly applicable, necessitating a fundamentally different approach, which is precisely the focus of the present work.

Recently, inspired by the success of non‑ergodic anomalous diffusion models in capturing diffusion phenomena in non‑equilibrium, heterogeneous, and porous media\cite{Klafter15}, as well as by the efficacy of the KS model in describing chemotaxis‑diffusion dynamics in homogeneous environments\cite{Langlands10,Meyer2014}, researchers are establishing the connection between non‑ergodic anomalous diffusion and bacterial chemotaxis‑diffusion\cite{Henry2010,Felix13}. By organically merging the two, the aim is to model and analyze more realistic and complex bacterial chemotaxis‑diffusion dynamics that better reflect natural habitats.

Most current efforts in this direction focus on modifications of the Patlak–Keller–Segel (PKS) model. For instance, Ref.~\refcite{Estrada18} incorporates  super-diffusion into the bacterial chemotaxis‑diffusion framework to characterize long‑step bacterial movement in nutrient‑scarce environments, achieved by replacing the spatial Laplace operator with a fractional Laplacian. Ref.~\refcite{Ma25} combines sub-diffusion with myxobacterial chemotaxis to describe the chemotactic aggregation dynamics in porous media, where the temporal local derivative is replaced by the non‑local Caputo fractional derivative.The well‑posedness and solution theory for these two classes of modified models, under various domains and generalized settings, have been systematically developed in Refs.~\refcite{Bezerra22,Costa23,Jiang25,Liu18} and related woderivativerks.

It is worth emphasizing, however, that merging anomalous diffusion with bacterial chemotaxis–diffusion–aggregation dynamics requires more than mere operator substitutions; rigorous biophysical justification and careful model derivation are equally essential. Looking ahead, further refinements to generalized KS models, combined with experimental validation, are poised to accelerate progress in this emerging direction.

The transition to the time-nonlocal KS system represents far more than a technical generalization; it marks a fundamental conceptual shift. Specifically, the historical memory inherent in the time non-local derivative operator shatters the core analytical pillars, most notably the conventional Lyapunov functionals and comparison principles, upon which the established theory for logarithmic models has traditionally rested. This invalidation renders classical methods inapplicable, as the hereditary nature of the system prevents the direct translation of prior global existence results. Consequently, a profound theoretical void emerges. The central challenge of the present analysis, therefore, is to establish a novel analytical framework, designed to address the complex interdependencies and inherent nonlocal characteristics that fundamentally define the system.

Since analytical solutions to KS models are generally unavailable, numerical methods are indispensable for visualizing the system dynamics. For a broad class of simplified KS models, classical discretization  techniques are widely employed, including finite element methods (see, e.g., Refs.~\refcite{Saito07,Sulman19,Wang25}), finite difference schemes (e.g., Refs.~\refcite{Epshteyn19,Hu23,Wang22}), finite volume approaches (e.g., Refs.~\refcite{Chertock08,Filbet06,Zhou17}), and Local Galerkin methods (e.g., Refs.~\refcite{Epshteyn08,Guo19,Qiu21}). These methods rest on a solid theoretical foundation, providing well‑established convergence and stability analyses, rigorous error estimates, and predictable computational costs. However, when applied to strongly nonlinear and coupled systems such as \eqref{Pro:FKS}, these classical methods exhibit pronounced limitations. Prominent among them are high implementation complexity, the need for carefully tailored iterative solvers, cumbersome mesh generation on irregular domains, and the curse of dimensionality. To our knowledge, no existing numerical scheme addresses the logarithmic KS model, highlighting the urgent demand for methods tailored to this nonlocal, singular system. This motivates our adoption of a mesh‑free deep learning strategy, which bypasses many of these difficulties and is naturally suited to the nonlocal structure of the system.

Physics-Informed Neural Networks (PINNs) offer a promising mesh-free approach, demonstrating strong adaptability to complex geometries and high-dimensional problems\cite{Raissi19}. Their implementation leverages automatic differentiation for computing complex derivatives within a unified framework\cite{Karniadakis2021}, and they exhibit an inherent capacity for refinement during training\cite{McClenny23}. Motivated by recent advances in hybrid methodologies that combine classical numerical methods with PINNs\cite{Fang23,Guo22}, we propose a hybrid strategy to solve the strongly nonlinear, coupled time-nonlocal system \eqref{Pro:FKS}, aiming to bridge the gap between biophysical realism and computational tractability.

\subsection{Main contributions and novelties of the paper}
The collective aggregation of myxobacteria serves as a paradigmatic example of self‑organization across scales, from individual stochastic motion to population‑level pattern formation. To capture this multiscale phenomenology, we develop an integrated framework that ties together biophysically grounded modeling, a rigorous existence theory for the resulting time-nonlocal system, and a robust hybrid numerical strategy. In particular, our model is designed to more faithfully describe the chemotactic gliding, diffusion, and aggregation of myxobacteria on rough surfaces, where surface roughness and physical obstacles profoundly modulate population dynamics. In doing so, we seek to forge a coherent narrative that bridges biological realism and analytical tractability. The main contributions of this work are summarized below.
\begin{itemize}
  \item \textbf{Multiscale mathematical modeling (micro-to-macro)}: We establish a novel mathematical framework that more faithfully characterize the chemotactic gliding, diffusion and aggregation of myxobacteria on rough surfaces, where geometric irregularities and granular obstacles fundamentally modulate population behavior. By formulating a subordinated Langevin equation driven by an inverse $\alpha$-stable subordinator, we provide a rigorous bridge between microscopic stochastic trajectories and the macroscopic system. This derivation substantiates both the physical rationality and mathematical integrity of the proposed model.

  \item \textbf{Advanced solution theory and analytical innovation}: We establish a comprehensive well-posedness theory for the proposed time-nonlocal KS system \eqref{Pro:FKS} with logarithmic sensitivity. Our results include local well-posedness of mild solution in arbitrary spatial dimensions, mass conservation, positivity preservation, and regularity in generalized Sobolev spaces, as well as global well-posedness in dimensions two and three. The analysis rests on a suite of novel tools: a fractional Lyapunov functional, Caputo-type variational inequalities, fractional convexity estimates, logarithmic Sobolev-type inequalities, Bregman distance techniques, and a weighted bootstrap mechanism specifically adapted to the singular sensitivity and time-nonlocal memory.

  \item \textbf{Robust positivity-preserving time-marching PINNs}:  We propose a multi-objective, positivity/non-negativity-preserving time-marching PINN algorithm. A distinctive feature of this approach is the use of independent neural networks for coupled variables, integrated with the $L_1$ temporal semidiscrete scheme. To strictly enforce physical constraints, we introduce the transformations $n = \rho^2 \ge 0$ and $c = \exp(v) > 0$ directly into the continuous system, ensuring the algorithm's generalizability across a wide class of KS-type models.

  \item \textbf{Numerical validation in complex geometries}: We design original numerical examples to verify the robustness of our theoretical and algorithmic frameworks. By constructing exact solutions with temporal H\"{o}lder continuity and performing simulations on a complex `butterfly-shaped' domain, we validate the global existence of solutions and demonstrate the algorithm's superior ability to handle irregular geometries and capture long-term evolutionary dynamics.
\end{itemize}

Overall, the proposed framework constitutes a promising and versatile approach for modeling biophysical dynamics in complex, irregular environments. Not only does it provide a powerful computational tool for capturing chemotactic dynamics within irregular geometries, but its intrinsic connection to non-equilibrium statistical physics also unlocks broad prospects for future theoretical and numerical explorations.

\subsection{Organization of the paper}

The remainder of this paper is organized as follows. Section~\ref{sec:secmF}
presents the modeling framework. We  begin by discussing the motility mechanisms of myxobacteria and constructing a discrete agent-based model to describe their trajectories on rough surfaces. Building on this microscopic description, we derive the macroscopic System~\eqref{Pro:FKS} via a subordinated Langevin equation. Section~\ref{sec:ST} states the main theoretical results for the resulting system, including mass conservation, positivity preservation, local mild well-posedness in arbitrary spatial dimensions, and global well-posedness in dimensions two and three. Detailed technical proofs, along with further regularity estimates, are provided in Sections~\ref{sec:fLST}, \ref{sec:LEMS}, and \ref{Sec:CLS}. Section~\ref{sec:SPINN} then introduces a mesh-free, multi-objective, time-marching PINNs algorithm designed to preserve the non-negativity and positivity structures of the model. Numerical experiments on complex geometries are presented to demonstrate the accuracy, robustness, and flexibility of the proposed computational framework. Finally, Section~\ref{sec:conclusions} summarizes the main findings and outlines possible directions for future research.

\section{Stochastic Modeling: From Micro to Macro}
\label{sec:secmF}
\subsection{The biology mechanism}
\label{sec:bioMechanism}
Myxobacteria are Gram-negative, rod-shaped bacteria that lack flagella and are thus incapable of swimming. Instead, they exhibit gliding motility to move across surfaces, including over the surfaces of sibling cells\cite{Wolgemuth2002}. These bacteria exhibit social behaviors, such as cooperative feeding, coordinated movement, and social development\cite{Velicer2003}. Gliding occurs on solid surfaces as well as at the water-air interface, and is characterized by cell bending and slime secretion. Their gliding speed ranges between 10 and 60 $\mu$m/min, depending on temperature, nutrient availability, and initial cell density\cite{Dawid2000}. Myxobacteria cells are typically rod-shaped, with dimensions that varys by species; they generally measure between $0.6$--$0.9$ $\mu$m in wide and $3$--$8$ $\mu$m in length\cite{Dawid2000}. Vegetative cells commonly exhibit one of two morphological types: either slender, flexible rods with tapered ends, or cylindrical, rigid rods with rounded ends\cite{Saggu2023}.

\begin{figure}[!ht]
 \centering
 \centerline{\includegraphics[width=0.86\textwidth]{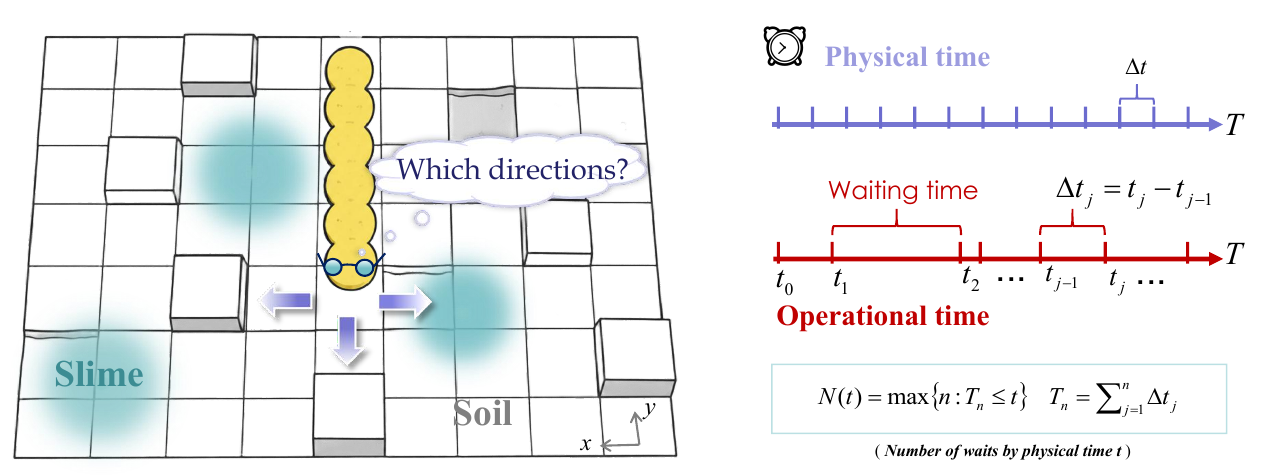}}
\caption{Schematic illustration of myxobacterial gliding on a rough surface and the corresponding random-walk timelines. $(a)$ Myxobacteria gliding on  a rough surface: In the discrete agent model, each bacterium occupies connected grid cells. During each update, the bacterium moves forward by extending its head to one of the four nearest-neighbor sites that is not occupied by its own body. Blue shaded areas indicate regions of bacterial aggregation and high slime concentration. $(b)$ Continuous time random walk timelines: The upper (blue) timeline represents physical time, sampled at regular intervals $\Delta t$, which corresponds to the waiting-time distribution $\psi(t)=\delta(t-\Delta t)$. The lower (red) timeline represents the internal (operational) time of the random-walk process, where each segment corresponds to a waiting time drawn from the distribution \eqref{eq:Dwaitingtime} for an individual movement event.}
\label{Fig:example0}
\end{figure}

Soil constitutes a primary natural habitat for myxobacteria\cite{Dawid2000}. These facultatively multicellular microorganisms are commonly found in topsoil environments and exhibit gliding motility, characterized by a smooth, non-rotational movement along the long axis of the cell\cite{McBride93}. During gliding, cells frequently pause and reverse direction, manifesting as `\emph{stop-and-go}' patternsunder electron microscopy (see, e.g., Refs.~\refcite{Kuhlwen71,Kuhlwen68,Reichenbach68}). As they move, myxobacteria deposit a slime trail behind them on the substrate. Each cell produces a distinct slime trail, visible as a phase-bright line, with clusters forming wider trails and single cells leaving narrower ones. When a cell encounters an existing trail, it tends to turn (probably through the acute angle of intersection), to follow the pre‑deposited trail. This turning behavior is thought to arise from the alignment maximizing cohesive interactions between newly secreted slime filaments and those already present in the trail. For further biological details, see, e.g., Refs.~\refcite{Kiskowski2004,Mauriello2010,Sozinova2005}.

Under starvation conditions, myxobacteria aggregate into large colonies and form fruiting bodies, ranging from 10 to 100 $\mu$m in size\cite{Wolgemuth2002}, through directed cell movement. It has been experimentally demonstrated that Myxococcus can detect and glide directly toward nearby colonies of potential prey to feed on them\cite{Kaiser2003}. Fruiting body formation is induced by nutritional deficiency and is regulated by factors such as nutrient concentration, pH, cation availability, and temperature, see Ref.~\refcite{Dawid2000} for further details.

Building on this biological background, the following sections analyze the characteristic motility patterns of microorganisms, with particular emphasis on the gliding behavior of myxobacteria and the associated diffusion and dissipation dynamics of slime trails.

\subsection{The discrete agent model for micro patterns}
\label{sec:lattice}
Building on the biological mechanisms detailed in Section \ref{sec:bioMechanism}, this section explores the chemotactic diffusion patterns of myxobacteria and the spatial evolution of slime on a $100\times100$ square-grid domain subject to Neumann boundary conditions. These analyses lay the theoretical groundwork for the stochastic modeling presented hereafter.

Following established agent-based frameworks (see, e.g., Refs. \refcite{Othmer97,Stevens00}), we treat each cell as an autonomous agent governed by the simple rules introduced in Sec.~\ref{sec:bioMechanism}, together with the following specific assumptions (see Fig. \ref{Fig:example0} for an illustration):
\begin{description}
  \item[A.1.]Each myxobacteria is represented as an $8\times1$ filament, occupying eight contiguous grid cells.
  \item[A.2.] At each step, a myxobacterium glides to one of the four cells adjacent to its marked pole, which we refer to as the `head' for descriptive purposes, though it carries no biological meaning.
  \item[A.3.] The complex structure of soil acts as a formidable obstacle to myxobacteria, resulting in frequent and prolonged trapping events. This behavior is captured by a heavy-tailed power-law  waiting‑time distribution(cf. e.g., Ref.~\refcite{Klafter15}):
      \begin{equation}\label{eq:Dwaitingtime}
        \psi(t)\sim \frac{\tau^{\alpha}}{t^{1+\alpha}},\quad  0 <\alpha<1,
      \end{equation}
      where $\tau$ is the characteristic time scale of the distribution.
\end{description}
Based on these assumptions, we construct a discrete lattice‑based agent model as follows.

Let $\mathcal{P}=\{1,2,\ldots,100\}$ and define the set of lattice points as $\mathcal{P}^2$. We establish the discrete lattice model on $\mathcal{P}^2$. For each site $x\in \mathcal{P}^2$, let $N_x$ be the set of its four nearest neighbors (up, down, left, right), as specified in Assumption \textbf{A.2.} and illustrated in Fig. \ref{Fig:example0} (a). Consistent with Assumption \textbf{A.1}, the sites occupied by myxobacteria $k$ at time $t_j$ are given by $\mathcal{B}_{k}(t_j)=\{(x_1,\ldots,x_8):x_{i}\in \mathcal{P}^{2}, i=1,2,\ldots,8 \}$, with head orientation $h_k(t_j)\in\{(1,0),(0,1),(-1,0),(0,-1)\}$. Let $\mathcal{I}_x(t_j)=\sharp\{y\in \mathcal{B}_{k}(t_j),k=1,2,\ldots,m:x=y\}$ denote the number of myxobacteria segments covering lattice point $x$ at time $t_j$. The bacterial density $\mathcal{N}(x,t_j):\mathcal{P}^{2}\times\mathbb{N}\rightarrow \mathbb{R}_{+}$ and slim concentration $\mathcal{C}(x,t_j):\mathcal{P}^{2}\times\mathbb{N}\rightarrow \mathbb{R}_{+}$ evolve according to production rates $\kappa_n$, $\kappa_c\geq0$ and decay rates $\lambda_n$, $\lambda_s\geq0$, along with a slime diffusion coefficient $D_c$.

The dynamics of slime production are specified as follows. The myxobacterial density evolves according to the update rule in Ref.~\refcite{Stevens00}, given by
\begin{equation}\label{eq:bacden}
\mathcal{N}(x,t_{j+1})=\big(1-\lambda_{n}\big)\mathcal{N}(x,t_j)+\kappa_n\mathcal{I}_{x}(t_j), \quad \mathcal{N}(x,0)=\kappa_n \mathcal{I}_{x}(0).
\end{equation}
Slime dynamics are governed by a threshold-dependent production mechanism. Production commences at $t_0(x)$, defined as the first instant when $\mathcal{I}_x(t_j)\geq M_1$ for some $M_1\in\mathbb{N}_{+}$, with initial slime concentration set to $\mathcal{C}(x,t_0)=\kappa_c \mathcal{I}_x(t_0)$. Subsequently, slime is produced if either the occupancy exceeds $M_1$ or the local slime concentration surpasses the sensing threshold $M_2$. The complete evolution, integrating diffusion, decay, and production (adapted from Ref.~\refcite{Stevens00}), is
\begin{equation}\label{eq:blmden}
\mathcal{C}(x,t_{j+1})=\underbrace{(1-\lambda_{c})}_{\textit{Decay}}\Big(\underbrace{\big(1-D_c\big)\mathcal{C}(x,t_j)}_{\textit{Retention}}+\underbrace{\sum\nolimits_{y\in N_x}\frac{D_c}{4}\mathcal{C}(y,t_j)}_{\textit{Diffusion Inflow}}\Big) + \underbrace{\kappa_c\mathcal{I}_{x}(t_j)}_{\textit{Secretion}}.
\end{equation}
If neither production condition is satisfied, the same update rule applies with the production term set to zero (i.e., $\kappa_c \mathcal{I}_x(t_j) = 0$). During the update process of slime concentration $\mathcal{C}(\cdot,t_j)$ and myxobacteria density $\mathcal{N}(\cdot,t_j)$, the transition from the current time step $t_j$ to the next $t_{j+1}$ must follow the waiting-time distribution $\psi(t)$ specified in \eqref{eq:Dwaitingtime}, as prescribed by Assumption \textbf{A.3.} (see also Fig. \ref{Fig:example0} (b)).

In model \eqref{eq:blmden}, the term $\kappa_c\mathcal{I}_{x}(t_j)$ acts as a `source term', representing localized slime secretion by myxobacteria at a rate proportional to the local occupancy count $\mathcal{I}_x$. Spatial spreading of the chemical is governed by an isotropic diffusion scheme with coefficient $D_c$; the term $(1-D_c)\mathcal{C}(x,t_j)$ accounts for mass retention at the current site, while the summation $\sum\nolimits_{y\in N_x}\frac{D_c}{4}\mathcal{C}(y,t_j)$ captures the diffusive flux from the four nearest neighbors ($N_x$),  each contributing one quarter of its diffusing mass. The prefactor $(1-\lambda_c)$ models linear degradation or environmental evaporation of slime over each discrete time step.

\begin{figure}[!ht]
\centering
\hspace*{-28pt}
\begin{minipage}[c]{0.45\textwidth}
 \centering
 \centerline{\includegraphics[width=.75\textwidth]{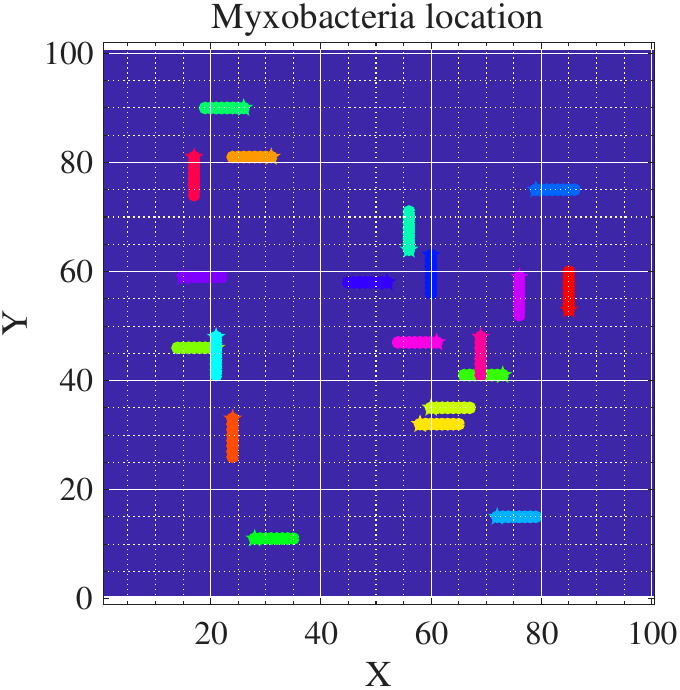}}
\end{minipage}
\begin{minipage}[c]{0.45\textwidth}
 \centering
 \centerline{\includegraphics[width=1.1\textwidth]{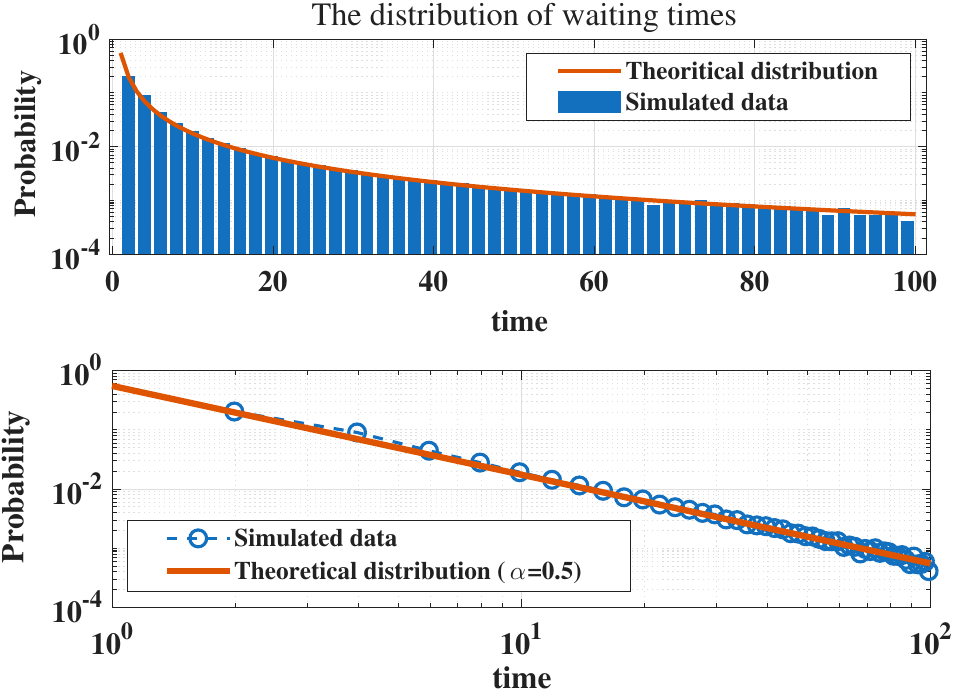}}
\end{minipage}
\caption{(Color online) Microscopic simulation setup and statistical validation of the heavy-tailed waiting time distribution. (\textit{Left}) Initial configuration: $20$ myxobacteria are randomly distributed across a $100\times100$ lattice. Each agent occupies $8$ contiguous cells. Movement is restricted to four cardinal directions: up, down, left, and right, but are not allowed to reverse their direction abruptly. For instance, if an individual is currently moving upward, its next move may only proceed to the left, right, or continuing upward by one grid cell. Overlapping between different myxobacteria is permitted to simulate high-density aggregation. (\textit{Right}) Comparative analysis of theoretical predictions and simulated waiting time distributions. The data are obtained from Fig. \ref{Fig:example2}, based on statistics collected from $1000$ agents over $1000$ discrete steps ($t_{min}=1$, $t_{max}=100$). The top-right inset presents a frequency histogram with a fitted curve, visually highlighting the heavy-tailed characteristic of the waiting time distribution. The bottom-right plot demonstrates the robust agreement between the theoretical model and simulated data under the exponent $\alpha=0.50$, substantiating the use of time-nonlocal operators to capture anomalous delay effects.}
\label{Fig:example1}
\end{figure}

To incorporate the observed directional persistence in myxobacterial motion, we introduce a directional weight factor $d(\cdot,\cdot,\cdot)$ that biases the bacterium's `head' toward grid points aligned with its current orientation in the next time step. Following  Ref.~\refcite{Stevens00}, we define
\begin{displaymath}
d\big(x_0,y,h_k(t_j)\big)=
\left\{
\begin{aligned}
&10, &&\quad \text{if}~y~\text{aligns with}~h_k(t_j);\\
&1,  &&\quad \text{otherwise}.
\end{aligned}
\right.
\end{displaymath}
This weight ensures that, in the absence of chemotactic cues (i.e., $\mathcal{N}=0$ and $\mathcal{C}=0$), the bacterium maintains its trajectory over a distance roughly comparable to its body length, thereby preventing non-physical, erratic reorientations.

Let $w_n, w_c\geq0$ denote the weight factors for myxobacteria density and slime concentration, respectively. The probability $P_{k,x_0}(x,t+1)$ for the head of the $k_{th}$ myxobacterium, initially located at $x_0$ at time $t_j$, moves to a neighboring site $x\in N_{x_0}$ with $x\notin \mathcal{B}_k(t_j)$ at time $t_{j+1}$ is define as\cite{Stevens00}
\begin{equation}\label{eq:Probbl}
P_{k,x_0}(x,t_{j+1})
=\frac{w_c\,\mathcal{C}(x,t_j)+\big(w_n\,\mathcal{N}(x,t_j)+\kappa_n\big)d\big(x_0,y,h_k(t_j)\big)}
{\sum_{y\in N_{x_0},y\notin\,\mathcal{B}_{k}(t_j)}\big(w_c\,\mathcal{C}(y,t_j)
+\big(w_n\,\mathcal{N}(y,t_j)+\kappa_n\big)d\big(x_0,y,h_k(t_j)\big)\big)},
\end{equation}
with $P_{k,x_0}(x,t_{j+1})=0$ for $x\in \mathcal{B}_{k}(t_j)$. This transition probability is shaped by three principal biological mechanisms:
\begin{itemize}
    \item \textbf{Chemotaxis ($w_c\mathcal{C}$)}: biases the agent toward higher chemoattractant (slime) concentration; the weight $w_c$ modulates the sensitivity to chemical gradients.
    \item \textbf{Contact Guidance ($w_n\mathcal{N} + \kappa_n$)}: encodes ``trail-following" behavior, where $w_n\mathcal{N}$ favors movement toward areas of higher historical bacterial density (i.e., existing slime trails), while the baseline constant $\kappa_n$ ensures mobility in previously unexplored terrain.
    \item \textbf{Directional persistence $d(\cdot,\cdot,\cdot)$}: the multiplication of the contact guidance term by $d(x_0, y, h_k(t_j))$ prioritizes alignment with the current heading, reflecting the mechanical difficulty that a rod-shaped bacterium faces when attempting sharp turns along a slime trail.
    \item \textbf{Normalization}: The denominator sums these weights over all neighboring sites not occupied by the bacterium's own body ($\mathcal{B}_k$), ensuring that $\sum_{x} P_{k,x_0}(x,t_{j+1})=1$.
\end{itemize}

Equation \eqref{eq:blmden} governs the spatio-temporal evolution of slime concentration by integrating local secretion, isotropic grid-based diffusion, and linear decay. The stochastic motion of myxobacteria is then captured by the transition probability in \eqref{eq:Probbl}. Together, these rules define a biased random walk whose movement direction is determined by a linearly weighted combination of three biological drivers: chemoattractant toward slime gradients, contact guidance along existing trails, and directional persistence along the current heading.

\begin{figure}[!ht]
\centering
\begin{minipage}[c]{0.24\textwidth}
 \centering
 \centerline{\includegraphics[width=0.95\textwidth]{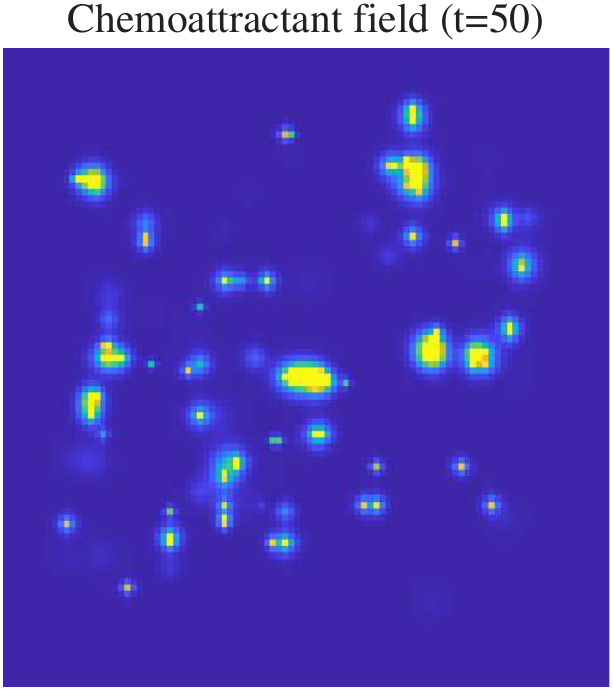}}
\end{minipage}
\begin{minipage}[c]{0.24\textwidth}
 \centering
 \centerline{\includegraphics[width=0.95\textwidth]{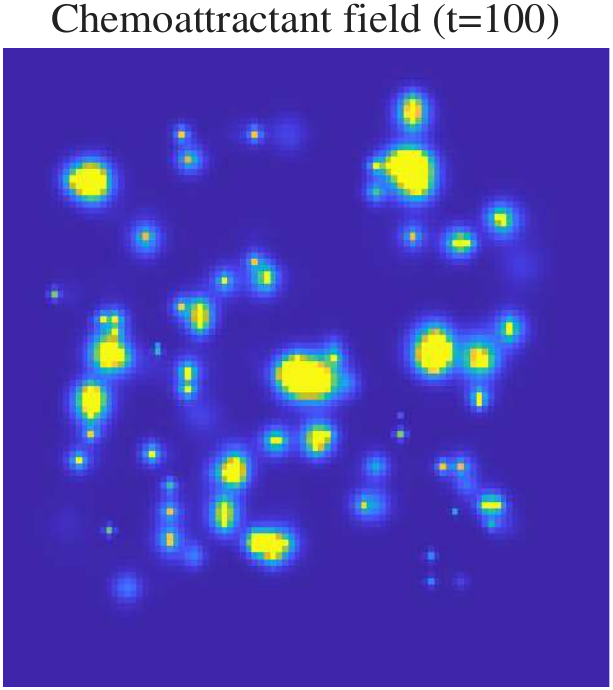}}
\end{minipage}
\begin{minipage}[c]{0.24\textwidth}
 \centering
 \centerline{\includegraphics[width=0.95\textwidth]{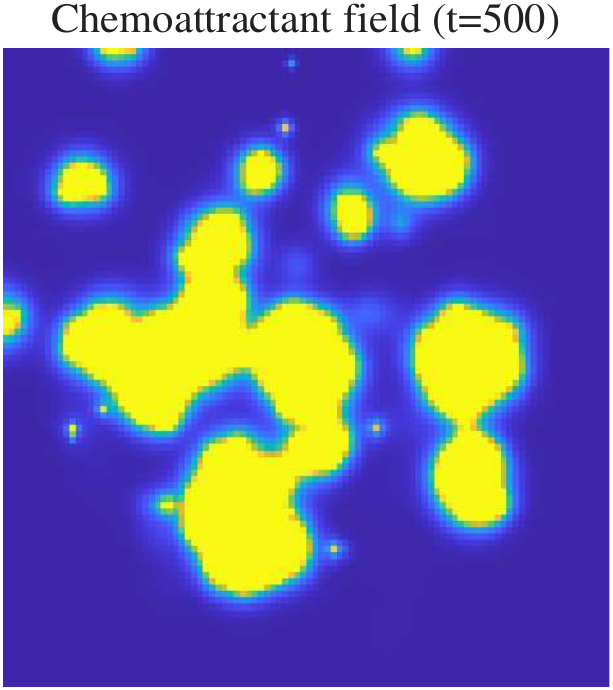}}
\end{minipage}
\begin{minipage}[c]{0.24\textwidth}
 \centering
 \centerline{\includegraphics[width=0.95\textwidth]{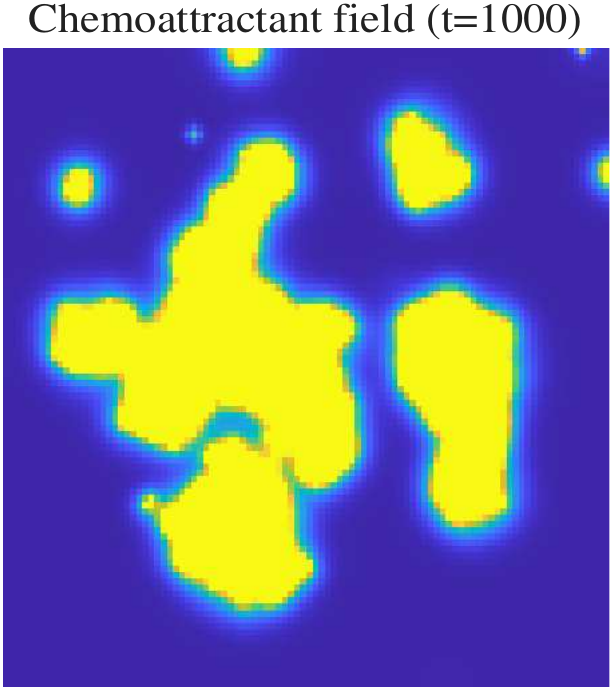}}
\end{minipage}
\caption{(Color online)  Spatiotemporal evolution of slime concentration $c$ and the emergence of fruiting body patterns.This simulation tracks $1000$ myxobacteria on a $100\times100$ lattice,  capturing the transition from stochastic gliding to collective aggregation.
The numerical results demonstrates that the gliding motility of the bacteria, along with the diffusion and degradation of slime, largely reproduces the dynamical behaviors observed under laboratory microscopy. Specifically, the myxobacteria exhibit directed gliding toward regions of higher slime concentration and aggregate there. Upon reaching a critical density threshold ($M_1=5$), these clusters secrete additional slime to facilitate social growth and structural reinforcement. The model effectively resolves the coarsening dynamics of the population, where adjacent spores merge into large-scale fruiting bodies while transient, smaller structures undergo apoptosis-like dissolution. Simulation parameters are set as: $\Lambda_n=0.10$, $\Lambda_c=0.02$,   $\kappa_n=0.20$, $\kappa_c=0.30$, $D_c=1.00$, $w_n=11.00$, $w_c=111.00$, with a fractional exponent $\alpha=0.50$ over $1000$ time steps.}
\label{Fig:example2}
\end{figure}

All the above simulations performed under the parameter regime $d < w_n \ll w_c$ are illustrated in Figs. \ref{Fig:example1} and \ref{Fig:example2}. These results demonstrate a robust qualitative agreement with biological observations documented in foundational cinematographic studies (e.g., \refcite{Kuhlwen71,Kuhlwen68,Reichenbach68}). Such alignment with both classical experimental recordings and recent scholarly reviews \refcite{Islam2015,Mauriello2010} corroborates our modeling framework and validates the hypotheses \textbf{A.1}–\textbf{A.3}.

\subsection{Macroscopic dynamics via stochastic subordination}
The microscopic stochastic simulations in the preceding section confirm the validity of our model assumptions (\textbf{A.1}–\textbf{A.3}) and corroborate the underlying biophysical mechanisms. Building on these physically plausible microscopic assumptions and the associated biological rationale, we now derive the macroscopic system \eqref{Pro:FKS} rigorously through a multiscale stochastic modeling approach. This derivation not only endows the model with a sound biophysical interpretation from the micro‑scale up, but also highlights its accuracy and indispensability in capturing realistic chemotactic dynamics.

Following the standard theory of stochastic differential equations (see, e.g., Ref. \refcite{Risken89}), the classical  Fokker-Planck equation is statistically equivalent to the Langevin dynamics $\dot{x}(t)=A+\sqrt{2\mathcal{D}}\dot{\mathcal{W}}(t)$, where $\mathcal{W}(t)$ is a standard Wiener process. Combining the biological mechanisms of myxobacteria described in Sec. \ref{sec:bioMechanism} with the lattice-based agent model introduced in Sec. \ref{sec:lattice}, we model the directed gliding motion toward aggregates via the subordinated Langevin equation
\begin{equation}\label{eq:Langevin}
\dot{x}(t) = A\big(x\big(\mathcal{S}(t)\big),\mathcal{S}(t)\big)\,\dot{\mathcal{S}}(t)
        + \sqrt{2\mathcal{D}}\,\dot{\mathcal{W}}_{\mathcal{S}(t)},
\end{equation}
where $\mathcal{S}(t):=\inf\big\{\bar{\tau}\geq0:T(\bar{\tau})>t\big\}$ is the inverse $\alpha$-stable subordinator (see e.g., Ref. \refcite{Applebaum09}). The drift term $A(x,t)$, which characterizes the directional bias of gliding, is defined by
\begin{equation}\label{eq:drift}
A(x,t):=\underbrace{\chi\,\delta_x^2/(2\tau^\alpha)}_{(\textit{coefficient})}
 \times \underbrace{v(x,t)^{-1}\cdot\partial_x v(x,t)}_{(\textit{effective force field})}.
\end{equation}
The quantity $\delta_x^2/(2\tau^\alpha)$ represents an effective mobility associated with the migration rate, with $\delta_x$ the fixed gliding step and $\tau$ the characteristic timescale in the waiting-time distribution \eqref{eq:Dwaitingtime}. The logarithmic gradient $\partial_x\ln v(x,t)=v(x,t)^{-1}\partial_x v(x,t)$ acts as an effective driving force generated by spatial variations in the concentration potential; this form follows from the asymptotic expansion of transition probabilities (see \eqref{eq:prpl-def} in Sec. \ref{Der:drift_coefficient} for details) and implies that bacteria preferentially move toward regions with higher potential. In the spatially homogeneous case $\partial_x v=0$, the drift vanishes and the dynamics reduce to symmetric subdiffusion.

The process $\dot{\mathcal{S}}(t)$ denotes increments in operational time\cite{Magdziarz07}, which maps discrete gliding events onto physical time $t$. Biologically, this captures the ``stop-and-go" motility of myxobacteria: each myxobacteria glides and then enters a trapped state for a random duration governed by a power-law distribution. During trapping intervals, $\dot{\mathcal{S}}(t)=0$, so both deterministic forcing and stochastic fluctuations are effectively suspended; dynamical updates occur only upon escape, when $\dot{\mathcal{S}}(t)>0$. Consequently, the particle experiences fluctuations solely during active displacement phases, leading to transport that is slower than classical Brownian diffusion.

The stochastic equation \eqref{eq:Langevin} is formally equivalent to the following system of coupled stochastic differential equations in the overdamped limit
\begin{equation}\label{Eq:Langevin_system}
\left\{\begin{aligned}
x(t)
&=y\big(\mathcal{S}(t)\big),\quad \\
\dot{y}(\bar{\tau})
&=A\big(y(\bar{\tau}),\bar{\tau}\big)
+\sqrt{2\mathcal{D}}\,\dot{\mathcal{W}}(\bar{\tau}).
\end{aligned}\right.
\end{equation}
The process $T(\bar{\tau})$ asts as a ``random clock'' capturing the heavy-tailed waiting times and is characterized by the Laplace exponent $\mathbb{E}[e^{-zT(\bar{\tau})}]=e^{-\bar{\tau}z^{\alpha}}$, $z>0$, see Refs. \refcite{Ma26,Ma23,Magdziarz07}. Here, $\bar{\tau}$ denotes the  operational time. Unlike the physical time variable $t$, the operational time evolves continuously during active motion, while the random clock $\mathcal{S}(t)$ introduces intermittent trapping events and thereby produces anomalous temporal scaling. The noise term $\mathcal{W}(\bar{\tau})$ is assumed to be Gaussian white noise, independent of the subordinator, with $\langle \mathcal{W}(\bar{\tau})\rangle=0$ and $\langle \mathcal{W}(\bar{\tau}_1)\mathcal{W}(\bar{\tau}_2)\rangle=\delta(\bar{\tau}_1-\bar{\tau}_2)$. In \eqref{Eq:Langevin_system}, inertial effects have been neglected. This overdamped approximation is justified because, in strongly dissipative media, the momentum relaxation time is much shorter than the characteristic waiting time associated with the power-law distribution \eqref{eq:Dwaitingtime}. Hence, over the timescales relevant to aggregation dynamics, the motion is effectively governed by force balance rather than inertia.

Model \eqref{Eq:Langevin_system} is thus constructed directly from the underlying biological mechanisms. On the operational timescale $T(\bar{\tau})$, the bacterium undergoes overdamped stochastic motion driven by the external concentration field governed by \eqref{eq:blmden}, while the mapping from operational time to physical time is determined by the heavy-tailed waiting-time distribution \eqref{eq:Dwaitingtime}. As a result, the model consistently captures both chemotactic gliding and anomalous diffusive transport in a unified framework.

Starting from either the drift structure \eqref{eq:drift} or the stochastic system \eqref{Eq:Langevin_system}, we can derive the macroscopic equation \eqref{Pro:KS1}. Let $\mathcal{G}(x,\bar{\tau})$ denote the probability density function (PDF) of the process $y(\bar{\tau})$. Then $\mathcal{G}$ satisfies the classical Fokker-Planck equation (see, e.g., Refs.~\refcite{Deng20,Henry2010,Magdziarz2008,Magdziarz07}),
\begin{equation}
\frac{\partial}{\partial\bar{\tau}}\mathcal{G}(x,\bar{\tau})
=-\frac{\partial}{\partial x}\big(A(x,\bar{\tau})\mathcal{G}(x,\bar{\tau})\big)
 +\mathcal{D}\frac{\partial^2}{\partial x^2}\mathcal{G}(x,\bar{\tau}), \quad \mathcal{G}(x,0)=n(x,0).
\end{equation}
Define the operator $\mathcal{L}_{x}:=-\frac{\partial}{\partial x}\big(A(x,\bar{\tau})\cdot\big)+\mathcal{D}\frac{\partial^2}{\partial x^2}$. Taking the Laplace transform with respect to the operation time $\bar{\tau}$ gives
\begin{equation}
s\,\widehat{\mathcal{G}}(x,s)-\mathcal{G}(x,0)
=\mathcal{L}_{x}\,\widehat{\mathcal{G}}(x,s),\quad\forall~ s\in\mathbb{C},
\end{equation}
and hence $\widehat{\mathcal{G}}(x,s)=(s-\mathcal{L}_{x})^{-1}\mathcal{G}(x,0)$. Next, let $\mathcal{H}(\bar{\tau},t)$ denote the PDF of the inverse $\alpha$-stable subordinator $S(t)$. Its Laplace transform with respect to $t$ satisfies (see (6)-(7) in Ref.~\refcite{Magdziarz07})
\begin{equation}
\mathfrak{L}_{t\rightarrow z}\big\{\mathcal{H}(\bar{\tau},t)\big\}
=-\partial_{\bar{\tau}}\big(e^{-\bar{\tau} z^{\alpha}}\big)z^{-1}=z^{\alpha-1}e^{-\bar{\tau}z^{\alpha}},\quad \forall~ z\in\mathbb{C}.
\end{equation}
Now let $n(x,t)$ denote the PDF of the process $y(\mathcal{S}(t))$ (equivalently, $x(t)$). By the total probability formula and the independence of $y(\bar{\tau})$ and $\mathcal{S}(t)$, we get that the PDF  $n(x,t)=\int_{0}^{\infty}\mathcal{G}(x,\bar{\tau})\mathcal{H}(\bar{\tau},t)\mathrm{d}\bar{\tau}$ (see Ref.~\refcite{Meerschaert13}). Taking the Laplace transform with respect to $t$ yields $\widehat{n}(x,z)=z^{\alpha-1}\widehat{\mathcal{G}}(x,z^\alpha)$ (cf. (9) in Ref.~\refcite{Magdziarz07}). Substituting $s=z^{\alpha}$ into the resolvent representation above gives \ref{Der:drift_coefficient}
\begin{equation}
 \widehat{\mathcal{G}}(x,z^{\alpha})=\big(z^{\alpha}-\mathcal{L}_{x}\big)^{-1}\mathcal{G}(x,0),
\end{equation}
which implies
\begin{equation}
z\,\widehat{n}(x,z)-n(x,0)=z^{1-\alpha}\,\mathcal{L}_{x}\widehat{n}(x,z).
\end{equation}
For $\alpha\in(0,1)$, applying the inverse Laplace transform together with the standard identities for Riemann-Liouville fractional operators, namely $\mathfrak{L}_{t\rightarrow z}\{_{0}D^{1-\alpha}_{t}f(t)\}=z^{1-\alpha}\widehat{f}(z)$ and $_{0}I^{1-\alpha}_{t}\big(\frac{\mathrm{d}}{\mathrm{d}t}n(x,t)\big)={_{0}^{C}\mathfrak{D}^{\alpha}_t}n(x,t)$, yields the fractional Fokker-Planck equation
\begin{equation}\label{eq:FFPE}
{_{0}^{C}\mathfrak{D}^{\alpha}_t}n(x,t)
=-\chi \mathcal{D}\frac{\partial}{\partial x}
  \left(\frac{n(x,t)}{v(x,t)}\frac{\partial}{\partial x}v(x,t)\right)
  +\mathcal{D}\frac{\partial^2n(x,t)}{\partial x^2}, \quad \forall~~ t>0.
\end{equation}
Here,
$${_{0}I^{1-\alpha}_{t}}u(t):=\frac{1}{\Gamma(\alpha)}\int_{0}^{t}(t-s)^{\alpha-1}u(s)\mathrm{d}s$$
and
$${_{0}D^{1-\alpha}_{t}f(t)}u(t):=\frac{1}{\Gamma(1-\alpha)}\frac{\mathrm{d}}{\mathrm{d}t}\int_{0}^{t}(t-s)^{-\alpha}u(s)\mathrm{d}s$$
denote the Riemiann-Liouville fractional integral and derivative, respectively (see, e.g., Ref. \refcite{Podlubny99}).

Finally, extending the single-particle dynamics \eqref{eq:Langevin} to a system of interacting particles leads to the macroscopic equation \eqref{Pro:KS1}; see, for instance, Ref.~\refcite{Stevens00b}). Specifically, the trajectory of the $i_{th}$ particle is governed by
\begin{equation}\label{eq:Langevin-ext}
\dot{x}_{i}(t)
=A\left(x_{i}(\mathcal{S}_{i}(t)),\mathcal{S}_{i}(t)\right)\,\dot{\mathcal{S}}_{i}(t)
 +\sqrt{2\mathcal{D}}\,\dot{\mathcal{W}}^{(i)}_{\mathcal{S}_{i}(t)},
\end{equation}
where $\mathcal{W}^{(i)}$ are mutually independent Wiener processes and $\{\mathcal{S}_i(t)\}_{i\ge1}$ are independent inverse $\alpha$-stable subordinators. Thus, each particle possesses its own random waiting clock, corresponding to an independent continuous-time random walk.  Since independent superposition preserves the one-particle statistical law, the collective particle density still satisfies the fractional Fokker–Planck equation \eqref{eq:FFPE}. Setting $v(x,t)=c(x,t)$, where $c(x,t)$ denotes the slime concentration (as described later), \eqref{eq:FFPE} directly reduces to \eqref{Pro:KS1}, which is the target macroscopic equation.
\begin{remark}
The quantity $\delta_x^2/(2\tau^\alpha)$, which has physical dimension [$\mathrm{length}^2/\mathrm{time}^{\alpha}$], serves as a generalized diffusion coefficient and characterizes the effective transport rate of particles in the medium. Here, $\delta_x^2$ denotes the characteristic jump intensity, while $\tau^\alpha$ captures the temporal memory effect induced by the heavy-tailed waiting-time distribution. The ratio $\delta_x^2/(2\tau^\alpha)$ thus quantifies the balance between particle mobility and trapping effects. In the classical diffusive regime $\alpha=1$, this expression reduces to the standard diffusion coefficient $\mathcal{D}=\delta_x^2/(2\tau)$, which has physical dimension $[\mathrm{length}^2/\mathrm{time}]$. Hence, $\delta_x^2/(2\tau^\alpha)$ can be regarded as a natural generalization of the classical diffusion coefficient to anomalous diffusion, linking the microscopic stochastic parameters $(\delta_x,\tau)$ to the macroscopic transport behavior characterized by $\mathcal{D}$.
\end{remark}

\begin{remark}
The field $v(x,t)$ primarily characterizes the influence of the chemoattractant concentration on bacterial chemotaxis, governing the directional bias of cell movement. Varying $v(x,t)$ provides considerable modeling flexibility and naturally leads to different macroscopic formulations. For instance, setting $v(x,t) = c(x,t)$ recovers the macroscopic equation derived in this work. Alternatively, choosing the exponential form $v(x,t)=\exp(-\beta c(x,t))$ (see, e.g., Ref.~\refcite{Langlands10}) regularizes the chemotactic sensitivity and yields a non-singular drift term of the form $-\mathcal{D}\chi\nabla\cdot\big(n(x,t)\nabla c(x,t)\big)$.
\end{remark}

Similarly, to describe slime diffusion in a multi‑particle setting, we consider the subordinated Langevin equation
\begin{equation}\label{eq:Langevin-c}
\dot{x}_{i}(t)=\sqrt{2\mathcal{D}}\,\dot{\mathcal{W}}^{(i)}_{\mathcal{S}_{i}(t)},
\end{equation}
which corresponds to purely diffusive motion without directional bias. Here, ${\mathcal{W}^{(i)}}{i\ge1}$ are mutually independent Wiener processes, and ${\mathcal{S}_{i}(t)}_{i\ge1}$ are independent inverse $\alpha$-stable subordinators that account for trapping effects and anomalous waiting times. Applying the same subordination argument as above yields the time-fractional Fokker--Planck equation
\begin{equation}
{_{0}^{C}\mathfrak{D}^{\alpha}_t}c(x,t)
=\frac{\delta_x^2}{2\tau^{\alpha}}\frac{\partial^2c(x,t)}{\partial x ^2},
\end{equation}
which describes the anomalous diffusion of the slime concentration field $c(x,t)$.

The remaining reaction terms in \eqref{Pro:KS2} cannot be derived directly from the stochastic dynamics \eqref{eq:Langevin-c}, as they originate from biological processes rather than random motion. Specifically, following Refs.~\refcite{Ma26,Stevens00}, the term $-\gamma c(x,t)$ models the natural degradation of slime at rate $\gamma$, while $+n(x,t)$ represents slime production induced by myxobacterial aggregation and activity. Incorporating these biologically motivated mechanisms into the fractional diffusion equation yields the macroscopic model \eqref{Pro:KS2}.

\section{Solution Theory}
\label{sec:ST}
\subsection{Preliminaries}
Let $\Omega\subset\mathbb{R}^{d}$ (with $d\geq2$) denote an open bounded domain with smooth boundary $\partial\Omega$. We begin by briefly recalling several function spaces used throughout this work.

For any $\kappa\in\mathbb{N}_{+}$ and $1\leq p\leq\infty$, let $L^{p}(\Omega)$ and $W^{\kappa,p}(\Omega)$ denote the usual Lebesgue and Sobolev spaces, respectively, as outlined in Refs. \refcite{Adams03,Grisvard11,Jin21book}. When $0<\kappa<1$ and $1\leq p<\infty$, the Sobolev space $W^{\kappa,p}(\Omega)$ is referred to as fractional Sobolev space, which is defined as
\begin{displaymath}
W^{\kappa,p}(\Omega)
=\left\{\omega\in L^{p}(\Omega):\big|\omega\big|_{W^{\kappa,p}}
:=\left(\int_{\Omega}\int_{\Omega}
\frac{|\omega(\mathbf{x})-\omega(\mathbf{y})|^p}{|\mathbf{x}-\mathbf{y}|^{d+p\kappa}}
\mathrm{d}\mathbf{x}\,\mathrm{d}\mathbf{y}\right)^{1/p}<\infty\right\},
\end{displaymath}
equipped with the norm of $\|\omega(\mathbf{x},t)\|_{W^{\kappa,p}}:=(\|\omega(\mathbf{x},t)\|^{p}_{L^{p}}+|\omega(\mathbf{x},t)|^{p}_{W^{\kappa,p}})^{1/p}$. Under the smoothness assumption on $\partial\Omega$, fractional Sobolev spaces may also be characterized via the $K$-method of interpolation. In particular, for $0<\kappa<1$ and $p=2$, the Hilbertian Sobolev spaces are denoted by $H^{\kappa}(\Omega):=[L^{2}(\Omega),H^{1}(\Omega)]_{\kappa}$ (see, e.g., Ref. \refcite{Jin21book}), with equivalent norms.

Let $\mathcal{A}=-\Delta$ be the Neumann Laplacian on $L^p(\Omega)$ with domain (e.g., Ref.~\refcite{Taira16})
\begin{displaymath}
D(\mathcal{A}):=\left\{\omega(\mathbf{x})\in W^{2,p}(\Omega)\,:\,
\frac{\partial \omega(\mathbf{x})}{\partial\nu}=0\ \text{on }\partial\Omega\right\}.
\end{displaymath}
where $\nu$ denotes the outward unit normal vector on $\partial\Omega$. It is well-known that for a bounded domain $\Omega$ with smooth boundary, $\mathcal{A}$ is a closed sectorial operator on $L^p(\Omega)$ and generates a bounded analytic semigroup $\{e^{-t\mathcal{A}}\}_{t \ge 0}$ on $L^p(\Omega)$. The associated eigenvalue problem reads
\begin{equation}
\left\{\begin{aligned}
\mathcal{A} \varphi_j &= \lambda_j \varphi_j,             && \text{in}~~\Omega, \\
\displaystyle\frac{\partial \varphi_j}{\partial\nu} &= 0, && \text{on}~~\partial\Omega,
\end{aligned}\right.
\qquad j = 1,2,\dots
\end{equation}
The sequence of eigenvalues satisfies  $0=\lambda_1<\lambda_2\le\lambda_3\le\cdots$, $\lambda_j\to\infty$ ($j\to\infty$), and the corresponding eigenfunctions $\{\varphi_j\}_{j=1}^\infty$ form a complete orthonormal basis of $L^2(\Omega)$, i.e., $\|\varphi_j\|_{L^2}=1$, $(\varphi_i,\varphi_j)_{\Omega}=\delta_{ij}$ with $(\cdot,\cdot)_{\Omega}$ denoting the $L^{2}$ inner product defined on $\Omega$. Accordingly, for $s\geq0$, the spectral Sobolev space associated with the Neumann Laplacian  defined by
\begin{displaymath}
H_{\mathcal{A}}^{s}(\Omega)=\mathcal{D}\big((I+\mathcal{A})^{s/2}\big)
:=\Bigg\{\omega\in L^{2}(\Omega)\;:\;\sum_{j=1}^{\infty}(1+\lambda_{j})^{s}\Big(\int_{\Omega}\omega\varphi_j\mathrm{d}\mathbf{x}\Big)^2<\infty\Bigg\}.
\end{displaymath}
The norm is naturally given by
$\|\omega\|^2_{H_{\mathcal{A}}^{s}}=\sum_{j=1}^{\infty}(1+\lambda_{j})^{s}|(\omega,\varphi_j)_{\Omega}|^2$.

\begin{remark}\label{rmk:space}
Since $\Omega\subset\mathbb{R}^d$ ($d\geq2$) is a bounded domain with smooth boundary, the spectral spaces
$H_{\mathcal{A}}^{s}(\Omega)$ are equivalent to the classical Sobolev spaces $H^{s}(\Omega)$ (see, e.g., Refs.~\refcite{Amann19,Burenkov02,Fujiwara67}). Consequently, the standard Sobolev and Morrey embeddings remain valid for $H_{\mathcal{A}}^{s}(\Omega)$.
More precisely, the following continuous embeddings hold.
\begin{description}
\item[$(i)$] If $0\le s<d/2$ and $2\le p\le\frac{2d}{d-2s}$, then $H_{\mathcal{A}}^{s}(\Omega)\hookrightarrow L^p(\Omega)$. If $s=d/2$, then $H_{\mathcal{A}}^{s}(\Omega)\hookrightarrow L^p(\Omega)$, $2\le p<\infty$.
\item[$(ii)$]If $s>d/2$, then $H_{\mathcal{A}}^{s}(\Omega)\hookrightarrow C^{0,\mu}(\overline\Omega)$ with $\mu\in(0,s-d/2)$.
\item[$(iii)$] If $-d/2<s\le0$ and $p\ge\frac{2d}{d-2s}$, then $L^p(\Omega)\hookrightarrow H_{\mathcal{A}}^{s}(\Omega)$.
\item[$(iv)$] For all $p>d$, the embedding $W^{1,p}(\Omega)\hookrightarrow C^{0,1-d/p}(\overline\Omega)\hookrightarrow L^\infty(\Omega)$ is valid.
\item[$(v)$] For every $1<p<\infty$, the shifted square-root norm of the Neumann Laplacian is equivalent to the $W^{1,p}$-norm; more precisely, $D\big((I+\mathcal A)^{1/2}\big)=W^{1,p}(\Omega)$ with equivalent norms, and $\|u\|_{W^{1,p}}\simeq\|(I+\mathcal A)^{1/2}u\|_{L^p}$. In particular, $\|u\|_{W^{1,p}}\le C\left(\|u\|_{L^p}+\|\mathcal A^{1/2}u\|_{L^p}\right)$.
\end{description}
The embeddings in $(i)$--$(iv)$ follow from the classical Sobolev embedding theorem together with the equivalence between the spectral and classical Sobolev scales on smooth bounded domains; see, for instance, Refs.~\refcite{Adams03,Burenkov02,Grisvard11,Taira16}. Furthermore, the assertion in $(v)$ is based on the $L^p$ square-root estimate for the Neumann Laplacian (see, e.g., Ref.~\refcite{Auscher01}) and on the $L^p$-boundedness of the associated Neumann Riesz transform (see, e.g., Ref.~\refcite{Jiang24,Mendez01}). More precisely, for every $1<p<\infty$, $\|\mathcal A_p^{1/2}u\|_{L^p}\simeq\|\nabla u\|_{L^p}$. Here the estimate $\|\mathcal A_p^{1/2}u\|_{L^p}\le C\|\nabla u\|_{L^p}$ is the $L^p$ square-root estimate, while the converse estimate $\|\nabla u\|_{L^p}\le C\|\mathcal A_p^{1/2}u\|_{L^p}$ is the boundedness of the Neumann Riesz transform $\nabla\mathcal A_p^{-1/2}$, understood through the functional calculus of the Neumann Laplacian. Consequently, $\|u\|_{W^{1,p}}\simeq\|u\|_{L^p}+\|\mathcal A_p^{1/2}u\|_{L^p}\simeq\|(I+\mathcal A_p)^{1/2}u\|_{L^p}$.
\end{remark}

Additionally, we present the following lemmas to support subsequent analysis and validation efforts.
\begin{lemma}[Comparison principle\cite{Al-Refai22}]\label{Lem:CPp}
Let $\Omega\subset\mathbb{R}^d$ $(d\geq2)$ be a bounded domain with smooth boundary, and let
$\mathcal{A}=-\Delta$. Suppose that $\omega(\mathbf{x},t)\in C([0,T];H^2(\Omega))$ satisfies
\begin{equation}
\left\{\begin{aligned}
{_{0}^{C}\mathfrak D_t^\alpha}\omega+\mathcal{A}\omega+\gamma\omega&\ge0,
&& \text{in}~~\Omega\times(0,T],\\
\partial_\nu\omega&=0,
&& \text{on}~~\partial\Omega\times(0,T],\\
\omega(\mathbf{x},0)&\ge0,
&& \text{in}~~\Omega,
\end{aligned}\right.
\end{equation}
where $\gamma\ge0$ is a constant. Then $\omega\geq0$ holds in $\Omega\times[0,T]$.
\end{lemma}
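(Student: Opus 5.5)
Since $\omega\in C([0,T];H^2(\Omega))$ and we do not assume pointwise continuity in time or a classical extremum principle, the plan is a Stampacchia-type energy argument on the negative part. Write $\omega_-:=\max\{-\omega,0\}\ge0$, so that $\omega=\omega_+-\omega_-$ and $\omega_-(\cdot,0)=0$ by the initial condition. The key tool is the Alikhanov-type convexity inequality for the Caputo derivative, applied to the convex function $\phi(s)=\tfrac12 (s_-)^2$:
\begin{equation*}
\phi'(\omega)\,{_{0}^{C}\mathfrak D_t^\alpha}\omega \;\ge\; {_{0}^{C}\mathfrak D_t^\alpha}\phi(\omega),
\qquad\text{i.e.}\qquad
-\omega_-\,{_{0}^{C}\mathfrak D_t^\alpha}\omega\;\ge\;\tfrac12\,{_{0}^{C}\mathfrak D_t^\alpha}\big(\omega_-^2\big).
\end{equation*}
I would first prove this for $\omega$ absolutely continuous in time (integrate by parts in the Caputo kernel, using monotonicity of $(t-s)^{-\alpha}$ and convexity of $\phi$), and then pass to the given regularity by mollifying in time. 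This approximation step is the main technical obstacle: one must make sense of ${_{0}^{C}\mathfrak D_t^\alpha}\omega$ (interpreted, as usual, as ${_{0}D^{\alpha}_t}(\omega-\omega(0))$ in a weak sense), and $\phi$ is only $C^{1,1}$. I would regularize $\phi$ by smooth convex $\phi_\varepsilon$ with $\phi_\varepsilon'\le0$, and use the fact that the inequality survives $L^1$-limits in time after integration against nonnegative test functions.

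Next, multiply the differential inequality by $\omega_-\ge0$ and integrate over $\Omega$. For the diffusion term, Green's formula together with $\partial_\nu\omega=0$ gives
\begin{equation*}
\int_\Omega \mathcal A\omega\,\omega_-\,\mathrm d\mathbf x=\int_\Omega\nabla\omega\cdot\nabla\omega_-\,\mathrm d\mathbf x=-\int_\Omega|\nabla\omega_-|^2\,\mathrm d\mathbf x,
\end{equation*}
which is valid since $\omega_-\in H^1$ with $\nabla\omega_-=-\nabla\omega\,\mathbf 1_{\{\omega<0\}}$. Similarly, $\gamma\int_\Omega\omega\,\omega_-\,\mathrm d\mathbf x=-\gamma\|\omega_-\|_{L^2}^2$. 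Combining these with the convexity inequality, and writing $E(t):=\|\omega_-(t)\|_{L^2}^2$, we obtain in the weak (time-integrated) sense
\begin{equation*}
\tfrac12\,{_{0}^{C}\mathfrak D_t^\alpha}E(t)\;\le\;-\|\nabla\omega_-\|_{L^2}^2-\gamma E(t)\;\le\;0,\qquad E(0)=0.
\end{equation*}

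Finally, apply the fractional integral ${_{0}I^{\alpha}_t}$, which is positivity preserving. Since ${_{0}I^{\alpha}_t}\,{_{0}^{C}\mathfrak D_t^\alpha}E=E-E(0)$, this yields $E(t)\le E(0)=0$ for all $t\in[0,T]$. Hence $\omega_-\equiv0$, i.e. $\omega\ge0$ a.e. in $\Omega\times[0,T]$, and everywhere by continuity. Note that $\gamma\ge0$ only helps; no Gronwall-type argument is needed, although for a negative zero-order coefficient the fractional Gronwall lemma would handle the extra term.
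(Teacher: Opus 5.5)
Your argument is correct, but it follows a different route from the paper. The paper gives no proof of Lemma~\ref{Lem:CPp}; it imports the result from Al-Refai and Luchko, whose comparison principles rest on estimates of the fractional derivative at an extremum point. This is the same mechanism the paper uses in Lemma~\ref{lem:Caputo-logistic}.

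Transplanted to this PDE, that route is the classical maximum-principle proof. Take a point $(\mathbf{x}_0,t_0)$ with $t_0>0$ where $\omega$ attains a negative minimum over $\overline{\Omega}\times[0,T]$. The representation formula gives ${_{0}^{C}\mathfrak D_t^\alpha}\omega(\mathbf{x}_0,t_0)<0$, strictly, because $\omega(\mathbf{x}_0,0)\ge0>\omega(\mathbf{x}_0,t_0)$. Also $\mathcal{A}\omega(\mathbf{x}_0,t_0)\le0$ and $\gamma\omega(\mathbf{x}_0,t_0)\le0$, which contradicts the inequality.

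That route is elementary and needs no convexity inequality, but it requires classical regularity: $\omega$ continuous on $\overline{\Omega}\times[0,T]$, twice differentiable in $\mathbf{x}$ up to $\partial\Omega$ (so a boundary minimum can be handled via $\partial_\nu\omega=0$), and a pointwise Caputo derivative. None of this follows from $\omega\in C([0,T];H^2(\Omega))$; for $d\ge4$ you do not even get continuity in $\mathbf{x}$.

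Your Stampacchia-type test with $\tfrac12(\omega_-)^2$ is the $L^2$ analogue of the paper's own negative-part argument in Lemma~\ref{lem:bound-n}. It is cleaner: $s\mapsto\tfrac12(s_-)^2$ is $C^{1,1}$, so Green's formula applies directly and no Kato inequality or sign-function calculus is needed. It works at the stated Sobolev level in every dimension, absorbs the Neumann condition through the boundary term, and extends verbatim to weak solutions with only $H^1$ spatial regularity. The price is the Caputo convexity inequality at low time regularity, which you correctly identify as the crux.

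Two points to tighten there.
\begin{itemize}
\item The regularization must be \emph{causal}. Extend $\omega$ by $\omega(0)$ for $t<0$ and convolve with a mollifier $\rho_k$ supported in $[0,1/k]$. Then $\omega_k(0)=\omega(0)$ and ${_{0}^{C}\mathfrak D_t^\alpha}\omega_k=\rho_k*{_{0}^{C}\mathfrak D_t^\alpha}\omega$ exactly, because the Volterra kernel commutes with causal convolution. Hence $\omega_k\to\omega$ in $C([0,T];H^2(\Omega))$, and ${_{0}^{C}\mathfrak D_t^\alpha}\omega_k\to{_{0}^{C}\mathfrak D_t^\alpha}\omega$ in $L^1(0,T;L^2(\Omega))$, assuming (as the statement tacitly must) that the latter exists. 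A symmetric mollifier does not commute with the Caputo operator on $[0,T]$. Since you pass only the convexity inequality to the limit and use the differential inequality for $\omega$ itself, no mollification error enters the energy estimate.
\item Since $E\in C([0,T])$, the conclusion is that $\omega(\cdot,t)\ge0$ a.e.\ in $\Omega$ for every $t\in[0,T]$. ``Everywhere by continuity'' only makes sense when $H^2(\Omega)$ embeds into $C(\overline\Omega)$, i.e.\ for $d\le3$.
\end{itemize}
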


\begin{lemma}[Generalized Gr\"{o}nwall inequality\cite{Alikhanov10}]\label{Lem:Gronwall}
Let $y(t)$ be a non-negative, absolutely continuous function satisfying the fractional differential inequality
\begin{equation*}
{_{0}^{C}\mathfrak D_t^\alpha}y(t)\leq C_1\,y(t)+C_2(t),\quad 0<\alpha\leq 1,
\end{equation*}
for almost all $t$ in $[0,T]$, where $C_1>0$ and $C_2(t)$ is an integrable nonnegative function on $[0,T]$. Then
\begin{equation}
y(t)\leq y(0)E_{\alpha}(C_1\,t^\alpha)+\Gamma(\alpha)\,E_{\alpha,\alpha}(C_1\,t^\alpha){_{0}I_t^\alpha}\big(C_2(t)\big),
\end{equation}
where $E_{\alpha}(z)$ and $E_{\alpha,\beta}(z)$ are the one- and two-parameter Mittag-Leffler functions defined by $E_{\alpha}(z):=\sum_{n=0}^{\infty}z^{n}/\Gamma(\alpha n+1)$ and $E_{\alpha,\beta}(z):=\sum_{n=0}^{\infty}z^{n}/\Gamma(\alpha n+\beta)$, respectively. ${_{0}I^{\alpha}_{t}}u(t):=\frac{1}{\Gamma(\alpha)}\int_{0}^{t}(t-s)^{1-\alpha}u(s)\mathrm{d}s$ denotes the Riemann-Liouville fractional integral.
\end{lemma}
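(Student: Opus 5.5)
The plan is to convert the fractional differential inequality into a Volterra integral inequality with a positive kernel and then iterate it (a Picard/Neumann-series argument). This produces exactly the Mittag-Leffler functions appearing in the statement. Throughout, I use the standard Riemann--Liouville integral ${_{0}I_t^\beta}u(t)=\frac{1}{\Gamma(\beta)}\int_0^t (t-s)^{\beta-1}u(s)\,\mathrm{d}s$. The exponent $1-\alpha$ written in the statement should read $\alpha-1$; I assume this is a typo.

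\textbf{Step 1 (integral form).} Since $y$ is absolutely continuous, ${_{0}^{C}\mathfrak D_t^\alpha}y={_{0}I_t^{1-\alpha}}y'$ is well defined a.e. By the semigroup property ${_{0}I_t^{\alpha}}{_{0}I_t^{1-\alpha}}={_{0}I_t^{1}}$ we get ${_{0}I_t^\alpha}\,{_{0}^{C}\mathfrak D_t^\alpha}y=y(t)-y(0)$. The kernel of ${_{0}I_t^\alpha}$ is nonnegative, so applying it to the hypothesis preserves the inequality:
\begin{equation*}
y(t)\le y(0)+C_1\,{_{0}I_t^\alpha}y(t)+{_{0}I_t^\alpha}C_2(t)=:g(t)+\mathcal{B}y(t),\qquad \mathcal{B}:=C_1\,{_{0}I_t^\alpha}.
\end{equation*}
I expect this step to be the main technical obstacle. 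One must justify the composition identity and the integrability of ${_{0}^{C}\mathfrak D_t^\alpha}y$ under only the assumptions that $y$ is absolutely continuous and $C_2\in L^1$. I would first check that $y'\in L^1$ gives ${_{0}I_t^{1-\alpha}}y'\in L^1$ by Young's inequality, and then apply Fubini.

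\textbf{Step 2 (iteration).} The operator $\mathcal{B}$ is positivity preserving, so iterating $k$ times gives
\begin{equation*}
y\le \sum_{j=0}^{k-1}\mathcal{B}^j g+\mathcal{B}^k y,\qquad \mathcal{B}^j=C_1^j\,{_{0}I_t^{j\alpha}}.
\end{equation*}
Since $y$ is continuous on $[0,T]$, the remainder satisfies
\begin{equation*}
|\mathcal{B}^k y(t)|\le \|y\|_{\infty}\,\frac{C_1^k t^{k\alpha}}{\Gamma(k\alpha+1)}\to0 \quad (k\to\infty).
\end{equation*}
Letting $k\to\infty$ therefore yields $y\le\sum_{j\ge0}\mathcal{B}^j g$.

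\textbf{Step 3 (summation).} The constant part of $g$ sums to
\begin{equation*}
y(0)\sum_j C_1^j t^{j\alpha}/\Gamma(j\alpha+1)=y(0)E_\alpha(C_1t^\alpha).
\end{equation*}
The forcing part sums to
\begin{equation*}
\sum_j C_1^j\,{_{0}I_t^{(j+1)\alpha}}C_2=\int_0^t (t-s)^{\alpha-1}E_{\alpha,\alpha}\big(C_1(t-s)^\alpha\big)C_2(s)\,\mathrm{d}s.
\end{equation*}
The interchange of sum and integral is justified by monotone convergence, since all terms are nonnegative. Finally, $E_{\alpha,\alpha}$ has positive Taylor coefficients and is therefore increasing on $[0,\infty)$, so $E_{\alpha,\alpha}(C_1(t-s)^\alpha)\le E_{\alpha,\alpha}(C_1t^\alpha)$. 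Pulling this factor out and recognizing $\int_0^t(t-s)^{\alpha-1}C_2(s)\,\mathrm{d}s=\Gamma(\alpha)\,{_{0}I_t^\alpha}C_2(t)$ gives the claimed bound.
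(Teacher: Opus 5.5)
Your proof is correct, and you read the typo correctly: the kernel of ${_{0}I^{\alpha}_{t}}$ must be $(t-s)^{\alpha-1}$. Taken literally with $(t-s)^{1-\alpha}$, the bound fails as soon as $C_2$ concentrates its mass just below $t$.

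The paper gives no proof of this lemma and only cites Alikhanov. The argument there does not iterate an inequality. It turns the hypothesis into an equation ${_{0}^{C}\mathfrak D_t^\alpha}y=C_1y+C_2-\varphi$ with an integrable defect $\varphi\ge0$, and quotes the solution formula for the linear Caputo Cauchy problem,
\[
y(t)=y(0)E_\alpha(C_1t^\alpha)+\int_0^t(t-s)^{\alpha-1}E_{\alpha,\alpha}\big(C_1(t-s)^\alpha\big)\big(C_2(s)-\varphi(s)\big)\,\mathrm{d}s .
\]
It then drops $\varphi$ because the kernel is positive, and concludes with the same monotonicity of $E_{\alpha,\alpha}$ that you use.

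Your Neumann series is exactly the construction of that resolvent kernel, applied directly to the integral inequality. So your route is self-contained: it needs no existence, uniqueness or representation theorem for linear Caputo equations with $L^1$ data, at the cost of a few more lines. Both arguments ultimately rest on the positivity of $(t-s)^{\alpha-1}E_{\alpha,\alpha}(C_1(t-s)^\alpha)$.

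The step you expected to be the main obstacle is routine. For $\alpha<1$, Tonelli together with $\int_r^t(t-s)^{\alpha-1}(s-r)^{-\alpha}\,\mathrm{d}s=\Gamma(\alpha)\Gamma(1-\alpha)$ gives ${_{0}I_t^\alpha}\,{_{0}^{C}\mathfrak D_t^\alpha}y(t)=y(t)-y(0)$ for every $t$, not just almost every $t$. Points where ${_{0}I_t^\alpha}C_2(t)=+\infty$ are harmless, because the bound is then trivial.
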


\subsection{Main results}
Building upon the functional spaces and notations established earlier, we now present the primary well-posedness results for problem \eqref{Pro:FKS}. Detailed proofs of these theorems will be provided in the subsequent sections.
To this end, we assume that the initial data $n_0(\mathbf{x})$ and $c_0(\mathbf{x})$ satisfy the following assumptions
\begin{equation}\label{Eq:Assuption}
\left\{
\begin{aligned}
&n_0(\mathbf{x})\in C^{0}(\overline{\Omega})~\text{with}~n_0\geq0~\text{in}~\Omega~\text{and}~{n_0 \not\equiv0};\\
&c_0(\mathbf{x})\in W^{1,\infty}(\Omega)~\text{such that}~c_0>0~\text{in}~\overline{\Omega}.
\end{aligned}\right.
\end{equation}
These regularity and positivity assumptions are natural from a modeling perspective and essential for establishing the well-posedness of the system \eqref{Pro:FKS}. The continuity of $n_0(\mathbf{x})$ and $W^{1,\infty}(\Omega)$ regularity of $c_0(\mathbf{x})$ ensure that the initial configurations are physically meaningful, while the non-negativity and positivity conditions reflect the biological interpretations of myxobacteria density and slime concentration, respectively.

Let $C(\overline{\Omega}\times[0,\infty))$ be the space of continuous functions on $\overline{\Omega}\times[0,\infty)$, and let $C([0,\infty);W^{1,p}(\Omega))$ be the space of continuous functions from $[0,\infty)$ into $W^{1,p}(\Omega)$, with $p>d$. Under the assumptions given in \eqref{Eq:Assuption}, the primary results on the solution theory for problem \eqref{Pro:FKS} are stated in the following theorems.
\begin{theorem}\label{Thm:golablel}
Let $\Omega \subset \mathbb{R}^{d}$ ($d \geq 2$) be a bounded domain with smooth boundary. Assume that the initial data $n_0(\mathbf{x})$ and $c_0(\mathbf{x})$ satisfy \eqref{Eq:Assuption}. Then, for all $t > 0$, the solution component $n(\mathbf{x},t)$ of problem \eqref{Pro:FKS} satisfies the mass conservation law,
\begin{equation}
\int_{\Omega} n(\mathbf{x},t) \, \mathrm{d}\mathbf{x} = \int_{\Omega} n_0(\mathbf{x}) \, \mathrm{d}\mathbf{x},
\end{equation}
while the component $c(\mathbf{x},t)$ satisfies the following estimate,
\begin{equation}
\int_{\Omega} c(\mathbf{x},t)\,\mathrm{d}\mathbf{x}
\leq \max \left\{ \frac{t^{\alpha}}{\Gamma(1+\alpha)} \int_{\Omega} n_{0}(\mathbf{x}) \, \mathrm{d}\mathbf{x},\,\int_{\Omega} c_{0}(\mathbf{x})\, \mathrm{d}\mathbf{x} \right\}.
\end{equation}
\end{theorem}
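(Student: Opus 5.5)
We integrate each equation of \eqref{Pro:FKS} over $\Omega$ and use the homogeneous Neumann conditions \eqref{Pro:NS4} to remove all divergence terms. Throughout we take the solution regular enough that the Caputo derivative commutes with the spatial integral, which holds for the mild/classical solutions constructed later, and positive in the sense $c>0$, so that the flux $\frac{n}{c}\nabla c$ is well defined. Set $N(t):=\int_\Omega n(\mathbf{x},t)\,\mathrm{d}\mathbf{x}$ and $M(t):=\int_\Omega c(\mathbf{x},t)\,\mathrm{d}\mathbf{x}$.

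\textbf{Step 1: mass conservation.} Integrating \eqref{Pro:KS1} over $\Omega$ and applying the divergence theorem gives
\begin{equation*}
{_{0}^{C}\mathfrak{D}^{\alpha}_t}N(t)=\mathcal{D}\int_{\partial\Omega}\Big(\partial_\nu n-\chi\frac{n}{c}\partial_\nu c\Big)\,\mathrm{d}S=0 .
\end{equation*}
Applying ${_{0}I^{\alpha}_{t}}$ and using ${_{0}I^{\alpha}_{t}}\,{_{0}^{C}\mathfrak{D}^{\alpha}_t}N=N(t)-N(0)$ yields $N(t)\equiv N(0)=\int_\Omega n_0\,\mathrm{d}\mathbf{x}$.

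\textbf{Step 2: equation for $M$.} Integrating \eqref{Pro:KS2} in the same way gives
\begin{equation*}
{_{0}^{C}\mathfrak{D}^{\alpha}_t}M+\gamma M=N_0:=\int_\Omega n_0\,\mathrm{d}\mathbf{x}.
\end{equation*}
This is a linear fractional ODE, so it can be solved explicitly:
\begin{equation*}
M(t)=M(0)E_\alpha(-\gamma t^\alpha)+N_0\,t^\alpha E_{\alpha,\alpha+1}(-\gamma t^\alpha).
\end{equation*}
Now we use three facts about Mittag-Leffler functions with negative argument for $\alpha\in(0,1)$: $E_\alpha(-\gamma t^\alpha)$ is completely monotone and takes values in $(0,1]$; $t^\alpha E_{\alpha,\alpha+1}(-\gamma t^\alpha)={_{0}I^{\alpha}_{t}}\big[E_\alpha(-\gamma t^\alpha)\big]\ge 0$; and the identity $E_\alpha(-z)+z E_{\alpha,\alpha+1}(-z)=1$.

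\textbf{Step 3: the bound.} The identity in Step 2 shows that $M(t)$ is a convex combination
\begin{equation*}
M(t)=\theta(t)\,M(0)+\big(1-\theta(t)\big)\frac{N_0}{\gamma},\qquad \theta(t)=E_\alpha(-\gamma t^\alpha)\in(0,1].
\end{equation*}
To obtain the stated bound, involving $t^\alpha/\Gamma(1+\alpha)$ rather than $N_0/\gamma$, we estimate the two terms differently. The first term satisfies $E_\alpha(-\gamma t^\alpha)\le1$. For the second term we drop the damping: since $E_\alpha\le 1$,
\begin{equation*}
t^\alpha E_{\alpha,\alpha+1}(-\gamma t^\alpha)\le {_{0}I^{\alpha}_{t}}1=\frac{t^\alpha}{\Gamma(1+\alpha)}.
\end{equation*}
This gives $M(t)\le \theta M(0)+\min\{1-\theta,\ \gamma t^\alpha/\Gamma(1+\alpha)\}\,N_0/\gamma$.

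Converting this into the single max stated in the theorem is the main obstacle. A cleaner route is a comparison argument. Let $K:=\max\{t_*^\alpha N_0/\Gamma(1+\alpha),\,M(0)\}$ at a fixed time $t_*$, and consider $W:=K-M$. For the linear scalar equation, ${_{0}^{C}\mathfrak{D}^{\alpha}_t}W+\gamma W=\gamma K-N_0$; together with $W(0)\ge0$, the scalar comparison principle (the ODE version of Lemma~\ref{Lem:CPp}) gives $W\ge0$ whenever $\gamma K\ge N_0$. In the complementary regime $\gamma K<N_0$ we instead compare $M$ with the supersolution $M(0)+N_0 t^\alpha/\Gamma(1+\alpha)$. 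This shows $M\le M(0)+N_0t^\alpha/\Gamma(1+\alpha)\le 2\max\{\cdot\}$, which recovers the bound only up to a constant factor. The sharp max should follow from the convex-combination representation above by splitting into the two cases $M(0)\ge N_0/\gamma$ and $M(0)<N_0/\gamma$. In the second case we must check the inequality $\theta M(0)+(1-\theta)N_0/\gamma\le\max\{M(0),\,N_0t^\alpha/\Gamma(1+\alpha)\}$, and this is the step where the Mittag-Leffler estimates need the most care.
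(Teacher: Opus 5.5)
Your Steps 1 and 2 are correct and follow the paper's route. Lemma~\ref{lem:MassCon-n} integrates both equations over $\Omega$, uses the Neumann conditions, and obtains \eqref{Eq:MassCon-c}, i.e.\ $M(t)+\gamma\,{_{0}I^{\alpha}_{t}}M(t)=M(0)+N_0t^\alpha/\Gamma(1+\alpha)$, which is the Volterra form of your scalar equation. The proof of the theorem just quotes this.

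The genuine gap is the last step, and it cannot be closed, because the inequality you single out is false. In the case $\gamma M(0)<N_0$ you have
$\theta M(0)+(1-\theta)N_0/\gamma=M(0)+(1-\theta)\bigl(N_0/\gamma-M(0)\bigr)>M(0)$ for every $t>0$, since $\theta=E_\alpha(-\gamma t^\alpha)<1$. But $\max\{M(0),N_0t^\alpha/\Gamma(1+\alpha)\}=M(0)$ as long as $N_0t^\alpha\le\Gamma(1+\alpha)M(0)$.

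Concretely, take $\gamma=1$, $n_0\equiv2$, $c_0\equiv1$. The spatially homogeneous pair $n\equiv2$, $c(t)=2-E_\alpha(-t^\alpha)$ solves \eqref{Pro:FKS}. Then $\int_\Omega c(\cdot,t)=|\Omega|\bigl(2-E_\alpha(-t^\alpha)\bigr)>|\Omega|$ for all $t>0$, whereas the claimed bound equals $|\Omega|$ for $t^\alpha\le\Gamma(1+\alpha)/2$. So the max form of the estimate is not true, and no amount of care with Mittag-Leffler functions will produce it. The paper's own argument does not produce it either, since \eqref{Eq:MassCon-c} only yields a sum.

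Here is what your computations do establish. The same holds for the paper's identity once the nonnegative memory term $\gamma\,{_{0}I^{\alpha}_{t}}M$ is discarded using $c>0$.
\begin{itemize}
\item The additive bound $\int_\Omega c(\cdot,t)\le\int_\Omega c_0+\frac{t^\alpha}{\Gamma(1+\alpha)}\int_\Omega n_0$. This is the stated right-hand side up to the factor $2$ you found.
\item The uniform-in-time bound $\int_\Omega c(\cdot,t)\le\max\{\int_\Omega c_0,\ \gamma^{-1}\int_\Omega n_0\}$, which follows from your convex-combination representation.
\end{itemize}
Your first case $M(0)\ge N_0/\gamma$ is fine and does give the stated max. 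The right conclusion is to replace the statement's max by one of these two bounds, not to look for a proof of it.
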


To facilitate further analysis of system \eqref{Pro:FKS}, we introduce a novel fractional Lyapunov functional of the form (detailed in Sec.~\ref{Sec:sssCLS}) as follows
\begin{equation}
\mathcal{E}[(n,c)](t):
=\int_{0}^{t}(t-s)^{\alpha-1}\mathcal{F}(s)\mathrm{d}s,\quad t>0,
\end{equation}
where the integrand $\mathcal{F}$ is a jointly convex functional given by
\begin{equation}
\mathcal{F}(t):=\mathcal{F}[(n,c)](t)
=\int_{\Omega}n\log\left(\frac{n}{\bar{n}}\right)\mathrm{d}\mathbf{x}
+\theta\int_{\Omega}\frac{|\nabla c|^2}{c}\mathrm{d}\mathbf{x}.
\end{equation}
Here, $\bar{n}:=\frac{1}{|\Omega|}\int_{\Omega}n\mathrm{d}\mathbf{x}$ denotes the spatial average of $n$, and $\theta$ is specified positive constant. We show that $\mathcal{F}(t)\leq\mathcal{F}(0)$ and $\mathcal{E}[(n,c)](t)<\infty$ for all $t\in[0,\infty)$ (see Lemma \ref{Lem:FracLyapunov}). Crucially, this uniform upper bound provides a key priori estimate that prevents finite‑time blow‑up and underpins the global existence of solutions. The resulting regularity and stability properties are stated in Theorem \ref{Thm:well-posed}.
\begin{theorem}\label{Thm:well-posed}
Let $\Omega \subset \mathbb{R}^{d}$ ($d=2,3$) be a bounded domain with smooth boundary. Suppose $0<\chi<1/2$, and assume the initial data $n_0(\mathbf{x})$ and $c_0(\mathbf{x})$ satisfy \eqref{Eq:Assuption}. Then, for $p>d$, problem \eqref{Pro:FKS} admits a unique global mild solution $(n(\mathbf{x},t), c(\mathbf{x},t))$ possessing the following regularity properties:
\begin{equation}\label{Eq:regularity}
\left\{
\begin{aligned}
&n \in C\big([0,\infty); L^{\infty}(\Omega)\big) \cap C^{0,\alpha/2}\big((0,\infty); W^{1,p}(\Omega)\big) \cap C\big((0,\infty); D(\mathcal{A})\big),\\
&c \in C\big([0,\infty); W^{1,p}(\Omega)\big) \cap C^{0,\alpha/2}\big((0,\infty); W^{2,p}(\Omega)\big) \cap C\big((0,\infty); D(\mathcal{A})\big).
\end{aligned}\right.
\end{equation}
Furthermore, the solution satisfies $n(\mathbf{x},t)\geq0$ and $c(\mathbf{x},t)>0$ in $\Omega \times (0,\infty)$, along with the uniform bound
\begin{equation}\label{eq:bounddd}
\sup_{t>0}\left(\big\|n(\cdot,t)\big\|_{L^{\infty}}+\big\|c(\cdot,t)\big\|_{W^{1,p}}\right) \leq M.
\end{equation}
\end{theorem}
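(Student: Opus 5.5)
The plan has four steps: a local mild solution by a contraction argument, positivity of both components, a priori bounds from the fractional Lyapunov functional upgraded by a bootstrap, and a continuation argument.

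\textbf{Step 1: Mild formulation.} Using the Neumann Laplacian $\mathcal{A}$ and the eigenfunction expansion, write \eqref{Pro:FKS} in mild form via the solution operators of the subdiffusion equation:
\begin{equation*}
n(t)=E_\alpha(-t^\alpha\mathcal{A})n_0-\chi\int_0^t (t-s)^{\alpha-1}E_{\alpha,\alpha}(-(t-s)^\alpha\mathcal{A})\nabla\cdot\Big(\frac{n}{c}\nabla c\Big)(s)\,\mathrm{d}s,
\end{equation*}
with the analogous formula for $c$ (operator $\mathcal{A}+\gamma$, source $n$). The key smoothing estimates are:
\begin{itemize}
\item $\|\mathcal{A}^{\beta}E_{\alpha,\alpha}(-t^\alpha\mathcal{A})\|_{L^p\to L^p}\lesssim t^{-\alpha\beta}$ for $\beta\in[0,1]$;
\item $\|(I+\mathcal{A})^{1/2}(\cdot)\nabla\cdot\|_{L^p\to L^p}\lesssim t^{-\alpha}$ for the divergence term, which is justified by Remark \ref{rmk:space}(v).
\end{itemize}
Because $p>d$, Remark \ref{rmk:space}(iv) gives $W^{1,p}\hookrightarrow L^\infty$. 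We then run a Banach fixed point in the set $\{(n,c):\|n\|_{L^\infty}+\|c\|_{W^{1,p}}\le R,\ c\ge \delta/2\}$ on a short interval $[0,T_0]$. Continuity of $c$ and $c_0>0$ on $\overline\Omega$ keep $1/c$ bounded, and the time singularities $(t-s)^{\alpha/2-1}$ are integrable. This yields local well-posedness together with the extensibility criterion: either $T_{\max}=\infty$, or $\|n\|_{L^\infty}+\|c\|_{W^{1,p}}+\|1/c\|_{L^\infty}\to\infty$ as $t\to T_{\max}$.

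\textbf{Step 2: Regularity and positivity.} Parabolic-type regularity in $D(\mathcal{A})$ and the Hölder exponent $\alpha/2$ in time follow by differencing the mild formula and using $\|(E(t+h)-E(t))\mathcal{A}^{-\beta}\|\lesssim h^{\alpha\beta}$ with $\beta=1/2$. Positivity is then obtained from Lemma \ref{Lem:CPp}:
\begin{itemize}
\item $n$ satisfies a linear equation with bounded drift $\nabla\log c$, so a version of the comparison principle with a bounded first-order term gives $n\ge0$;
\item since $n\ge0$, the equation ${}^C\mathfrak{D}^\alpha_t c+\mathcal{A}c+\gamma c=n\ge0$ gives $c\ge0$;
\item strict positivity follows from comparing $c$ with the subsolution $\min c_0\cdot E_\alpha(-\gamma t^\alpha)>0$.
\end{itemize}
This bound is positive on bounded intervals but decays like $t^{-\alpha}$. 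To make the lower bound on $c$ uniform in time, I would use instead the mass of $n$ (Theorem \ref{Thm:golablel}) together with a lower bound for the Neumann resolvent kernel.

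\textbf{Step 3: Uniform a priori bounds (main obstacle).} Following Lemma \ref{Lem:FracLyapunov}, $\mathcal{F}(t)\le\mathcal{F}(0)$. This is where $\chi<1/2$ and the choice of $\theta$ enter. The computation uses:
\begin{itemize}
\item the convexity inequality ${}^C\mathfrak{D}^\alpha_t\Phi(u)\le\Phi'(u)\,{}^C\mathfrak{D}^\alpha_t u$ for the jointly convex integrand $n\log n+\theta|\nabla c|^2/c$ (a Bregman-distance argument);
\item testing the equations with $\log n$ and $-\nabla\cdot(\nabla c/c)$-type multipliers;
\item absorbing the cross term $\chi\int \nabla n\cdot\nabla c/c$ by Young's inequality, which requires $\chi<1/2$ to close.
\end{itemize}
This yields a uniform bound on $\int n\log n$ and on $\int|\nabla\sqrt{c}|^2$. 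In $d=2,3$, this bound is bootstrapped:
\begin{itemize}
\item first, the $L\log L$ bound on $n$ gives $c$ bounded in $W^{1,q}$ for $q<d/(d-1)$ and beyond, via the $c$-mild formula;
\item next, weighted $L^r$ estimates for $n$ (testing with $n^{r-1}c^{-\kappa}$, the weight absorbing the singular sensitivity), closed with the fractional Grönwall Lemma \ref{Lem:Gronwall};
\item finally, iterating through the mild formula raises $n\in L^r$ for $r>d/2$ to $c\in W^{1,p}$ with $p>d$, and then to $n\in L^\infty$.
\end{itemize}
I expect this step to be hardest. The reasons are that Caputo derivatives lack a chain rule, which must be replaced by convexity inequalities, and that time-uniformity requires keeping the Mittag-Leffler constants bounded, for example by using the decay of $E_\alpha(-\lambda t^\alpha)$ on nonconstant modes.

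\textbf{Step 4: Conclusion.} The uniform bounds from Step 3, combined with the positive lower bound on $c$ from Step 2, rule out the blow-up alternative in the extensibility criterion of Step 1. Hence $T_{\max}=\infty$, which gives \eqref{eq:bounddd}. Uniqueness follows from the local Lipschitz estimate of Step 1 together with Lemma \ref{Lem:Gronwall}.
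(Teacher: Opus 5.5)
Your architecture matches the paper's: contraction in $L^\infty\times W^{1,p}$, positivity with a time-uniform lower bound on $c$, the entropy--Fisher functional under $\chi<1/2$, a weighted bootstrap, and continuation. However, Step~3 as written does not deliver the time-uniform bound \eqref{eq:bounddd}. For a weighted quantity $\mathcal{Y}=\int_\Omega n^r c^{-\kappa}\,\mathrm{d}\mathbf{x}$, the decay term $-\gamma c$ in the $c$-equation is paired with $\partial_c(n^rc^{-\kappa})=-\kappa n^rc^{-\kappa-1}$. This produces $+\kappa\gamma\mathcal{Y}$ on the right-hand side, a linear term of the wrong sign. Closing with Lemma~\ref{Lem:Gronwall} therefore gives at best $\mathcal{Y}(t)\le\mathcal{Y}(0)E_\alpha(\kappa\gamma t^\alpha)$, which grows exponentially in $t$. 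That excludes blow-up on every finite interval, so global existence survives. But every downstream constant, including the $L^\infty$ bound on $n$ and the $W^{1,p}$ bound on $c$, then depends on $T$. Appealing to the decay of $E_\alpha(-\lambda t^\alpha)$ on nonconstant modes does not help here. The growth comes from a zero-order term in a scalar fractional inequality, not from the linear solution operators.

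The missing idea is to keep the term you discard when you invoke Gr\"onwall. The source $+n$ in the $c$-equation contributes $-\kappa\int_\Omega n^{r+1}c^{-\kappa-1}\,\mathrm{d}\mathbf{x}$, which is a superlinear absorption. The paper takes $r=2$, $\kappa=1/2$; the gradient terms are then nonpositive because $\frac14-\frac{\chi^2}{2}>0$. This gives ${_{0}^{C}\mathfrak{D}^{\alpha}_t}\mathcal{Y}\le\frac{\gamma}{2}\mathcal{Y}-\frac12\int_\Omega n^3c^{-3/2}\,\mathrm{d}\mathbf{x}$, and H\"older gives $\mathcal{Y}\le\big(\int_\Omega n^3c^{-3/2}\,\mathrm{d}\mathbf{x}\big)^{2/3}\|c\|_{L^{3/2}}^{1/2}$.

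The time-uniform bound on $\|c\|_{L^{3/2}}$ comes from two facts. First, $\int_\Omega c\,\mathrm{d}\mathbf{x}=C_c(0)E_\alpha(-\gamma t^\alpha)+\frac{\tilde M}{\gamma}\big(1-E_\alpha(-\gamma t^\alpha)\big)$, where $\tilde M=\int_\Omega n_0\,\mathrm{d}\mathbf{x}$. Second, the Lyapunov bound $\int_\Omega|\nabla c|^2/c\,\mathrm{d}\mathbf{x}\le C$ holds. Together these keep $\sqrt c$ bounded in $H^1$ for all time.

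You then obtain ${_{0}^{C}\mathfrak{D}^{\alpha}_t}\mathcal{Y}\le a\mathcal{Y}-b\mathcal{Y}^{3/2}$. The scalar comparison principle of Lemma~\ref{lem:Caputo-logistic} applies: at a time where $\mathcal{Y}$ attains its running maximum, the Caputo derivative is nonnegative. Hence $\sup_{t}\mathcal{Y}\le\max\{\mathcal{Y}(0),(a/b)^2\}$, and so $n$ is bounded in $L^s$ uniformly in time for some $s>d/2$.

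From there, your mild-formula iteration to $L^\infty$ is a legitimate alternative to the paper's Moser iteration. It can be made time-uniform through the large-time decay of the kernels on mean-zero data, as you suggest. The paper instead extracts a dissipative term $\eta Y_p$ via Gagliardo--Nirenberg and uses $\int_0^\infty\tau^{\alpha-1}E_{\alpha,\alpha}(-\eta\tau^\alpha)\,\mathrm{d}\tau=1/\eta$.
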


The proofs of Theorems \ref{Thm:golablel} and \ref{Thm:well-posed} are carried out in the subsequent sections via a series of lemmas. Our approach integrates novel energy estimates, refined regularity analysis, and several new PDE techniques to establish a robust framework capable of handling the intrinsic difficulties of System \eqref{Pro:FKS}.

Before turning to the proofs, we fix the convention that, throughout the estimates below, $C$ denotes a generic positive constant independent of $t$ and $T$, whose value may vary from line to line.

\section{Fundamental Lemmas for the Solution Theory}
\label{sec:fLST}
Before we get into the main proofs, let's lay down some basic tools. In this section, we introduce a few auxiliary functions and some handy lemmas will support our construction of the solution theory.

Let's begin with a useful special function. For $\kappa>-1$ and $\lambda\in\mathbb{C}$, the Wright function $W(\kappa,\lambda;z)$ is defined by the series (see Ref.~\refcite[Sec.~1.11]{Kilbas06})
\begin{equation}
 W(\kappa,\lambda;z):=\sum_{j=0}^{\infty}\frac{z^{j}}{j!\Gamma(\kappa j+\lambda)}, \quad \forall~  z\in\mathbb{C},
\end{equation}
where $\Gamma(\cdot)$ denotes the Euler--Gamma function. When $\kappa>-1$, this series converges for all $z$, so $W(\kappa,\lambda;z)$ is an entire function. A particularly important special case is when $\kappa=-\alpha$ and $\lambda=1-\alpha$. Then the Wright function becomes the Mainardi function $M_{\alpha}(z)$, which satisfies the neat integral identity\cite{Costa23,Jin21book,Ma25b,Ma25}
\begin{equation}\label{eq:identity}
\int_{0}^{\infty}t^{\gamma}M_{\alpha}(t)\mathrm{d}t=\frac{\Gamma(1+\gamma)}{\Gamma(1+\alpha\gamma)}, \quad \gamma>-1.
\end{equation}

Now we set up the operators needed for our analysis. Recall that $\mathcal{A}:=-\Delta$ be the Neumann Laplacian on $L^p(\Omega)$, and $\mathcal{A}_\gamma:=\mathcal{A}+\gamma I$ with $\gamma>0$. Then, $-\mathcal{A}$ generates the usual Neumann heat semigroup $e^{-t\mathcal{A}}=e^{t\Delta}$, while $-\mathcal{A}_{\gamma}$ generates its exponentially damped version $e^{-t\mathcal{A}_{\gamma}}=e^{t(\Delta-\gamma)}=e^{-\gamma t}e^{t\Delta}$ for $t>0$. Both semigroups are bounded on $L^p(\Omega)$ and enjoy the standard heat-kernel smoothing estimates from $L^p(\Omega)$ to $L^p(\Omega)$ for suitable exponents $1\leq q\leq p\leq\infty$ (see, e.g., Refs.~\refcite{Ma23,Winkler2010}).

To represent mild solutions of System~\eqref{Pro:FKS}, we require the Mittag-Leffler operator families associated with $\mathcal{A}$ and $\mathcal{A}_\gamma$. Using the scalar functions $E_{\alpha,\beta}$ (with $E_\alpha:=E_{\alpha,1}$) recalled from the previous section, we now define the families $\{E_{\alpha}(-t^{\alpha}\mathcal{A})\}_{t\geq0}$, $\{E_{\alpha,\alpha}(-t^{\alpha}\mathcal{A})\}_{t\geq0}$, along with their $\mathcal{A}_\gamma$-analogues $\mathcal{A}_\gamma$-analogues $\{E_{\alpha}(-t^{\alpha}\mathcal{A}_\gamma)\}_{t\geq0}$, $\{E_{\alpha,\alpha}(-t^{\alpha}\mathcal{A}_\gamma)\}_{t\geq0}$.

A useful bridge between these time-nonlocal operator families and the standard heat semigroup is provided by the Mainardi function $M_{\alpha}$, via the integral identities
\begin{equation}\label{eq:EDelta}
\left\{\begin{aligned}
&E_{\alpha}(-t^{\alpha}\mathcal{A})
   \coloneqq \int_{0}^{\infty} M_{\alpha}(s) e^{s t^{\alpha} \Delta} \mathrm{d}s,
   &&E_{\alpha}\bigl(-t^{\alpha}\mathcal{A}_{\gamma}\bigr)
   \coloneqq \int_{0}^{\infty} M_{\alpha}(s) e^{s t^{\alpha} (\Delta - \gamma)} \mathrm{d}s, \\[4pt]
&E_{\alpha,\alpha}(-t^{\alpha}\mathcal{A})
   \coloneqq \int_{0}^{\infty} \alpha s M_{\alpha}(s) e^{s t^{\alpha} \Delta} \mathrm{d}s,
   &&E_{\alpha,\alpha}\bigl(-t^{\alpha}\mathcal{A}_{\gamma}\bigr)
   \coloneqq \int_{0}^{\infty} \alpha s M_{\alpha}(s) e^{s t^{\alpha} (\Delta - \gamma)} \mathrm{d}s.
\end{aligned}\right.
\end{equation}
These representations will be instrumental in the subsequent  derivation of smoothing estimates and time-regularity properties. We collect their main properties in the following lemmas.

\begin{lemma}[Continuity\cite{Costa23,Ma25}]\label{lem:continuous}
For each $1\leq q<\infty$, the Mittag-Leffler operator families associated with $\mathcal{A}$ and $\mathcal{A}_\gamma$, namely $\{E_{\alpha}(-t^{\alpha}\mathcal{A})\}_{t\geq0}$, $\{E_{\alpha}(-t^{\alpha}\mathcal{A}_{\gamma})\}_{t\geq0}$, $\{E_{\alpha,\alpha}(-t^{\alpha}\mathcal{A})\}_{t\geq0}$, and $\{E_{\alpha,\alpha}(-t^{\alpha}\mathcal{A}_{\gamma})\}_{t\geq0}$ are strongly continuous in $t>0$ on $L^{q}(\Omega)$.
\end{lemma}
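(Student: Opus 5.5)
The plan is to read strong continuity off the subordination formulas \eqref{eq:EDelta}. Each Mittag-Leffler family is an average of the Neumann heat semigroup against the Mainardi density, so its continuity in $t$ should follow from that of the semigroup by dominated convergence. I treat $E_{\alpha}(-t^{\alpha}\mathcal{A})$ in detail. The other three families differ only in the weight $\alpha sM_\alpha(s)$ in place of $M_\alpha(s)$ and in the extra factor $e^{-\gamma s t^\alpha}$. Both weights are integrable by \eqref{eq:identity}, with $\gamma=0$ and $\gamma=1$ respectively.

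\textbf{Uniform bound.} First I record that $M_\alpha\ge0$ on $(0,\infty)$. I also use that the Neumann heat semigroup is a contraction on $L^q(\Omega)$ for $1\le q<\infty$, since it is positivity preserving and sub-Markovian. Consequently
\[
\|E_{\alpha}(-t^{\alpha}\mathcal{A})\omega\|_{L^q}\le \int_0^\infty M_\alpha(s)\,\|e^{st^\alpha\Delta}\omega\|_{L^q}\,\mathrm{d}s\le \|\omega\|_{L^q}.
\]
The same computation bounds $E_{\alpha,\alpha}(-t^\alpha\mathcal{A})$ by $\alpha\,\Gamma(2)/\Gamma(1+\alpha)=1/\Gamma(\alpha)$. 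The damped families $\mathcal{A}_\gamma$ obey the same bounds, because $e^{-\gamma st^\alpha}\le1$. In particular all four families are uniformly bounded for $t\ge0$, which is what dominated convergence will need.

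\textbf{Continuity.} Fix $t_0>0$, $\omega\in L^q(\Omega)$ and a sequence $t_k\to t_0$. For each fixed $s>0$ we have $st_k^\alpha\to st_0^\alpha$. Because $\{e^{\tau\Delta}\}_{\tau\ge0}$ is a $C_0$-semigroup on $L^q(\Omega)$ for $q<\infty$, it follows that $\|e^{st_k^\alpha\Delta}\omega-e^{st_0^\alpha\Delta}\omega\|_{L^q}\to0$. In the damped case the scalar factor $e^{-\gamma st_k^\alpha}$ converges as well. The integrand norm is dominated by $2M_\alpha(s)\|\omega\|_{L^q}$, or by $2\alpha sM_\alpha(s)\|\omega\|_{L^q}$ for the $E_{\alpha,\alpha}$ families, and this dominating function lies in $L^1(0,\infty)$ by \eqref{eq:identity}. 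Since the Bochner-integral norm is at most the integral of the norm, dominated convergence gives $\|E_\alpha(-t_k^\alpha\mathcal{A})\omega-E_\alpha(-t_0^\alpha\mathcal{A})\omega\|_{L^q}\to0$.

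The identical argument gives continuity at $t_0=0$, since $e^{0\cdot\Delta}=I$. This yields $E_\alpha(-t^\alpha\mathcal A)\omega\to\omega$, while $E_{\alpha,\alpha}(-t^\alpha\mathcal{A})\omega\to\omega/\Gamma(\alpha)$.

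\textbf{Main obstacle.} The only delicate point is justifying strong continuity of the heat semigroup on $L^q$ together with the measurability of $s\mapsto M_\alpha(s)e^{st^\alpha\Delta}\omega$ as a Bochner integrand. The case $q=\infty$ is excluded precisely because strong continuity fails there. I would deduce strong continuity for $1<q<\infty$ from analyticity of the semigroup generated by the sectorial operator $\mathcal{A}$. For $q=1$ I would use density of $L^2$ in $L^1$ together with the uniform $L^1$-contraction bound. Measurability then follows from continuity of the integrand in $s$.
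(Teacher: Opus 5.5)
Your argument is correct. The paper gives no proof of this lemma and simply defers to Refs.~\refcite{Costa23,Ma25}; your route is exactly the subordination mechanism the paper itself uses in the proof of Lemma~\ref{lem:MLbounds}: take \eqref{eq:EDelta} as the definition, dominate the integrand by $M_\alpha(s)\|\omega\|_{L^q}$ or $\alpha sM_\alpha(s)\|\omega\|_{L^q}$ (integrable by \eqref{eq:identity}), and pass strong continuity of the Neumann heat semigroup on $L^q$, $q<\infty$, through the Bochner integral by dominated convergence.
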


Utilizing the $L^p$-$L^q$ estimates for the Neumann heat semigroup from  Refs.~\refcite{Ma25,Winkler2010}, we derive the following estimates for the Mittag-Leffler operators.

\begin{lemma}[$L^{p}$-$L^{q}$ estimates]\label{lem:MLbounds}
Let $\mathcal{A}=-\Delta$ be the Neumann Laplacian on $L^p(\Omega)$ with homogeneous Neumann boundary conditions, and set $\mathcal{A}_\gamma:=\mathcal{A}+\gamma I$ with $\gamma>0$. The operators $E_{\alpha}(-t^{\alpha}\mathcal{A})$, $E_{\alpha,\alpha}(-t^{\alpha}\mathcal{A})$, and their ${A}_{\gamma}$ counterparts are defined as in \eqref{eq:EDelta}. For every $t>0$ and $1\leq q\leq p\leq\infty$, there exist positive constants $\{c_j\}_{j\in \mathbb{N}_{+}}$, dependent only on $\Omega$, $\alpha$, $d$, $q$ and $p$, such that the following estimates hold:
\begin{align}
&\big\|E_{\alpha}(-t^{\alpha}\mathcal{A})\big\|_{L^{q}\rightarrow L^{p}}
          \leq c_1\left(1+t^{-\frac{\alpha d}{2}(\frac{1}{q}-\frac{1}{p})}\right),
      &&\frac{d}{2}\Big(\frac{1}{q}-\frac{1}{p}\Big)<1; \label{eq:Ea}                  \\
&\big\|E_{\alpha,\alpha}(-t^{\alpha}\mathcal{A})\nabla\cdot\big\|_{L^{q}
          \rightarrow L^{p}}\leq c_2\left(1+t^{-\frac{\alpha}{2}-\frac{\alpha d}{2}(\frac{1}{q}-\frac{1}{p})}\right),
      &&\frac{d}{2}\Big(\frac{1}{q}-\frac{1}{p}\Big)<\frac{3}{2};\label{eq:Eb}  \\
&\big\|\mathcal{A}^{1/2} E_{\alpha,\alpha}(-t^{\alpha}\mathcal{A})\big\|_{L^{q}
          \rightarrow L^{p}}\leq c_3\left(1+t^{-\frac{\alpha}{2}-\frac{\alpha d}{2}(\frac{1}{q}-\frac{1}{p})}\right),
      &&\frac{d}{2}\Big(\frac{1}{q}-\frac{1}{p}\Big)<\frac{3}{2};\label{eq:Ec}  \\
&\big\|E_{\alpha,\alpha}(-t^{\alpha}\mathcal{A}_{\gamma})\big\|_{L^{q}
          \rightarrow L^{p}}\leq c_4 \left(1+t^{-\frac{\alpha d}{2}(\frac{1}{q}-\frac{1}{p})}\right),
      &&\frac{d}{2}\Big(\frac{1}{q}-\frac{1}{p}\Big)<1; \label{eq:Ed}                  \\
&\big\|E_{\alpha,\alpha}(-t^{\alpha}\mathcal{A}_{\gamma})\nabla\cdot\big\|_{L^{q}
          \rightarrow L^{p}}\leq c_5\left(1+t^{-\frac{\alpha}{2}-\frac{\alpha d}{2}(\frac{1}{q}-\frac{1}{p})}\right),
      &&\frac{d}{2}\Big(\frac{1}{q}-\frac{1}{p}\Big)<\frac{3}{2};\label{eq:Ee}  \\
&\big\|\mathcal{A}^{1/2} E_{\alpha,\alpha}(-t^{\alpha}\mathcal{A}_{\gamma})\big\|_{L^{q}
          \rightarrow L^{p}}\leq c_6\left(1+t^{-\frac{\alpha}{2}-\frac{\alpha d}{2}(\frac{1}{q}-\frac{1}{p})}\right),
      &&\frac{d}{2}\Big(\frac{1}{q}-\frac{1}{p}\Big)<\frac{3}{2}.\label{eq:Ef}
\end{align}
\end{lemma}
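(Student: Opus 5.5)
The plan is to reduce every estimate to the known Neumann heat-semigroup bounds through the subordination formulas \eqref{eq:EDelta}, and then integrate against the Mainardi density using the moment identity \eqref{eq:identity}. The heat-semigroup facts we rely on (cf. Refs.~\refcite{Ma25,Winkler2010}) are, for $\tau>0$ and $1\le q\le p\le\infty$:
\begin{equation*}
\|e^{\tau\Delta}\|_{L^q\to L^p}\le C\big(1+\tau^{-\frac d2(\frac1q-\frac1p)}\big),\qquad
\|e^{\tau\Delta}\nabla\cdot\|_{L^q\to L^p}\le C\big(1+\tau^{-\frac12-\frac d2(\frac1q-\frac1p)}\big),
\end{equation*}
and the analogous bound for $\mathcal A^{1/2}e^{\tau\Delta}$, which follows from analyticity and $\|\mathcal A^{1/2}e^{\tau\Delta}\|_{L^p\to L^p}\le C\tau^{-1/2}$ composed with the $L^q$–$L^p$ smoothing. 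For the $\gamma$-shifted versions we use $e^{\tau(\Delta-\gamma)}=e^{-\gamma\tau}e^{\tau\Delta}$. Since $e^{-\gamma\tau}\le 1$, every bound for $e^{\tau\Delta}$ transfers verbatim; in fact the constant term could be dropped, but we keep the stated form.

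For \eqref{eq:Ea}, set $\tau=st^\alpha$ and $\sigma:=\frac d2(\frac1q-\frac1p)$. Minkowski's integral inequality applied to \eqref{eq:EDelta} gives
\begin{equation*}
\|E_\alpha(-t^\alpha\mathcal A)\|_{L^q\to L^p}\le C\int_0^\infty M_\alpha(s)\big(1+s^{-\sigma}t^{-\alpha\sigma}\big)\,\mathrm ds
= C\Big(1+\tfrac{\Gamma(1-\sigma)}{\Gamma(1-\alpha\sigma)}t^{-\alpha\sigma}\Big),
\end{equation*}
using \eqref{eq:identity} with $\gamma=0$ and $\gamma=-\sigma$. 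The condition $\sigma<1$ is exactly the requirement $\gamma=-\sigma>-1$ in \eqref{eq:identity}.

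For the $E_{\alpha,\alpha}$ families the weight is $\alpha sM_\alpha(s)$, so the exponents shift by one. For \eqref{eq:Ed} we need $\int sM_\alpha$ and $\int s^{1-\sigma}M_\alpha$, which requires $1-\sigma>-1$; this is implied by $\sigma<1$. For \eqref{eq:Eb}, \eqref{eq:Ec}, \eqref{eq:Ee}, \eqref{eq:Ef} the heat bound carries $\tau^{-\frac12-\sigma}$, giving $\int s^{\frac12-\sigma}M_\alpha(s)\,\mathrm ds=\Gamma(\frac32-\sigma)/\Gamma(1+\alpha(\frac12-\sigma))$. This is finite precisely when $\frac12-\sigma>-1$, i.e. $\sigma<\frac32$, which is the stated threshold. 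The factor $t^{\alpha(-\frac12-\sigma)}$ produces the claimed power.

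The $\nabla\cdot$ estimates need a density argument: first take $\phi\in C_c^\infty(\Omega;\mathbb R^d)$ so that $\nabla\cdot\phi\in L^q$ and the operator commutes with the Bochner integral, then extend by continuity. The $\mathcal A^{1/2}$ estimates require commuting the closed operator $\mathcal A^{1/2}$ with the Bochner integral. This is justified because the integrand $\mathcal A^{1/2}e^{st^\alpha\Delta}f$ is Bochner integrable by the bound just derived, so Hille's theorem applies.

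The main obstacle is the endpoint $p=\infty$ or $q=1$ in the gradient and $\mathcal A^{1/2}$ heat estimates, where Riesz-transform arguments fail. Here we would quote the Winkler-type bounds for $e^{\tau\Delta}\nabla\cdot$ directly. For $\mathcal A^{1/2}$ at these endpoints we would factor $\mathcal A^{1/2}e^{\tau\Delta}=e^{\frac\tau2\Delta}\mathcal A^{1/2}e^{\frac\tau2\Delta}$, working through an intermediate $L^r$ space with $1<r<\infty$. That splitting only costs constants.
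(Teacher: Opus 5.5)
Your proposal is correct and follows the paper's route. You insert the subordination formulas, apply Minkowski's inequality with the Neumann heat-semigroup $L^q$--$L^p$ bounds (using $e^{\tau(\Delta-\gamma)}=e^{-\gamma\tau}e^{\tau\Delta}$ for the shifted families), and evaluate the Mainardi moments $\int_0^\infty s^{\beta}M_\alpha(s)\,\mathrm{d}s$; the thresholds $\frac d2(\frac1q-\frac1p)<1$ and $<\frac32$ arise exactly as you describe. Your additional justifications (the density argument for $\nabla\cdot$, Hille's theorem to commute $\mathcal{A}^{1/2}$ with the Bochner integral, and routing the endpoint cases through an intermediate $L^r$) are steps the paper does not address, so your version is the more complete one.
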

\begin{proof}
Fix $1\leq q\leq\infty$ and $\omega(\mathbf{x},t)\in L^{q}(\Omega)$. To prove \eqref{eq:Ea}, we use the identity \eqref{eq:identity} together with the heat semigroup estimate under homogenous Neumann boundary conditions (Lemma 3.1 (17) in Ref. \refcite{Ma25}). This gives
\begin{displaymath}
\begin{aligned}
\big\|E_{\alpha}(-t^{\alpha}\mathcal{A})\omega(\mathbf{x},t)\big\|_{L^{p}}
&\leq\int_{0}^{\infty}M_{\alpha}(s)\left\|
    e^{st^{\alpha}\Delta}\omega(\mathbf{x},t)\right\|_{L^{p}}\mathrm{d}s \\
&\leq C\int_{0}^{\infty}M_{\alpha}(s)\Big(1+(st^{\alpha})^{-\frac{d}{2}(\frac{1}{q}-\frac{1}{p})}\Big)
    \big\|\omega(\mathbf{x},t)\big\|_{L^{q}}\mathrm{d}s                 \\
&\leq C\left(1+t^{-\frac{\alpha d}{2}(\frac{1}{q}-\frac{1}{p})}
    \frac{\Gamma\big(1-\frac{d}{2}(\frac{1}{q}-\frac{1}{p})\big)}
    {\Gamma\big(1+\alpha\big(-\frac{d}{2}(\frac{1}{q}-\frac{1}{p})\big)\big)}\right)\big\|\omega(\mathbf{x},t)\big\|_{L^{q}}\\
&\leq C\Big(1+t^{-\frac{\alpha d}{2}(\frac{1}{q}-\frac{1}{p})}\Big)\big\|\omega(\mathbf{x},t)\big\|_{L^{q}},\quad t>0.
\end{aligned}
\end{displaymath}
The condition $\frac{d}{2}(\frac{1}{q}-\frac{1}{p})<1$ is required for the convergence of the singular integral and implies both $1-\frac{d}{2}(\frac{1}{q}-\frac{1}{p})>0$ and $1+\alpha(1-\frac{d}{2}(\frac{1}{q}-\frac{1}{p}))>0$ hold, thereby it ensures that the arguments of the Gamma functions are well defined. The estimate \eqref{eq:Ed} follows by the same argument.

Next, we establish \eqref{eq:Eb}. Using identity \eqref{eq:identity} and estimate (19) from Lemma 3.1 in Ref. \refcite{Ma25}, we derive
\begin{displaymath}
\begin{aligned}
&\big\|E_{\alpha,\alpha}(-t^{\alpha}\mathcal{A})\nabla\cdot\omega(\mathbf{x},t)\big\|_{L^{p}}
    \leq\int_{0}^{\infty}\alpha s M_{\alpha}(s)
         \left\|e^{st^{\alpha}\Delta}\nabla\cdot\omega(\mathbf{x},t)\right\|_{L^{p}}\mathrm{d}s \\
    &\leq C\int_{0}^{\infty}\alpha s M_{\alpha}(s)\left(1+(st^{\alpha})^{-\frac{1}{2}-\frac{d}{2}(\frac{1}{q}-\frac{1}{p})}\right)\big\|\omega(\mathbf{x},t)\big\|_{L^{q}}\mathrm{d}s           \\
    &\leq C\,\left(\frac{\Gamma(2)}{\Gamma(1+\alpha)}+t^{-\frac{\alpha}{2}-\frac{\alpha d}{2}(\frac{1}{q}-\frac{1}{p})}\frac{\Gamma\big(\frac{3}{2}-\frac{d}{2}(\frac{1}{q}-\frac{1}{p})\big)}
        {\Gamma\big(1+\alpha\big(\frac{1}{2}-\frac{d}{2}(\frac{1}{q}-\frac{1}{p})\big)\big)}\right)
        \big\|\omega(\mathbf{x},t)\big\|_{L^{q}}\\
    &\leq C_2\Big(1+t^{-\frac{\alpha}{2}-\frac{\alpha d}{2}(\frac{1}{q}-\frac{1}{p})}\Big)\big\|\omega(\mathbf{x},t)\big\|_{L^{q}}, \quad t>0.
\end{aligned}
\end{displaymath}
Here, the condition $\frac{d}{2}(\frac{1}{q}-\frac{1}{p})<\frac{3}{2}$ ensures the convergence of the singular integral and well-definedness of the Gamma functions.

Finally, the remaining estimates \eqref{eq:Ec}, \eqref{eq:Ee}, and \eqref{eq:Ef} follow by the same reasoning, using \eqref{eq:identity} together with the heat semigroup estimates (17)-(19) from Lemma 3.1 in Ref. \refcite{Ma25}. This completes the proof.
\end{proof}

To simplify notation without loss of generality, we set $\mathcal{D}=1$ in \eqref{Pro:FKS}. Applying the Laplace transform to \eqref{Pro:FKS} and employing the identity $\mathfrak{L}_{t\rightarrow z}\{t^{\beta-1}E_{\alpha,\beta}(-t^{\alpha}\mathcal{A})\}
=z^{\alpha-\beta}(z^{\alpha}I+\mathcal{A})^{-1}$, valid for $\alpha>0$, $0<t\leq T$, and $\beta\in\mathbb{R}$ (see, e.g., Refs.~\refcite{Costa23,Ma25}), we obtain, after inverse Laplace transform, the following Duhamel integral system,
\begin{equation}\label{sys:mild}
\left\{\begin{aligned}
n(\mathbf{x},t)&
=E_{\alpha}\big(-t^{\alpha}\mathcal{A}\big)n_0(\mathbf{x})
      -\chi\int_{0}^{t}(t-s)^{\alpha-1}
      E_{\alpha,\alpha}\big(-(t-s)^{\alpha}\mathcal{A}\big)\nabla\cdot\Big(\frac{n}{c}\nabla c\Big)(\mathbf{x},s)\mathrm{d}s;\\
c(\mathbf{x},t)&
=E_{\alpha}\big(-t^{\alpha}\mathcal{A}_{\gamma}\big)c_0(\mathbf{x})+\int_{0}^{t}(t-s)^{\alpha-1}
      E_{\alpha,\alpha}\big(-(t-s)^{\alpha}\mathcal{A}_{\gamma}\big)n(\mathbf{x},s)\mathrm{d}s.
\end{aligned}\right.
\end{equation}

Following Definition 4.1 in Ref. \refcite{Ma25}, we call a pair $(n,c)$ satisfying \eqref{sys:mild} a mild solution of system \eqref{Pro:FKS}. Additionally, the forms of $n(\mathbf{x},t)$ and $c(\mathbf{x},t)$ in \eqref{sys:mild} directly give rise to the properties stated in the lemmas below.

\begin{lemma}[Conservation of Mass]\label{lem:MassCon-n}
Let the initial data $n_0(\mathbf{x})$ and $c_0(\mathbf{x})$ satisfy \eqref{Eq:Assuption}. For every $t>0$ and $\alpha\in(0,1)$, the total masses of $n(\mathbf{x},t)$ and $c(\mathbf{x},t)$ evolve according to the following identities,
\begin{equation}\label{eq:Mass-n}
\int_{\Omega}n(\mathbf{x},t)\mathrm{d}\mathbf{x}=\int_{\Omega}n_0(\mathbf{x})\mathrm{d}\mathbf{x},
\end{equation}
and
\begin{equation}\label{Eq:MassCon-c}
\begin{aligned}
\int_{\Omega}c(\mathbf{x},t)\mathrm{d}\mathbf{x}+\frac{\gamma}{\Gamma(\alpha)}
\int_{0}^{t}(t-s)^{\alpha-1}\int_{\Omega}c(\mathbf{x},s)\mathrm{d}\mathbf{x}\,\mathrm{d}s
=\frac{t^{\alpha}}{\Gamma(1+\alpha)}\int_{\Omega}n_0(\mathbf{x})\mathrm{d}\mathbf{x}
 +\int_{\Omega}c_0(\mathbf{x})\mathrm{d}\mathbf{x}.
\end{aligned}
\end{equation}
\end{lemma}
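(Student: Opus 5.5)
The plan is to integrate the mild formulation \eqref{sys:mild} over $\Omega$ and use the fact that the Neumann Laplacian annihilates constants. Integration over $\Omega$ is the same as the $L^2$ pairing with the constant eigenfunction $\varphi_1=|\Omega|^{-1/2}$, and $\lambda_1=0$. Since the Neumann heat semigroup is self-adjoint and $e^{s\Delta}1=1$, we have $\int_\Omega e^{s\Delta}f\,\mathrm{d}\mathbf{x}=\int_\Omega f\,\mathrm{d}\mathbf{x}$. For the damped version, $\int_\Omega e^{s(\Delta-\gamma)}f\,\mathrm{d}\mathbf{x}=e^{-\gamma s}\int_\Omega f\,\mathrm{d}\mathbf{x}$. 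Inserting these into the subordination formulas \eqref{eq:EDelta} and using \eqref{eq:identity} with $\gamma=0$ and $\gamma=1$ (which give $\int_0^\infty M_\alpha=1$ and $\int_0^\infty \alpha sM_\alpha=\alpha/\Gamma(1+\alpha)=1/\Gamma(\alpha)$), we get
\begin{equation*}
\int_\Omega E_\alpha(-t^\alpha\mathcal{A})f=\int_\Omega f,\qquad \int_\Omega E_{\alpha,\alpha}(-t^\alpha\mathcal{A})f=\frac{1}{\Gamma(\alpha)}\int_\Omega f .
\end{equation*}
In the $\mathcal{A}_\gamma$ case the same computation yields the scalar Mittag-Leffler values $E_\alpha(-\gamma t^\alpha)$ and $E_{\alpha,\alpha}(-\gamma t^\alpha)$ times $\int_\Omega f$. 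Fubini is justified because the integrands are absolutely integrable, by the $L^1$ bounds in Lemma \ref{lem:MLbounds}.

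\textbf{Mass of $n$.} For \eqref{eq:Mass-n}, the first term of the $n$-equation contributes $\int_\Omega n_0$. In the Duhamel term, $E_{\alpha,\alpha}$ commutes with integration against $1$, as above. The remaining integral $\int_\Omega\nabla\cdot(\frac{n}{c}\nabla c)\,\mathrm{d}\mathbf{x}$ vanishes by the divergence theorem and $\partial_\nu c=0$. To make this rigorous, I would write $E_{\alpha,\alpha}(\cdot)\nabla\cdot F$ in weak form, i.e. pair it against the test function $1$. Self-adjointness moves the operator onto $1$, which it maps to a constant, and $\int_\Omega\nabla\cdot F=\int_{\partial\Omega}F\cdot\nu=0$ because $F\cdot\nu=\frac{n}{c}\partial_\nu c=0$. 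This needs $c>0$ and enough regularity of $(n,c)$ for $F\in L^1$, which is available for the mild solution on its existence interval.

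\textbf{Mass of $c$.} Set $y(t):=\int_\Omega c(\mathbf{x},t)\,\mathrm{d}\mathbf{x}$. Integrating the $c$-equation and using the $\mathcal{A}_\gamma$ identities together with the conservation just proved gives
\begin{equation*}
y(t)=E_\alpha(-\gamma t^\alpha)\,y(0)+\Big(\int_0^t s^{\alpha-1}E_{\alpha,\alpha}(-\gamma s^\alpha)\,\mathrm{d}s\Big)\int_\Omega n_0 .
\end{equation*}
By Laplace transform theory, this $y$ is exactly the solution of the scalar equation ${_{0}^{C}\mathfrak D_t^\alpha}y+\gamma y=\int_\Omega n_0$ with $y(0)=\int_\Omega c_0$. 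Applying ${_{0}I_t^{\alpha}}$ to this equation and using ${_{0}I^\alpha_t}\,{_{0}^{C}\mathfrak D^\alpha_t}y=y-y(0)$ together with ${_{0}I^\alpha_t}1=t^\alpha/\Gamma(1+\alpha)$ yields \eqref{Eq:MassCon-c}.

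Alternatively, one can verify the identity directly in the Laplace domain. There $\hat y(z)=\frac{z^{\alpha-1}y(0)+z^{-1}m}{z^\alpha+\gamma}$ with $m=\int_\Omega n_0$, and multiplying by $(1+\gamma z^{-\alpha})$ gives $z^{-1}y(0)+z^{-1-\alpha}m$, which is the transform of the right-hand side of \eqref{Eq:MassCon-c}.

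The main obstacle is justifying, at the level of mild solutions, that the divergence term integrates to zero and that the operator families commute with spatial integration. I would handle this via self-adjointness of $e^{s\Delta}$ on $L^2$, extended by density to $L^1$, together with the Neumann condition. The computation for $c$ is routine.
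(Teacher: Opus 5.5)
Your proof is correct, but it takes a different route from the paper's. The paper works with the differential equations themselves. It integrates \eqref{Pro:KS1} in time against $(t-s)^{\alpha-1}/\Gamma(\alpha)$ to get $n-n_0={_{0}I^{\alpha}_{t}}\big(\Delta n-\chi\nabla\cdot(\frac{n}{c}\nabla c)\big)$, integrates over $\Omega$, and removes both terms by the divergence theorem and the Neumann conditions. The same step on the $c$-equation, with ${_{0}I^{\alpha}_{t}}1=t^{\alpha}/\Gamma(1+\alpha)$, gives \eqref{Eq:MassCon-c} immediately, without Mittag-Leffler functions or a scalar Laplace inversion.

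That is shorter, but it treats $(n,c)$ as a strong solution on all of $[0,t]$. It needs ${_{0}I^{\alpha}_{t}}\,{_{0}^{C}\mathfrak{D}^{\alpha}_t}n=n-n_0$, and integrability of $\Delta n$ and $\nabla\cdot(\frac{n}{c}\nabla c)$ with boundary traces down to $s=0$. None of this is available where the lemma sits: it precedes the existence theory, and even later the paper only shows $n\in C((0,T];D(\mathcal{A}))$.

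Your argument works directly on \eqref{sys:mild}, which is the paper's definition of solution. It uses only $(n,c)\in\mathbb{X}$, the $L^1$ bounds of Lemma \ref{lem:MLbounds} for Fubini, and the fact that the subordinated families preserve the spatial mean. In the $\mathcal{A}_\gamma$ case they scale it by $E_\alpha(-\gamma t^\alpha)$ and $E_{\alpha,\alpha}(-\gamma t^\alpha)$. For that step you need the Laplace transform $\int_0^\infty M_\alpha(s)e^{-zs}\,\mathrm{d}s=E_\alpha(-z)$ rather than the moment identity \eqref{eq:identity}. Alternatively, use $E_\alpha(-t^\alpha\mathcal{A}_\gamma)1=E_\alpha(-\gamma t^\alpha)$ as in Lemma \ref{Lem:Bijection}, together with self-adjointness. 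The price of your route is that you first obtain the closed form $y(t)=E_\alpha(-\gamma t^\alpha)y(0)+m\,t^{\alpha}E_{\alpha,\alpha+1}(-\gamma t^\alpha)$ and must convert it back to the Volterra identity. This is the same formula the paper rederives later in \eqref{eq:Cc-explicit}.

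One refinement to your treatment of the drift term. For $(n,c)\in\mathbb{X}$ you only know $F=\frac{n}{c}\nabla c\in L^p$ with $c\in W^{1,p}$, so $\partial_\nu c$ has no trace and the divergence-theorem step is not literally available. It is also unnecessary. The operator $E_{\alpha,\alpha}(-\tau^\alpha\mathcal{A})\nabla\cdot$ on $L^p$ vector fields is the continuous extension from compactly supported smooth fields. Equivalently, it is defined by $\langle E_{\alpha,\alpha}(-\tau^\alpha\mathcal{A})\nabla\cdot F,\phi\rangle=-\langle F,\nabla E_{\alpha,\alpha}(-\tau^\alpha\mathcal{A})\phi\rangle$. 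Testing with $\phi=1$ gives zero because $\nabla 1=0$, so the zero-flux condition is already built into the operator.
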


\begin{proof}
Applying the Riemann-Liouvillee fractional integral operator ${_{0}I^{1-\alpha}_{t}}$ to both sides of \eqref{Pro:KS1} gives
\begin{displaymath}
n(\mathbf{x},t)-n_{0}(\mathbf{x})
=\frac{1}{\Gamma(\alpha)}\int_{0}^{t}(t-s)^{\alpha-1}
\left(\Delta n(\mathbf{x},s)-\chi\nabla\cdot\Big(\frac{n}{c}\nabla c\Big)(\mathbf{x},s)\right)\mathrm{d}s.
\end{displaymath}
Integrating over $\Omega$ and using the divergence theorem, we obtain, for all $t>0$
\begin{displaymath}
\begin{aligned}
\int_{\Omega}\big(n(\mathbf{x},t)-n_{0}(\mathbf{x})\big)\mathrm{d}\mathbf{x}
&=\frac{1}{\Gamma(\alpha)}\int_{0}^{t}(t-s)^{\alpha-1}\int_{\Omega}\Big(\Delta n-\chi\nabla\cdot\Big(\frac{n}{c}\nabla c\Big)\Big)(\mathbf{x},s)\mathrm{d}\mathbf{x}\,\mathrm{d}s\\
&=\frac{1}{\Gamma(\alpha)}\int_{0}^{t}(t-s)^{\alpha-1}\int_{\partial\Omega}
   \frac{\partial n(\mathbf{x},s)}{\partial\nu}\mathrm{d}S\,\mathrm{d}s         \\
&\quad -\frac{\chi}{\Gamma(\alpha)}\int_{0}^{t}(t-s)^{\alpha-1}\int_{\partial\Omega}
  \frac{n(\mathbf{x},s)}{c(\mathbf{x},s)}\frac{\partial c(\mathbf{x},s)}{\partial\nu}\mathrm{d}S\,\mathrm{d}s,
\end{aligned}
\end{displaymath}
where $\mathrm{d}S$ denotes the boundary area element. The homogeneous Neumann boundary conditions in \eqref{Pro:KS3} make both boundary integrals vanish, and \eqref{eq:Mass-n} follows.

The same argument applied to the equation of $c(x,t)$  gives \eqref{Eq:MassCon-c}, using the identity $\frac{1}{\Gamma(\alpha)}\int_{0}^{t}(t-s)^{\alpha-1}\mathrm{d}s=\frac{t^{\alpha}}{\Gamma(1+\alpha)}$. This completes the proof.
\end{proof}

\begin{lemma}\label{lem:bound-n}
Let $T>0$ be appropriately small, and the initial data $n_0(\mathbf{x})$ and $c_0(\mathbf{x})$ satisfy \eqref{Eq:Assuption}. Then, for all $(\mathbf{x},t)\in\Omega\times[0,T]$ with  $\Omega\subset \mathbb{R}^{d}$ ($d\geq2$), the solution component $n(\mathbf{x},t)$ of the mild solution to the system \eqref{Pro:FKS} remains nonnegative (i.e., $n\geq0$).
\end{lemma}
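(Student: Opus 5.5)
The plan is to treat the $n$-equation as a \emph{linear} time-fractional advection--diffusion equation with a frozen, bounded drift, and to show by a Caputo-type energy argument that its negative part vanishes. Local existence of the mild solution \eqref{sys:mild} on a short interval $[0,T]$ is taken for granted, in the class used for the fixed-point argument: $n\in C([0,T];L^\infty(\Omega))$ and $c\in C([0,T];W^{1,p}(\Omega))$ with $p>d$.

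\textbf{Step 1 (positivity of $c$ on a short interval).} Since $c_0\in W^{1,\infty}(\Omega)$ and $c_0>0$ on $\overline\Omega$, we have $\min_{\overline\Omega}c_0=:2\delta>0$. By Remark~\ref{rmk:space}$(iv)$, $W^{1,p}(\Omega)\hookrightarrow L^\infty(\Omega)$. Together with the continuity $c\in C([0,T];W^{1,p})$ from Lemma~\ref{lem:continuous}, this gives $c\ge\delta$ on $\Omega\times[0,T]$ once $T$ is small; this is exactly where the smallness of $T$ enters. Consequently the drift
\[
\mathbf b:=\chi\nabla c/c
\]
satisfies $\|\mathbf b(\cdot,t)\|_{L^p}\le \chi\delta^{-1}\|c\|_{W^{1,p}}\le B$ uniformly on $[0,T]$, with $p>d$. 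If needed, one can upgrade this to $\mathbf b\in L^\infty$ using the $W^{2,p}$-smoothing \eqref{eq:Ef} of the $c$-Duhamel term for $t>0$.

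\textbf{Step 2 (energy inequality for $n_-$).} Regarding $\mathbf b$ as given, $n$ solves
\[
{_{0}^{C}\mathfrak{D}^{\alpha}_t}n=\Delta n-\nabla\cdot(n\mathbf b)
\]
with Neumann data. Set $n_-:=\max\{-n,0\}$ and test the equation with $-n_-$. For the time derivative I will use the Alikhanov-type inequality
\[
\int_\Omega n_-\,{_{0}^{C}\mathfrak{D}^{\alpha}_t}(-n)\,\mathrm d\mathbf x\ \ge\ \tfrac12\,{_{0}^{C}\mathfrak{D}^{\alpha}_t}\|n_-\|_{L^2}^2 ,
\]
which follows from convexity of $s\mapsto (s_-)^2$ and the positivity of the Caputo kernel. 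The diffusion term contributes $-\|\nabla n_-\|_{L^2}^2$, and the boundary term drops by the Neumann condition. For the drift term, integration by parts (again with vanishing boundary term) gives $\int n_-\mathbf b\cdot\nabla n_-$. Young's inequality bounds this by $\tfrac12\|\nabla n_-\|_{L^2}^2+C\|\mathbf b\|_{L^\infty}^2\|n_-\|_{L^2}^2$. In the $L^p$ setting one instead uses H\"older and Gagliardo--Nirenberg with $p>d$, absorbing the gradient term.

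The result is
\[
{_{0}^{C}\mathfrak{D}^{\alpha}_t}\|n_-\|_{L^2}^2\le C_1\|n_-\|_{L^2}^2 .
\]
Since $n_0\ge0$ gives $\|n_-(0)\|_{L^2}=0$, Lemma~\ref{Lem:Gronwall} with $C_2\equiv0$ yields $\|n_-(t)\|_{L^2}\le 0$, i.e.\ $n\ge0$ on $\Omega\times[0,T]$.

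\textbf{Step 3 (justification, the main obstacle).} The mild solution need not be regular enough near $t=0$ to be used as a test function, and $y(t)=\|n_-\|_{L^2}^2$ must be absolutely continuous for Lemma~\ref{Lem:Gronwall}. I would handle this by approximation:
\begin{itemize}
\item mollify $n_0$ and $c_0$, keeping $n_0^\varepsilon\ge0$ and $c_0^\varepsilon\ge2\delta$;
\item use the smoothing estimates of Lemma~\ref{lem:MLbounds} to obtain approximate solutions in $C([0,T];H^2)\cap$ (fractional $H^1$ in time) on a $T$ independent of $\varepsilon$;
\item run Step 2 for these approximations;
\item pass to the limit $\varepsilon\to0$, using continuous dependence of the fixed point in $C([0,T];L^\infty)$, so that nonnegativity is preserved.
\end{itemize}
The delicate points are:
\begin{itemize}
\item the fractional chain-rule inequality for the nonsmooth map $s\mapsto s_-$, which I would obtain via a smooth convex regularization $\phi_\eta(s)\to(s_-)^2$ applied to the inequality $\phi'(u)\,{_{0}^{C}\mathfrak{D}^{\alpha}_t}u\ge{_{0}^{C}\mathfrak{D}^{\alpha}_t}\phi(u)$;
\item ensuring that the uniform lower bound $c^\varepsilon\ge\delta$ holds on a common interval.
\end{itemize}
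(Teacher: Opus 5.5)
Your argument is correct, but it takes a different route from the paper. The paper works at the $L^1$ level. It multiplies the $n$-equation by $\mathrm{sgn}(n)$ and applies three tools:
\begin{itemize}
\item the convexity inequality for $|\cdot|$ (Lemma~\ref{lem:convexl1});
\item Kato's inequality $\mathrm{sgn}(n)\Delta n\le\Delta|n|$;
\item the identity $\mathrm{sgn}(n)\nabla\cdot(n\mathbf v)=\nabla\cdot(|n|\mathbf v)$ with $\mathbf v=\nabla c/c$.
\end{itemize}
Both spatial terms then integrate to zero under the Neumann conditions. Subtracting mass conservation gives ${_{0}^{C}\mathfrak{D}^{\alpha}_t}\int_\Omega n_-\,\mathrm d\mathbf x\le 0$, and Lemma~\ref{Lem:Gronwall} with zero data finishes.

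You instead use an $L^2$ truncation with the $C^1$ convex function $\tfrac12(s_-)^2$, bound the drift term by Young/Gagliardo--Nirenberg, and close with Gr\"{o}nwall at a positive rate. The two routes trade off as follows.
\begin{itemize}
\item The paper's route is constant-free: the drift disappears by its conservative structure, so no quantitative bound on $\nabla c/c$ is needed. The price is that it differentiates through the nonsmooth map $|\cdot|$, the product-rule identity tacitly requires $\nabla\cdot(\nabla c/c)$ (second derivatives of $c$), and the time absolute continuity is simply asserted.
\item Your route needs only $\nabla c/c\in L^p$ with $p>d$ and a $C^1$ convexity inequality.
\item Your Step~1 obtains $c\ge\delta$ from continuity at $t=0$. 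This decouples the lemma from Lemma~\ref{lem:LowerBound-c}, whose proof in the paper itself invokes Lemma~\ref{lem:bound-n}.
\item Your Step~3 confronts the regularity issue that the paper glosses over.
\end{itemize}

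A few small fixes:
\begin{itemize}
\item The continuity $c\in C([0,T];W^{1,p}(\Omega))$ comes from Theorem~\ref{Lem:WelPod1} and \eqref{eq:asymtic}, not from Lemma~\ref{lem:continuous}.
\item \eqref{eq:Ef} is only a gradient-level ($\mathcal A^{1/2}$) estimate, not $W^{2,p}$-smoothing. The $W^{2,p}$ bounds of Lemma~\ref{Lem:timR} hold only for $t\ge\tau>0$. So rely on the $L^p$/Gagliardo--Nirenberg version of the drift bound rather than an $L^\infty$ upgrade.
\item In Step~3, regularize by the Neumann heat semigroup, $n_0^\varepsilon=e^{\varepsilon\Delta}n_0\ge0$ and $c_0^\varepsilon=e^{\varepsilon\Delta}c_0\ge2\delta$. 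Plain mollification does not respect the boundary condition. The semigroup version puts the data in $D(\mathcal A)$, and some such regularity is needed for $\partial_t n^\varepsilon$ to be integrable near $t=0$.
\item Run the contraction in a set without the constraint $n\ge0$ that is built into $\mathfrak B$. The paper's self-mapping proof for $\mathfrak B$ uses this very positivity argument, so keeping the constraint would make your limiting argument circular.
\end{itemize}
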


\begin{proof}
We prove the non-negativity of $n(\mathbf{x},t)$ by considering its negative part $u(\mathbf{x},t):=\max\{0,-n(\mathbf{x},t)\}$ (see Ref. \refcite{Huang25}). Clearly, $u(\mathbf{x},t)\ge0$, and by definition,
\begin{equation}\label{eq:reallllst}
\begin{cases}
n(\mathbf{x},t) = -u(\mathbf{x},t), \quad \nabla n(\mathbf{x},t)= -\nabla u(\mathbf{x},t), &~~ \text{for}~~ n(\mathbf{x},t) < 0, \\
u(\mathbf{x},t) = 0, \qquad\qquad \nabla u(\mathbf{x},t) = 0,  &~~ \text{for}~~ {n(\mathbf{x},t) \geq 0}.
\end{cases}
\end{equation}
Consequently, according to the definition of $u$, $n$ is non-negative if and only if $u(\mathbf{x},t)\equiv0$ throughout $\Omega\times[0, T]$.

The standard local existence theory for fractional parabolic equations in $L^q$ ensures that $n(\mathbf{x}, t)$ is absolutely continuous in $t$ for a.e. $\mathbf{x} \in \Omega$, justifing the use of the fractional convexity inequality. Multiply \eqref{Pro:KS1} by $\mathrm{sgn}(n)$ with $\mathrm{sgn}(\cdot)$ being defined as the standard sign function, and applying the fractional convexity inequality \eqref{eq:m1mL1} (Lemma \ref{lem:convexl1}), and integrating over $\Omega$, we obtain
\begin{displaymath}
{_{0}^{C}\mathfrak{D}^{\alpha}_t}\int_{\Omega}|n|\mathrm{d}\mathbf{x}
\leq \int_{\Omega}\text{sgn}(n)\Delta n\, \mathrm{d}\mathbf{x}-\chi\int_{\Omega} \text{sgn}(n)\nabla\cdot(n \mathbf{v})\, \mathrm{d}\mathbf{x},
\end{displaymath}
where $\mathbf{v}:=\frac{\nabla c}{c}$. Using $\nabla\cdot(n\mathbf{v})\text{sgn}(n)=\nabla\cdot(|n|\mathbf{v})$ and $\Delta n\,\text{sgn}(n)\leq\Delta|n|$ (Kato's inequality; see Refs.~\refcite{Huang25,Kato72}), we get
\begin{displaymath}
{_{0}^{C}\mathfrak{D}^{\alpha}_t}\int_{\Omega}|n|\, \mathrm{d}\mathbf{x}
\leq\int_{\Omega}\Delta|n|\,\mathrm{d}\mathbf{x}-\chi\int_{\Omega}\nabla\cdot(|n| \mathbf{v})\, \mathrm{d}\mathbf{x}.
\end{displaymath}
By the divergence theorem and the Neumann boundary conditions, we find $\int_{\Omega}\Delta|n|\mathrm{d}\mathbf{x}=\int_{\partial\Omega}\nabla|n|\cdot\vec{\nu}\mathrm{d}S=0$ and $\int_{\Omega}\nabla\cdot(|n| \mathbf{v})=\int_{\partial\Omega}\nabla|n|\mathbf{v}\cdot\vec{\nu}\mathrm{d}S=0$. Hence
\begin{displaymath}
{_{0}^{C}\mathfrak{D}^{\alpha}_t}\int_{\Omega}|n|\, \mathrm{d}\mathbf{x}\leq0.
\end{displaymath}
Meanwhile, Lemma \ref{lem:MassCon-n} gives ${_{0}^{C}\mathfrak{D}^{\alpha}_t}\int_{\Omega}n\mathrm{d}\mathbf{x}=0$. Subtracting yields
\begin{equation}\label{eq:maineq}
{_{0}^{C}\mathfrak{D}^{\alpha}_t}\int_{\Omega}(|n|-n)\, \mathrm{d}\mathbf{x} \leq 0.
\end{equation}
Since $|n|-n=2u$, by \eqref{eq:maineq}, we have
\begin{equation}\label{eq:maineq1}
{_{0}^{C}\mathfrak{D}^{\alpha}_t}\int_{\Omega}2u(\mathbf{x},t)\, \mathrm{d}\mathbf{x}\leq0.
\end{equation}
The initial non-negativity $n_0\geq0$ implies $u(\mathbf{x},0)=0$, hence $\int_{\Omega}u(\mathbf{x},0)\mathrm{d}\mathbf{x}=0$. Applying the generalized Gr\"{o}nwall lemma (Lemma \ref{Lem:Gronwall}) to \eqref{eq:maineq1} gives $\int_{\Omega}u(\mathbf{x},t)\mathrm{d}\mathbf{x}\le0\cdot E_{\alpha,1}(Ct^\alpha)=0$ for all $t\in[0,T]$, so
\begin{displaymath}
\int_{\Omega}u(\mathbf{x},t)\, \mathrm{d}\mathbf{x} \equiv 0,\quad \text{a.e.}~~ (\mathbf{x},t)\in\Omega\times[0,T].
\end{displaymath}
By the definition of $u$, implies $n\geq0$ almost everywhere on $(\mathbf{x},t)\in\Omega\times[0,T]$. This completes the proof.
\end{proof}

\begin{lemma}\label{lem:LowerBound-c}
Assume $c(\mathbf{x},0)$ satisfies the condition stated in \eqref{Eq:Assuption}. Then for $\alpha\in(0,1)$, the following hold:
\begin{description}
  \item[$(i)$] For every $t\in(0,T]$, the solution $c(\mathbf{x},t)$ of \eqref{Pro:FKS} satisfies
  \begin{equation}\label{eq:LowerBound-c}
      c(\mathbf{x},t)\geq \frac{t^{-\alpha}}{t^{-\alpha}+\gamma\,\Gamma(1-\alpha)}\inf_{\mathbf{x}\in\Omega}c(\mathbf{x},0),\quad  t>0,
  \end{equation}
      ensuring that $c(\mathbf{x},t)>0$ for all $(\mathbf{x},t)\in\Omega\times[0,T]$.
  \item[$(ii)$] There exists a constant $C_*>0$ such that $\inf_{\mathbf{x}\in\Omega, t\geq 0} c(\mathbf{x},t)\geq C_*$.
\end{description}
\end{lemma}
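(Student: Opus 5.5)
The plan is to work with the mild formulation \eqref{sys:mild} for $c$ rather than with a comparison argument for the Caputo derivative. Two facts make this possible. First, every operator family in \eqref{eq:EDelta} is positivity preserving: the Neumann heat semigroup $e^{\tau\Delta}$ is positivity preserving, and the weights $M_\alpha(s)$ and $\alpha sM_\alpha(s)$ are nonnegative. Second, $n\ge0$ on $[0,T]$ by Lemma \ref{lem:bound-n}. Hence the Duhamel term in the $c$-equation is nonnegative, and
\begin{displaymath}
c(\mathbf{x},t)\ \ge\ E_{\alpha}\big(-t^{\alpha}\mathcal{A}_{\gamma}\big)c_0(\mathbf{x})\ \ge\ \Big(\inf_{\Omega}c_0\Big)\,E_{\alpha}\big(-t^{\alpha}\mathcal{A}_{\gamma}\big)\mathbf{1}.
\end{displaymath}
Since $e^{\tau\Delta}\mathbf{1}=\mathbf{1}$ under Neumann conditions, the representation \eqref{eq:EDelta} gives
\begin{displaymath}
E_\alpha(-t^\alpha\mathcal{A}_\gamma)\mathbf{1}=\int_0^\infty M_\alpha(s)e^{-\gamma s t^\alpha}\,\mathrm{d}s=E_\alpha(-\gamma t^\alpha).
\end{displaymath}

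For part $(i)$ it then remains to use the classical lower bound $E_\alpha(-x)\ge \frac{1}{1+\Gamma(1-\alpha)x}$ for $x\ge0$ and $\alpha\in(0,1)$, which is Simon's estimate for completely monotone Mittag-Leffler functions. With $x=\gamma t^\alpha$ this yields
\begin{displaymath}
\frac{1}{1+\gamma\Gamma(1-\alpha)t^{\alpha}}=\frac{t^{-\alpha}}{t^{-\alpha}+\gamma\Gamma(1-\alpha)},
\end{displaymath}
which is exactly \eqref{eq:LowerBound-c}. Because $\inf c_0>0$ by \eqref{Eq:Assuption}, positivity of $c$ on $[0,T]$ follows. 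If a self-contained argument for the Mittag-Leffler bound is preferred, I would check it directly: $w(t)=(1+\gamma\Gamma(1-\alpha)t^\alpha)^{-1}$ satisfies ${_{0}^{C}\mathfrak{D}^{\alpha}_t}w+\gamma w\le0$, so Lemma \ref{Lem:CPp} applied to the spatially constant function $w$ gives the comparison.

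For part $(ii)$ the bound in $(i)$ decays like $t^{-\alpha}$, so the source $n$ has to supply the uniform bound. The key ingredient is a uniform positive lower bound for the Neumann heat kernel at large times. Because $\lambda_2>0$, we have
\begin{displaymath}
\big\|e^{\tau\Delta}f-\bar f\big\|_{L^\infty}\le Ce^{-\lambda_2\tau}\|f\|_{L^1}\quad\text{for }\tau\ge1 .
\end{displaymath}
Hence there exist $\tau_0$ and $k_0>0$ such that $e^{\tau\Delta}f\ge k_0\int_\Omega f$ for all $f\ge0$ and all $\tau\ge\tau_0$.

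I would insert this bound into the Duhamel term, keeping only the range $t-s=r\in[1,2]$ (valid once $t\ge2$) and $\sigma r^\alpha\ge\tau_0$. Using mass conservation, Lemma \ref{lem:MassCon-n}, the Duhamel term is then bounded below by
\begin{displaymath}
k_0\|n_0\|_{L^1}\int_1^2 r^{\alpha-1}\int_{\tau_0 r^{-\alpha}}^{\infty}\alpha\sigma M_\alpha(\sigma)e^{-\gamma\sigma r^\alpha}\,\mathrm{d}\sigma\,\mathrm{d}r=:\delta\|n_0\|_{L^1}.
\end{displaymath}
Here $\delta>0$ is independent of $t$, because $M_\alpha>0$ on $(0,\infty)$, and $\|n_0\|_{L^1}>0$ since $n_0\not\equiv0$. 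For $t\in[0,2]$ I use $(i)$ evaluated at $t=2$. Taking
\begin{displaymath}
C_*=\min\Big\{\frac{2^{-\alpha}\inf c_0}{2^{-\alpha}+\gamma\Gamma(1-\alpha)},\ \delta\|n_0\|_{L^1}\Big\}
\end{displaymath}
then gives $(ii)$.

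I expect the main obstacle to be this large-time heat kernel lower bound, and the bookkeeping showing that the restricted Wright-function integral is strictly positive and independent of $t$. A further point is that $(ii)$ needs $n\ge0$ on the whole time interval, not only for small $T$. I would handle this by continuation: on any interval of existence, Lemma \ref{lem:bound-n} can be reapplied, and the lower bound itself is what prevents the singular sensitivity from blowing up.
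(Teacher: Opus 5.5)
Your proposal is correct, but it proves both parts by a different route than the paper.

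\textbf{Part $(i)$.} The paper sets $c_h(t)=m_0E_{\alpha,1}(-\gamma t^\alpha)$ and notes that $w=c-c_h$ satisfies ${_{0}^{C}\mathfrak{D}^{\alpha}_t}w-\Delta w+\gamma w=n\ge0$. It then applies the comparison principle of Lemma~\ref{Lem:CPp}, followed by the same Mittag-Leffler lower bound you cite. You instead read the bound directly off the mild formula, using positivity of the subordinated families and the identity $E_\alpha(-t^\alpha\mathcal{A}_\gamma)\mathbf{1}=E_\alpha(-\gamma t^\alpha)$. This is the argument the paper itself uses later for $\mathcal{M}_2$ in Lemma~\ref{Lem:Bijection}. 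It also fits the mild-solution setting better, because Lemma~\ref{Lem:CPp} assumes $C([0,T];H^2(\Omega))$ regularity, which is not available down to $t=0$ when $c_0\in W^{1,\infty}(\Omega)$. Your fallback check of the Mittag-Leffler bound also works. With $X=\gamma\Gamma(1-\alpha)t^\alpha$, substitute $s=tu$ in the Caputo integral and bound $(1-u)^{-\alpha}\ge1$; this gives exactly ${_{0}^{C}\mathfrak{D}^{\alpha}_t}w\le-\gamma w$.

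\textbf{Part $(ii)$.} The paper keeps the window $t-s\in[\frac12,1]$. It uses a Gaussian lower bound $p(\mathbf{x},\mathbf{y},\theta)\ge p_*(\theta)>0$ for the Neumann heat kernel, valid for every $\theta>0$ and quoted from Gyrya--Saloff-Coste. It then needs a continuity and extreme-value argument for the Wright-type integral $\mathcal{H}(\tau)$. You need only the large-time bound $e^{\tau\Delta}f\ge k_0\int_\Omega f$ for $\tau\ge\tau_0$. This follows from the spectral gap and the standard $L^1\to L^\infty$ decay of $e^{\tau\Delta}(f-\bar f)$. The price is discarding the range $\sigma<\tau_0r^{-\alpha}$ of the subordination variable. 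Since $M_\alpha>0$ on $(0,\infty)$, the truncated integral $\delta$ is still positive and visibly independent of $t$, so no continuity argument is needed. The paper's route gets positivity from an arbitrary short window but relies on heavier heat-kernel machinery. Yours uses only semigroup estimates already used throughout the paper.

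Your closing remark is also well taken. Part $(ii)$ needs $n\ge0$ on the whole existence interval, and the paper's proof uses this for all $t\ge0$ even though Lemma~\ref{lem:bound-n} is stated only for appropriately small $T$.
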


\begin{proof}
We first construct a spatially homogeneous comparison function $c_h(t)$ satisfying the time fractional ordinary differential equation ${_{0}^{C}\mathfrak{D}^{\alpha}_t}c_h(t)=-\gamma c_{h}(t)$ for all $t>0$, equipped with initial condition $c_h(0):=m_0=\inf_{\mathbf{x}\in\Omega}c(\mathbf{x},0)>0$. The explicit solution is
$c_h(t)=m_0E_{\alpha,1}(-\gamma t^\alpha)$. Define the auxiliary function $w(\mathbf{x},t):=c(\mathbf{x},t)-c_h(t)$. Applying the Caputo derivative operator to $w$ and using the second equation of \eqref{Pro:FKS}, we get, for $t>0$,
\begin{displaymath}
\begin{aligned}
{_{0}^{C}\mathfrak{D}^{\alpha}_t}w(\mathbf{x},t)
&={_{0}^{C}\mathfrak{D}^{\alpha}_t}c(\mathbf{x},t)-{_{0}^{C}\mathfrak{D}^{\alpha}_t}c_h(t)   \\
&=\Delta c(\mathbf{x},t)-\gamma c(\mathbf{x},t) + n(\mathbf{x},t)-\big(-\gamma c_h(t)\big)   \\
&=\Delta w(\mathbf{x},t)-\gamma w(\mathbf{x},t)+ n(\mathbf{x},t).
\end{aligned}\end{displaymath}
Treating the term $n(\mathbf{x},t)$ as a source term, and using its non-negativity guaranteed by Lemma~\ref{lem:bound-n}, the solution $w(\mathbf{x},t)$ satisfies
\begin{equation}
{_{0}^{C}\mathfrak{D}^{\alpha}_t}w-\Delta w+\gamma w\geq 0,\quad (\mathbf{x},t)\in\Omega\times(0,T].
\end{equation}
Since $w(\mathbf{x},0)=c(\mathbf{x},0)-c_h(0)\geq0$ and $\partial w/\partial\mathbf{\nu}=0$, Lemma \ref{Lem:CPp} implies $w(\mathbf{x},t)\geq0$, i.e., $c(\mathbf{x},t)\geq c_h(t)$ for all $\mathbf{x}\in\Omega$ and $t\geq0$. Using the standard lower bound for the Mittag–Leffler function (valid for $\gamma>0$, $\alpha\in(0,1)$),
\begin{equation}
E_{\alpha,1}\big(-\gamma t^\alpha\big)\geq\frac{1}{1+\gamma\,\Gamma(1-\alpha)\,t^\alpha},
\end{equation}
we obtain $c(\mathbf{x},t)\geq m_0 E_{\alpha,1}(-\gamma t^\alpha)\geq \frac{t^{-\alpha}}{t^{-\alpha}+\gamma\,\Gamma(1-\alpha)}m_0$. This yields the desired estimate \eqref{eq:LowerBound-c}. Since $E_{\alpha,1}(-z)>0$ for all $z\geq 0$, the strict positivity of $c$ follows immediately.

It remains to prove ($ii$), i.e., $c(\mathbf{x},t)$ admits a strictly positive lower bound $C_*>0$ for all $t\in[0,\infty)$. To establish this lower bound as $t\to\infty$, we analyze the continuous contribution of the source term $n(\mathbf{x},t)$, expressed as $\mathcal{K}(\mathbf{x},t):=\int_{0}^{t}(t-s)^{\alpha-1} E_{\alpha,\alpha}\big(-(t-s)^{\alpha}\mathcal{A}_{\gamma}\big)n(\mathbf{x},s)\mathrm{d}s$ in \eqref{sys:mild}, and split the time axis into two regimes.
\begin{itemize}
  \item \textbf{Short time interval $[0,1]$}: For $t\in[0,1]$, by Lemma \ref{lem:continuous} and from ($i$) with the lower bound \eqref{eq:LowerBound-c}, it holds that $c(\mathbf{x},t)\geq \min_{t\in[0,1]}m_0 E_{\alpha,1}(-\gamma t^\alpha)\geq m_0 E_{\alpha,1}(-\gamma):=C_1>0$.
  \item \textbf{Long time interval $(1,\infty)$}: For $t>1$, we consider the integral contribution in $\mathcal{K}(\mathbf{x},t)$ specifically over the recent history window $s\in[t-1,t-\frac{1}{2}]$. In this interval, $\tau=t-s\in[\frac{1}{2},1]$, and $\tau^{\alpha-1}\geq1$ since $\alpha\in(0,1)$, we have
  \begin{equation}\label{eq:longt}
   \begin{aligned}
  c(\mathbf{x},t)
  &\geq \int_{t-1}^{t-1/2} (t-s)^{\alpha-1} E_{\alpha,\alpha}\big(-(t-s)^{\alpha}\mathcal{A}_{\gamma}\big)n(\mathbf{x},s)\mathrm{d}s \\
  & \geq \int_{t-1}^{t-1/2}1\cdot(\eta_0 \tilde{M})\mathrm{d}s =\frac{1}{2}\eta_0 \tilde{M} := C_2 > 0.
   \end{aligned}
  \end{equation}
  The constant $C_2$ is strictly positive and independent of $t$.
\end{itemize}
In the last inequality in \eqref{eq:longt} is established by leveraging the subordination principle for the Mittag-Leffler operator. Specifically, we have
\begin{displaymath}
\big[E_{\alpha,\alpha}(-\tau^\alpha\mathcal{A}_\gamma)n\big](\mathbf{x},t)=\int_0^\infty\alpha z M_\alpha(z)e^{-\gamma z\tau^\alpha}\left(\int_\Omega p(\mathbf{x},\mathbf{y},z\tau^\alpha)n(\mathbf{y},t-\tau)\mathrm{d}\mathbf{y}\right)\mathrm{d}z,
\end{displaymath}
where $p(\mathbf{x},\mathbf{y},\theta)$ is the Neumann heat kernel on $\Omega$. For bounded connected domains $\Omega$ satisfying the uniform interior cone condition, it is well-established that the Neumann heat kernel admits the Gaussian lower bound (see, e.g., Thm.~3.10 in Ref.~\refcite{Gyrya11}). For any $\theta>0$, there exist constants $m_1>0$ and $m_2>0$ such that
\begin{displaymath}
p(\mathbf{x},\mathbf{y},\theta)
\geq \frac{m_1}{|\Omega|}\exp\left(-\frac{m_2\,\text{diam}(\Omega)^2}{\theta}\right)
:=p_*(\theta)>0,
\end{displaymath}
with $\text{diam}(\Omega)=\sup_{\mathbf{x},\mathbf{y}\in\Omega}|\mathbf{x}-\mathbf{y}|<\infty$ is the finite diameter of the domain. This estimate provides a uniform lower bound for the heat kernel that is independent of the spatial coordinates $\mathbf{x},\mathbf{y}$. Consequently, by utilizing the mass conservation law $\int_\Omega n(\mathbf{y},\cdot)\mathrm{d}\mathbf{y}=\tilde{M}$, we can bound the inner spatial integral as
\begin{equation}
\int_\Omega p\big(\mathbf{x},\mathbf{y},z\tau^\alpha)n(\mathbf{y},t-\tau\big)\mathrm{d}\mathbf{y}
\geq p_*\big(z \tau^\alpha\big)\tilde{M}.
\end{equation}
Define the auxiliary function $\mathcal{H}(\tau):=\alpha\tilde{M}\int_0^\infty z M_\alpha(z)e^{-\gamma z\tau^\alpha}p_*(z\tau^\alpha)\mathrm{d}z$. Note that $M_\alpha(z)>0$ for all $z>0$, and the exponential structure of the Gaussian bound $p_*$ ensures that the integrand is strictly positive on $(0,\infty)$. Furthermore, the Mainardi function $M_\alpha(z)$ exhibits super-exponential decay as $z\to 0^+$ (specifically, $M_\alpha(z)\sim\exp(-z^{-\frac{1}{1-\alpha}})$, see e.g., Thm.3.8~(iv) in Ref.~\refcite{Jin21book}), which effectively counteracts the singularity potentially arising from the term $\exp(-1/z)$ in the heat kernel estimate. By the Lebesgue dominated convergence theorem, $\mathcal{H}(\tau)$ is continuous on the compact interval $\tau\in[\frac{1}{2},1]$. Applying the Weierstrass extreme value theorem, we conclude that $\mathcal{H}(\tau)$ attains a strictly positive minimum
\begin{displaymath}
\min_{\tau\in[1/2,1]}\mathcal{H}(\tau):=\eta_0>0.
\end{displaymath}
Substitute this uniform bound into the temporal integral, \eqref{eq:longt} holds directly.

Finally, setting $C_*=\min\{C_1, C_2\}>0$ yields $c(\mathbf{x},t)\geq C_*$ for all $\mathbf{x}\in\Omega$ and $t\geq0$. This completes the proof.
\end{proof}

\begin{remark}
Combing Lemmas \ref{lem:bound-n} and \ref{lem:LowerBound-c} yields the non-negativity of $n(\mathbf{x},t)$ and the strict positivity of $c(\mathbf{x},t)>0$ almost everywhere on $(\mathbf{x},t)\in\Omega\times[0,T]$. In particular, the lower bound established in Lemma \ref{lem:LowerBound-c} guarantees the well-posedness of the logarithmic chemotaxis term and will be frequently invoked in the forthcoming analysis.
\end{remark}
\begin{lemma}\label{lem:regularity-c}
Let $T>0$, $d\geq2$ and $1\leq p,q\leq\infty$. Suppose the initial data $n_0(\mathbf{x})$ and $c_0(\mathbf{x})$ satisfy \eqref{Eq:Assuption}. Then the following estimates hold for $t\in(0,T]$:
\begin{description}
  \item[$(i)$] If $\frac{1}{q}-\frac{1}{p}<\frac{2}{d}$, there exists a positive constant $C$ such that
  \begin{equation}\label{eq:estimate-c-r1}
  \big\|c(\mathbf{x},t)\big\|_{L^{p}}
  \leq C\Big(1+\max\Big\{T^{\alpha},T^{\alpha-\frac{\alpha d}{2}\max\{0,\frac{1}{q}-\frac{1}{p}\}}\Big\}\sup_{t\in(0,T)}\big\|n(\mathbf{x},t)\big\|_{L^{q}}\Big).
  \end{equation}
  \item[$(ii$)] If $\frac{1}{q}-\frac{1}{p}<\frac{1}{d}$, there exists a positive constant $C$ such that
  \begin{equation}\label{eq:estimate-c-r2}
  \big\| c(\mathbf{x},t)\big\|_{W^{1,p}}
  \leq C\Big(1
  +\max\Big\{T^{\alpha},T^{\frac{\alpha}{2}-\frac{\alpha d}{2}\max\{0,\frac{1}{q}-\frac{1}{p}\}}\Big\}\sup_{t\in(0,T)}\big\|n(\mathbf{x},t)\big\|_{L^{q}}\Big).
  \end{equation}
\end{description}
\end{lemma}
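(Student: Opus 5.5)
We start from the Duhamel representation of $c$ in \eqref{sys:mild}. For $t\in(0,T]$ this gives
\begin{displaymath}
c(\mathbf{x},t)=E_{\alpha}\big(-t^{\alpha}\mathcal{A}_{\gamma}\big)c_0
+\int_{0}^{t}(t-s)^{\alpha-1}E_{\alpha,\alpha}\big(-(t-s)^{\alpha}\mathcal{A}_{\gamma}\big)n(\cdot,s)\,\mathrm{d}s ,
\end{displaymath}
and we estimate the two terms separately in $L^p$ and in $W^{1,p}$.

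\textbf{The initial-data term.} Since $c_0\in W^{1,\infty}(\Omega)\subset W^{1,p}(\Omega)$, we do not use any smoothing here, only uniform boundedness. The $\mathcal{A}_\gamma$-analogue of \eqref{eq:Ea} with $q=p$ gives $\|E_\alpha(-t^\alpha\mathcal{A}_\gamma)c_0\|_{L^p}\le C\|c_0\|_{L^p}$. This follows from the subordination formula \eqref{eq:EDelta}, because $e^{-\gamma st^\alpha}e^{st^\alpha\Delta}$ is a contraction and $\int_0^\infty M_\alpha=1$.

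For the gradient we use that $\mathcal{A}^{1/2}$ commutes with the functional calculus. Hence $\|\mathcal{A}^{1/2}E_\alpha(-t^\alpha\mathcal{A}_\gamma)c_0\|_{L^p}\le C\|\mathcal{A}^{1/2}c_0\|_{L^p}\le C\|c_0\|_{W^{1,p}}$, and Remark \ref{rmk:space}$(v)$ converts this into a $W^{1,p}$ bound. This produces the constant "$1$" in both \eqref{eq:estimate-c-r1} and \eqref{eq:estimate-c-r2}. For $p=\infty$ we would instead use the heat-semigroup gradient bound $\|\nabla e^{t\Delta}w\|_{L^\infty}\le C\|\nabla w\|_{L^\infty}$, valid on smooth convex-type domains up to a constant.

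\textbf{The Duhamel term for $(i)$.} We set $\delta:=\max\{0,\frac1q-\frac1p\}$. When $q<p$ we apply \eqref{eq:Ed}; when $q\ge p$ we first use the embedding $L^q\hookrightarrow L^p$ on the bounded domain $\Omega$ and then apply \eqref{eq:Ed} with $q=p$. This yields
\begin{displaymath}
\Big\|\int_0^t\cdots\Big\|_{L^p}\le C\int_0^t (t-s)^{\alpha-1}\big(1+(t-s)^{-\frac{\alpha d}{2}\delta}\big)\,\mathrm{d}s\,\sup_{s}\|n(\cdot,s)\|_{L^q}.
\end{displaymath}
The time integral equals $t^\alpha/\alpha+t^{\alpha-\alpha d\delta/2}/(\alpha-\alpha d\delta/2)$. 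It is finite precisely because $\frac{d}{2}\delta<1$, which is the hypothesis $\frac1q-\frac1p<\frac2d$. Bounding $t\le T$ for each power gives the $\max\{T^\alpha,T^{\alpha-\alpha d\delta/2}\}$ factor, with the caveat that a negative exponent would need $t\ge$ something; here the exponent is positive, so monotonicity in $t$ suffices.

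\textbf{The Duhamel term for $(ii)$.} We keep the $L^p$ part from $(i)$ and add the gradient part. By \eqref{eq:Ef},
\begin{displaymath}
\big\|\mathcal{A}^{1/2}E_{\alpha,\alpha}(-(t-s)^\alpha\mathcal{A}_\gamma)n\big\|_{L^p}\le C\big(1+(t-s)^{-\frac\alpha2-\frac{\alpha d}{2}\delta}\big)\|n\|_{L^q}.
\end{displaymath}
Multiplying by $(t-s)^{\alpha-1}$ and integrating gives $t^{\alpha}+t^{\frac\alpha2-\frac{\alpha d}{2}\delta}$, up to constants. This is finite exactly when $\frac12+\frac d2\delta<1$, i.e.\ when $\frac1q-\frac1p<\frac1d$. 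Remark \ref{rmk:space}$(v)$ then turns the $L^p$ bound together with the $\mathcal{A}^{1/2}$ bound into a $W^{1,p}$ bound, and combining with $(i)$ yields \eqref{eq:estimate-c-r2}.

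\textbf{Main obstacle.} The delicate point is the endpoint cases $p=\infty$ or $q=1$, where Remark \ref{rmk:space}$(v)$ (valid only for $1<p<\infty$) and the Riesz-transform equivalence fail. For $p=\infty$ we would bypass $\mathcal{A}^{1/2}$ entirely and bound $\nabla E_{\alpha,\alpha}(\cdot)$ directly through the subordination formula, using the heat-kernel gradient estimate $\|\nabla e^{\theta\Delta}w\|_{L^p}\le C(1+\theta^{-\frac12-\frac d2(\frac1q-\frac1p)})e^{-\lambda_2\theta}\|w\|_{L^q}$ (Lemma 3.1 in Ref.~\refcite{Ma25}). Integrating this against $\alpha sM_\alpha(s)$ via \eqref{eq:identity} gives the same time singularity.
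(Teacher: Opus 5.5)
Your proposal is correct and follows essentially the same route as the paper: the Duhamel formula for $c$, the bounds \eqref{eq:Ea}/\eqref{eq:Ed} for the $L^p$ part (with H\"older's inequality, i.e.\ $L^q\hookrightarrow L^p$, when $q\ge p$), and \eqref{eq:Ef} together with commuting $\mathcal{A}^{1/2}$ through $E_\alpha(-t^\alpha\mathcal{A}_\gamma)$ for the gradient part, where the conditions $\frac1q-\frac1p<\frac2d$ and $\frac1q-\frac1p<\frac1d$ arise exactly as integrability of the resulting time kernels. Your endpoint remarks go beyond the paper, which applies Remark~\ref{rmk:space}$(v)$ without regard to its restriction $1<p<\infty$; note only that $\|\nabla e^{t\Delta}w\|_{L^\infty}\le C\|w\|_{W^{1,\infty}}$ holds on every smooth bounded domain, so no convexity is needed, and that the other endpoint where $(v)$ fails is $p=1$ rather than $q=1$, which your direct subordination argument for $\nabla E_{\alpha,\alpha}$ also covers.
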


\begin{proof}
We divide the proof of ($i$) into two distinct cases: $p\geq q$ and $p<q$.

\textbf{Case 1: $p\geq q$.} Apply estimates \eqref{eq:Ea} and \eqref{eq:Ed} from Lemma \ref{lem:MLbounds}, together with H\"{o}lder's inequality, yields, for $t>0$
\begin{equation}\label{eq:eeetedr}
\begin{aligned}
\big\|c(\mathbf{x},t)\big\|_{L^{p}}
&\leq\big\|E_{\alpha}(-t^{\alpha}\mathcal{A})c_0\big\|_{L^{p}}+\int_{0}^{t}(t-s)^{\alpha-1}
   \left\|E_{\alpha,\alpha}\big(-(t-s)^{\alpha}\mathcal{A}_{\gamma}\big)n(\mathbf{x},s)\right\|_{L^{p}}\mathrm{d}s\\
&\leq |\Omega|^{1/p}\big\|E_{\alpha}(-t^{\alpha}\mathcal{A})c_0\big\|_{L^{\infty}}
   +C\int_{0}^{t}(t-s)^{\alpha-1}\big\|n(\mathbf{x},s)\big\|_{L^{q}}\mathrm{d}s\\
&\quad+C\int_{0}^{t}(t-s)^{\alpha-1-\frac{\alpha d}{2}(\frac{1}{q}-\frac{1}{p})}\big\|n(\mathbf{x},s)\big\|_{L^{q}}\mathrm{d}s\\
&\leq C\big\|c_0\big\|_{L^{\infty}}+C\max\Big\{T^{\alpha},T^{\alpha-\frac{\alpha d}{2}(\frac{1}{q}-\frac{1}{p})}\Big\}\sup_{t\in(0,T)}\big\|n(\mathbf{x},t)\big\|_{L^{q}}.
\end{aligned}
\end{equation}
In the last inequality, we used the fact that if $0\leq\frac{1}{q}-\frac{1}{p}<\frac{2}{d}$, $0<1-\alpha+\frac{\alpha d}{2}(\frac{1}{q}-\frac{1}{p})<1$, then the second integral is convergent and bounded as above.

\textbf{Case 2: $p<q$.} The estimate follows directly from H\"{o}lder's inequality. Combining the two cases gives \eqref{eq:eeetedr} for all $1\leq p,q\leq\infty$. Under the assumptions in \eqref{Eq:Assuption}, the estimate \eqref{eq:estimate-c-r1} follows immediately.

For the gradient estimate in ($ii$), we use a similar argument together with the commutator identity $\mathcal{A}^{1/2}E_{\alpha}(-t^{\alpha}\mathcal{A}))c_0=E_{\alpha}(-t^{\alpha}\mathcal{A}))\mathcal{A}^{1/2}c_0$, which follows from the corresponding identity for the heat semigroup $\mathcal{A}^{1/2}e^{t\Delta}c_0=e^{t\Delta}\mathcal{A}^{1/2} c_0$. Applying the estimates \eqref{eq:Ea} and \eqref{eq:Ef} from Lemma \ref{lem:MLbounds}, and noting that the condition $0\leq\frac{1}{q}-\frac{1}{p}<\frac{1}{d}$ implies $1-\frac{\alpha}{2}+\frac{\alpha d}{2}(\frac{1}{q}-\frac{1}{p})<1$, we obtain \eqref{eq:estimate-c-r2}. This completes the proof of the lemma.
\end{proof}

\begin{remark}
Lemma \ref{lem:regularity-c} requires the exponents $p$, $q$ to satisfy $1\leq p,q\leq\infty$ and $\frac{1}{q}-\frac{1}{p}<\frac{1}{d}$ with $d\geq2$. Without loss of generality, we may make $q\in[\frac{d}{2},d)$, which implies $\frac{dq}{d-q}>d$. This allows us to choose $p>d$ with $p<\frac{dq}{d-q}$. Such a choice simultaneously satisfies the hypotheses of Lemma \ref{lem:regularity-c} and the assumptions needed in Theorem \ref{Thm:well-posed}.
\end{remark}

The proofs of these lemmas are based on standard arguments, yet their results reveal a clear departure from the classical logarithmic Keller–Segel system. This distinction provides new insight into the time‑nonlocal system \eqref{Pro:FKS}.

\section{Local Well-Posedness of Mild Solution in Arbitrary Dimensions}
\label{sec:LEMS}
Let $\Omega\subset\mathbb{R}^{d}$ ($d\geq2$) be a bounded domain with smooth boundary. Following the framework adopted in, e.g., Refs.~\refcite{Jin20,Winkler10,Winkler22}, we introduce the Banach space
\begin{equation}
\mathbb{X}:=C\big([0,T];L^{\infty}(\Omega)\big)\times C\big([0,T];W^{1,p}(\Omega)\big),\quad \text{\rm for}~ ~ p>d\geq2,
\end{equation}
endowed with the norm
\begin{equation}
\big\|(n,c)\big\|_{\mathbb{X}}:=\big\|n\big\|_{C([0,T];L^{\infty}(\Omega))}+\big\|c\big\|_{C([0,T];W^{1,p}(\Omega))}.
\end{equation}
Since $p>d\geq2$, the Sobolev embedding theorem ensures that $W^{1,p}(\Omega)\hookrightarrow C(\overline{\Omega})$. This continuity allows us to define pointwise bounds for $c(\mathbf{x},t)$, which is further justified by Lemma \ref{lem:LowerBound-c}. For fixed constants $\mathcal{R}>0$ and $C_{*}>0$, we define the closed subset $\mathfrak{B}$ as follows,
\begin{equation}
\mathfrak{B} := \left\{
(n,c) \in \mathbb{X} \;\middle|\;
\begin{aligned}
&\|(n,c)\|_{\mathbb{X}} \leq \mathcal{R},\; n \geq 0,\; c \geq C_* \,\text{in } \Omega \times [0,T], \\
&\partial_{\nu} n = \partial_{\nu} c = 0 \, \text{on } \partial\Omega
\end{aligned}
\right\}.
\end{equation}
We focus on the mapping $\mathcal{M}(n,c):=\big(\mathcal{M}_1(n,c),\mathcal{M}_2(n,c)\big)$, defined for $t\in(0,T]$ by
\begin{equation}\label{eq:mappings}
\left\{\begin{aligned}
\mathcal{M}_1(n,c) &= E_{\alpha}\bigl(-t^{\alpha}\mathcal{A}\bigr)n_0
      -\chi\int_{0}^{t} (t-s)^{\alpha-1} E_{\alpha,\alpha}\bigl(-(t-s)^{\alpha}\mathcal{A}\bigr)
         \nabla\cdot\Bigl(\frac{n}{c}\nabla c\Bigr)(s) \,\mathrm{d}s,\\[4pt]
\mathcal{M}_2(n,c) &= E_{\alpha}\bigl(-t^{\alpha}\mathcal{A}_{\gamma}\bigr)c_0
      +\int_{0}^{t} (t-s)^{\alpha-1} E_{\alpha,\alpha}\bigl(-(t-s)^{\alpha}\mathcal{A}_{\gamma}\bigr) n(s) \,\mathrm{d}s.
\end{aligned}\right.
\end{equation}
These components satisfy
\begin{equation}\label{Pro:mapping}
\left\{
\begin{aligned}
&{}_{0}^{C}\mathfrak{D}^{\alpha}_t \mathcal{M}_1(n,c)(\mathbf{x},t) = \mathcal{D}\Delta \mathcal{M}_1(n,c)(\mathbf{x},t)
      - \mathcal{D}\chi \nabla\cdot\Bigl(\frac{n}{c}\nabla c\Bigr)(\mathbf{x},t),\\[4pt]
&{}_{0}^{C}\mathfrak{D}^{\alpha}_t \mathcal{M}_2(n,c)(\mathbf{x},t) = \mathcal{D}\Delta \mathcal{M}_2(n,c)(\mathbf{x},t)
      - \gamma \mathcal{M}_2(n,c)(\mathbf{x},t) + n(\mathbf{x},t),\\[4pt]
&\mathcal{M}_1(n,c)(\mathbf{x},0) = n_0(\mathbf{x}), \quad \mathcal{M}_2(n,c)(\mathbf{x},0) = c_0(\mathbf{x}),\\[4pt]
&\partial_{\nu}\mathcal{M}_1(n,c)(\mathbf{x},t) = 0, \qquad \partial_{\nu}\mathcal{M}_2(n,c)(\mathbf{x},t) = 0.
\end{aligned}
\right.
\end{equation}

Our goal is to establish the following properties of $\mathcal{M}$:
\begin{itemize}
  \item $\mathcal{M}(n,c)\in\mathbb{X}$ for all $(n,c)\in\mathbb{X}$, $t\in(0,T]$;
  \item for sufficiently large $\mathcal{R}>0$ and appropriately small $T>0$, $\mathcal{M}$ maps $\mathfrak{B}$ into itself; and
  \item under the same conditions, $\mathcal{M}$ is a contraction on $\mathfrak{B}$.
\end{itemize}
We first show that $\mathcal{M}$ is well-defined on $\mathfrak{B}$ and, under suitable assumptions, is self-mapping.

\begin{lemma}\label{Lem:Bijection}
Let the initial data $n_0(\mathbf{x})$ and $c_0(\mathbf{x})$ satisfy \eqref{Eq:Assuption} and let $\chi > 0$. Define the mapping $\mathcal{M}(n,c):=\big(\mathcal{M}_1(n,c),\mathcal{M}_2(n,c)\big)$ by \eqref{eq:mappings}. Then $\mathcal{M}(n,c)\in\mathbb{X}$ for all $(n,c)\in\mathbb{X}$. Moreover, for any $t\in(0,T]$, if $\mathcal{R}>0$ is suitably large and $T>0$ is appropriately small, the mapping $\mathcal{M}$ is well-defined on the closed subset $\mathfrak{B}$ and maps $\mathfrak{B}$ into itself, i.e., $\mathcal{M}:\mathfrak{B}\to\mathfrak{B}$.
\end{lemma}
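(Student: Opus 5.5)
We will estimate the two components of $\mathcal{M}$ separately in the norms of $\mathbb{X}$, using the Mittag-Leffler smoothing bounds of Lemma~\ref{lem:MLbounds} and the pointwise lower bound $c\ge C_*$ built into $\mathfrak{B}$. Fix $(n,c)\in\mathfrak{B}$. Since $p>d$, Remark~\ref{rmk:space}$(iv)$ gives $\|c\|_{L^\infty}\le C\|c\|_{W^{1,p}}\le C\mathcal{R}$. Together with $c\ge C_*$, this yields the flux bound
\[
\Big\|\frac{n}{c}\nabla c\Big\|_{L^{p}}\le C_*^{-1}\|n\|_{L^\infty}\|\nabla c\|_{L^p}\le C_*^{-1}\mathcal{R}^2 .
\]

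For $\mathcal{M}_1$ we apply \eqref{eq:Ea} with $q=p=\infty$ to the free part, which gives $\|E_\alpha(-t^\alpha\mathcal{A})n_0\|_{L^\infty}\le C\|n_0\|_{L^\infty}$. For the Duhamel part we apply \eqref{eq:Eb} with source exponent $p$ and target $L^\infty$. The admissibility condition $\frac{d}{2p}<\frac32$ holds, and the time singularity is $(t-s)^{\alpha-1-\frac{\alpha}{2}-\frac{\alpha d}{2p}}$. This is integrable because $\frac{\alpha}{2}(1+\frac dp)<\alpha$, which is exactly where $p>d$ enters. We obtain
\[
\|\mathcal{M}_1(n,c)(t)\|_{L^\infty}\le C\|n_0\|_{L^\infty}+C\chi C_*^{-1}\mathcal{R}^2\big(T^{\alpha}+T^{\frac{\alpha}{2}(1-\frac dp)}\big).
\]
For $\mathcal{M}_2$ we argue as in Lemma~\ref{lem:regularity-c}$(ii)$ with $q=\infty$. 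The free part is controlled by $\|c_0\|_{W^{1,\infty}}$, using the commutation of $\mathcal{A}^{1/2}$ with $E_\alpha(-t^\alpha\mathcal{A}_\gamma)$ and Remark~\ref{rmk:space}$(v)$. The Duhamel part is controlled via \eqref{eq:Ed} and \eqref{eq:Ef}, which give the bound $C\mathcal{R}(T^\alpha+T^{\alpha/2})$.

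We then choose $\mathcal{R}\ge 2C(\|n_0\|_{L^\infty}+\|c_0\|_{W^{1,\infty}})$ and take $T$ small enough that the $T$-dependent terms are at most $\mathcal{R}/2$. This gives $\|\mathcal{M}(n,c)\|_{\mathbb{X}}\le\mathcal{R}$. The same estimates without the smallness step show that $\mathcal{M}(n,c)$ is bounded for arbitrary $(n,c)\in\mathbb{X}$, provided $c$ is bounded below; this is implicit in the definition of the flux.

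Membership in $C([0,T];\cdot)$ follows from two facts. First, Lemma~\ref{lem:continuous} gives strong continuity of the families for $t>0$. Second, the Duhamel terms are continuous, which we show by the standard splitting of $\int_0^{t+h}-\int_0^t$ into a short piece of length $h$, small by the integrable kernel, and a piece handled by dominated convergence. Continuity at $t=0$ follows from the subordination formula \eqref{eq:EDelta} together with the strong continuity of the heat semigroup on $C(\overline\Omega)$ and $W^{1,p}$. This is why $n_0\in C^0(\overline\Omega)$ rather than merely $L^\infty$ is assumed. The Neumann conditions are inherited because each term lies in the range of the Neumann functional calculus, as recorded in \eqref{Pro:mapping}.

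It remains to preserve the order constraints. For $\mathcal{M}_2$ we write $\mathcal{M}_2=E_\alpha(-t^\alpha\mathcal{A}_\gamma)c_0+(\text{positivity-preserving operator applied to }n\ge0)$, since the heat kernel and $M_\alpha$ are nonnegative. Hence $\mathcal{M}_2\ge m_0E_\alpha(-\gamma t^\alpha)\ge m_0E_\alpha(-\gamma T^\alpha)$, and we fix $C_*\le m_0E_\alpha(-\gamma T^\alpha)$, for instance $C_*=m_0E_\alpha(-\gamma)$ when $T\le1$, consistent with Lemma~\ref{lem:LowerBound-c}.

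\textbf{Main obstacle.} The hardest step is $\mathcal{M}_1\ge0$, since the drift term has no sign. Here we will invoke the argument of Lemma~\ref{lem:bound-n}. The function $\mathcal{M}_1$ solves the linear equation in \eqref{Pro:mapping}, and we plan to treat the frozen-flux problem ${}_0^C\mathfrak{D}^\alpha_t u=\Delta u-\chi\nabla\cdot(u\nabla\log c)$ with $u(0)=n_0$. Its unique mild solution is nonnegative by the $L^1$ negative-part argument of Lemma~\ref{lem:bound-n}. We will then identify $\mathcal{M}_1$ with that solution at the fixed point. If this identification proves awkward, the fallback is to define $\mathfrak{B}$-self-mapping up to the sign condition and recover $n\ge0$ a posteriori for the fixed point via Lemma~\ref{lem:bound-n}.
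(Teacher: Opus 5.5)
Your norm estimates for $\mathcal{M}_1$ and $\mathcal{M}_2$, and the bound $\mathcal{M}_2\ge m_0E_\alpha(-\gamma t^\alpha)$, follow the paper's proof essentially step for step. Two of your remarks go beyond the paper, which passes over both points: that $c$ must be bounded below for $\mathcal{M}(n,c)$ to be defined on $\mathbb{X}$, and that $n_0\in C^0(\overline\Omega)$ is what gives continuity at $t=0$ in $L^\infty$.

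The genuine gap is the claim $\mathcal{M}_1(n,c)\ge0$ for every $(n,c)\in\mathfrak{B}$. In your frozen-flux problem the unknown $u$ sits inside the drift, $\nabla\cdot(u\nabla\log c)$. By contrast, $\mathcal{M}_1(n,c)$ solves the equation in \eqref{Pro:mapping}, where $\nabla\cdot(\frac{n}{c}\nabla c)$ is a fixed source built from the input $n$. The two problems coincide only when $\mathcal{M}_1(n,c)=n$, i.e. at a fixed point. So the identification says nothing about a general element of $\mathfrak{B}$. Using it to produce the fixed point is also circular, since Banach's theorem on $\mathfrak{B}$ needs the self-map first.

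The paper's own argument fails at exactly the same spot. Multiplying the $\mathcal{M}_1$-equation by $\mathrm{sgn}(\mathcal{M}_1)$ does not produce a divergence, because $\mathrm{sgn}(\mathcal{M}_1)\nabla\cdot(\frac{n}{c}\nabla c)\neq\nabla\cdot(|\mathcal{M}_1|\frac{\nabla c}{c})$. In fact the claim is false under \eqref{Eq:Assuption}:
\begin{itemize}
\item Suppose $n_0=0$ on a ball around $x_0$. Gaussian upper bounds for the Neumann heat kernel, combined with \eqref{eq:EDelta} and \eqref{eq:identity}, give $[E_\alpha(-t^\alpha\mathcal{A})n_0](x_0)=O(t^{2\alpha})$.
\item Take a smooth time-independent pair $(\bar n,\bar c)\in\mathfrak{B}$ with $g:=\nabla\cdot(\bar n\nabla\log\bar c)$ and $g(x_0)>0$. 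At $x_0$ the Duhamel term equals $-\chi\frac{t^\alpha}{\Gamma(1+\alpha)}g(x_0)+o(t^\alpha)$.
\item Hence $\mathcal{M}_1(\bar n,\bar c)(x_0,t)<0$ for all small $t>0$, however large $\mathcal{R}$ and small $T$ are chosen.
\end{itemize}

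Your fallback is therefore the right repair, but it proves a modified lemma, and one further step must change. Drop $n\ge0$ from $\mathfrak{B}$ and keep $c\ge C_*$. Obtain the fixed point, then recover $n\ge0$ from Lemma~\ref{lem:bound-n}; its negative-part argument applies there because the drift is $\nabla\cdot(n\nabla\log c)$ with the same $n$.

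Without $n\ge0$ in the ball, however, you can no longer discard the Duhamel part of $\mathcal{M}_2$. Instead, use $\|E_{\alpha,\alpha}(-\tau^\alpha\mathcal{A}_\gamma)\|_{L^\infty\to L^\infty}\le 1/\Gamma(\alpha)$, which follows from \eqref{eq:EDelta}, \eqref{eq:identity} and $L^\infty$-contractivity of the heat semigroup. This gives $\mathcal{M}_2\ge m_0E_\alpha(-\gamma)-\mathcal{R}T^\alpha/\Gamma(1+\alpha)$ for $T\le1$. Then set $C_*:=\frac12 m_0E_\alpha(-\gamma)$ and take $T$ small. Since this $C_*$ depends only on $m_0,\gamma,\alpha$, choosing $\mathcal{R}$ first and then $T$ is not circular.

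Alternatively, under the stronger hypothesis $\min_{\overline\Omega}n_0>0$, the original $\mathfrak{B}$ works by smallness alone. Indeed $E_\alpha(-t^\alpha\mathcal{A})n_0\ge\min n_0$, and your Duhamel bound gives $\mathcal{M}_1\ge\min n_0-C\chi C_*^{-1}\mathcal{R}^2\big(T^\alpha+T^{\frac{\alpha}{2}(1-\frac dp)}\big)>0$ for $T$ small.
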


\begin{proof}
We assume without loss of generality that $t\in(0,T]$ with $T<1$ and $p>d$. For any $(n,c)\in\mathfrak{B}$, we have $n(t,x)\geq0$ and $c(x,t)\ge C_*$ on $\Omega\times(0,T)$. Applying estimates \eqref{eq:Ea} and \eqref{eq:Eb} from Lemma \ref{lem:MLbounds},  together with H\"{o}lder's inequality, yields
\begin{displaymath}
\begin{aligned}
\big\|\mathcal{M}_1(n,c)\big\|_{L^{\infty}}
&\leq C\big\|n_0\big\|_{L^{\infty}}
     +\chi\int_{0}^{t}s^{\alpha-1}\left\|E_{\alpha,\alpha}\big(-s^{\alpha}\mathcal{A}\big)
     \nabla\cdot\Big(\frac{n}{c}\nabla c\Big)(t-s)\right\|_{L^{\infty}}\mathrm{d}s\\
&\leq C\big\|n_0\big\|_{L^{\infty}}+C\int_{0}^{t}(t-s)^{\frac{\alpha}{2}-\frac{\alpha d}{2p}-1}
      \left\|c(s)^{-1}\right\|_{L^{\infty}}\big\|n(s)\nabla c(s)\big\|_{L^{p}}\mathrm{d}s\\
&\leq C\big\|n_0\big\|_{L^{\infty}}+\frac{C}{C_{*}}\int_{0}^{t}
     (t-s)^{\frac{\alpha}{2}-\frac{\alpha d}{2p}-1}
      \big\|n(s)\big\|_{L^{\infty}}
      \big\|\nabla c(s)\big\|_{L^{p}}\mathrm{d}s\\
&\leq C\big\|n_0\big\|_{L^{\infty}}+C\mathcal{R}^2
     \int_{0}^{t}(t-s)^{\frac{\alpha}{2}-\frac{\alpha d}{2p}-1}\mathrm{d}s \\
&\leq C\big\|n_0\big\|_{L^{\infty}}+C\mathcal{R}^2 T^{\frac{\alpha}{2}(1-\frac{d}{p})}.
\end{aligned}
\end{displaymath}
Choosing $\mathcal{R}>0$ suitably large and $T>0$ appropriately small (depending on $n_0$ and $c_0$ only through their norms in $L^{\infty}(\Omega)$ and $W^{1,p}(\Omega)$), we obtain
\begin{equation}\label{eq:EM1}
\sup_{t\in[0,T]}\big\|\mathcal{M}_1(n,c)\big\|_{L^{\infty}}\leq \mathcal{R}/2.
\end{equation}

Similarly, using eatimates \eqref{eq:Ea} and \eqref{eq:Ef} from  Lemma \ref{lem:MLbounds}, for $p>d$, we have
\begin{equation}\label{eq:EM2}
\begin{aligned}
\Big\|\mathcal{A}^{1/2}\mathcal{M}_2(n,c)\Big\|_{L^{p}}
&\leq \left\|E_{\alpha}(-t^\alpha\mathcal{A}_{\gamma})\mathcal{A}^{1/2}c_0\right\|_{L^{p}}
      +\int_{0}^{t}s^{\alpha-1}\Big\|\mathcal{A}^{1/2}E_{\alpha,\alpha}\big(-s^{\alpha}
      \mathcal{A}_{\gamma}\big)n(t-s)\Big\|_{L^{p}}\mathrm{d}s\\
&\leq C\big\|\nabla c_0\big\|_{L^{\infty}}
      +C\int_{0}^{t}(t-s)^{\frac{\alpha}{2}-1}\big\|n(s)\big\|_{L^{p}}\mathrm{d}s\\
&\leq C\big\|\nabla c_0\big\|_{L^{\infty}}+CT^{\frac{\alpha}{2}}\,\mathcal{R},
\end{aligned}
\end{equation}
and
\begin{equation}\label{eq:EM3}
\big\|\mathcal{M}_2(n,c)\big\|_{L^{p}}\leq C\|c_0\|_{L^{\infty}}+C T^{\alpha}\mathcal{R}.
\end{equation}
Thus, for $\mathcal{R}$ and $T$ chosen as above, we get
\begin{equation}\label{eq:EM4}
\sup_{t\in[0,T]}\big\|\mathcal{M}_2(n,c)\big\|_{W^{1,p}}\leq \mathcal{R}/2.
\end{equation}
Combining \eqref{eq:EM1} and \eqref{eq:EM4}, we conclude that for all $t\geq0$,
\begin{displaymath}
\big\|\mathcal{M}(n,c)\big\|_{\mathbb{X}}
\leq\mathcal{R},\qquad \forall~~(n,c)\in\mathfrak{B}.
\end{displaymath}

To complete the proof that $\mathcal{M}(\mathfrak{B})\subset\mathfrak{B}$, it remains to verify that $\mathcal{M}_1$ preserves the non-negativity of elements in $\mathfrak{B}$ (i.e., $\mathcal{M}_1\geq0$), and that $\mathcal{M}_2$ preserves the lower bound constraint $c\geq C_*>0$ that defines $\mathfrak{B}$. This is done by adapting the technique from the proofs of Lemmas \ref{lem:bound-n} and \ref{lem:LowerBound-c}.

\textbf{Non-negativity of $\mathcal{M}_1$.} Following the argument in Lemma~\ref{lem:bound-n}, Multiply the first equation in \eqref{Pro:mapping} by $\operatorname{sgn}(\mathcal{M}_1)$, integrate over $\Omega$, and apply the Neumann boundary conditions. This yields
\begin{displaymath}
{}_{0}^{C}\mathfrak{D}^{\alpha}_t\int_{\Omega}|\mathcal{M}_1|\,\mathrm{d}x\le 0.
\end{displaymath}
On the other hand, mass conservation (derived from the same equation) gives ${}_{0}^{C}\mathfrak{D}^{\alpha}_t\int_{\Omega}\mathcal{M}_1\mathrm{d}x=0$. Subtracting yields
\begin{displaymath}
{}_{0}^{C}\mathfrak{D}^{\alpha}_t\int_{\Omega}\bigl(|\mathcal{M}_1|-\mathcal{M}_1\bigr)\,\mathrm{d}x \le 0.
\end{displaymath}
Since $|\mathcal{M}_1|-\mathcal{M}_1\ge 0$ and vanishes at $t=0$, it follows that $|\mathcal{M}_1|-\mathcal{M}_1\equiv 0$, i.e., $\mathcal{M}_1(n,c)\ge 0$ for all $(n,c)\in\mathfrak{B}$.

\textbf{Lower bound for $\mathcal{M}_2$}. Recall from Lemma~\ref{lem:LowerBound-c} that
\begin{equation}
    C_* = \min\{C_1,\, C_2\} > 0, \quad
    C_1 = m_0 E_{\alpha,1}(-\gamma), \quad
    m_0 = \inf_{\overline{\Omega}}c_0(\mathbf{x})> 0,
\end{equation}
satisfies $c(\mathbf{x},t)\ge C_*$ for the original solution. We show the same for $\mathcal{M}_2(n,c)$. Using the mild representation of $\mathcal{M}_2$, we have
\begin{displaymath}
\mathcal{M}_2(n,c)(\mathbf{x},t)
= E_{\alpha}\bigl(-t^{\alpha}\mathcal{A}_{\gamma}\bigr)c_0(\mathbf{x})
  + \int_{0}^{t} (t-s)^{\alpha-1}
  E_{\alpha,\alpha}\bigl(-(t-s)^{\alpha}\mathcal{A}_{\gamma}\bigr)n(\mathbf{x},s)\,\mathrm{d}s.
\end{displaymath}
Since $(n,c)\in\mathfrak{B}$ implies $n\ge 0$ and  the Mittag–Leffler operators $E_{\alpha}(-t^{\alpha}\mathcal{A}_{\gamma})$, $E_{\alpha,\alpha}(-t^{\alpha}\mathcal{A}_{\gamma})$ are positivity-preserving (see, e.g., Ref.~\refcite{Wang12}), the integral term is non-negative. Hence,
\begin{displaymath}
\mathcal{M}_2(n,c)(\mathbf{x},t) \ge E_{\alpha}\bigl(-t^{\alpha}\mathcal{A}_{\gamma}\bigr)c_0(\mathbf{x}).
\end{displaymath}
Combining the lower bound $c_0\ge m_0$ with the positivity-preserving property, we obtain
\begin{displaymath}
E_{\alpha}\bigl(-t^{\alpha}\mathcal{A}_{\gamma}\bigr)c_0
\ge m_0\,E_{\alpha}\bigl(-t^{\alpha}\mathcal{A}_{\gamma}\bigr)1.
\end{displaymath}
Using the eigenfunction expansion and the identity $\mathcal{A}_{\gamma}\,1=\gamma\,1$,
it follows that $E_{\alpha}\bigl(-t^{\alpha}\mathcal{A}_{\gamma}\bigr)\mathbf{1}=E_{\alpha,1}(-\gamma t^{\alpha})\,1$. Thus,
\begin{equation}\label{eq:M2lower}
\mathcal{M}_2(n,c)(\mathbf{x},t) \ge m_0\,E_{\alpha,1}(-\gamma t^{\alpha}),
\qquad (\mathbf{x},t)\in\Omega\times[0,T].
\end{equation}
In the present local existence argument we may assume without loss of generality that $T\le 1$ (indeed the statement already restricts to $T<1$). Since the Mittag-Leffler function $E_{\alpha,1}(-x)$ is strictly decreasing on $[0,\infty)$ and $t^{\alpha}\le 1$, we have
\begin{displaymath}
m_0\,E_{\alpha,1}(-\gamma t^{\alpha})
\ge m_0\,E_{\alpha,1}(-\gamma T^{\alpha})
\ge m_0\,E_{\alpha,1}(-\gamma)
= C_1 \ge C_* .
\end{displaymath}
Combining this with \eqref{eq:M2lower} yields
\begin{equation}\label{eq:EM5}
\mathcal{M}_2(n,c)(\mathbf{x},t) \ge C_*, \qquad
(\mathbf{x},t)\in\Omega\times[0,T].
\end{equation}
Thus $\mathcal{M}_2\geq C_*$, and the constant $C_*$ here coincides with that obtained in Lemma~\ref{lem:LowerBound-c}.

Combining the non-negativity of $\mathcal{M}_1$, the uniform positive lower bound of $\mathcal{M}_2$ established in \eqref{eq:EM5}, and the norm bounds \eqref{eq:EM1}--\eqref{eq:EM4}, we conclude that $\mathcal{M}(\mathfrak{B})\subset\mathfrak{B}$. Moreover, the estimates \eqref{eq:EM1}--\eqref{eq:EM3} already guarantee that $\mathcal{M}(n,c)(t)\in\mathbb{X}$ for all $(n,c)\in\mathbb{X}$.

Finally, the continuity of $\mathcal{M}(n,c)$ in $t\in[0,T]$  follows from strong continuity of the Mittag-Leffler operator (see Lemma \ref{lem:continuous}) and the elementary properties of the Neumann heat semigroup. With this, it immediately follows that for any $t_1$, $t_2\in[0, T]$ with $t_1<t_2$, $\lim_{t_{1}\to t_{2}^{-}}\|\mathcal{M}(n,c)(t_1)-\mathcal{M}(n,c)(t_2)\|_{\mathbb{X}}=0$, which implies $\mathcal{M}(n,c)\in C\big([0,T]; L^{\infty}(\Omega)\big)$. In establishing this continuity result, we also naturally prove that $n(\mathbf{x},t)\in C\big([0,T];L^{\infty}(\Omega)\big)$, $c(\mathbf{x},t)\in C\big([0,T];W^{1,p}(\Omega)\big)$, $p>d$. Since this argument is standard (see, e.g., Refs.~\refcite{Costa23,Ma25}), the detailed verification is omitted. The proof is now complete.
\end{proof}

\begin{lemma}[Contraction mapping]\label{Lem:Contraction}
Under the assumptions of Lemma \ref{Lem:Bijection}, the mapping $\mathcal{M}(n,c):\mathfrak{B}\rightarrow\mathfrak{B}$ is a contraction for all $t\in(0,T]$, provided $\mathcal{R}>0$ is suitably large and $T>0$ is appropriately small.
\end{lemma}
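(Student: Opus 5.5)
We take two elements $(n_1,c_1),(n_2,c_2)\in\mathfrak{B}$ and estimate the difference of their images in the norm of $\mathbb{X}$, reusing the smoothing estimates of Lemma~\ref{lem:MLbounds} exactly as in the proof of Lemma~\ref{Lem:Bijection}. We aim to show
\begin{displaymath}
\big\|\mathcal{M}(n_1,c_1)-\mathcal{M}(n_2,c_2)\big\|_{\mathbb{X}}\le C(\mathcal{R},C_*)\,T^{\frac{\alpha}{2}(1-\frac{d}{p})}\,\big\|(n_1,c_1)-(n_2,c_2)\big\|_{\mathbb{X}},
\end{displaymath}
with $T<1$. We then choose $T$ small, with $\mathcal{R}$ fixed as in Lemma~\ref{Lem:Bijection}, so that the prefactor is at most $1/2$. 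The initial-data terms cancel in the difference, so only the Duhamel integrals matter.

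\textbf{The $c$-component.} The map $\mathcal{M}_2$ is affine in $n$, so
\begin{displaymath}
\mathcal{M}_2(n_1,c_1)-\mathcal{M}_2(n_2,c_2)=\int_0^t(t-s)^{\alpha-1}E_{\alpha,\alpha}\big(-(t-s)^\alpha\mathcal{A}_\gamma\big)(n_1-n_2)(s)\,\mathrm{d}s .
\end{displaymath}
We estimate this exactly as in \eqref{eq:EM2}--\eqref{eq:EM3}. Estimate \eqref{eq:Ef} with $q=p$ gives the gradient part, and \eqref{eq:Ed} gives the $L^p$ part. Together with Remark~\ref{rmk:space}$(v)$, this yields
\begin{displaymath}
\sup_t\|\mathcal{M}_2(n_1,c_1)-\mathcal{M}_2(n_2,c_2)\|_{W^{1,p}}\le C\,(T^{\alpha/2}+T^\alpha)\,|\Omega|^{1/p}\sup_t\|n_1-n_2\|_{L^\infty}.
\end{displaymath}

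\textbf{The $n$-component.} We write the difference of the fluxes as
\begin{displaymath}
\frac{n_1}{c_1}\nabla c_1-\frac{n_2}{c_2}\nabla c_2=\frac{n_1-n_2}{c_1}\nabla c_1+\frac{n_2}{c_1}\nabla(c_1-c_2)+n_2\nabla c_2\,\frac{c_2-c_1}{c_1c_2}.
\end{displaymath}
We bound each term in $L^p$ using $c_i\ge C_*$, $\|n_i\|_{L^\infty}\le\mathcal{R}$ and $\|\nabla c_i\|_{L^p}\le\mathcal{R}$. For the last term we also use $\|c_1-c_2\|_{L^\infty}\le C\|c_1-c_2\|_{W^{1,p}}$, which holds by the embedding in Remark~\ref{rmk:space}$(iv)$ since $p>d$. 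This gives the pointwise-in-time bound
\begin{displaymath}
\Big\|\frac{n_1}{c_1}\nabla c_1-\frac{n_2}{c_2}\nabla c_2\Big\|_{L^p}\le C\big(\mathcal{R}C_*^{-1}+\mathcal{R}^2C_*^{-2}\big)\big(\|n_1-n_2\|_{L^\infty}+\|c_1-c_2\|_{W^{1,p}}\big).
\end{displaymath}
We then apply \eqref{eq:Eb} with $q=p$ and $L^\infty$ as the target space. The resulting kernel is $(t-s)^{\frac{\alpha}{2}-\frac{\alpha d}{2p}-1}$, which is integrable because $p>d$. Integrating gives the factor $T^{\frac{\alpha}{2}(1-\frac{d}{p})}$ in the $L^\infty$ bound for $\mathcal{M}_1(n_1,c_1)-\mathcal{M}_1(n_2,c_2)$.

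\textbf{Main obstacle.} The delicate point is the singular logarithmic sensitivity: the denominators $c_i$ must stay uniformly away from zero. We handle this entirely through the constraint $c\ge C_*$ built into $\mathfrak{B}$, which Lemma~\ref{Lem:Bijection} shows is preserved by $\mathcal{M}_2$. We also need the $L^\infty$ control of $c_1-c_2$ coming from the Morrey embedding for $p>d$. Once these are in place, the contraction constant depends only on $\mathcal{R}$, $C_*$, $\chi$ and $\Omega$ and carries a positive power of $T$. We then shrink $T$, keeping it compatible with the constraints already imposed in Lemma~\ref{Lem:Bijection}, so that the constant is at most $1/2$. The Banach fixed-point theorem on the closed set $\mathfrak{B}$ then yields the contraction property and a unique local mild solution.
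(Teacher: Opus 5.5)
Your proposal is correct and follows the paper's proof essentially step for step: the same three-term splitting of the flux difference, estimate \eqref{eq:Eb} into $L^\infty$ with the integrable kernel $(t-s)^{\frac{\alpha}{2}-\frac{\alpha d}{2p}-1}$ for $\mathcal{M}_1$, and \eqref{eq:Ed}/\eqref{eq:Ef} for $\mathcal{M}_2$. Both lead to a contraction constant of order $C\max\{\mathcal{R}^2T^{\frac{\alpha}{2}(1-\frac{d}{p})},T^{\alpha/2},T^{\alpha}\}$, and your treatment of the singular factors (bounding $1/c_i$ by $C_*^{-1}$ and controlling $c_1-c_2$ in $L^\infty$ via the Morrey embedding for $p>d$) is, if anything, cleaner than the paper's H\"older split of the $1/c-1/\widetilde{c}$ term.
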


\begin{proof}
Let $(n,c)$ and $(\widetilde{n},\widetilde{c})$ belong to $\mathfrak{B}$. Applying estimates \eqref{eq:Eb} from Lemma \ref{lem:MLbounds}, together with Lemma \ref{lem:LowerBound-c} and H\"{o}lder's inequality, we obtain the following bound
\begin{displaymath}
\begin{aligned}
&\big\|\mathcal{M}_1(n,c)-\mathcal{M}_1(\widetilde{n},\widetilde{c})\big\|_{L^{\infty}}\\
&\leq \chi\int_{0}^{t}(t-s)^{\alpha-1}\left\|E_{\alpha,\alpha}\big(-(t-s)^{\alpha}\mathcal{A}\big)\nabla\cdot
     \Big(\frac{n}{c}\nabla c-\frac{\widetilde{n}}{\widetilde{c}}\nabla \widetilde{c}\Big)(s)\right\|_{L^{\infty}}\mathrm{d}s                   \\
&\leq C\int_{0}^{t}(t-s)^{\frac{\alpha}{2}-\frac{\alpha d}{2p}-1}
\left\|\left(\frac{n}{c}\nabla c-\frac{\widetilde{n}}{\widetilde{c}}\nabla
       \widetilde{c}\right)(s)\right\|_{L^{p}}\mathrm{d}s  \\
&\leq C\int_{0}^{t}(t-s)^{\frac{\alpha}{2}-\frac{\alpha d}{2p}-1}
     \Big\{\big\|\big(n-\widetilde{n}\big)(s)\big\|_{L^{\infty}}
     \big\|\,c(s)\,\big\|^{-1}_{L^{\infty}}
     \big\|\nabla c(s)\big\|_{L^{p}}                      \\
&\qquad
     \qquad~+\big\|\,\widetilde{n}(s)\,\big\|_{L^{\infty}}\big\|\,c(s)\,\big\|^{-1}_{L^{\infty}}
     \big\|\big(\nabla c-\nabla \widetilde{c}\big)(s)\big\|_{L^{p}}  \\
&\qquad
     \qquad~+\big\|\,\widetilde{n}(s)\,\big\|_{L^{\infty}}
     \big\|\big(1/c-1/\widetilde{c}\big)(s)\,\big\|_{L^{p}}
     \big\|\nabla c(s)\big\|_{L^{\vartheta}}\Big\}\mathrm{d}s        \\
&\leq \big(C\mathcal{R}/C_*\big)\big(1+(\mathcal{R}/C_*)\big)
      T^{\frac{\alpha}{2}\left(1-\frac{d}{p}\right)}\big\|(n,c)-(\widetilde{n},\widetilde{c})\big\|_{\mathbb{X}}.
\end{aligned}
\end{displaymath}
Hence,
\begin{equation}\label{eq:EcM1}
\sup_{t\in(0,T]}
\big\|\mathcal{M}_1(n,c)-\mathcal{M}_1(\tilde{n},\tilde{c})\big\|_{L^\infty(\Omega)}
\le C\mathcal{R}^{2}T^{\frac{\alpha}{2}\left(1-\frac{d}{p}\right)}
\big\|(n,c)-(\tilde{n},\tilde{c})\big\|_{\mathbb{X}}.
\end{equation}

Similarly, using estimate \eqref{eq:Ef} from Lemma \ref{lem:MLbounds}, we obtain, for $t\in(0,T]$,
\begin{displaymath}
\begin{aligned}
\left\|\mathcal{A}^{1/2}\big(\mathcal{M}_{2}(n,c)-\mathcal{M}_{2}(\widetilde{n},\widetilde{c})\big)\right\|_{L^{p}}
&\leq \int_{0}^{t}(t-s)^{\alpha-1}\left\|\mathcal{A}^{1/2}E_{\alpha,\alpha}\big(-(t-s)^{\alpha}\mathcal{A}_{\gamma}\big)
     \big(n(s)-\widetilde{n}(s)\big)\right\|_{L^{p}}\mathrm{d}s  \\
&\leq C\int_{0}^{t}(t-s)^{\frac{\alpha}{2}-1}
     \big\|n(s)-\widetilde{n}(s)\big\|_{L^{p}}\mathrm{d}s
\leq CT^{\frac{\alpha}{2}}\big\|(n,c)-(\widetilde{n},\widetilde{c})\big\|_{\mathbb{X}},
\end{aligned}
\end{displaymath}
and
\begin{displaymath}
\big\|\mathcal{M}_{2}(n,c)-\mathcal{M}_{2}(\widetilde{n},\widetilde{c})\big\|_{L^{p}}
\leq CT^{\alpha}\big\|(n,c)-(\widetilde{n},\widetilde{c})\big\|_{\mathbb{X}}.
\end{displaymath}
Combining these two bounds yields
\begin{equation}\label{eq:EcM2}
\sup_{t\in(0,T]}
\big\|\mathcal M_2(n,c)-\mathcal M_2(\tilde n,\tilde c)\big\|_{W^{1,p}}
\le C \max\big\{T^{\alpha/2},T^{\alpha}\big\}
\big\|(n,c)-(\tilde n,\tilde c)\big\|_{\mathbb X}.
\end{equation}

By integrating the estimates \eqref{eq:EcM1} and \eqref{eq:EcM2}, we can establish that choosing a suitably small $T>0$ and an appropriately large radius $\mathcal{R}>0$ ensures the following result holds, for all $t\in(0,T]$,
\begin{displaymath}
\big\|\mathcal{M}(n,c)-\mathcal{M}(\widetilde{n},\widetilde{c})\big\|_{\mathbb{X}}
\leq C_\mathfrak{B}\big\|(n,c)-(\widetilde{n},\widetilde{c})\big\|_{\mathbb{X}},
\end{displaymath}
where the contractive constant $C_\mathfrak{B}$ is defined as
\begin{displaymath}
C_\mathfrak{B}:=C\max\Big\{\mathcal{R}^{2}
T^{\frac{\alpha}{2}(1-\frac{d}{p})}, T^{\frac{\alpha}{2}}, T^{\alpha}\Big\}<1.
\end{displaymath}
As a result, the mapping $\mathcal{M}$ is a contraction on the closed subset $\mathfrak{B}$.
\end{proof}

\begin{theorem}
\label{Lem:WelPod1}
Let the initial data $n_0(\mathbf{x})$, $c_0(\mathbf{x})$ satisfy \eqref{Eq:Assuption}. Then there exists an appropriately small $T>0$ such that system \eqref{Pro:FKS} admits a unique local mild solution $(n,c)\in\mathfrak{B}$ for $t\in[0,T]$, satisfying
\begin{equation}
n(\mathbf{x},t)\in C\big([0,T];L^{\infty}(\Omega)\big),\quad
c(\mathbf{x},t)\in C\big([0,T];W^{1,p}(\Omega)\big),\quad p>d\geq2.
\end{equation}
Furthermore, for $p>d$, as $t\rightarrow0^{+}$, it holds that
\begin{equation}\label{eq:asymtic}
\lim_{t\rightarrow0^{+}}\big\|n(\mathbf{x},t)-n_0(\mathbf{x})\big\|_{L^{p}}\rightarrow0,\quad
\lim_{t\rightarrow0^{+}}\big\|c(\mathbf{x},t)-c_0(\mathbf{x})\big\|_{W^{1,p}}\rightarrow0.
\end{equation}
\end{theorem}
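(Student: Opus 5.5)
The plan is to apply the Banach fixed-point theorem on the closed set $\mathfrak{B}\subset\mathbb{X}$, combining Lemmas \ref{Lem:Bijection} and \ref{Lem:Contraction}.

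First, $\mathfrak{B}$ must be a complete metric space. Norm-closed balls in $\mathbb{X}$ are closed. The constraints $n\ge0$ and $c\ge C_*$ pass to limits, since convergence in $C([0,T];L^\infty)$ and in $C([0,T];W^{1,p})\hookrightarrow C([0,T];C(\overline\Omega))$ for $p>d$ preserves these pointwise inequalities. The Neumann condition is encoded by working in the ranges of the Neumann Mittag-Leffler families, so it causes no trouble.

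Next, fix $\mathcal{R}$ larger than twice the constants in \eqref{eq:EM1} and \eqref{eq:EM4} built from $\|n_0\|_{L^\infty}$ and $\|c_0\|_{W^{1,\infty}}$. Then shrink $T\le1$ until the following hold simultaneously:
\begin{itemize}
\item $C\mathcal{R}^2T^{\frac{\alpha}{2}(1-\frac dp)}\le \mathcal{R}/4$;
\item $CT^{\alpha/2}\mathcal{R}\le\mathcal{R}/4$;
\item the contraction constant $C_\mathfrak{B}<1$.
\end{itemize}
Lemma \ref{Lem:Bijection} gives $\mathcal{M}(\mathfrak{B})\subset\mathfrak{B}$, and Lemma \ref{Lem:Contraction} gives that $\mathcal{M}$ is a contraction. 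This yields a unique fixed point, which by definition \eqref{sys:mild} is the mild solution with the stated regularity. Uniqueness is a priori only within $\mathfrak{B}$. To upgrade it to uniqueness among all mild solutions in $\mathbb{X}$ with $c>0$, take a second solution. On a short interval its norm is below some $\mathcal{R}'$, and by Lemma \ref{lem:LowerBound-c} its $c$-component is bounded below. Enlarging $\mathcal{R}$ and then repeating the difference estimate of Lemma \ref{Lem:Contraction} on $[0,t_0]$ gives a Volterra-type inequality with weakly singular kernel $(t-s)^{\frac{\alpha}{2}(1-\frac dp)-1}$. A singular Gronwall (Henry-type) argument, continued stepwise, then forces the two solutions to coincide.

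The main work lies in the initial-time continuity \eqref{eq:asymtic}. For $n$, I would write
\[
n(t)-n_0=\big(E_\alpha(-t^\alpha\mathcal{A})n_0-n_0\big)-\chi\int_0^t\cdots\,\mathrm{d}s .
\]
The integral term is bounded in $L^\infty\subset L^p$ by $C\mathcal{R}^2t^{\frac\alpha2(1-\frac dp)}\to0$. The first term tends to zero in $L^p$ by the subordination formula \eqref{eq:EDelta}: since $e^{st^\alpha\Delta}n_0\to n_0$ in $L^p$ for each fixed $s$, with uniform bound $\|n_0\|_{L^p}$, dominated convergence against $M_\alpha(s)\,\mathrm{d}s$ gives the limit. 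For $c$ in $W^{1,p}$, the Duhamel term is $O(t^{\alpha/2}\mathcal{R})$ by \eqref{eq:EM2}--\eqref{eq:EM3}. For the initial term, $c_0\in W^{1,\infty}\subset W^{1,p}$, and I would use the norm equivalence of Remark \ref{rmk:space}(v) together with the commutation $\mathcal{A}^{1/2}E_\alpha(-t^\alpha\mathcal{A}_\gamma)c_0=E_\alpha(-t^\alpha\mathcal{A}_\gamma)\mathcal{A}^{1/2}c_0$. Strong continuity of the damped heat semigroup on $L^p$ at $0$, again integrated against $M_\alpha$, then gives convergence.

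The step I expect to be most delicate is this strong continuity at $t=0$, because Lemma \ref{lem:continuous} only asserts continuity for $t>0$. I would handle it directly through the Mainardi representation plus dominated convergence, as above. For the $L^\infty$-valued continuity of $n$ near $t=0$, the continuity $n_0\in C^0(\overline\Omega)$ is what makes the heat semigroup strongly continuous in the sup norm.
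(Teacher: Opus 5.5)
Your proposal follows the paper's proof: Banach's fixed-point theorem on $\mathfrak{B}$ via Lemmas \ref{Lem:Bijection} and \ref{Lem:Contraction}, then the same splitting of $n(t)-n_0$ and $c(t)-c_0$ into an initial-value term and a Duhamel term that vanishes like a positive power of $t$. Your closedness check, the singular-Gronwall uniqueness outside $\mathfrak{B}$, and the Mainardi/dominated-convergence proof of continuity at $t=0$ are refinements rather than a different route; the last of these fills a step the paper attributes to Lemma \ref{lem:continuous}, which is stated only for $t>0$.

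One caution, which you inherit from the paper rather than introduce: the sign argument in Lemma \ref{Lem:Bijection} does not give $\mathcal{M}_1(n,c)\ge 0$, because the drift there is $\nabla\cdot(\frac{n}{c}\nabla c)$ with the \emph{input} $n$ rather than $\mathcal{M}_1$. In fact $\mathcal{M}_1$ can become negative for small $t$, for instance when the input $n$ is a bump inside a region where $n_0$ vanishes. The clean fix is to drop $n\ge 0$ from $\mathfrak{B}$. You then keep a positive lower bound for $c$ by absorbing the $O(T^{\alpha}\mathcal{R})$ Duhamel contribution into $m_0E_\alpha(-\gamma)$. Finally, recover $n\ge 0$ for the fixed point a posteriori, as in Lemma \ref{lem:bound-n}.
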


\begin{proof}
By Lemmas \ref{Lem:Bijection} and \ref{Lem:Contraction}, the mapping $\mathcal{M}(n,c)(t): \mathfrak{B} \rightarrow \mathfrak{B}$ defined in \eqref{eq:mappings} is well-defined and strictly contractive on the complete metric space $\mathfrak{B}$ for all $t\in(0, T]$, with $T>0$ appropriately small. The Banach fixed-point theorem (see Theorem. 5.7 in Ref. \refcite{Brezis11}) then guarantees a unique fixed point $(n,c)\in \mathfrak{B}$, which yields the unique local mild solution to \eqref{Pro:FKS} on $[0,T]$. This establishes the local well-posedness.

We now verify the initial data attainment. By applying the estimates from Lemma \ref{lem:MLbounds}, we directly derive
\begin{align*}
\big\|n(\cdot,t)-n_0\big\|_{L^{p}}
&\leq\big\|E_{\alpha}(-t^\alpha\mathcal{A})n_0-n_0\big\|_{L^{p}}
     +\chi\int_{0}^{t}s^{\alpha-1}\left\|E_{\alpha,\alpha}\big(-s^{\alpha}\mathcal{A}\big)
     \nabla\cdot\Big(\frac{n}{c}\nabla c\Big)(t-s)\right\|_{L^{p}}\mathrm{d}s\\
&\leq |\Omega|^{1/p}\big\|E_{\alpha}(-t^\alpha\mathcal{A})n_0-n_0\big\|_{L^{\infty}}
      +\frac{C}{C_{*}}\int_{0}^{t}(t-s)^{\frac{\alpha}{2}-\frac{\alpha d}{2p}-1}\mathrm{d}s\,\big\|(n,c)\big\|^2_{\mathbb{X}}  \\
&\leq |\Omega|^{1/p}\big\|E_{\alpha}(-t^\alpha\mathcal{A})n_0-n_0\big\|_{L^{\infty}}
      +C t^{\frac{\alpha}{2}\left(1-\frac{d}{p}\right)}\,\big\|(n,c)\big\|^2_{\mathbb{X}}.
\end{align*}
By the strong continuity of the Mittag–Leffler operator (Lemma \ref{lem:continuous}), $\|n(\cdot,t)-n(\cdot,0)\|_{L^{p}}\rightarrow0$ as $t\rightarrow0^{+}$. Hence $\big\|n(\cdot,t)-n_0\big\|_{L^{p}}\to 0$.

Similarly, for the gradient part of $c$
\begin{align*}
\left\|\mathcal{A}^{1/2}c(\cdot,t)-\mathcal{A}^{1/2}c_0\right\|_{L^{p}}
&\leq \left\|\mathcal{A}^{1/2}E_{\alpha}(-t^\alpha\mathcal{A}_{\gamma})c_0-\mathcal{A}^{1/2}c_0\right\|_{L^{p}} \\
&\quad+\int_{0}^{t}s^{\alpha-1}\big\|\mathcal{A}^{1/2}E_{\alpha,\alpha}\big(-s^{\alpha}\mathcal{A}_{\gamma}\big)n(t-s)\big\|_{L^{p}}\mathrm{d}s\\
&\leq \left\|E_{\alpha}(-t^\alpha\mathcal{A}_{\gamma})\mathcal{A}^{1/2}c_0-\mathcal{A}^{1/2}c_0\right\|_{L^{p}}
      +C\int_{0}^{t}(t-s)^{\frac{\alpha}{2}-\frac{\alpha d}{2p}-1}
      \big\|n(s)\big\|_{L^{\infty}}\mathrm{d}s \\
&\leq \left\|E_{\alpha}(-t^\alpha\mathcal{A}_{\gamma})\mathcal{A}^{1/2}c_0-\mathcal{A}^{1/2}c_0\right\|_{L^{p}}+Ct^{\frac{\alpha}{2}\left(1-\frac{d}{p}\right)}
\big\|(n,c)\big\|_{\mathbb{X}} \rightarrow0,
\end{align*}
as $t\rightarrow0^{+}$, where we used the commutativity $\mathcal{A}^{1/2}E_{\alpha}(-t^\alpha\mathcal{A}_{\gamma})=E_{\alpha}(-t^\alpha\mathcal{A}_{\gamma})\mathcal{A}^{1/2}$ and the strong continuity of the semigroup.

For the $L^p$-norm of $c$, we similarly obtain
\begin{align*}
\big\|c(\cdot,t)-c_0\big\|_{L^{p}}
\leq \big\|E_{\alpha}(-t^\alpha\mathcal{A}_{\gamma})c_0-c_0\big\|_{L^{p}}
+Ct^{\alpha\left(1-\frac{d}{2p}\right)}
\big\|(n,c)\big\|_{\mathbb{X}} \rightarrow 0,\quad t\rightarrow0^{+}.
\end{align*}

Combining the above convergence results yields \eqref{eq:asymtic}. This completes the proof.
\end{proof}

Theorem \ref{Lem:WelPod1} establishes local well-posedness of system \eqref{sys:mild} in $\mathbb{X}$, and provides the key estimate \eqref{eq:asymtic} for solutions $(n,c)(\cdot,t)$, which guarantees continuous dependence on initial data.

We now aim to improve the temporal and spatial regularity of this mild solution. To facilitate the analysis, we introduce the shorthand $\mathcal{G}(s):=\nabla\cdot(\frac{n(\mathbf{x},s)}{c(\mathbf{x},s)}\nabla c(\mathbf{x},s))$, together with the Mittag-Leffler families
\begin{equation}\label{Not:notiess}
\left\{\begin{aligned}
\mathcal{S}_\alpha(t):=&E_\alpha(-t^\alpha\mathcal{A}),\quad && \mathcal{P}_\alpha(t):=t^{\alpha-1}E_{\alpha,\alpha}(-t^\alpha\mathcal{A}),\\
\mathcal{S}^{\gamma}_\alpha(t):=&E_\alpha(-t^\alpha\mathcal{A}_\gamma),\quad
&&\mathcal{P}^{\gamma}_\alpha(t):=t^{\alpha-1}E_{\alpha,\alpha}(-t^\alpha\mathcal{A}_\gamma).
\end{aligned}\right.
\end{equation}
Estimates for these operators are collected in Lemma~\ref{Der:Lemma}. With these tools in hand, we present the following lemmas on enhanced regularity.

\begin{lemma}[Improved temporal regularity]\label{Lem:timR}
Suppose $p>d\geq2$ and $T>\tau>0$, where $T$ is appropriately small. Let $(n,c)$ be the local mild solution obtained in Theorem~\ref{Lem:WelPod1} for Problem~\eqref{Pro:FKS}. Then, for $\alpha\in(0,1)$,  the following regularity properties hold on $[\tau,T]$,
\begin{align*}
&c(\mathbf{x},t)\in L^{\infty}([\tau,T];W^{2,p}(\Omega))
   \cap C^{0,\alpha/2}([\tau,T];W^{2,p}(\Omega)),\\
&n(\mathbf{x},t)\in L^{\infty}([\tau,T];W^{1,p}(\Omega))
   \cap C^{0,\alpha}([\tau,T];L^p(\Omega)).
\end{align*}
Moreover, for any $\tau>0$, there also hold
\begin{displaymath}
\sup_{t\in[\tau,T]}\Big(\big\|n(\mathbf{x},t)\big\|_{L^\infty}
+\big\|\nabla n(\mathbf{x},t)\big\|_{L^p}
+\big\|c(\mathbf{x},t)\big\|_{L^\infty}
+\big\|\nabla c(\mathbf{x},t)\big\|_{L^\infty}\Big)<\infty.
\end{displaymath}
\end{lemma}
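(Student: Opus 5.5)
The plan is a bootstrap on the mild formulation \eqref{sys:mild}, written with the families \eqref{Not:notiess}. Since $(n,c)\in\mathfrak{B}$, Lemma~\ref{lem:LowerBound-c} gives $c\ge C_*$, so the chemotactic flux $\mathbf{F}:=\frac{n}{c}\nabla c$ satisfies $\|\mathbf{F}(s)\|_{L^p}\le C_*^{-1}\mathcal{R}^2$ uniformly on $[0,T]$. Every estimate will be carried out on $[\tau,T]$. Near $t=0$ the singular smoothing factors of the form $t^{-\alpha/2}$ and $t^{-\alpha}$ acting on $n_0,c_0$ are then bounded by $\tau^{-\alpha/2}$ or $\tau^{-\alpha}$, while the Duhamel integrals are controlled by Beta-function integrals.

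\textbf{Step 1: spatial regularity of $n$.} Apply $\mathcal{A}^{1/2}$ to the $n$-equation. For the homogeneous term we use $\|\mathcal{A}^{1/2}\mathcal{S}_\alpha(t)n_0\|_{L^p}\le Ct^{-\alpha/2}\|n_0\|_{L^\infty}$, obtained by subordination via \eqref{eq:EDelta} and \eqref{eq:identity} as in Lemma~\ref{lem:MLbounds}. The Duhamel term contains $\mathcal{A}^{1/2}\mathcal{P}_\alpha\nabla\cdot$, which costs a full derivative. I would not put both derivatives on the kernel. Instead I would first show $\nabla c\in L^\infty$ and $c\in W^{2,p}$ on $[\tau/2,T]$ (Step 2). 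This gives $\nabla\cdot\mathbf{F}=\frac{\nabla n\cdot\nabla c}{c}+\frac{n\Delta c}{c}-\frac{n|\nabla c|^2}{c^2}$, whose $L^p$ norm is bounded by $C(1+\|\nabla n\|_{L^p})$. Then $\|\mathcal{A}^{1/2}\mathcal{P}_\alpha(t-s)\nabla\cdot\mathbf{F}\|_{L^p}\le C(t-s)^{\alpha/2-1}(1+\|\nabla n(s)\|_{L^p})$. Setting $y(t)=\sup_{[\tau,t]}\|\nabla n\|_{L^p}$, with the $[0,\tau/2]$ part of the integral treated as a bounded remainder through the $L^\infty$ bound of $\mathbf{F}$, we obtain a singular Gronwall inequality with kernel $(t-s)^{\alpha/2-1}$. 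A Henry-type Gronwall argument, in the spirit of Lemma~\ref{Lem:Gronwall}, then bounds $\sup_{[\tau,T]}\|\nabla n\|_{L^p}$ via Remark~\ref{rmk:space}$(v)$. Finally, $\|n\|_{L^\infty}\le\mathcal{R}$ already holds from $\mathfrak{B}$.

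\textbf{Step 2: spatial regularity of $c$.} For the homogeneous part, $\|\mathcal{A}\mathcal{S}^\gamma_\alpha(t)c_0\|_{L^p}\le Ct^{-\alpha/2}\|\nabla c_0\|_{L^p}$, which is finite on $[\tau,T]$. For the Duhamel part we write $\mathcal{A}\mathcal{P}^\gamma_\alpha(t-s)n(s)=\mathcal{A}^{1/2}\mathcal{P}^\gamma_\alpha(t-s)\mathcal{A}^{1/2}n(s)$. This is integrable with weight $(t-s)^{\alpha/2-1}$ once $\|\nabla n\|_{L^p}$ is bounded. To avoid circularity with Step 1, I would start at an intermediate level: using only $n\in L^\infty$, the bound $\|\mathcal{A}^{1-\epsilon/2}\mathcal{P}^\gamma_\alpha n\|_{L^p}\le C(t-s)^{\alpha\epsilon/2-1}$ gives $c\in W^{2-\epsilon,p}\hookrightarrow W^{1,\infty}$ for $p>d$ and small $\epsilon$. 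That is enough for Step 1, and feeding $\nabla n\in L^p$ back in yields $c\in L^\infty([\tau,T];W^{2,p})$. Here $\|c\|_{L^\infty}$ and $\|\nabla c\|_{L^\infty}$ follow from $W^{2,p}\hookrightarrow W^{1,\infty}$.

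\textbf{Step 3: Hölder continuity in time.} For $\tau\le t_1<t_2\le T$, we split $c(t_2)-c(t_1)$ into three pieces: the difference $(\mathcal{S}^\gamma_\alpha(t_2)-\mathcal{S}^\gamma_\alpha(t_1))c_0$; the integral over $[t_1,t_2]$; and the integral over $[0,t_1]$ of $(\mathcal{P}^\gamma_\alpha(t_2-s)-\mathcal{P}^\gamma_\alpha(t_1-s))n(s)$. Each piece is measured in $W^{2,p}$. For the kernel differences we use $\frac{d}{dt}\mathcal{S}^\gamma_\alpha(t)=-t^{\alpha-1}\mathcal{A}_\gamma E_{\alpha,\alpha}(-t^\alpha\mathcal{A}_\gamma)$ and its analogue for $\mathcal{P}$, interpolating the derivative bound against the plain bound to gain $|t_2-t_1|^{\alpha/2}$ while losing half a derivative. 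The $n$ part is handled the same way in $L^p$: there the flux term needs one derivative, and the available gain is $|t_2-t_1|^{\alpha}$ using $n,\nabla n$ bounds. I expect this to be the main obstacle. Trading the time increment against the available half-derivative, uniformly near $s\to t_1$, requires precise moment identities for $M_\alpha$ of the type $\int_0^\infty z^{\beta}M_\alpha(z)\,dz$ under the difference quotient. The exponent bookkeeping ($\alpha/2$ for $c$ in $W^{2,p}$, $\alpha$ for $n$ in $L^p$) must be checked carefully against Lemma~\ref{Der:Lemma}.
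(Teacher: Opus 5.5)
\textbf{Gap: Steps 1 and 2 still feed into each other.}
Your Step 1 bound $\|\nabla\cdot\mathbf{F}\|_{L^p}\le C(1+\|\nabla n\|_{L^p})$ contains the term $\frac{n}{c}\Delta c$. It therefore needs $\Delta c\in L^p$, i.e.\ $c\in W^{2,p}$, and not just $\nabla c\in L^\infty$.

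Your intermediate estimate with $\mathcal{A}_\gamma^{1-\epsilon/2}\mathcal{P}^\gamma_\alpha$ gives $c\in W^{2-\epsilon,p}\hookrightarrow W^{1,\infty}$. That does not control $\Delta c$ in $L^p$. Meanwhile, your upgrade to $c\in W^{2,p}$ uses $\nabla n\in L^p$, which is the output of Step 1. So the claim ``that is enough for Step 1'' is false as written, and the circularity remains.

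Within your scheme, the cheapest repair is to close both steps at once. Once $n$ and $\nabla c$ are bounded (from $\mathfrak{B}$ and the $W^{2-\epsilon,p}$ step), both Duhamel inequalities are linear in $Y(s):=\|\mathcal{A}^{1/2}n(s)\|_{L^p}+\|\mathcal{A}_\gamma c(s)\|_{L^p}$. They share the kernel $(t-s)^{\alpha/2-1}$, so a single singular Gronwall argument for $Y$ works.

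You still need to know a priori that $Y$ is finite on $[\tau/2,T]$ before Gronwall gives anything. The paper's Step (iii) tacitly assumes the same for $\|\mathcal{A}^{1/2}n\|_{L^p}$. This can be supplied by a fractional-order bootstrap or a weighted fixed-point space. Your splitting of the history at $\tau/2$ is more careful than the paper, which runs its Gronwall over $[0,t]$ using bounds valid only on $[\tau,T]$.

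\textbf{How the paper avoids the circularity, and how your H\"older estimates compare.}
The paper uses a different idea. It first proves $n\in C^{0,\alpha/2}([\tau,T];L^p)$ using only $\mathbf{F}\in L^\infty(L^p)$. It then gets $c\in L^\infty([\tau,T];W^{2,p})$ from
$\mathcal{A}_\gamma c(t)=\mathcal{A}_\gamma\mathcal{S}^\gamma_\alpha(t)c_0+(I-\mathcal{S}^\gamma_\alpha(t))n(t)+\int_0^t\mathcal{A}_\gamma\mathcal{P}^\gamma_\alpha(t-s)\big(n(s)-n(t)\big)\,ds$,
which rests on $\int_0^t\mathcal{A}_\gamma\mathcal{P}^\gamma_\alpha(r)\,dr=I-\mathcal{S}^\gamma_\alpha(t)$. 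Temporal H\"older regularity of the source replaces spatial regularity. So $c\in W^{2,p}$ is available before $\nabla n$ is estimated, and the Gronwall step involves $\nabla n$ alone.

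For the H\"older estimates, your route is legitimate and differs from the paper's. You put half a derivative on the kernel and use $\nabla n\in L^\infty(L^p)$. The paper's Step (vi) instead uses $n\in C^{0,\alpha}(L^p)$ with the same subtraction trick.

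The obstacle you anticipate does not arise. Estimate \eqref{eq:LLv} with $m=1$, $\sigma=1/2$ gives $\|\partial_t\mathcal{A}_\gamma^{1/2}\mathcal{P}^\gamma_\alpha(t)\|_{L^p\to L^p}\le Ct^{\alpha/2-2}$. Fubini then gives $\int_{t_1}^{t_2}\int_0^{t_1}(r-s)^{\alpha/2-2}\,ds\,dr\le C|t_2-t_1|^{\alpha/2}$ directly, with no Mainardi moment identities needed. Your $C^{0,\alpha}(L^p)$ estimate for $n$ is the same as the paper's Step (v).
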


\begin{proof}
To help navigate the proof, we provide a schematic diagram in Fig.\ref{Fig:diagram} that outlines the logical flow of the regularity improvements.

\begin{figure}[htbp]
\centering
\begin{tikzpicture}[
    node distance=1.68cm and 0.98cm,
    mainnode/.style={align=center,font=\footnotesize,inner sep=8pt,rounded corners=3pt,
    draw=red!65!black,thick,dashed,fill=none},>=Stealth,thick]
\node[mainnode] (A) {$c\in C(W^{1,p})$};
\node[mainnode] (C) [right=of A] {$n\in C^{0,\frac{\alpha}{2}}(L^p)$};
\node[mainnode] (D) [right=of C] {$c\in L^\infty(W^{2,p})$};
\node[mainnode] (F) [right=of D] {$n\in C^{0,\alpha}(L^p)$};
\node[mainnode] (B) [below=of C] {$n\in C(L^\infty)$};
\node[mainnode] (G) [below=of D] {$c\in C^{0,\frac{\alpha}{2}}(W^{1,p})$};
\node[mainnode] (E) [below=0.55cm of D] {$n\in L^\infty(W^{1,p})$};
\node[mainnode] (H) [below=of F] {$c\in C^{0,\frac{\alpha}{2}}(W^{2,p})$};
\draw[->, draw=pink!65!black] (A) -- (C) node[midway,above] {\scriptsize($i$)};
\draw[->, draw=pink!65!black] (B) -- (C) node[midway,left] {\scriptsize($i$)};
\draw[->, draw=pink!65!black] (B) -- (G) node[midway,below] {\scriptsize($iv$)};
\draw[->, draw=pink!65!black] (B) -- (E) node[midway,above] {\scriptsize($iii$)};
\draw[->, draw=pink!65!black] (C) -- (D) node[midway,above] {\scriptsize($ii$)};
\draw[->, draw=pink!65!black] (D) -- (F) node[midway,above] {\scriptsize($v$)};
\draw[->, draw=pink!65!black] (D) -- (E) node[midway,left] {\scriptsize($iii$)};
\draw[->, draw=pink!65!black] (E) -- (F) node[midway,above] {\scriptsize($v$)};
\draw[->, draw=pink!65!black] (F) -- (H) node[midway,right] {\scriptsize($vi$)};
\draw[->, draw=pink!65!black] (G) -- (H) node[midway,below] {\scriptsize($vi$)};
\end{tikzpicture}
\caption{The diagram for illustrating the proof of improved regularity of $(n,c)$.}
\label{Fig:diagram}
\end{figure}

Recall that $\mathcal{A}=-\Delta$ is the Neumann Laplacian on $L^p(\Omega)$, $1<p<\infty$, and $\mathcal{A}_\gamma=\mathcal{A}+\gamma I$ is the damped Neumann Laplacian with $\gamma>0$. Since $\gamma I$ is bounded on $L^p(\Omega)$, we have $D(\mathcal{A}_\gamma)=D(\mathcal{A})$ and the norm equivalence $\|u\|_{W^{2,p}}\simeq\|\mathcal{A}u\|_{L^p}+\|u\|_{L^p}\simeq\|\mathcal{A}_\gamma u\|_{L^p}+\|u\|_{L^p}$ (see, e.g., Ref.~\refcite{Lindemulder26}). We also recall the notation $\mathcal{G}(s):=\nabla\cdot(\frac{n(\mathbf{x},s)}{c(\mathbf{x},s)}\nabla c(\mathbf{x},s))$ and the Mittag-Leffler families defined in  \eqref{Not:notiess}. With these preparations, we prove the lemma in the following steps.

\begin{itemize}
    \item \emph{\textbf{Step $(i)$}}: $C^{0,\alpha/2}([\tau,T];L^{p}(\Omega))$-regularity of $n$.
\end{itemize}

We first establish a temporal regularity estimate for $n$,   using the already available bounds $n\in L^\infty((0,T)\times\Omega)$ and
$c\in L^\infty([\tau,T]; W^{1,p}(\Omega))$ from Theorem~\ref{Lem:WelPod1}. Since $p > d\geq2$, Sobolev embedding gives $c\in L^\infty([\tau,T]; L^\infty(\Omega))$. Together with the lower bound $c\ge C_* > 0$ (Lemma~\ref{lem:LowerBound-c}), this implies $F:= -n\frac{\nabla c}{c}$ belongs to $L^\infty([\tau,T]; L^p(\Omega))$. Recalling $\mathcal{G}=\nabla\cdot F$, the mild solution for $n$ reads
\begin{displaymath}
n(t) = \mathcal{S}_\alpha(t)n_0 + \int_0^t \mathcal{P}_\alpha(t-s)\mathcal{G}(s)\,\mathrm{d}s.
\end{displaymath}
Let $\tau \le t_1 < t_2 \le T$. We decompose the difference $n(t_2) - n(t_1) = \mathcal{J}_{1} + \mathcal{J}_{2} + \mathcal{J}_{3}$, where
\begin{equation}\label{eq:dedee}
\left\{
\begin{aligned}
\mathcal{J}_1 &:= \big(\mathcal{S}_\alpha(t_2) - \mathcal{S}_\alpha(t_1)\big)n_0,\\
\mathcal{J}_2 &:= \int_{0}^{t_1} \big(\mathcal{P}_\alpha(t_2-s) - \mathcal{P}_\alpha(t_1-s)\big)\mathcal{G}(s)\,\mathrm{d}s, \\
\mathcal{J}_3 &:= \int_{t_1}^{t_2} \mathcal{P}_\alpha(t_2-s)\mathcal{G}(s)\,\mathrm{d}s.
\end{aligned}\right.
\end{equation}
For $\mathcal{J}_3$, using the kernel bound $\|\mathcal{P}_\alpha(t)\nabla \cdot f\|_{L^p}\le C t^{\frac{\alpha}{2}-1}\|f\|_{L^p}$, we eatimate
\begin{displaymath}
\big\|J_3\big\|_{L^p} \le C \int_{t_1}^{t_2} (t_2-s)^{\alpha/2-1}\big\|F(s)\big\|_{L^p}\,\mathrm{d}s \le C|t_2-t_1|^{\alpha/2}.
\end{displaymath}
For $\mathcal{J}_2$, applying the fundamental theorem of calculus yields
\begin{align*}
\big\|J_2\big\|_{L^p}
\le C \int_{t_1}^{t_2} \int_0^{t_1} (r-s)^{\frac{\alpha}{2}-2}\big\|F(s)\big\|_{L^p}\,\mathrm{d}s\,\mathrm{d}r
\le C \int_{t_1}^{t_2} (r-t_1)^{\frac{\alpha}{2}-1}\,\mathrm{d}r = C|t_2-t_1|^{\frac{\alpha}{2}}.
\end{align*}
For $J_1$, the continuous differentiability of the operator $\mathcal{S}_\alpha(t)$ for $t > 0$ gives $\|J_1\|_{L^p} \le C_\tau|t_2-t_1|$. Combining the estimates for $J_1$, $J_2$, and $J_3$, we conclude that
\begin{displaymath}
\big\|n(t_2)-n(t_1)\big\|_{L^p} \le C\big|t_2-t_1\big|^{\frac{\alpha}{2}}, \quad \forall~ \tau \le t_1 < t_2 \le T.
\end{displaymath}
Therefore, the desired H\"{o}lder regularity has been established, that is $n\in C^{0,\frac{\alpha}{2}}\big([\tau,T]; L^p(\Omega)\big)$.

\begin{itemize}
    \item \emph{\textbf{Step $(ii)$}}: $L^\infty([\tau,T];W^{2,p}(\Omega))$-regularity of $c$.
\end{itemize}

For $t\in[\tau,T]$, applying the shifted operator $\mathcal{A}_\gamma$ to the mild formulation of $c$ yields
\begin{align*}
\big\|\mathcal{A}_\gamma c(\mathbf{x},t)\big\|_{L^p}
\le \big\|\mathcal{A}_\gamma \mathcal{S}_\alpha^\gamma(t)c_0\big\|_{L^p}
+ \int_0^t \big\|\mathcal{A}_\gamma \mathcal{P}_\alpha^\gamma(t-s)n(\mathbf{x},s)\big\|_{L^p}\,\mathrm{d}s.
\end{align*}
From the resolvent estimate for the Mittag-Leffler family (see Lemma~\ref{lem:MLbounds}), for $t>0$, we have $\|\mathcal{A}_\gamma\mathcal{S}_\alpha^\gamma(t)\|_{L^p\to L^p}\le C t^{-\alpha}$. Hence, for $t\geq\tau>0$,
\begin{displaymath}
\big\|\mathcal{A}_\gamma\mathcal{S}^{\gamma}_\alpha(t)c_0\big\|_{L^{p}}
\leq C t^{-\alpha}\big\|c_0\big\|_{L^{p}}
\leq C(\tau)\big\|c_0\big\|_{L^{p}}.
\end{displaymath}
For the convolution term, write $n(\mathbf{x},s)=n(\mathbf{x},t)+(n(\mathbf{x},s)-n(\mathbf{x},t))$.
Then
\begin{align*}
\int_0^t \mathcal{A}_\gamma\mathcal{P}^{\gamma}_\alpha(t-s)n(\mathbf{x},s)\,\mathrm{d}s
=&\int_0^t \mathcal{A}_\gamma\mathcal{P}^{\gamma}_\alpha(t-s)\big(n(\mathbf{x},s)-n(\mathbf{x},t)\big)\,\mathrm{d}s \\
&+\Big(\int_0^t\mathcal{A}_\gamma\mathcal{P}^{\gamma}_\alpha(t-s)\,\mathrm{d}s\Big)\,n(\mathbf{x},t).
\end{align*}
Using the resolvent identity $\int_0^t\mathcal{A}_\gamma\mathcal{P}_\alpha^\gamma(r)\mathrm{d}r=I-\mathcal{S}_\alpha^\gamma(t)$, we obtain
\begin{displaymath}
\mathcal{A}_\gamma c(\mathbf{x},t)=\mathcal{A}_\gamma\mathcal{S}_\alpha^\gamma(t)c_0
+\big(I-\mathcal{S}_\alpha^\gamma(t)\big)n(\mathbf{x},t)
+\int_0^t\mathcal{A}_\gamma\mathcal{P}_\alpha^\gamma(t-s)\big(n(\mathbf{x},s)-n(\mathbf{x},t)\big)\,\mathrm{d}s.
\end{displaymath}
By Lemma \ref{lem:MLbounds}, we obtain $\|\mathcal{S}_\alpha^\gamma(t)\|_{L^p\to L^p}\le C$. Since $n(\mathbf{x},t)\in C([0,T];L^p(\Omega))$ from Theorem~\ref{Lem:WelPod1}, the second term remains uniformly bounded on $[\tau,T]$. Moreover, Lemma~\ref{Der:Lemma} gives $\|\mathcal{A}_\gamma\mathcal{P}_\alpha^\gamma(t)\|_{L^p\to L^p}\le C t^{-1}(1+t^{\alpha})^{-1}$. Thus, for the third term, by the temporal H\"{o}lder continuity $n\in C^{0,\alpha/2}([\tau,T];L^p)$, we have, for some
\(\theta\in(0,\alpha/2]\),
\begin{displaymath}
\begin{aligned}
&\int_0^t\left\|\mathcal{A}_\gamma\mathcal{P}_\alpha^\gamma(t-s)\bigl(n(\mathbf{x},s)-n(\mathbf{x},t)\bigr)\right\|_{L^p}\,\mathrm{d}s \\
&\le C\int_0^t\frac{\|n(\mathbf{x},s)-n(\mathbf{x},t)\|_{L^p}}{(t-s)(1+(t-s)^\alpha)}\mathrm{d}s
\le C\int_0^t \frac{|t-s|^\theta}{(t-s)(1+(t-s)^\alpha)}\,\mathrm{d}s.
\end{aligned}
\end{displaymath}
Letting $r=t-s$, we obtain $\int_0^t \frac{r^{\theta-1}}{1+r^\alpha}\,\mathrm{d}r\le\int_0^T r^{\theta-1}\,\mathrm{d}r
= \frac{T^\theta}{\theta}<\infty$, since $t\le T$ and $\theta>0$.
Therefore the convolution term is uniformly bounded in $t\in[\tau,T]$, and we conclude
\begin{displaymath}
\sup_{t\in[\tau,T]}\|\mathcal{A}_\gamma c(\mathbf{x},t)\|_{L^p}<\infty.
\end{displaymath}
Elliptic regularity for the Neumann Laplacian implies $\|c(\mathbf{x},t)\|_{W^{2,p}}\simeq\|\mathcal{A}_\gamma c(\mathbf{x},t)\|_{L^p}+\|c(\mathbf{x},t)\|_{L^p}$ (see, e.g., Ref.~\refcite{Lindemulder26}). Since $c(\mathbf{x},t)\in C([0,T];L^p(\Omega))$ from  Theorem~\ref{Lem:WelPod1}, we conclude $c(\mathbf{x},t)\in L^\infty([\tau,T];W^{2,p}(\Omega))$. Moreover, since $p>d$, the embedding $W^{2,p}(\Omega)\hookrightarrow W^{1,\infty}(\Omega)$ yields
\begin{equation}
\sup_{t\in[\tau,T]}\Big(\big\|c(\mathbf{x},t)\big\|_{L^\infty}+\big\|\nabla c(\mathbf{x},t)\big\|_{L^\infty}\Big)<\infty.
\end{equation}

\begin{itemize}
    \item \emph{\textbf{Step $(iii)$}}: $L^{\infty}([\tau,T];W^{1,p})$-regularity of $n$.
\end{itemize}

Expanding the chemotaxis term $\mathcal{G}(t):=\nabla\cdot\left(\frac{n}{c}\nabla c\right)$, a direct expansion gives
\begin{displaymath}
\mathcal{G}=\frac{1}{c}\nabla n\cdot\nabla c-\frac{n}{c^2}|\nabla c|^2+\frac{n}{c}\Delta c.
\end{displaymath}
By Lemma \ref{lem:LowerBound-c}, $c$ is uniformly positive on $[\tau,T]$, i.e., $c\ge C_*>0$. Combined with $n\in C([\tau,T];L^\infty(\Omega))$ and $c\in L^\infty([\tau,T];W^{2,p}(\Omega))$ from Theorem \ref{Lem:WelPod1}, along with the embedding $W^{2,p}(\Omega)\hookrightarrow W^{1,\infty}(\Omega)$ for $p>d\geq 2$ and the uniform bound $\|n(t)\|_{L^\infty}\le C_\tau$, we establish the estimate, for all $t\in[\tau,T]$,
\begin{align}
\|\mathcal{G}(t)\|_{L^p}
&\le C\left(\|\nabla n(t)\|_{L^p}\|\nabla c(t)\|_{L^\infty}+\|n(t)\|_{L^\infty}\|\nabla c(t)\|_{L^\infty}^2
+\|n(t)\|_{L^\infty}\|\Delta c(t)\|_{L^p}\right)  \notag\\
&\le C_\tau\left(1+\|\nabla n(t)\|_{L^p}\right).  \label{est:GGG}
\end{align}

Returning to the integral representation of $n$ prescribed in \eqref{sys:mild}, applying the gradient operator and using the smoothing property of the resolvent family given by Lemma~\ref{Der:Lemma}, namely $\|\mathcal{A}^{1/2}\mathcal{P}_\alpha(t)\|_{L^p\to L^p}\le Ct^{\frac{\alpha}{2}-1}$, we obtain
\begin{align*}
\big\|\mathcal{A}^{1/2}n(t)\big\|_{L^p}
&\le\big\|\mathcal{A}^{1/2}\mathcal{S}_\alpha(t)n_0\big\|_{L^p}+C\int_0^t(t-s)^{\frac{\alpha}{2}-1}\big\|\mathcal{G}(s)\big\|_{L^p}\,\mathrm{d}s \\
&\le C_\tau+C_\tau\int_0^t(t-s)^{\frac{\alpha}{2}-1}\big\|\mathcal{A}^{1/2} n(s)\big\|_{L^p}\,\mathrm{d}s,
\end{align*}
where we used estimate \eqref{est:GGG}. Setting $u(t):=\|\mathcal{A}^{1/2} n(\mathbf{x},t)\|_{L^{p}(\Omega)}$, we form the Volterra-type inequality for $t \in [\tau,T]$,
\begin{equation}\label{ineq:volterra-clean}
u(t)\leq C_\tau+C_\tau\int_{0}^{t}(t-s)^{\frac{\alpha}{2}-1}u(s)\mathrm{d}s,
\end{equation}
Since the kernel $k(t)=C_2t^{\frac{\alpha}{2}-1}\in L^1(0,T)$ for any $\alpha>0$. Applying the Volterra-type Gr\"{o}nwall inequality (see Lemma 8.2 in Chapter 9 of Ref. \refcite{Gripenberg90}) to \eqref{ineq:volterra-clean} yields
\begin{equation}\label{es:regim}
u(t)=\big\|\mathcal{A}^{1/2} n(\mathbf{x},t)\big\|_{L^{p}}\leq C,\quad \forall~ t\in[\tau, T].
\end{equation}
Combined with the already established bound $\|n(\mathbf{x},t)\|_{L^{p}}\leq C$, it follows that $n\in L^\infty([\tau,T];W^{1,p}(\Omega))$. Since $p>d$, the Sobolev embedding $W^{1,p}(\Omega) \hookrightarrow L^\infty(\Omega)$ holds. Together with the uniform estimate of $\|n(t)\|_{W^{1,p}}$ obtained above, we conclude that $n$ is pointwise bounded on $[\tau,T]$, i.e.,
\begin{displaymath}
\sup_{t\in[\tau,T]}\|n(t)\|_{L^\infty}<\infty.
\end{displaymath}

\begin{itemize}
    \item \emph{\textbf{Step $(iv)$}}: $C^{0,\alpha/2}([\tau,T];W^{1,p}(\Omega))$-regularity of $c$.
\end{itemize}

Let $\tau\leq t_1<t_2\leq T$.  Leveraging the mild formulation of $c(\mathbf{x},t)$, we decompose
\begin{align*}
c(\mathbf{x},t_2)-c(\mathbf{x},t_1)
&=\big(\mathcal{S}^{\gamma}_{\alpha}(t_2)-\mathcal{S}^{\gamma}_{\alpha}(t_1)\big)c_0
  +\int_{0}^{t_1}\big(\mathcal{P}^{\gamma}_\alpha(t_2-s)-\mathcal{P}^{\gamma}_\alpha(t_1-s)\big)n(\mathbf{x},s)\mathrm{d}s\\
&\quad + \int_{t_1}^{t_2}\mathcal{P}^{\gamma}_\alpha(t_2-s)n(\mathbf{x},s)\mathrm{d}s
=:\mathcal{K}_1+\mathcal{K}_2+\mathcal{K}_3.
\end{align*}
We estimate each term in $W^{1,p}(\Omega)$.

For term $\mathcal{K}_1$, since $\mathcal{S}^{\gamma}_\alpha(t)$ is differentiable on $(0,\infty)$ as a bounded operator on $L^p(\Omega)$ with $1<p<\infty$, we obtain
\begin{equation}
\mathcal{K}_1=\big(\mathcal{S}^{\gamma}_{\alpha}(t_2)-\mathcal{S}^{\gamma}_{\alpha}(t_1)\big)c_0
=\int_{t_1}^{t_2}\partial_{s}\,\mathcal{S}^{\gamma}_{\alpha}(s)\,c_0\,\mathrm{d}s.
\end{equation}
By taking the $W^{1,p}$-norm and applying Minkowski's inequality,
\begin{equation}\label{eq:dsrt}
\big\|\mathcal{K}_1\big\|_{W^{1,p}}
\le\int_{t_1}^{t_2}\big\|\partial_s\,\mathcal{S}_\alpha^\gamma(s)\,c_0\big\|_{W^{1,p}}\,\mathrm{d}s.
\end{equation}
Recall the derivative identity (see, e.g., Ref.~\refcite{Podlubny99})
\begin{equation}\label{eq:dsrt1}
\partial_s\,\mathcal{S}^{\gamma}_{\alpha}(s)
=\partial_s\,E_\alpha(-s^\alpha\mathcal{A}_\gamma)
=-s^{\alpha-1}\mathcal{A}_{\gamma}\,E_{\alpha,\alpha}(-s^\alpha\mathcal{A}_\gamma)
=-\mathcal{A}_\gamma\,\mathcal{P}_\alpha^\gamma(s).
\end{equation}
Using the norm equivalence $\|u\|_{W^{1,p}}\leq C(\|\mathcal{A}_\gamma^{1/2}u\|_{L^p}+\|u\|_{L^p})$, and and the commutativity of $\mathcal{A}_\gamma^{1/2}$ with $\mathcal{A}_\gamma \mathcal{P}_\alpha^\gamma(s)$, we get
\begin{align*}
\big\|\partial_s\,\mathcal{S}_\alpha^\gamma(s)c_0\big\|_{W^{1,p}}
=\big\|\mathcal{A}_\gamma \mathcal{P}_\alpha^\gamma(s)c_0\big\|_{W^{1,p}}
\leq C\left(\big\|\mathcal{A}_\gamma \mathcal{P}_\alpha^\gamma(s)\mathcal{A}_\gamma^{1/2}c_0\big\|_{L^p}+\big\|\mathcal{A}_\gamma\mathcal{P}_\alpha^\gamma(s)c_0\big\|_{L^p}\right).
\end{align*}
By employing the explicit resolvent-type estimate from Lemma~\ref{Der:Lemma}, which provides $\|\mathcal{A}_\gamma \mathcal P_\alpha^\gamma(s)\|_{L^p\to L^p}\le C s^{-1}(1+s^\alpha)^{-1}$, we obtain
\begin{align*}
\big\|\partial_s\,\mathcal{S}_\alpha^\gamma(s)c_0\big\|_{W^{1,p}}
&\leq C s^{-1}(1+s^\alpha)^{-1}\left(\big\|\mathcal{A}_\gamma^{1/2}c_0\big\|_{L^p}+\big\|c_0\big\|_{L^p} \right)\\
&\leq C s^{-1}(1+s^\alpha)^{-1} \big\|c_0\big\|_{W^{1,p}}
\leq C s^{-1} \big\|c_0\big\|_{W^{1,p}},
\end{align*}
where we used the property $(1+s^\alpha)^{-1}<1$ for all $s>0$. Substituting this bound into \eqref{eq:dsrt}, we infer that
\begin{equation}
\big\|\mathcal{K}_1\big\|_{W^{1,p}}
\le C\big\|c_0\big\|_{W^{1,p}}\int_{t_1}^{t_2}s^{-1}\,\mathrm{d}s.
\end{equation}
Since $t_1, t_2\in[\tau,T]$ with $\tau>0$, the integrand is uniformly bounded by $\tau^{-1}$. Thus, the integral evaluates to $\int_{t_1}^{t_2}s^{-1}\mathrm{d}s \le \tau^{-1}|t_2-t_1|$. By the embedding $C^{0,1}\hookrightarrow C^{0,\alpha/2}$ for any $0<\alpha<1$, we obtain
\begin{displaymath}
\|\mathcal{K}_1\|_{W^{1,p}}\le C|t_2-t_1|^{\alpha/2}.
\end{displaymath}
This establishes the desired H\"{o}lder continuity on $[\tau,T]$.

For term $\mathcal{K}_3$, by applying Minkowski's integral inequality, we can decompose the estimate as follows:
\begin{align*}
\big\|\mathcal{K}_3\big\|_{W^{1,p}}
&\le C \left(\big\|\mathcal{A}_\gamma^{1/2}\mathcal{K}_3\big\|_{L^p}
   +\big\|\mathcal{K}_3\big\|_{L^p}\right)  \\
&\le C \int_{t_1}^{t_2}\left(\big\|\mathcal{A}_\gamma^{1/2}\mathcal{P}_\alpha^\gamma(t_2-s)n(\cdot,s)\big\|_{L^p}+\big\|\mathcal{P}_\alpha^\gamma(t_2-s)n(\cdot,s)\big\|_{L^p}\right) \mathrm{d}s.
\end{align*}
Recall from Lemma~\ref{Der:Lemma} that $\|\mathcal{A}_\gamma^{1/2}\mathcal{P}_\alpha^\gamma(t)\|_{L^p\to L^{p}}\le Ct^{\frac{\alpha}{2}-1}(1+t^{\alpha})^{-1}$, and the standard resolvent estimate gives $\|\mathcal{P}_\alpha^\gamma(t)\|_{L^p\to L^{p}}\le Ct^{\alpha-1}(1+t^{\alpha})^{-1}$. Since $n\in C([\tau,T];L^\infty(\Omega))$ according to Theorem~\ref{Lem:WelPod1}, the source term is uniformly bounded, i.e., $\max_{s\in[\tau,T]}\|n(\cdot,s)\|_{L^p}\le C_\tau$. Substituting these into the inequality yields
\begin{align*}
\big\|\mathcal{K}_3\big\|_{W^{1,p}}
\le C_\tau \int_{t_1}^{t_2}
 &\left[(t_2-s)^{\frac{\alpha}{2}-1}\big(1+(t_2-s)^\alpha\big)^{-1} \right.\\
 &\quad\left.+ (t_2-s)^{\alpha-1}\big(1+(t_2-s)^\alpha\big)^{-1} \right] \mathrm{d}s.
\end{align*}
Note that for all $s \in [t_1, t_2]$, the bounded algebraic factors satisfy $(1+(t_2-s)^\alpha)^{-1/2} \le 1$ and $(1+(t_2-s)^\alpha)^{-1} \le 1$. Furthermore, since $\alpha > 0$, the temporal exponent satisfies $\frac{\alpha}{2}-1 < \alpha-1$, meaning that the singularity $(t_2-s)^{\frac{\alpha}{2}-1}$ dominates as $s \to t_2$. Consequently, on the bounded time interval, the sum inside the bracket can be sharply capped by the more singular term, leading to
\begin{equation}
\big\|\mathcal{K}_3\big\|_{W^{1,p}}
\le C_\tau \int_{t_1}^{t_2} (t_2-s)^{\frac{\alpha}{2}-1} \,\mathrm{d}s
  = \frac{2C_\tau}{\alpha}|t_2-t_1|^{\alpha/2}.
\end{equation}

For term $\mathcal{K}_2$, we first apply the fundamental theorem of calculus to represent the kernel difference as $\mathcal{P}_\alpha^\gamma(t_2-s)-\mathcal{P}_\alpha^\gamma(t_1-s)=\int_{t_1}^{t_2} \partial_r\mathcal{P}_\alpha^\gamma(r-s)\,\mathrm{d}r$. Hence
\begin{displaymath}
\mathcal{K}_2=\int_{0}^{t_1}\int_{t_1}^{t_2}  \partial_r\mathcal{P}_\alpha^\gamma(r-s)n(s)\,\mathrm{d}r\,\mathrm{d}s.
\end{displaymath}
Invoking the norm equivalence $\|u\|_{W^{1,p}}\leq C(\|\mathcal{A}_\gamma^{1/2}u\|_{L^p}+\|u\|_{L^p})$ once again, together with Minkowski's inequality, we obtain
\begin{displaymath}
\big\|\mathcal{K}_2\big\|_{W^{1,p}}
\leq C \int_{0}^{t_1}\int_{t_1}^{t_2} \left(\big\|\mathcal{A}_\gamma^{1/2}\partial_r\mathcal{P}_\alpha^\gamma(r-s)n(s)\big\|_{L^p}+\big\|\partial_r\mathcal{P}_\alpha^\gamma(r-s)n(s)\big\|_{L^p}\right)\mathrm{d}r\,\mathrm{d}s.
\end{displaymath}
Since $n\in C([0,T];L^{p}(\Omega))$, there exists a constant $\bar{M}>0$ such that $\|n\|_{L^{p}}\leq \bar{M}$ for $0\leq s\leq T$.
Moreover, by the standard Mittag–Leffler operator estimates, $\|\mathcal{A}_\gamma^{1/2}\partial_r\mathcal{P}_\alpha^\gamma(t)\|_{L^{p}\to L^{p}}\leq Ct^{\alpha/2-2}$, and $\|\partial_t\mathcal{P}_\alpha^\gamma(t)\|_{L^{p}\to L^{p}}\leq C t^{\alpha-2}$ (see Lemma~\ref{Der:Lemma}). Therefore,
\begin{displaymath}
\big\|\mathcal{K}_2\big\|_{W^{1,p}}
\leq C\bar{M}\int_{0}^{t_1}\int_{t_1}^{t_2} \left(
(r-s)^{\frac{\alpha}{2}-2}+(r-s)^{\alpha-2}
\right)\mathrm{d}r\,\mathrm{d}s.
\end{displaymath}
We first estimate the more singular contribution. By Fubini's theorem,
\begin{displaymath}
\int_{0}^{t_1}\int_{t_1}^{t_2}(r-s)^{\frac{\alpha}{2}-2}\mathrm{d}r\,\mathrm{d}s=\int_{t_1}^{t_2}\int_{0}^{t_1}(r-s)^{\frac{\alpha}{2}-2}\mathrm{d}s\,\mathrm{d}r.
\end{displaymath}
Introducing the change of variables $u=r-s$, we obtain $\int_{0}^{t_1}(r-s)^{\frac{\alpha}{2}-2}\mathrm{d}s=\int_{r-t_1}^{r}u^{\frac{\alpha}{2}-2}\mathrm{d}u$. Since $\alpha/2-2<-1$, direct integration yields $\int_{r-t_1}^{r}u^{\frac{\alpha}{2}-2}\mathrm{d}u=\frac{1}{1-\alpha/2}[(r-t_1)^{\alpha/2-1}-r^{\alpha/2-1}]$. Because $\alpha/2-1<0$, it follows that $\int_{0}^{t_1}(r-s)^{\frac{\alpha}{2}-2}\mathrm{d}s\leq C(r-t_1)^{\alpha/2-1}$. Integrating once more with respect to $r$, we arrive at $\int_{t_1}^{t_2}(r-t_1)^{\alpha/2-1}\mathrm{d}r=\frac{2}{\alpha}|t_2-t_1|^{\alpha/2}$. Hence,
\begin{displaymath}
\int_{0}^{t_1}\int_{t_1}^{t_2}(r-s)^{\frac{\alpha}{2}-2}\mathrm{d}r\,\mathrm{d}s\leq C\big|t_2-t_1\big|^{\frac{\alpha}{2}}.
\end{displaymath}
Similarly,
\begin{displaymath}
\int_{0}^{t_1}\int_{t_1}^{t_2}(r-s)^{\alpha-2}\mathrm{d}r\,\mathrm{d}s\leq C\big|t_2-t_1\big|^{\alpha}.
\end{displaymath}
Combining the above estimates and observing that $|t_2-t_1|^{\alpha}\leq T^{\alpha/2}|t_2-t_1|^{\alpha/2}$, we conclude that
\begin{equation}
\big\|\mathcal{K}_2\big\|_{W^{1,p}}\le C_{\tau,T}\big|t_2-t_1\big|^{\alpha/2}.
\end{equation}
Therefore, $\mathcal{K}_2$ satisfies the desired H\"{o}lder's regularity in $W^{1,p}(\Omega)$.

Collecting all of the aforementioned estimates,
we may conclude that $c(\mathbf{x},t)\in C^{0,\alpha/2}([\tau,T];W^{1,p}(\Omega))$
for $\tau>0$ and $p>d\geq2$.

\begin{itemize}
    \item \emph{\textbf{Step $(v)$}}: $C^{0,\alpha}([\tau,T];L^{p}(\Omega))$-regularity of $n$.
\end{itemize}

In view of the estimates \eqref{est:GGG} and \eqref{es:regim}, with $c\in L^\infty([\tau,T];W^{2,p}(\Omega))$ and $n\in L^\infty([\tau,T]; W^{1,p})$, we now have $\mathcal{G}\in L^\infty([\tau,T];L^p(\Omega))$. Based on this, to rigorously establish that $n\in C^{0,\alpha}([\tau, T]; L^p(\Omega))$, we choose $\tau \le t_1 < t_2 \le T$ and recall the decomposition of $n(t_2) - n(t_1)$ in \eqref{eq:dedee}.

For $\mathcal{J}_1$, since $t_1 \ge \tau > 0$, the differentiability of $\mathcal{S}_\alpha(t)$ yields a localized Lipschitz bound $\|\mathcal{J}_1\|_{L^p} \le C_\tau |t_2-t_1|$, which algebraically embeds into $C^{0,\alpha}$. For $\mathcal{J}_3$, the standard resolvent estimate $\|\mathcal{P}_\alpha(t)\|_{L^p \to L^p} \le C t^{\alpha-1}$ (see Lemma \ref{Der:Lemma}) coupled with the result $\mathcal{G} \in L^\infty([\tau,T]; L^p(\Omega))$ directly implies $\|\mathcal{J}_3\|_{L^p} \le C \|\mathcal{G}\|_{L^\infty} \int_{t_1}^{t_2} (t_2-s)^{\alpha-1}\,\mathrm{d}s \le C |t_2-t_1|^\alpha$. Finally, for $\mathcal{J}_2$, we express the kernel difference as $\int_{t_1}^{t_2} \partial_\theta \mathcal{P}_\alpha(\theta-s)\,\mathrm{d}\theta$ and utilize the temporal derivative decay estimate $\|\partial_\theta \mathcal{P}_\alpha(\theta-s)\|_{L^p \to L^p} \le C(\theta-s)^{\alpha-2}$. Applying Fubini's theorem to exchange the integration order over the domain yields the sharp bound $\int_{t_1}^{t_2} \int_{0}^{t_1} (\theta-s)^{\alpha-2}\,\mathrm{d}s\,\mathrm{d}\theta = \frac{1}{1-\alpha}\int_{t_1}^{t_2}[(\theta-t_1)^{\alpha-1}-\theta^{\alpha-1} ]\le C|t_2-t_1|^\alpha$, thereby confirming that $\|\mathcal{J}_2\|_{L^p} \le C |t_2-t_1|^\alpha$. Combining these three bounds yields
\begin{equation}\label{eq:nHolder}
\big\|n(t_2)-n(t_1)\big\|_{L^p}\le C_{\tau,T}\big|t_2-t_1\big|^\alpha,
\end{equation}
which implies $n\in C^{0,\alpha}([\tau,T];L^p(\Omega))$.

\begin{itemize}
    \item \emph{\textbf{Step $(vi)$}}: $C^{0,\alpha/2}([\tau,T];W^{2,p}(\Omega))$-regularity of $c$.
\end{itemize}

We now proceed to establish the temporal H\"{o}lder continuity of $c$ in $W^{2,p}(\Omega)$. Recall the mild solution for $c$ in \eqref{sys:mild} that
\begin{displaymath}
\mathcal{A}_\gamma c(t)
= \mathcal{A}_\gamma\mathcal S_\alpha^\gamma(t)c_0 + \bigl(I-\mathcal{S}_\alpha^\gamma(t)\bigr)n(t)
 +\int_0^t \mathcal{A}_\gamma\mathcal{P}_\alpha^\gamma(t-s)\bigl(n(s)-n(t)\bigr)\,\mathrm{d}s.
\end{displaymath}
For convenience, we define the three terms on the right-hand side as $H_1(t)$, $H_2(t)$, and $H_3(t)$, respectively, $H_1(t):=\mathcal{A}_\gamma\mathcal{S}_\alpha^\gamma(t)c_0$, $H_2(t):=(I-\mathcal S_\alpha^\gamma(t))n(t)$, $H_3(t):=\int_0^t\mathcal{A}_\gamma\mathcal{P}_\alpha^\gamma(t-s)(n(s)-n(t))\mathrm{d}s$. Let $\tau\le t_1<t_2\le T$. We estimate the increments of $H_1$, $H_2$, and $H_3$ separately.

For $H_1$, if $t_1\ge\tau>0$, the operator $\mathcal{A}_\gamma\mathcal{S}_\alpha^\gamma(t)$ is smooth in $t$, so $H_1$ is Lipschitz on $[\tau,T]$, which implies $\|H_1(t_2)-H_1(t_1)\|_{L^p}\le C_\tau|t_2-t_1|\,\|c_0\|_{L^p}$.

For $H_2$, we write
\begin{displaymath}
H_2(t_2)-H_2(t_1)
= \bigl(I-\mathcal{S}_\alpha^\gamma(t_2)\bigr)\bigl(n(t_2)-n(t_1)\bigr)
 + \bigl(\mathcal{S}_\alpha^\gamma(t_1)-\mathcal{S}_\alpha^\gamma(t_2)\bigr)n(t_1).
\end{displaymath}
Since $n\in C^{0,\alpha}([\tau,T];L^p(\Omega))$ and $\mathcal{S}_\alpha^\gamma(t)$ is uniformly bounded on $L^p(\Omega)$, it follows that $\|(I-\mathcal{S}_\alpha^\gamma(t_2))(n(t_2)-n(t_1))\|_{L^p}\le C|t_2-t_1|^\alpha$. Moreover, arguing as above,
\begin{align*}
\big\|\bigl(\mathcal{S}_\alpha^\gamma(t_1)-\mathcal{S}_\alpha^\gamma(t_2)\bigr)n(t_1)\big\|_{L^p}
\le\int_{t_1}^{t_2}\big\|\mathcal{A}_\gamma\mathcal{P}_\alpha^\gamma(r)\big\|_{L^p\to L^p}\,\mathrm{d}r\,\big\|n(t_1)\big\|_{L^p}\le C_\tau\big|t_2-t_1\big|.
\end{align*}
Therefore,
\begin{displaymath}
\big\|H_2(t_2)-H_2(t_1)\big\|_{L^p}\le C\big|t_2-t_1\big|^\alpha.
\end{displaymath}

We now proceed to estimate $H_3$. To this end, we decompose the increment as $H_3(t_2)-H_3(t_1)=I_1+I_2$, where
\begin{displaymath}
I_2=\int_{t_1}^{t_2}\mathcal{A}_\gamma\mathcal{P}_\alpha^\gamma(t_2-s)\bigl(n(s)-n(t_2)\bigr)\,\mathrm{d}s,
\end{displaymath}
and
\begin{displaymath}
I_1=\int_0^{t_1}\left[\mathcal{A}_\gamma\mathcal{P}_\alpha^\gamma(t_2-s)\bigl(n(s)-n(t_2)\bigr)
   -\mathcal{A}_\gamma\mathcal{P}_\alpha^\gamma(t_1-s)\bigl(n(s)-n(t_1)\bigr)\right]\,\mathrm{d}s.
\end{displaymath}
For the term $I_2$, we employ the H\"{o}lder continuity of $n$ to obtain
\begin{align*}
\big\|I_2\big\|_{L^p}
&\le C \int_{t_1}^{t_2} (t_2-s)^{-1}\big\|n(s)-n(t_2)\big\|_{L^p}\,\mathrm{d}s \\
&\le C \int_{t_1}^{t_2} (t_2-s)^{\alpha-1}\,\mathrm{d}s \le C\big|t_2-t_1\big|^\alpha.
\end{align*}
To estimate $I_1$,  we further split it as $I_1=I_{11}+I_{12}$, where
\begin{displaymath}
\left\{
\begin{aligned}
I_{11}&=\int_0^{t_1}\bigl(\mathcal{A}_\gamma\mathcal{P}_\alpha^\gamma(t_2-s)
       -\mathcal{A}_\gamma\mathcal{P}_\alpha^\gamma(t_1-s)\bigr)\bigl(n(s)-n(t_1)\bigr)\,\mathrm{d}s, \\
I_{12}&=\int_0^{t_1}\mathcal{A}_\gamma\mathcal{P}_\alpha^\gamma(t_2-s)\bigl(n(t_1)-n(t_2)\bigr)\,\mathrm{d}s.
\end{aligned}\right.
\end{displaymath}
For $I_{11}$, applying the fundamental theorem of calculus yields
\begin{displaymath}
\mathcal{A}_\gamma\mathcal{P}_\alpha^\gamma(t_2-s)-\mathcal{A}_\gamma\mathcal{P}_\alpha^\gamma(t_1-s)
=\int_{t_1}^{t_2}\partial_r\bigl(\mathcal{A}_\gamma\mathcal{P}_\alpha^\gamma(r-s)\bigr)\,\mathrm{d}r.
\end{displaymath}
By Lemma \ref{Der:Lemma}, we have $\|\partial_r(\mathcal{A}_\gamma\mathcal{P}_\alpha^\gamma(r-s))\|_{L^p\to L^p}\le C(r-s)^{-2}$. Therefore,
\begin{align*}
\|I_{11}\|_{L^p}
&\le C \int_{t_1}^{t_2}\int_0^{t_1} (r-s)^{-2}\big\|n(s)-n(t_1)\big\|_{L^p}\,\mathrm{d}s\,\mathrm{d}r \\
&\le C \int_{t_1}^{t_2}\int_0^{t_1} (r-s)^{-2}(t_1-s)^\alpha\,\mathrm{d}s\,\mathrm{d}r.
\end{align*}
By introducing the change of variables $u=t_1-s$, we have $r-s=(r-t_1)+u$. This allows us to estimate the inner integral as follows
\begin{align*}
\int_0^{t_1} (r-s)^{-2}(t_1-s)^\alpha\,\mathrm{d}s
&= \int_0^{t_1} \bigl((r-t_1)+u\bigr)^{-2} u^\alpha\,\mathrm{d}u \\
&\le \int_0^\infty \bigl((r-t_1)+u\bigr)^{-2} u^\alpha\,\mathrm{d}u
= C(r-t_1)^{\alpha-1}.
\end{align*}
Consequently,
\begin{displaymath}
\big\|I_{11}\big\|_{L^p} \le C \int_{t_1}^{t_2} (r-t_1)^{\alpha-1}\,\mathrm{d}r
\le C|t_2-t_1|^\alpha.
\end{displaymath}
For $I_{12}$, we use the identity $\int_0^{t_1}\mathcal{A}_\gamma\mathcal{P}_\alpha^\gamma(t_2-s)\mathrm{d}s
=\mathcal{S}_\alpha^\gamma(t_2-t_1)-\mathcal{S}_\alpha^\gamma(t_2)$ to explicitly evaluate it as $I_{12}=\left(\mathcal{S}_\alpha^\gamma(t_2-t_1)-\mathcal{S}_\alpha^\gamma(t_2)\right)\left(n(t_1)-n(t_2)\right)$. Since the operator $\mathcal{S}_\alpha^\gamma(t)$ is uniformly bounded on $L^p(\Omega)$, we obtain the estimate
\begin{displaymath}
\big\|I_{12}\big\|_{L^p}
\le C\big\|n(t_1)-n(t_2)\big\|_{L^p}
\le C\big|t_2-t_1\big|^\alpha.
\end{displaymath}
Combining the bounds for $I_{11}$, $I_{12}$, and $I_2$, we conclude that
\begin{displaymath}
\big\|H_3(t_2)-H_3(t_1)\big\|_{L^p} \le C\big|t_2-t_1\big|^\alpha.
\end{displaymath}

Furthermore, aggregating the estimates for $H_1$, $H_2$, and $H_3$ yields
\begin{displaymath}
\big\|\mathcal{A}_\gamma c(t_2)-\mathcal{A}_\gamma c(t_1)\big\|_{L^p} \le C\big|t_2-t_1\big|^\alpha,
\end{displaymath}
which implies that $\mathcal A_\gamma c \in C^{0,\alpha/2}\bigl([\tau,T];L^p(\Omega)\bigr)$. On the other hand,  it has been established in Step $(iv)$ that $c\in C^{0,\alpha/2}\bigl([\tau,T];W^{1,p}(\Omega)\bigr)$. Leveraging the continuous embedding $W^{1,p}(\Omega) \hookrightarrow L^p(\Omega)$, we deduce
\begin{displaymath}
\big\|c(t_2)-c(t_1)\big\|_{L^p} \le C\big|t_2-t_1\big|^{\alpha/2}.
\end{displaymath}

Finally, utilizing the norm equivalence $\|u\|_{W^{2,p}(\Omega)} \le C\bigl(\|\mathcal{A}_\gamma u\|_{L^p(\Omega)}+\|u\|_{L^p(\Omega)}\bigr)$ for $u\in D(\mathcal{A}_\gamma)$, we obtain
\begin{align*}
\big\|c(t_2)-c(t_1)\big\|_{W^{2,p}}
&\le C\big\|\mathcal A_\gamma(c(t_2)-c(t_1))\big\|_{L^p} + C\big\|c(t_2)-c(t_1)\big\|_{L^p}
\le C\big|t_2-t_1\big|^{\alpha/2}.
\end{align*}
Hence, $c\in C^{0,\alpha}\bigl([\tau,T];W^{2,p}(\Omega)\bigr)$. In particular, $D^2c \in C^{0,\alpha/2}\bigl([\tau,T];L^p(\Omega)\bigr)$. This completes the proof.

\end{proof}

\begin{lemma}[Improved spatial regularity]\label{Lem:Rnnn}
Let $T>0$ be appropriately small and let $(n,c)$ be the local mild solution of  System \eqref{Pro:FKS}. Then $n(\mathbf{x},t)\in C^{0,\alpha/2}((0,T];W^{1,p}(\Omega))\cap C((0,T];D(\mathcal{A}))$ and $c(\mathbf{x},t)\in C((0,T];D(\mathcal{A}))$ for $p>d\geq2$.
\end{lemma}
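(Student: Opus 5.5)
The plan is to work on an arbitrary interval $[\tau,T]$ with $\tau>0$, where Lemma~\ref{Lem:timR} already supplies the bounds
$n\in L^\infty([\tau,T];W^{1,p})\cap C^{0,\alpha}([\tau,T];L^p)$ and $c\in C^{0,\alpha/2}([\tau,T];W^{2,p})$,
together with $\nabla c\in L^\infty$ and $c\ge C_*$ from Lemma~\ref{lem:LowerBound-c}. Since $\tau$ is arbitrary, each property on $[\tau,T]$ gives the claim on $(0,T]$. The claim $c\in C((0,T];D(\mathcal{A}))$ needs no new work. By Step $(vi)$ of Lemma~\ref{Lem:timR}, $\mathcal{A}_\gamma c$ is Hölder continuous in $L^p$ on $[\tau,T]$. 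Also $c(t)\in D(\mathcal{A})$ for each $t$, because the mild representation keeps the Neumann condition. The norm equivalence $\|u\|_{W^{2,p}}\simeq\|\mathcal{A}_\gamma u\|_{L^p}+\|u\|_{L^p}$ then gives continuity into $D(\mathcal{A})$.

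For $n\in C^{0,\alpha/2}([\tau,T];W^{1,p})$, I will repeat the splitting $n(t_2)-n(t_1)=\mathcal{J}_1+\mathcal{J}_2+\mathcal{J}_3$ from \eqref{eq:dedee}. Each term will be measured in the norm $\|\mathcal{A}^{1/2}\cdot\|_{L^p}+\|\cdot\|_{L^p}$, which is equivalent to the $W^{1,p}$ norm by Remark~\ref{rmk:space}$(v)$.

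The input is the bound on $\mathcal{G}=\nabla\cdot(\frac{n}{c}\nabla c)$ from \eqref{est:GGG}. Combined with \eqref{es:regim}, it gives $\mathcal{G}\in L^\infty([\tau,T];L^p)$. On $(0,\tau)$ I only use $\mathcal{G}=\nabla\cdot F$ with $F\in L^\infty(L^p)$.

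The three terms are handled as follows.
\begin{itemize}
\item $\mathcal{J}_1$ is Lipschitz on $[\tau,T]$ in $W^{1,p}$, since $\partial_t\mathcal{A}^{1/2}\mathcal{S}_\alpha(t)=-\mathcal{A}^{3/2}\mathcal{P}_\alpha(t)$ is bounded for $t\ge\tau$.
\item $\mathcal{J}_3$ is estimated with $\|\mathcal{A}^{1/2}\mathcal{P}_\alpha(t)\|_{L^p\to L^p}\le Ct^{\alpha/2-1}$, which gives $C|t_2-t_1|^{\alpha/2}$.
\item $\mathcal{J}_2$ is treated by the fundamental theorem of calculus with $\|\mathcal{A}^{1/2}\partial_r\mathcal{P}_\alpha(r)\|\le Cr^{\alpha/2-2}$ (Lemma~\ref{Der:Lemma}). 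The double integral is the same one as in Step $(iv)$, and it yields $|t_2-t_1|^{\alpha/2}$.
\end{itemize}
There is a complication in $\mathcal{J}_2$: the history part $s\in(0,\tau/2)$ only has $\mathcal{G}=\nabla\cdot F$. On that range $r-s\ge\tau/2$, so the kernels are smooth. I will absorb the divergence with the \eqref{eq:Eb}-type bound, and that part is then Lipschitz in $t$.

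For $n\in C([\tau,T];D(\mathcal{A}))$, I will mimic Step $(ii)$ and Step $(vi)$ of Lemma~\ref{Lem:timR}, with $n$ in place of $c$ and $\mathcal{G}$ as the source. The recentred identity is
\[
\mathcal{A}n(t)=\mathcal{A}\mathcal{S}_\alpha(t)n_0-\chi\big(I-\mathcal{S}_\alpha(t)\big)\mathcal{G}(t)-\chi\int_0^t\mathcal{A}\mathcal{P}_\alpha(t-s)\big(\mathcal{G}(s)-\mathcal{G}(t)\big)\,\mathrm{d}s.
\]
This reduces the problem to Hölder continuity of $\mathcal{G}$ in $L^p$ on $[\tau,T]$.

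That step is the main obstacle. Expanding
\[
\mathcal{G}=\frac{\nabla n\cdot\nabla c}{c}-\frac{n|\nabla c|^2}{c^2}+\frac{n\Delta c}{c},
\]
the second and third terms are Hölder continuous by the available regularity of $n$ and $c$ and by $c\ge C_*$. The first term requires $\nabla n$ to be Hölder continuous in $L^p$, which is exactly the $W^{1,p}$ claim proved in the previous step; this is why that step comes first. The fact that $c\in W^{2,p}$ is Hölder continuous and embeds into $W^{1,\infty}$ is used there as well.

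Near $s=0$ in the convolution, $\mathcal{G}(s)$ is not controlled in $L^p$. I will therefore split the integral at $\tau/2$. On $[0,\tau/2]$, I write $\mathcal{A}\mathcal{P}_\alpha(t-s)\nabla\cdot F$ with $t-s\ge\tau/2$, so the operator $\mathcal{A}\mathcal{P}_\alpha(t-s)\nabla\cdot$ is bounded there. On $[\tau/2,t]$ I use the Hölder bound on $\mathcal{G}$, which needs the $\mathcal{G}$ bounds from the previous step on $[\tau/2,T]$ rather than $[\tau,T]$; since $\tau$ is arbitrary this is available. With $\|\mathcal{A}\mathcal{P}_\alpha(r)\|\le Cr^{-1}$ the singular integral converges.

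If the Hölder exponent of $\mathcal{G}$ is only $\alpha/2$ or smaller, mere continuity into $D(\mathcal{A})$ still follows by dominated convergence. The Neumann condition $\partial_\nu n=0$ holds because the range of every Mittag-Leffler operator lies in $D(\mathcal{A})$.
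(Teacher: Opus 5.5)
Your proposal is correct and follows essentially the paper's route: $c$ is read off from Step $(vi)$ of Lemma~\ref{Lem:timR}, then $n\in C^{0,\alpha/2}([\tau,T];W^{1,p}(\Omega))$, then H\"{o}lder continuity of $\mathcal{G}$ by product rules, and finally continuity of $\mathcal{A}n$ via the recentring identity $\mathcal{A}\int_0^{\delta}\mathcal{P}_\alpha(r)\,\mathrm{d}r=I-\mathcal{S}_\alpha(\delta)$ together with the $(t-s)^{\theta-1}$ bound. The differences are only in bookkeeping and level of detail: you prove the $W^{1,p}$ H\"{o}lder bound by an explicit three-term splitting where the paper only refers back to Step $(iii)$ of Lemma~\ref{Lem:timR}, and you recentre over all of $[0,t]$ with a fixed split at $\tau/2$, treating the early history in divergence form $\mathcal{G}=\nabla\cdot F$, whereas the paper splits at $t-\delta$ and tacitly uses $\mathcal{G}\in L^\infty((0,T];L^p(\Omega))$ near $s=0$, which Lemma~\ref{Lem:timR} does not supply --- so your version is, if anything, the more careful one.
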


\begin{proof}
Let $\mathcal{A}:=-\Delta$ be the Neumann Laplacian on $L^p(\Omega)$. To establish the desired temporal continuity in $D(\mathcal{A})$, it suffices to show that $n$, $c\in C([\tau,T]; D(\mathcal{A}))$ for every $\tau>0$.

The spatial regularity of $c$ follows directly from Lemma~\ref{Lem:timR}, which guarantees that $c\in C^{0,\alpha}([\tau,T];W^{2,p}(\Omega))\subset C([\tau,T];W^{2,p}(\Omega))$. Since $c$ satisfies the Neumann boundary condition, $c(t)\in D(\mathcal{A}_\gamma)=D(\mathcal{A})$. The equivalence between the graph norm of $D(\mathcal{A})$ and the $W^{2,p}(\Omega)$ norm, then yields $c\in C([\tau,T]; D(\mathcal{A}))$.

For the regularity of $n$, we use the mild formulation
\begin{displaymath}
n(t)=\mathcal{S}_\alpha(t)n_0+\int_0^t\mathcal{P}_\alpha(t-s)\mathcal{G}(s)\mathrm{d}s,
\end{displaymath}
where $\mathcal{G}:=\chi\big(\frac{1}{c}\nabla n\cdot\nabla c-\frac{n}{c^2}|\nabla c|^2+\frac{n}{c}\Delta c\big)$. From Lemma~\ref{Lem:timR}, we know $\mathcal{G}\in L^\infty((0,T];L^p(\Omega))$. Fix $\tau>0$ and introduce a strict time-truncation parameter $\delta:=\tau/2> 0$. The proof proceeds in the following sequential stages.

\begin{itemize}
    \item \emph{\textbf{Step $(i)$}}: The H\"{o}lder continuity of $n$ bounded away from $t=0$.
\end{itemize}

To this end, we estimate the regularity of $n(t)$ for $t\in[\delta,T]$. Since $t\ge\delta>0$, the initial value term satisfies $\mathcal{S}_\alpha(t)n_0\in C^\infty([\delta,T];D(\mathcal{A}))$. For the convolution integral, utilizing the estimate $\|\mathcal{A}^{1/2}\mathcal{P}_\alpha(t)\|_{L^p\to L^p}\le C t^{\alpha/2-1}$ from Lemma \ref{Der:Lemma} and arguing exactly as in Step $(iii)$ of Lemma~\ref{Lem:timR}, we rigorously deduce that $\mathcal{A}^{1/2}n\in C^{0,\alpha/2}([\delta,T];L^p(\Omega))$. Therefore, via domain characterization, we obtain $n\in C^{0,\alpha/2}([\delta,T];W^{1,p}(\Omega))$.

\begin{itemize}
    \item \emph{\textbf{Step $(ii)$}}: The H\"{o}lder continuity of the forcing term $\mathcal{G}$.
\end{itemize}

Since $p>d\ge 2$, the embeddings $W^{1,p}(\Omega) \hookrightarrow L^\infty(\Omega)$ and $W^{2,p}(\Omega)\hookrightarrow W^{1,\infty}(\Omega)$ hold. We collect the regularities on the restricted interval $[\delta,T]$: $c\in C^{0,\alpha}([\delta,T];W^{2,p}(\Omega))$, which gives $\Delta c\in C^{0,\alpha}([\delta,T];L^p(\Omega))$ and $\nabla c$, $c^{-1}\in C^{0,\alpha}([\delta,T];L^\infty(\Omega))$. $n\in C^{0,\alpha/2}([\delta,T]; W^{1,p}(\Omega))$, which gives $\nabla n\in C^{0,\alpha/2}([\delta,T];L^p(\Omega))$ and $n\in C^{0,\alpha/2}([\delta,T];L^\infty(\Omega))$. Applying standard product rules for H\"{o}lder spaces, we deduce that each nonlinear component of $\mathcal{G}$ is H\"{o}lder continuous. Consequently $\mathcal{G}\in C^{0,\alpha/2}([\delta,T];L^p(\Omega))$.

\begin{itemize}
    \item \emph{\textbf{Step $(iii)$}}: The continuity of $\mathcal{A} n$.
\end{itemize}

Define $\mathcal{J}(t)=\int_0^t \mathcal{P}_\alpha(t-s)\mathcal{G}(s)\mathrm{d}s$. To rigorously justify that $\mathcal{J}(t)\in D(\mathcal{A})$ and $\mathcal{A}\mathcal{J}(t)$ is continuous, we split $\mathcal{J}(t)$ into a regular history part and a singular local part, namely
\begin{equation}
\mathcal{J}(t)
=\int_0^{t-\delta} \mathcal{P}_\alpha(t-s)\mathcal{G}(s)\mathrm{d}s
+\int_{t-\delta}^t\mathcal{P}_\alpha(t-s)\mathcal{G}(s)\mathrm{d}s:=\mathcal{J}_1(t)+\mathcal{J}_2(t).
\end{equation}
For $\mathcal{J}_1(t)$, the temporal variable $s$ is strictly separated from $t$ such that $t-s\ge\delta>0$. In this regime, $\mathcal{A}\mathcal{P}_\alpha(t-s)$ is uniformly bounded and differentiable. Thus, $\mathcal{A}$ is allowed to pass through the integral, yielding
\begin{displaymath}
\mathcal{I}_1(t):=\mathcal{A}\mathcal{J}_1(t)\in C([\tau,T];L^p(\Omega)).
\end{displaymath}
For $\mathcal{J}_2(t)$, the operator $\mathcal{A}$ cannot be directly pulled inside the singular integral. Instead, we rewrite $\mathcal{J}_2(t)$ by adding and subtracting $\mathcal{G}$, i.e.,
\begin{displaymath}
\mathcal{J}_2(t)
=\int_{t-\delta}^t \mathcal{P}_\alpha(t-s)\big(\mathcal{G}(s)-\mathcal{G}(t)\big)\mathrm{d}s
 +\Bigg(\int_0^\delta \mathcal{P}_\alpha(r)\mathrm{d}r\Bigg)\mathcal{G}(t).
\end{displaymath}
Applying $\mathcal{A}$ to this expression is now mathematically well-defined. Using the fundamental relation $\frac{d}{dt}\mathcal{S}_\alpha(t)=-\mathcal{A}\mathcal{P}_\alpha(t)$, we have $\mathcal{A}\int_0^\delta \mathcal{P}_\alpha(r)\mathrm{d}r=I-\mathcal{S}_\alpha(\delta)$. Therefore, we obtain
\begin{displaymath}
\mathcal{I}_2(t)
:= \mathcal{A}\mathcal{J}_2(t)
= \int_{t-\delta}^t \mathcal{A}\mathcal{P}_\alpha(t-s)\big(\mathcal{G}(s)-\mathcal{G}(t)\big)\mathrm{d}s
 + \big(I-\mathcal{S}_\alpha(\delta)\big)\mathcal{G}(t).
\end{displaymath}
The boundary term $(I-\mathcal{S}_\alpha(\delta))\mathcal{G}(t)$ is continuous since $\mathcal{G}\in C([\delta,T];L^p)$ and $I-\mathcal{S}_\alpha(\delta)$ is a bounded operator. For the convolution integral, since both $s$ and $t$ lie within $[\delta,T]$, we apply the H\"{o}lder continuity of $\mathcal{G}$ established as before, and thus there is
\begin{displaymath}
\left\|\mathcal{A}\mathcal{P}_\alpha(t-s)\big(\mathcal{G}(s)-\mathcal{G}(t)\big)\right\|_{L^p}
\le C(t-s)^{-1}|t-s|^\gamma=C(t-s)^{\gamma-1}.
\end{displaymath}
Since $(t-s)^{\gamma-1}\in L^1(0,\delta)$ for $\gamma>0$, the integral converges absolutely. Moreover, the integrand depends continuously on $t$ and is dominated by the integrable function $C(t-s)^{\gamma-1}$. Hence the continuity of the integral follows from the Lebesgue dominated convergence theorem. Thus, $\mathcal{I}_2\in C([\tau, T]; L^p(\Omega))$.

Combining the above estimates yields $\mathcal{A}n(t)\in C([\tau,T];L^p(\Omega))$. Since the mild formulation already ensures $n\in C([\tau,T];L^p(\Omega))$, it follows from the graph norm characterization of $D(\mathcal{A})$ that $n\in C([\tau,T];D(\mathcal{A}))$ for every $\tau>0$. Therefore, we conclude $n\in C((0,T];D(\mathcal{A}))$. The proof is complete.
\end{proof}
\begin{theorem}\label{Thm:Lwell-posed}
Let $\Omega\subset\mathbb{R}^{d}$ ($d\geq2$) be a bounded domain with smooth boundary $\partial\Omega$. If the initial data $n_0(\mathbf{x})$ and $c_0(\mathbf{x})$ satisfy the conditions specified in \eqref{Eq:Assuption}, then for $p>d$, there exists $T>0$ properly small such that Problem \eqref{Pro:FKS} admits a unique local mild solution $(n,c)(\mathbf{x},t)$ satisfying
\begin{displaymath}
\left\{
\begin{aligned}
&n(\mathbf{x},t)\in C\big([0,T];L^{\infty}(\Omega)\big)\cap C^{0,\alpha/2}\big((0,T];W^{1,p}(\Omega)\big)\cap C\big((0,T];D(\mathcal{A})\big), \\
&c(\mathbf{x},t)\in C\big([0,T];W^{1,p}(\Omega)\big)\cap C^{0,\alpha/2}\big((0,T];W^{2,p}(\Omega)\big)\cap C\big((0,T];D(\mathcal{A})\big).
\end{aligned}\right.
\end{displaymath}
Furthermore, the solution components $n$ and $c$ satisfy $n(\cdot,t)\geq0$ and $c(\cdot,t)>0$ for all $t>0$, respectively.
\end{theorem}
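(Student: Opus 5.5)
The proof assembles results already established in this section; no new estimate is needed beyond checking that the pieces fit on a common time interval.

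\emph{Existence, uniqueness and base regularity.} We fix $p>d$ and take $T>0$ as small as Theorem~\ref{Lem:WelPod1} requires. That theorem, which rests on the self-mapping property of Lemma~\ref{Lem:Bijection} and the contraction of Lemma~\ref{Lem:Contraction}, yields a unique fixed point $(n,c)\in\mathfrak{B}$ of $\mathcal{M}$. This fixed point is a mild solution in the sense of \eqref{sys:mild}, with $n\in C([0,T];L^\infty(\Omega))$ and $c\in C([0,T];W^{1,p}(\Omega))$. Uniqueness is stated within $\mathfrak{B}$. To obtain uniqueness among all mild solutions in $\mathbb{X}$, we argue as follows. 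Any second mild solution is, by Lemmas~\ref{lem:bound-n} and \ref{lem:LowerBound-c}, nonnegative in $n$ and bounded below by $C_*$ in $c$. Its $\mathbb{X}$-norm is finite on $[0,T]$, so after enlarging $\mathcal{R}$ (and correspondingly shrinking $T$) it lies in $\mathfrak{B}$. Alternatively, the difference estimate \eqref{eq:EcM1}--\eqref{eq:EcM2} can be iterated on short subintervals.

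\emph{Regularity on $(0,T]$.} For every $\tau\in(0,T)$:
\begin{itemize}
\item Lemma~\ref{Lem:timR} gives $c\in C^{0,\alpha/2}([\tau,T];W^{2,p}(\Omega))$.
\item Lemma~\ref{Lem:Rnnn} gives $n\in C^{0,\alpha/2}([\delta,T];W^{1,p}(\Omega))$ with $\delta=\tau/2$, together with $n,c\in C([\tau,T];D(\mathcal{A}))$.
\end{itemize}
Since $\tau$ is arbitrary, these yield the stated memberships in $C^{0,\alpha/2}((0,T];\cdot)$ and $C((0,T];D(\mathcal{A}))$. Here $C^{0,\alpha/2}((0,T];X)$ is understood as local H\"older continuity on every $[\tau,T]$, so H\"older constants are allowed to depend on $\tau$.

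\emph{Sign properties.} $n\ge0$ follows from Lemma~\ref{lem:bound-n}, or directly from $(n,c)\in\mathfrak{B}$ via the non-negativity of $\mathcal{M}_1$ shown in Lemma~\ref{Lem:Bijection}. The bound $c\ge m_0E_{\alpha,1}(-\gamma t^\alpha)>0$ follows from \eqref{eq:M2lower} or from Lemma~\ref{lem:LowerBound-c}(i).

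\emph{Main obstacle.} The only delicate point is consistency of the small time. Lemmas~\ref{Lem:timR} and \ref{Lem:Rnnn} are stated for ``appropriately small'' $T$, so we must check that this $T$ is the same one produced by the fixed-point argument. We do not expect any further smallness to be needed. Both lemmas use only the bounds furnished by membership in $\mathfrak{B}$, the kernel estimates of Lemma~\ref{lem:MLbounds} and Lemma~\ref{Der:Lemma}, and the linear Volterra--Gr\"onwall argument in Step $(iii)$ of Lemma~\ref{Lem:timR}; the last of these works on any bounded interval. We will therefore take the minimum of the finitely many smallness thresholds and conclude.
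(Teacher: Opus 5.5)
Your proposal is correct and matches the paper's proof, which likewise takes existence, uniqueness (in $\mathfrak{B}$) and base regularity from Theorem~\ref{Lem:WelPod1}, the higher regularity on $(0,T]$ from Lemmas~\ref{Lem:timR} and \ref{Lem:Rnnn}, and the sign properties from the earlier positivity lemmas. For $c>0$ you cite Lemma~\ref{lem:LowerBound-c}(i), or \eqref{eq:M2lower} applied to the fixed point, which is the right source; the paper cites Lemma~\ref{lem:regularity-c} at that step, which is a misreference.
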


\begin{proof}
By Theorem \ref{Lem:WelPod1}, problem \eqref{Pro:FKS} admits a unique local mild solution $(n,c)$ on the time interval $[0,T]$ for some $T>0$.
Higher-order temporal and spatial regularity is subsequently established via a bootstrap argument in Lemmas \ref{Lem:timR} and \ref{Lem:Rnnn}. What's more, the non-negativity of $n(\mathbf{x},t)$ and the strict positivity of $c(\mathbf{x},t)$ are rigorously guaranteed by Lemmas \ref{lem:bound-n} and \ref{lem:regularity-c}, respectively.
\end{proof}

\section{Global Well-Posedness of Mild Solutions for Two- and Three-Dimensional Systems}
\label{Sec:CLS}

\subsection{A novel Lyapunov functional}\label{Sec:sssCLS}

We establish a conditional energy functional method to prove the global boundedness of solutions to system \eqref{Pro:FKS}. Motivated by previous works on chemotaxis systems (see, e.g., \refcite{Chen21,Dai22,Jin20,Pang21}), the core idea of this approach lies in constructing and analyzing a carefully designed time-dependent energy functional whose dissipation controls the growth of the solution, thereby preventing finite-time blow-up. However, the classical logarithmic sensitivity model is not directly applicable to Problem \eqref{Pro:FKS}, and the analysis is further complicated by the lack of available theoretical results for time-nonlocal KS models with logarithmic sensitivity.

To address the aforementioned difficulties, we adopt the strategy of Ref.~\refcite{Quan20} for time-fractional phase-field equations, which resolves the issue of constructing a dissipation-preserving energy functional for Caputo fractional derivatives. Inspired by their use of a time-weighted average of the classical energy, we introduce the following novel Lyapunov functional for our problem:
\begin{equation}\label{F:Lyapunov}
\mathcal{E}[(n,c)](t):
=\int_{0}^{t}(t-s)^{\alpha-1}\mathcal{F}(s)\mathrm{d}s,\quad t>0,
\end{equation}
where $\mathcal{F}(\cdot)$ is a jointly convex function defined by
\begin{equation}\label{Fun:JCF}
\mathcal{F}(t):=\mathcal{F}[(n,c)](t)
=\underbrace{\int_{\Omega}n\log\left(\frac{n}{\bar{n}}\right)\mathrm{d}\mathbf{x}}_{:=\mathcal{F}_1(n)}
+\underbrace{\theta\int_{\Omega}\frac{|\nabla c|^2}{c}\mathrm{d}\mathbf{x}}_{:=\mathcal{F}_2(c,\nabla c)},
\end{equation}
with $\bar{n}:=\frac{1}{|\Omega|}\int_{\Omega}n\mathrm{d}\mathbf{x}$ and $\theta>0$ a constant. The non-negativity of $\mathcal{F}(t)$ is fundamental to our analysis. Indeed, since $\varphi(n)=n\log n$ is strictly convex, Jensen's inequality gives $\int_{\Omega}n\log n\mathrm{d}\mathbf{x}\geq|\Omega|\bar{n}\log\bar{n}$, which ensures $\mathcal{F}_1(n)\geq 0$. Combined with the obvious non-negativity of the term $\mathcal{F}_2(c,\nabla c)$, we conclude $\mathcal{F}(t)\geq0$ for all $t>0$. The non-negativity of $\mathcal{F}(t)$ on $[0, T]$ ensures that $\mathcal{E}(t)\ge 0$ as a direct consequence of the positivity of the integral kernel $(t-s)^{\alpha-1}$.

\begin{remark}
The Lyapunov functional \eqref{F:Lyapunov} is defined as a time-convolution of the free energy \eqref{Fun:JCF} with the positive kernel $(t-s)^{\alpha-1}$. This structure captures history-dependent energy accumulation associated with the evolution of system \eqref{Pro:FKS}.  It tracks the cumulative effect of the instantaneous free energy over time, rather than relying solely on pointwise-in-time estimates, and provides a convenient framework for establishing boundedness of mild solutions.
\end{remark}

Let $\phi(n):=n\log\left(\frac{n}{\bar{n}}\right)$ for $n\in[0,\infty)$. The functional $\mathcal{F}_1(n)$ is strictly convex on the set $\{n\in L^1(\Omega):n\ge0; \int_{\Omega}n\mathrm{d}\mathbf{x}=\tilde{M}\}$. Defining $\phi(0):=0$ and using $\lim_{n\to0^+}\phi(n)=0$, we obtain the continuity of $\phi$ on $[0,\infty)$. Moreover, $\phi$ is strictly convex on $(0,\infty)$ since $\phi''(n)=1/n>0$. By the fundamental theorem of convex analysis, for any $n_1,n_2\in[0,\infty)$ with $n_1\neq n_2$ and $\lambda\in(0,1)$, the following inequality
\begin{equation}\label{ieq:f11}
\phi\big(\lambda n_1+(1-\lambda)n_2\big)<\lambda\phi(n_1)+(1-\lambda)\phi(n_2)
\end{equation}
holds. Integrating \eqref{ieq:f11} over $\Omega$ and employing the linearity of integrals yields
\begin{displaymath}
\mathcal{F}_1\big(\lambda n_1+(1-\lambda)n_2\big)<\lambda\mathcal{F}_1(n_1)+(1-\lambda)\mathcal{F}_1(n_2),
\end{displaymath}
establishing that $\mathcal{F}_1(n)$ is strictly convex for $n\ge0$.

For $\mathcal{F}_2(c,\nabla c)$, which depends on both $c$ and $\nabla c$, to confirm its strict convexity, we need to verify the convexity of the bivariate function $f(u,\mathbf{v})=\frac{|\mathbf{v}|^2}{u}$, where $u=c>0$ and $\mathbf{v}=\nabla c$. Specifically, define $f:\mathbb{R}^{+}\times\mathbb{R}^d\to\mathbb{R}$ as $f(u,\mathbf{v})=\frac{\sum_{i=1}^d v_i^2}{u}$. Its Hessian matrix $\mathbf{H}_f$ is given by
\begin{displaymath}
\mathbf{H}_f
= \frac{2}{u^3}
\begin{pmatrix}
|\mathbf{v}|^2 & -u v_1 & \dots    & -u v_d \\
-u v_1         & u^2    & \dots    & 0      \\
\vdots         & \vdots & \ddots   & \vdots \\
-u v_d         & 0      & \dots    & u^2
\end{pmatrix}.
\end{displaymath}
For any vector $\mathbf{z}=(a,b_1,\dots,b_d)^T\in\mathbb{R}^{d+1}$, we have
\begin{equation}\label{eq:nonegative}
\mathbf{z}^T\mathbf{H}_f\mathbf{z}
=\frac{2}{u^3}\big(a^2|\mathbf{v}|^2-2au\mathbf{v}\cdot\mathbf{b}+u^2|\mathbf{b}|^2\big)
=\frac{2}{u^3}\big|a\mathbf{v}-u\mathbf{b}\big|^2
\geq0,
\end{equation}
indicating that the Hessian matrix $\mathbf{H}_f$ is positive semi-definite, which implies that $\mathcal{F}_2(c,\nabla c)$ is jointly convex.

Combining the above analyses, we conclude that the functional $\mathcal{F}(t)$ is jointly convex on the admissible set. Moreover, the function $\mathcal{F}(t)$ enjoys the following significant property.

\begin{lemma}[Variational inequality]\label{Lem:varieq}
Let $\mathcal{F}(t)$ be the convex functional defined in \eqref{Fun:JCF}. Then for all $t>0$,
\begin{equation}\label{ieq:varieq}
{_{0}^{C}\mathfrak{D}^{\alpha}_t} \mathcal{F}(t)
\le \int_{\Omega}\frac{\delta\mathcal{F}}{\delta n}{_{0}^{C}\mathfrak{D}^{\alpha}_t}n\,\mathrm{d}\mathbf{x}
   + \int_{\Omega}\frac{\delta\mathcal{F}}{\delta c}{_{0}^{C}\mathfrak{D}^{\alpha}_t}c\,\mathrm{d}\mathbf{x},
\end{equation}
where $\frac{\delta\mathcal{F}}{\delta n}$ and $\frac{\delta\mathcal{F}}{\delta c}$ denote the variational derivatives of $\mathcal{F}$ with respect to $n$ and $c$, respectively.
\end{lemma}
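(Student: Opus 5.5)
The plan is to use the standard convexity inequality for the Caputo derivative. For a convex $C^1$ function $\Phi$ and an absolutely continuous $u$ one has ${_{0}^{C}\mathfrak{D}^{\alpha}_t}\Phi(u)\le \Phi'(u)\,{_{0}^{C}\mathfrak{D}^{\alpha}_t}u$, and we will lift this from scalars to the integral functional $\mathcal{F}$ via its joint convexity established above.

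\textbf{Step 1 (pointwise convexity inequality).} Fix $t>0$. Integrate by parts in the definition of the Caputo derivative to get the equivalent form
\begin{displaymath}
{_{0}^{C}\mathfrak{D}^{\alpha}_t}g(t)=\frac{1}{\Gamma(1-\alpha)}\Big(\frac{g(t)-g(0)}{t^{\alpha}}+\alpha\int_0^t\frac{g(t)-g(s)}{(t-s)^{1+\alpha}}\,\mathrm{d}s\Big),
\end{displaymath}
valid for $g$ absolutely continuous on $[0,t]$ with the integrals finite. This is exactly where the Hölder continuity from Lemma~\ref{Lem:timR} and Lemma~\ref{Lem:Rnnn} is used. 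Apply it to $g=\mathcal{F}$ and write $U(s):=(n,c,\nabla c)(\cdot,s)$.

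\textbf{Step 2 (supporting-hyperplane inequality).} By the joint convexity of $\mathcal{F}_1$ and of $f(u,\mathbf{v})=|\mathbf{v}|^2/u$, verified through the Hessian computation \eqref{eq:nonegative}, we have
\begin{displaymath}
\mathcal{F}(t)-\mathcal{F}(s)\le \big\langle \mathcal{F}'(U(t)),U(t)-U(s)\big\rangle,
\end{displaymath}
where $\mathcal{F}'(U(t))=\big(\log(n/\bar n)+1,\,-\theta|\nabla c|^2/c^2,\,2\theta\nabla c/c\big)$. Here $\bar n$ is constant in time by Lemma~\ref{lem:MassCon-n}, so the normalization causes no trouble. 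Insert this bound into both terms of the Step 1 formula; since the kernels $t^{-\alpha}$ and $(t-s)^{-1-\alpha}$ are positive, the inequality is preserved. Then use Fubini to pull $\int_\Omega \mathcal{F}'(U(t))\cdot$ outside the time integrals. What remains inside is precisely the Step 1 representation of ${_{0}^{C}\mathfrak{D}^{\alpha}_t}U$. For the $\nabla c$-component we integrate by parts in space using $\partial_\nu c=0$. This gives $\int_\Omega \frac{2\theta\nabla c}{c}\cdot\nabla({_{0}^{C}\mathfrak{D}^{\alpha}_t}c)\,\mathrm{d}\mathbf{x}=-\int_\Omega 2\theta\nabla\cdot\big(\frac{\nabla c}{c}\big){_{0}^{C}\mathfrak{D}^{\alpha}_t}c\,\mathrm{d}\mathbf{x}$, and together with the $c$-component this yields the variational derivative $\frac{\delta\mathcal{F}}{\delta c}=-\theta\frac{|\nabla c|^2}{c^2}-2\theta\nabla\cdot\frac{\nabla c}{c}$. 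The $n$-component yields $\frac{\delta\mathcal{F}}{\delta n}=\log(n/\bar n)+1$. The constant $1$ contributes $\int_\Omega{_{0}^{C}\mathfrak{D}^{\alpha}_t}n\,\mathrm{d}\mathbf{x}=0$ by mass conservation, so it may be dropped. Collecting terms gives \eqref{ieq:varieq}.

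\textbf{Main obstacle.} The difficulty lies in justifying the computations for the logarithm, since $n$ may vanish and $\log n$ is then unbounded. We plan to regularize by replacing $n\log(n/\bar n)$ with $(n+\varepsilon)\log((n+\varepsilon)/\bar n)$, which is smooth and convex on $[0,\infty)$. The bounds $c\ge C_*$ from Lemma~\ref{lem:LowerBound-c} and the $W^{2,p}$-regularity of $c$ make the $c$-terms harmless, and all regularized integrals are finite by the regularity on $[\tau,T]$. We would prove the inequality for each $\varepsilon>0$ and then let $\varepsilon\to0$. The left side converges by dominated convergence, using $n\in L^\infty$. On the right side we use monotone/Fatou arguments, exploiting that $\int_\Omega\log(n+\varepsilon)\,{_{0}^{C}\mathfrak{D}^{\alpha}_t}n\,\mathrm{d}\mathbf{x}$ can be rewritten through the equation \eqref{Pro:KS1} as $-\int_\Omega\frac{|\nabla n|^2}{n+\varepsilon}\,\mathrm{d}\mathbf{x}+\chi\int_\Omega\frac{n}{n+\varepsilon}\frac{\nabla n\cdot\nabla c}{c}\,\mathrm{d}\mathbf{x}$. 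This form has a sign-definite principal part, so the limit can be passed. Near $t\to0^+$ the Hölder bounds are only available on $[\tau,T]$, so the absolute continuity of $\mathcal{F}$ at the origin would need the continuity in Theorem~\ref{Lem:WelPod1}. If that proves insufficient, the statement should be read for a.e. $t>0$.
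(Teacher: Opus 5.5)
This is essentially the paper's argument. The paper likewise inserts the nonnegative Bregman remainder of the jointly convex integrand (your supporting-hyperplane inequality) into the singular-integral form of the Caputo derivative, uses positivity of the kernels, commutes $\nabla$ with ${_{0}^{C}\mathfrak{D}^{\alpha}_t}$, and integrates by parts using $\partial_\nu c=0$. It differs from you only in handling $\mathcal{F}_1$ separately by quoting the scalar fractional convexity inequality (Lemma 2.12 of Jin's book), and in not regularizing the logarithm.

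One correction to your Step 1: the H\"{o}lder exponents from Lemmas~\ref{Lem:timR} and~\ref{Lem:Rnnn} are at most $\alpha$, which does not guarantee that $\int_0^t|g(t)-g(s)|(t-s)^{-1-\alpha}\,\mathrm{d}s$ or your Fubini integrand is finite, since an $\alpha$-H\"{o}lder bound only gives the divergent majorant $\int_0^t(t-s)^{-1}\,\mathrm{d}s$. Absolute continuity in time is what actually closes this for a.e.\ $t$, because $\int_0^t(t-s)^{-1-\alpha}\int_s^t|g'(r)|\,\mathrm{d}r\,\mathrm{d}s\le\alpha^{-1}\int_0^t(t-r)^{-\alpha}|g'(r)|\,\mathrm{d}r<\infty$ for a.e.\ $t$. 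So you need a $W^{1,1}$-in-time bound rather than the H\"{o}lder lemmas, which is consistent with your fallback to a.e.\ $t>0$.
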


\begin{proof}
By the linearity of the Caputo fractional derivative ${_{0}^{C}\mathfrak{D}^{\alpha}_t}$, it follows that
\begin{displaymath}
{_{0}^{C}\mathfrak{D}^{\alpha}_t}\mathcal{F}(t)
= {_{0}^{C}\mathfrak{D}^{\alpha}_t}\mathcal{F}_1(t)
  + {_{0}^{C}\mathfrak{D}^{\alpha}_t}\mathcal{F}_2(t).
\end{displaymath}
To obtain the result of \eqref{ieq:varieq}, we estimate each term separately.

For $\mathcal{F}_1$, by employing the extended fractional convex inequality (Lemma 2.12 in Ref.~\refcite{Jin21book}) and invoking the strict convexity of $\mathcal{F}_1(t)$, we obtain, for all $t>0$,
\begin{equation}\label{ieq:AF1}
\begin{aligned}
{_{0}^{C}\mathfrak{D}^{\alpha}_t}\int_{\Omega}n\log\left(\frac{n}{\bar{n}}\right)\mathrm{d}\mathbf{x}
\le\int_{\Omega}\left(\log\left(\frac{n}{\bar{n}}\right)+1\right){_{0}^{C}\mathfrak{D}^{\alpha}_t}n\,\mathrm{d}\mathbf{x}
=\int_{\Omega}\frac{\delta \mathcal{F}_1}{\delta n} {_{0}^{C}\mathfrak{D}^{\alpha}_t}n\,\mathrm{d}\mathbf{x},
\end{aligned}
\end{equation}
with $\frac{\delta \mathcal{F}_1}{\delta n}=\log\left(\frac{n}{\bar{n}}\right)+1=\frac{\delta \mathcal{F}}{\delta n}$. The final equality follows from the homogeneous Neumann boundary condition ($\mathbf{J}\cdot \mathbf{\nu}=0$, which indicates that there is no matter entering or leaving the boundary). By Gauss's divergence theorem, we have  $\int_{\Omega}{_{0}^{C}\mathfrak{D}^{\alpha}_t}n\mathrm{d}\mathbf{x}= -\int_{\Omega}\nabla\cdot\mathbf{J}\mathrm{d}\mathbf{x}
=-\int_{\partial\Omega}\mathbf{J}\cdot\mathbf{\nu}\mathrm{d}\sigma=0$, with the flux $\mathbf{J}$ defined as $\mathbf{J}:=-\mathcal{D}\nabla n+\mathcal{D}\chi\frac{n}{c}\nabla c$. This is also another characterization of the law of conservation of mass.

For $\mathcal{F}_2$, the presence of spatial gradients prevents a direct application of convex inequality. To preserve the proof structure and address this issue, we first introduce the Bregman’s distance (see, e.g., Ref.~\refcite{Bregman67}) to manage the jointly convex function $\mathcal{F}_2(c,\nabla c)$. Define $f(t)=f(c(t),\nabla c(t)):=|\nabla c|^2/c$. At any time $t$, for past moments $s$ and a fixed point $\mathbf{x}\in\Omega\subset\mathbb{R}^{d}$, by virtue of the joint convexity of $f(c,\nabla c)$ (see \eqref{eq:nonegative}), the following exact identity holds
\begin{equation}\label{eq:Bregman}
\begin{aligned}
f(c(s),\nabla c(s))
&=f(c(t),\nabla c(t))+\frac{\partial f}{\partial c}(t)\big[c(s)-c(t)\big]   \\
&\quad+\frac{\partial f}{\partial(\nabla c)}(t)\cdot\big[\nabla c(s)-\nabla c(t)\big]+\mathfrak{B}(s,t),
\end{aligned}
\end{equation}
where $\mathfrak{B}(s,t)\ge0$ (for $s\in[0,t]$) represents the Bregman distance associated with $f$ when $f$ is jointly convex. Note that $\mathfrak{B}(s,t)=0$ if and only if $(c(s),\nabla c(s))=(c(t),\nabla c(t))$.

Using the equivalent representation of the Caputo derivative (see, e.g., Ref. \refcite{Jin21book})
\begin{displaymath}
{_{0}^{C}\mathfrak{D}^{\alpha}_t}f(t)
= \frac{f(t)-f(0)}{\Gamma(1-\alpha)t^\alpha}
  +\frac{\alpha}{\Gamma(1-\alpha)}\int_0^t\frac{f(t)-f(s)}{(t-s)^{\alpha+1}}\mathrm{d}s,
\end{displaymath}
and substituting the expression for $f(t)-f(s)$ derived from \eqref{eq:Bregman} into the pointwise derivative, we obtain
\begin{equation}\label{eq:Bregman2}
\begin{aligned}
{_{0}^{C}\mathfrak{D}^{\alpha}_t}f(t)
&= \underbrace{\frac{\partial f(t)}{\partial c}\left[\frac{c(t)-c(0)}{\Gamma(1-\alpha)t^\alpha}
+\frac{\alpha}{\Gamma(1-\alpha)}\int_0^t\frac{c(t)-c(s)}{(t-s)^{\alpha+1}}\mathrm{d}s\right]}_{\frac{\partial f}{\partial c}\cdot{_{0}^{C}\mathfrak{D}^{\alpha}_t} c} \\
&\quad +\underbrace{\sum_{i=1}^d\frac{\partial f(t)}{\partial(\partial_{x_i}c)}
\left[\frac{\partial_{x_i}c(t)-\partial_{x_i}c(0)}{\Gamma(1-\alpha)t^\alpha}
      +\frac{\alpha}{\Gamma(1-\alpha)}\int_0^t\frac{\partial_{x_i}c(t)-\partial_{x_i}c(s)}{(t-s)^{\alpha+1}}\mathrm{d}s\right] }_{\frac{\partial f}{\partial\nabla c}\cdot{_{0}^{C}\mathfrak{D}^{\alpha}_t}\nabla c}\\
&\quad-\underbrace{\left[ \frac{\mathfrak{B}(0,t)}{\Gamma(1-\alpha)t^\alpha}
+\frac{\alpha}{\Gamma(1-\alpha)}\int_0^t \frac{\mathfrak{B}(s,t)}{(t-s)^{\alpha+1}}\mathrm{d}s\right]}_{\mathcal{R}(t)\ge0}  \\
&=\frac{\partial f}{\partial c}{_{0}^{C}\mathfrak{D}^{\alpha}_t}c +\frac{\partial f}{\partial\nabla c}\cdot{_{0}^{C}\mathfrak{D}^{\alpha}_t}\nabla c-\mathcal{R}(\mathbf{x},t).
\end{aligned}
\end{equation}
Since the kernel $(t-s)^{-\alpha}$ is independent of the spatial variable $\mathbf{x}$, by Leibniz's integral rule and $c$ is $C^1$ in $\mathbf{x}$, the gradient operator commutes with the time fractional derivative operator, i.e., ${_0^C\mathfrak{D}^\alpha_t}\nabla c=\nabla{_0^C\mathfrak{D}^\alpha_t}c$. Integrating both side of \eqref{eq:Bregman2} over $\Omega$ yields
\begin{displaymath}
{_{0}^{C}\mathfrak{D}^{\alpha}_t}\mathcal{F}_2
=\theta\int_{\Omega}\frac{\partial f}{\partial c}{_{0}^{C}\mathfrak{D}^{\alpha}_t}c\,\mathrm{d}\mathbf{x}
 +\theta\int_{\Omega}\frac{\partial f}{\partial \nabla c}\cdot \nabla ({_{0}^{C}\mathfrak{D}^{\alpha}_t}c)\,\mathrm{d}\mathbf{x}
 -\theta\int_{\Omega}\mathcal{R}(x, t)\mathrm{d}\mathbf{x}.
\end{displaymath}
Applying Green's formula (integration by parts) with the homogeneous Neumann condition, we get
\begin{displaymath}
{_{0}^{C}\mathfrak{D}^{\alpha}_t}\mathcal{F}_2
= \int_{\Omega}\left[\theta\left(-\frac{|\nabla c|^2}{c^2}-2\nabla\cdot\frac{\nabla c}{c}\right)\right] {_{0}^{C}\mathfrak{D}^{\alpha}_t}c\,\mathrm{d}\mathbf{x}-\theta\int_{\Omega}\mathcal{R}(\mathbf{x},t)\mathrm{d}\mathbf{x}.
\end{displaymath}
Recalling that the variational derivative is given by $\frac{\delta\mathcal{F}_2}{\delta c}=\theta\big(\frac{\partial f}{\partial c}-\nabla\cdot\frac{\partial f}{\partial\nabla c}\big)=\theta\big(-\frac{|\nabla c|^2}{c^2}-2\nabla\cdot\frac{\nabla c}{c}\big)=\frac{\delta\mathcal{F}}{\delta c}$, and noting the non-negativity of the remainder term $\int_{\Omega}\mathcal{R}(\mathbf{x},t)\mathrm{d}\mathbf{x}\ge0$, it follows that
\begin{equation}\label{ieq:AF3}
{_{0}^{C}\mathfrak{D}^{\alpha}_t}\mathcal{F}_2
\leq \int_{\Omega}\frac{\delta \mathcal{F}_2}{\delta c}\,{_{0}^{C}\mathfrak{D}^{\alpha}_t}c\,\mathrm{d}\mathbf{x},\quad t>0.
\end{equation}
Combining \eqref{ieq:AF1} and \eqref{ieq:AF3} yields the desired inequality \eqref{ieq:varieq}. This completes the proof of the lemma.
\end{proof}

Crucially, Lemma \ref{Lem:varieq} establishes a variational inequality \eqref{ieq:varieq} that bridges the fractional energy law and the system's long-term decay. This result is indispensable for proving the convergence of the Lyapunov functional and represents one of the central technical innovations of this work.

\begin{lemma}\label{lem:dissapation}
Let $\Omega\subset \mathbb{R}^d$ ($d\geq2$) be a bounded domain with smooth boundary, and $(n,c)$ be a solution of System \eqref{Pro:FKS}. For the convex functional $\mathcal{F}(t)$  defined in \eqref{Fun:JCF}, if $0<\chi<1/2$, then it holds that ${_{0}^{C}\mathfrak{D}^{\alpha}_t}\mathcal{F}(t)<0$ for all $t>0$. Furthermore, there is a positive constant $\lambda>0$ such that the dissipation estimate
\begin{equation}\label{est:dissap}
{_{0}^{C}\mathfrak{D}^{\alpha}_t} \mathcal{F}(t)
\leq-\lambda\mathcal{F}(t),\quad ~ t>0,
\end{equation}
holds, which implies
\begin{equation}
\mathcal{F}(t)\leq \mathcal{F}(0) E_\alpha\big(-\lambda t^\alpha\big)\leq \mathcal{F}(0)<\infty,\quad t>0.
\end{equation}
\end{lemma}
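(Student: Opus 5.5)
We would prove Lemma \ref{lem:dissapation} by inserting the equations of \eqref{Pro:FKS} (with $\mathcal{D}=1$) into the variational inequality of Lemma \ref{Lem:varieq}. This gives a pointwise-in-time dissipation functional of the kind used for the parabolic logarithmic KS system. We then absorb the cross terms using $\chi<1/2$, turn the remaining dissipation into $-\lambda\mathcal{F}$ by a logarithmic Sobolev-type inequality, and conclude with a scalar fractional comparison argument. All computations are performed for $t>0$, where Theorem \ref{Thm:Lwell-posed} gives $n,c\in D(\mathcal{A})$, $n\ge0$ and $c\ge C_*>0$ (Lemma \ref{lem:LowerBound-c}), so every integration by parts is legitimate. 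Where $n$ may vanish, we would replace $\log n$ by $\log(n+\varepsilon)$ and let $\varepsilon\to0$.

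\emph{Step 1 (the $n$-part).} With $\frac{\delta\mathcal{F}}{\delta n}=\log(n/\bar n)+1$ and ${_{0}^{C}\mathfrak{D}^{\alpha}_t}n=\nabla\cdot(\nabla n-\chi \frac{n}{c}\nabla c)$, the Neumann conditions give
\begin{displaymath}
\int_\Omega \frac{\delta\mathcal{F}}{\delta n}\,{_{0}^{C}\mathfrak{D}^{\alpha}_t}n\,\mathrm{d}\mathbf{x}=-\int_\Omega\frac{|\nabla n|^2}{n}\mathrm{d}\mathbf{x}+\chi\int_\Omega\frac{\nabla n\cdot\nabla c}{c}\mathrm{d}\mathbf{x}.
\end{displaymath}

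\emph{Step 2 (the $c$-part).} Write $\frac{\delta\mathcal{F}}{\delta c}=\theta\big(\frac{|\nabla c|^2}{c^2}-\frac{2\Delta c}{c}\big)$ and treat the three pieces of ${_{0}^{C}\mathfrak{D}^{\alpha}_t}c=\Delta c-\gamma c+n$ separately.
\begin{itemize}
\item The $-\gamma c$ piece gives $-\theta\gamma\int_\Omega|\nabla c|^2/c$.
\item The source $n$ gives, after one integration by parts, $2\theta\int_\Omega\nabla n\cdot\nabla c/c-\theta\int_\Omega n|\nabla c|^2/c^2$.
\item The $\Delta c$ piece is the classical identity for $\int|\nabla c|^2/c$. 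It equals $-2\theta\int_\Omega c|D^2\log c|^2$ plus a boundary integral $\theta\int_{\partial\Omega}\frac{1}{c}\partial_\nu|\nabla c|^2$, which is $\le0$ on convex domains. For general smooth $\Omega$ we would control it by the trace inequality $\partial_\nu|\nabla c|^2\le C_\Omega|\nabla c|^2$, absorbing it into the Hessian term and the $\gamma$-term (shrinking constants if necessary).
\end{itemize}

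\emph{Step 3 (absorption, where $\chi<1/2$ enters).} Collecting terms, we obtain
\begin{displaymath}
{_{0}^{C}\mathfrak{D}^{\alpha}_t}\mathcal{F}\le-\int_\Omega\frac{|\nabla n|^2}{n}+(\chi+2\theta)\int_\Omega\frac{\nabla n\cdot\nabla c}{c}-\theta\int_\Omega\frac{n|\nabla c|^2}{c^2}-\theta\gamma\int_\Omega\frac{|\nabla c|^2}{c}.
\end{displaymath}
Young's inequality bounds the cross term by $\epsilon\int|\nabla n|^2/n+\frac{(\chi+2\theta)^2}{4\epsilon}\int n|\nabla c|^2/c^2$. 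For this to be absorbed with some $\epsilon<1$, we need $(\chi+2\theta)^2<4\theta$, i.e.\ $4\theta^2-4(1-\chi)\theta+\chi^2<0$. This quadratic in $\theta$ has a positive root interval exactly when its discriminant $16(1-2\chi)$ is positive, that is, when $\chi<1/2$. Fixing $\theta$ in that interval gives
\begin{displaymath}
{_{0}^{C}\mathfrak{D}^{\alpha}_t}\mathcal{F}\le-\delta\int_\Omega\frac{|\nabla n|^2}{n}-\theta\gamma\int_\Omega\frac{|\nabla c|^2}{c}
\end{displaymath}
for some $\delta>0$. The right-hand side is strictly negative unless $n$ and $c$ are spatially constant, which yields the strict sign claimed in the statement whenever $\mathcal{F}(t)>0$.

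\emph{Step 4 (closing the estimate).} Conservation of mass (Lemma \ref{lem:MassCon-n}) fixes $\bar n$. A logarithmic Sobolev inequality on the bounded smooth domain, obtained via Poincaré--Sobolev applied to $\sqrt n$ with the entropy bounded as in Ref.~\refcite{Winkler22}, gives $\int n\log(n/\bar n)\le C_{LS}\int|\nabla n|^2/n$. Together with $\theta\gamma\int|\nabla c|^2/c=\gamma\mathcal{F}_2$, this yields \eqref{est:dissap} with $\lambda=\min\{\delta/C_{LS},\gamma\}$. Finally we compare $\mathcal{F}$ with $y(t)=\mathcal{F}(0)E_\alpha(-\lambda t^\alpha)$, which solves ${_{0}^{C}\mathfrak{D}^{\alpha}_t}y=-\lambda y$. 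The difference $w=\mathcal{F}-y$ satisfies ${_{0}^{C}\mathfrak{D}^{\alpha}_t}w\le-\lambda w$ with $w(0)=0$. The scalar analogue of Lemma \ref{Lem:CPp}, or the resolvent representation of Lemma \ref{Lem:Gronwall} with a nonpositive forcing, gives $w\le0$. Then $0<E_\alpha(-\lambda t^\alpha)\le1$ yields the final bound.

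\emph{Main obstacle.} We expect the hardest step to be Step 2 for nonconvex $\Omega$. The boundary term of the Bochner-type identity must be absorbed without destroying the margin gained in Step 3, and this may force a smaller $\theta$ or use of the $\gamma$-damping. A secondary difficulty is justifying the log-Sobolev constant uniformly in time. It depends only on $\tilde M$ and $\Omega$, so we expect this to go through.
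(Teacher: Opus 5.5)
Your argument is the paper's own proof. It uses the same variational inequality and the same absorption condition $(\chi+2\theta)^2<4\theta$, obtained via Young rather than Sylvester's criterion, which gives $\theta\in(\theta_-,\theta_+)$ precisely when $\chi<1/2$. It also uses the same identity $-2\theta\int_\Omega c|D^2\log c|^2\,\mathrm{d}\mathbf{x}+\theta\int_{\partial\Omega}c^{-1}\partial_\nu|\nabla c|^2\,\mathrm{d}S$ for the $\Delta c$-piece, a logarithmic Sobolev inequality, and a Mittag-Leffler comparison. On convex $\Omega$ your version is complete; convexity is exactly what the paper uses when it asserts $\partial_\nu|\nabla c|^2\le0$. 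In three places you are more careful than the paper:
\begin{itemize}
\item The strict sign can only hold when $\mathcal{F}(t)>0$, since spatially constant data give $\mathcal{F}\equiv0$.
\item The log-Sobolev inequality obtained from Sobolev--Poincar\'e needs no convexity.
\item Comparing with $\mathcal{F}(0)E_\alpha(-\lambda t^\alpha)$ through the nonnegativity of $E_{\alpha,\alpha}$ is the right closing argument. Lemma \ref{Lem:Gronwall} as stated assumes $C_1>0$ and does not cover $C_1=-\lambda$.
\end{itemize}

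The step that would fail is your extension to nonconvex $\Omega$. Every term produced by $\mathcal{F}_2$ carries the same factor $\theta$, and $\theta$ is already pinned to $(\theta_-,\theta_+)$. So ``shrinking constants'' gains nothing. After $\partial_\nu|\nabla c|^2\le C_\Omega|\nabla c|^2$ you need a $\theta$-independent inequality of the form
\begin{displaymath}
C_\Omega\int_{\partial\Omega}\frac{|\nabla c|^2}{c}\,\mathrm{d}S\le 2\int_\Omega c\,|D^2\log c|^2\,\mathrm{d}\mathbf{x}+(\gamma-\lambda')\int_\Omega\frac{|\nabla c|^2}{c}\,\mathrm{d}\mathbf{x},\qquad \lambda'>0 .
\end{displaymath}
The trace inequality $\int_{\partial\Omega}w^2\,\mathrm{d}S\le\eta\int_\Omega|\nabla w|^2\,\mathrm{d}\mathbf{x}+C_\eta\int_\Omega w^2\,\mathrm{d}\mathbf{x}$ with $w=|\nabla\sqrt c|$ leaves a term $C_\Omega C_\eta\int_\Omega|\nabla c|^2/c\,\mathrm{d}\mathbf{x}$. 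Its coefficient is dictated by the curvature of $\partial\Omega$ and need not be below $\gamma$.

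If you interpolate instead via $\int_\Omega|\nabla c|^4/c^3\,\mathrm{d}\mathbf{x}\le C\int_\Omega c|D^2\log c|^2\,\mathrm{d}\mathbf{x}$, the remainder becomes an additive term $C\theta\|c(\cdot,t)\|_{L^1}$. That gives only ${_{0}^{C}\mathfrak{D}^{\alpha}_t}\mathcal{F}\le-\lambda\mathcal{F}+C$: a bound on $\mathcal{F}$, but not $\mathcal{F}(t)\le\mathcal{F}(0)E_\alpha(-\lambda t^\alpha)$, which forces $\mathcal{F}\to0$. So, as in the paper, the estimate is established only for convex $\Omega$. That hypothesis should be added to the statement rather than claimed to be removable.
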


\begin{proof}
For brevity, we denote the variational derivatives by $\mu_n:=\frac{\delta\mathcal{F}}{\delta n}$ and $\mu_c:=\frac{\delta\mathcal{F}}{\delta c}$. By invoking Lemma \ref{Lem:varieq} and substituting the dynamics from System \eqref{Pro:FKS} into the variational inequality \eqref{ieq:varieq}, followed by integration by parts, results in
\begin{equation}\label{eq:KKKt}
{_{0}^{C}\mathfrak{D}^{\alpha}_t}\mathcal{F}
\le \big\langle\mu_n,{_{0}^{C}\mathfrak{D}^{\alpha}_t}n\big\rangle
    +\big\langle\mu_c,{_{0}^{C}\mathfrak{D}^{\alpha}_t}c\big\rangle
:=\mathcal{I}_n(t)+\mathcal{I}_c(t).
\end{equation}
A direct computation yields
\begin{align*}
\mathcal{I}_n(t)
&:= \int_{\Omega}
\left(\log\left(\frac{n}{\bar n}\right)+1\right){_{0}^{C}\mathfrak{D}^{\alpha}_t}n\,\mathrm{d}\mathbf{x} \\
&= \int_{\Omega}\log\left(\frac{n}{\bar n}\right)\,\Big[\mathcal{D}\Delta n
   -\mathcal{D}\chi\nabla\cdot\left(\frac{n}{c}\nabla c\right)\Big]\,\mathrm{d}\mathbf{x}
\qquad\quad\Big(\because\int_\Omega{_{0}^{C}\mathfrak{D}^{\alpha}_t}n\,\mathrm{d}\mathbf{x}=0\Big)\\
&= -\mathcal{D}\int_{\Omega}\nabla\log n\cdot\nabla n\,\mathrm{d}\mathbf{x}
   +\mathcal{D}\chi\int_{\Omega}\nabla\log n\cdot\frac{n}{c}\nabla c\,\mathrm{d}\mathbf{x}\\
&= -4\mathcal{D}\int_{\Omega}\big|\nabla\sqrt n\big|^2\,\mathrm d\mathbf{x}
   +2\mathcal{D}\chi\int_{\Omega}\frac{\sqrt n}{c}\nabla\sqrt n\cdot\nabla c\,\mathrm{d}\mathbf{x}.
\end{align*}
Similarly,
\begin{displaymath}
\begin{aligned}
\mathcal{I}_c(t)
&:=\int_{\Omega}\theta\left(-2\frac{\Delta c}{c}+\frac{|\nabla c|^2}{c^2}\right)
     \big[\mathcal{D}\Delta c-\gamma c+n\big]\mathrm{d}\mathbf{x}\\
&=-2\theta \mathcal{D}\int_{\Omega}\frac{(\Delta c)^2}{c}\mathrm{d}\mathbf{x}
  +\theta\mathcal{D}\int_{\Omega}\frac{|\nabla c|^2\Delta c}{c^2}\mathrm{d}\mathbf{x}
  -\gamma\theta\int_{\Omega}\frac{|\nabla c|^2}{c}\mathrm{d}\mathbf{x} \\
&\quad +4\theta \int_{\Omega}\frac{\sqrt{n}\nabla \sqrt{n}\cdot\nabla c}{c}\mathrm{d}\mathbf{x}
  -\theta\int_{\Omega}\frac{n |\nabla c|^2}{c^2}\mathrm{d}\mathbf{x}.
\end{aligned}
\end{displaymath}
Combining $\mathcal{I}_n$ and $\mathcal{I}_c$, we get
\begin{equation}\label{eq:KKIt}
\begin{aligned}
\mathcal{I}(t)
=&\underbrace{-4\mathcal{D}\int_{\Omega}\big|\nabla \sqrt{n}\big|^2\mathrm{d}\mathbf{x}
+2(\mathcal{D}\chi+2\theta)\int_{\Omega}\frac{\sqrt{n}\nabla \sqrt{n}\cdot\nabla c}{c}\mathrm{d}\mathbf{x}
-\theta\int_{\Omega}\frac{n|\nabla c|^2}{c^2}\mathrm{d}\mathbf{x}}_{:=\mathcal{J}_1} \\
& \underbrace{-2\theta\mathcal{D}\int_{\Omega}\frac{(\Delta c)^2}{c}\mathrm{d}\mathbf{x}
+\theta\mathcal{D}\int_{\Omega}\frac{|\nabla c|^2\Delta c}{c^2}\mathrm{d}\mathbf{x}}_{:=\mathcal{J}_2}
-\gamma\theta\int_{\Omega}\frac{|\nabla c|^2}{c}\mathrm{d}\mathbf{x}\\
=:&\mathcal{J}_1+\mathcal{J}_2-\gamma\theta\int_{\Omega}\frac{|\nabla c|^2}{c}\mathrm{d}\mathbf{x}.
\end{aligned}
\end{equation}

Defining $\mathbf{u}(\mathbf{x},t) := \nabla\sqrt{n}$ and $\mathbf{v}(\mathbf{x},t):=\frac{\sqrt{n}\nabla c}{c}$, then the term $\mathcal{J}_1$ can be rewritten as
\begin{displaymath}
\mathcal{J}_1
=\int_{\Omega}\Big[-4\mathcal{D}|\mathbf{u}|^2
+2(\mathcal{D}\chi+2\theta)\mathbf{u}\cdot\mathbf{v}-\theta|\mathbf{v}|^2\Big]\mathrm{d}\mathbf{x}.
\end{displaymath}
The integrand is the quadratic form
\begin{displaymath}
\mathcal{Q}(\mathbf{u},\mathbf{v}):=
\begin{pmatrix}
\mathbf{u}\\
\mathbf{v}\end{pmatrix}^T
\mathbf{A}
\begin{pmatrix}\mathbf{u}\\
\mathbf{v}
\end{pmatrix},
\quad\textrm{with}\quad
\mathbf{A}:=
\begin{pmatrix}-4\mathcal{D} & \mathcal{D}\chi+2\theta \\
\mathcal{D}\chi+2\theta & -\theta
\end{pmatrix}.
\end{displaymath}
By Sylvester's criterion (for $-\mathbf{A}$ positive definite), its negative definite necessary and sufficient condition is that the determinant $\det(\mathbf{A})>0$ (and the first-order principal form $-4\mathcal{D}<0$ obviously true), i.e.,
\begin{equation}\label{ieq:Fesabe}
4\mathcal{D}\theta-(\mathcal{D}\chi+2\theta)^2>0
\iff 4\theta^2-4\mathcal{D}(1-\chi)\theta+\mathcal{D}^2\chi^2<0.
\end{equation}
When $0<\chi<1/2$, the discriminant of the above quadratic inequality is $\Delta=16\mathcal{D}^2(1-2\chi)> 0$, and the solution set of this inequality is exactly $\theta\in(\theta_-,\theta_+)$ with $\theta_{\pm}=\frac{\mathcal{D}}{2}(1-\chi\pm\sqrt{1-2\chi})$. The reason for setting $0<\chi<1/2$ is clearly observed from Fig.~\ref{Fig:chi}. Choosing $\theta$ in this interval, the quadratic form is strongly negative definite, thus, there exists a constant $\mu_0>0$, such that

\begin{equation}\label{ieq:J11}
\mathcal{J}_1
\leq-\mu_0\int_{\Omega}\Bigg(\big|\nabla\sqrt{n}\big|^2+\frac{n\big|\nabla c\big|^2}{c^2}\Bigg)\mathrm{d}\mathbf{x}.
\end{equation}
\begin{figure}[t!]
 \centering
 \centerline{\includegraphics[width=0.86\textwidth]{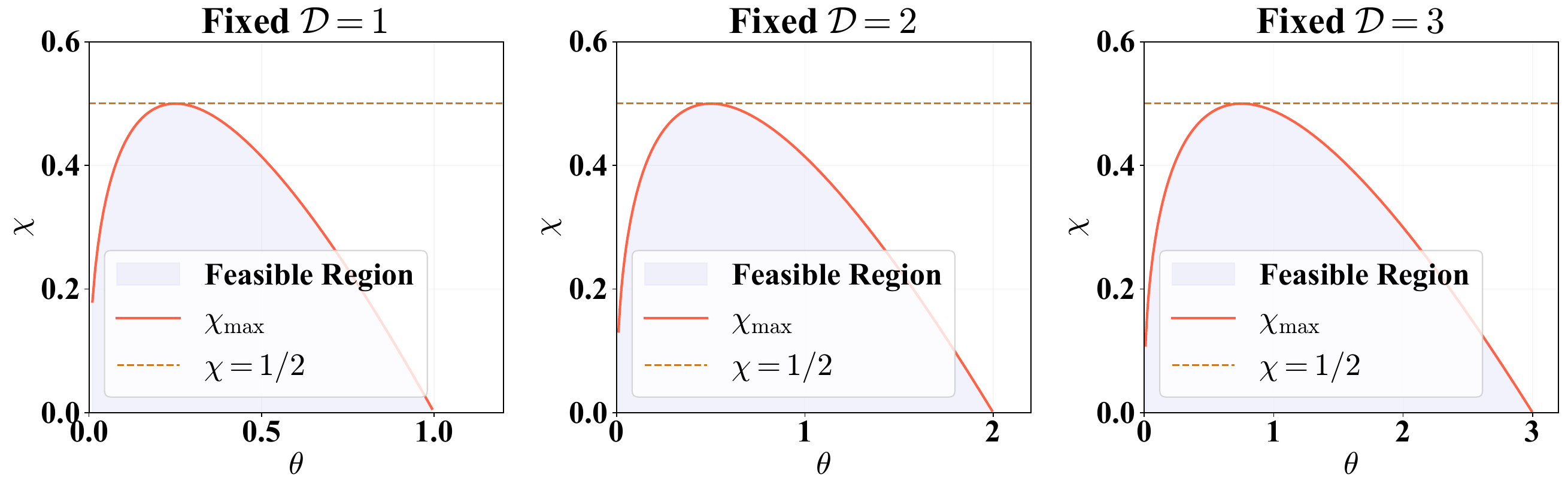}}
 \caption{{\rm(}Color online{\rm)}  Numerical illustration  of the feasible region determined by Problem \eqref{ieq:Fesabe} under various fixed $\mathcal{D}$.}
\label{Fig:chi}
\end{figure}

For $\mathcal{J}_2$, since $\Omega$ is convex and leveraging the lower bound $c\geq C_{\aleph}>0$ from Lemma \ref{lem:LowerBound-c}, we apply the pointwise identity $\frac{1}{2}\Delta|\nabla c|^2=|\nabla^2 c|^2+\nabla c\cdot\nabla(\Delta c)$ (cf. the proof of Lemma 3.1 in Ref. \refcite{Winkler14}) and the fact that $\partial\nu|\nabla c|^2\leq0$ (see Lemma 3.1 in Ref.~\refcite{Lankeit16}) to obtain
\begin{equation}\label{ieq:J21}
\begin{aligned}
\mathcal{J}_2
&= -2\theta\mathcal{D}\int_{\Omega}\frac{(\Delta c)^2}{c}\mathrm{d}\mathbf{x} + \theta\mathcal{D}\int_{\Omega}\frac{|\nabla c|^2\Delta c}{c^2}\mathrm{d}\mathbf{x} \\
&= -2\theta\mathcal{D}\int_{\Omega}c\left|\nabla^2\log c\right|^2\mathrm{d}\mathbf{x}+\theta\mathcal{D}\int_{\partial\Omega}\frac{1}{c}\frac{\partial|\nabla c|^2}{\partial\nu}\mathrm{d}S\leq0.
\end{aligned}
\end{equation}
This finding is further supported by Ref. \refcite{Chen21}, specifically Lemma 3.1. Substituting the estimates \eqref{ieq:J11} and \eqref{ieq:J21} into \eqref{eq:KKKt} or \eqref{eq:KKIt}, and invoking the lower bound of $c$, we arrive at the total energy dissipation:
\begin{equation}\label{eq:total-dissipation}
{_{0}^{C}\mathfrak{D}^{\alpha}_t}\mathcal{F}(t)
\leq -\mu_0\int_{\Omega}\big|\nabla\sqrt{n}\big|^2 \mathrm{d}\mathbf{x}
     -\gamma\theta\int_{\Omega}\frac{|\nabla c|^2}{c} \mathrm{d}\mathbf{x}
     < 0, \quad t>0.
\end{equation}
This confirms that the functional $\mathcal{F}(t)$ is non-increasing along the trajectories of the system.

To rigorously establish the dissipation estimate \eqref{est:dissap}, we utilize the Logarithmic Sobolev Inequality (LSI)\cite{Gross1975}. Since $\Omega\subset \mathbb{R}^d$ is a bounded domain with smooth boundary, $\bar{\Omega}$ can be regard as a compact smooth Riemannian manifold with boundary, endowed with the Euclidean metric. In this geometric setting, the Ricci curvature is zero, and the convexity of $\partial\Omega$ ensures that the second fundamental form of the boundary is non-negative. Following the framework of Deuschel and Stroock Ref.~\refcite{Deuschel90}, Chung and Yau Ref.~\refcite{chung1996}, the Neumann Laplace operator on such a manifold satisfies the logarithmic Sobolev inequality (LSI). Specifically, we define the normalized probability measure $\mathrm{d}\mu=\mathrm{d}\mathbf{x}/|\Omega|$ and the corresponding density $f=n/\bar{n}$, where $\bar{n}=\tilde{M}/|\Omega|$ is the spatial average under mass conservation. Employing the identity $|\nabla\sqrt{f}|^2=\frac{1}{4}\frac{|\nabla f|^2}{f}$, on the set $\{f>0\}$, the LSI is given by
\begin{displaymath}
\int_{\Omega} f\log f \mathrm{d}\mu
\leq \frac{1}{2\rho}\int_{\Omega}\frac{|\nabla f|^2}{f}\mathrm{d}\mu,
\end{displaymath}
with $\rho\geq\min\left\{\frac{\lambda_1}{8e},\frac{1}{d \cdot D(\Omega)^2}\right\}>0$, and where $D(\Omega)$ denotes the diameter of $\Omega$, $\lambda_1$ is the first eigenvalue of the Laplacian, and $e$ denots the Euler constant. To map this to our physical variables, we observe that
\begin{displaymath}
\int_{\Omega}\frac{n}{\bar{n}}\log\left(\frac{n}{\bar{n}}\right)\frac{\mathrm{d}\mathbf{x}}{|\Omega|}
=\frac{1}{\tilde{M}}\int_{\Omega}n\ln\left(\frac{n}{\bar{n}}\right)\mathrm{d}\mathbf{x}
=\frac{\mathcal{F}_1(t)}{\tilde{M}},
\end{displaymath}
and
\begin{displaymath}
\int_{\Omega}\frac{|\nabla(n/\bar{n})|^2}{n/\bar{n}}\frac{\mathrm{d}\mathbf{x}}{|\Omega|}
=\frac{1}{\bar{n}|\Omega|}\int_{\Omega}\frac{|\nabla n|^2}{n}\mathrm{d}\mathbf{x}
=\frac{4}{\tilde{M}}\int_{\Omega}\big|\nabla\sqrt{n}\big|^2\mathrm{d}\mathbf{x}.
\end{displaymath}
Substituting these back into LSI and eliminating the mass $\tilde{M}$, we obtain the functional bound
\begin{displaymath}
\mathcal{F}_1(t)\leq\frac{2}{\rho}\int_{\Omega}\big|\nabla \sqrt{n}\big|^2 \mathrm{d}\mathbf{x},\quad t>0.
\end{displaymath}
Hence, the first dissipation term in the energy law satisfies
\begin{equation}\label{eq:est-F1}
\mu_0 \int_{\Omega}\big|\nabla \sqrt{n}\big|^2 \mathrm{d}\mathbf{x}\geq\tilde{\lambda}\mathcal{F}_1(t),\quad t>0
\end{equation}
where the decay rate $\tilde{\lambda}:=\mu_0\rho/2>0$ is determined by the geometric spectral gap of $\Omega$.

For the second component $\mathcal{F}_2(t)=\theta\int_{\Omega} \frac{|\nabla c|^2}{c}\mathrm{d}\mathbf{x}$,  the linear degradation term directly yields
\begin{equation}\label{eq:est-F2}
\gamma\theta \int_{\Omega}\frac{|\nabla c|^2}{c}\mathrm{d}\mathbf{x}= \gamma\mathcal{F}_2(t).
\end{equation}
Inserting \eqref{eq:est-F1} and \eqref{eq:est-F2} into the energy evolution equation \eqref{eq:total-dissipation}, we arrive at
\begin{equation}
{_{0}^{C}\mathfrak{D}^{\alpha}_t}\mathcal{F}(t)
\leq -\tilde{\lambda}\mathcal{F}_1(t)-\gamma\big(\theta\mathcal{F}_2(t)\big),\quad t>0.
\end{equation}
By setting $\lambda:=\min\{\tilde{\lambda},\gamma\}>0$, the inequality simplifies to
\begin{equation}
{_{0}^{C}\mathfrak{D}^{\alpha}_t}\mathcal{F}(t)
\leq -\lambda\big(\mathcal{F}_1(t)+\theta\mathcal{F}_2(t)\big)
=-\lambda \mathcal{F}(t),\quad t>0.
\end{equation}
By Lemma \ref{Lem:Gronwall}, we conclude that
\begin{equation}
\mathcal{F}(t)\leq \mathcal{F}(0) E_\alpha(-\lambda t^\alpha),\quad t>0,
\end{equation}
where $E_\alpha(\cdot)$ is the Mittag-Leffler function. This result characterizes the algebraic decay of the system toward its equilibrium state, completing the proof.
\end{proof}

\begin{lemma}[Nonlocal Lyapunov dissipation]\label{Lem:FracLyapunov}
Let $\Omega\subset\mathbb{R}^d$ $(d\ge2)$ be a bounded domain with smooth boundary. Suppose $(n, c)$ is the mild global solution of Problem \eqref{Pro:FKS} with $\gamma>0$, $0<\chi<1/2$, and $\alpha\in(0,1)$. Let $\mathcal{E}[(n,c)](t)$ be the Lyapunov functional defined in \eqref{F:Lyapunov}, with associated energy $\mathcal{F}(t)$ given by \eqref{Fun:JCF}. Then for some $\lambda>0$,
\begin{equation}\label{ineq:FracEnergy}
\mathcal{F}(t)+\frac{\lambda}{\Gamma(\alpha)}\mathcal{E}[(n,c)](t)\le \mathcal{F}(0),\quad~ t>0.
\end{equation}
As a consequence, the solution satisfies the uniform bounds
\begin{displaymath}
\mathcal{F}(t)\le \mathcal{F}(0),\quad\textrm{and}\quad
\mathcal{E}[(n,c)](t)\le\frac{\Gamma(\alpha)}{\lambda}\mathcal{F}(0), \quad t>0.
\end{displaymath}
\end{lemma}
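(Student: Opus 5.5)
The plan is to integrate the pointwise dissipation inequality of Lemma~\ref{lem:dissapation} in time with the Riemann--Liouville integral ${_{0}I^{\alpha}_{t}}$. This operator is the left inverse of the Caputo derivative and has a positive kernel, so it preserves inequalities.

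First I take from Lemma~\ref{lem:dissapation} (valid for $0<\chi<1/2$, with $\theta\in(\theta_-,\theta_+)$) the estimate ${_{0}^{C}\mathfrak{D}^{\alpha}_t}\mathcal{F}(t)\le-\lambda\mathcal{F}(t)$ for $t>0$. Next I apply ${_{0}I^{\alpha}_{t}}g(t)=\frac{1}{\Gamma(\alpha)}\int_0^t(t-s)^{\alpha-1}g(s)\,\mathrm{d}s$ to both sides. Since the kernel $(t-s)^{\alpha-1}/\Gamma(\alpha)$ is nonnegative, the inequality is preserved. On the left I use the standard identity ${_{0}I^{\alpha}_{t}}\,{_{0}^{C}\mathfrak{D}^{\alpha}_t}\mathcal{F}(t)=\mathcal{F}(t)-\mathcal{F}(0)$. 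This follows from ${_{0}^{C}\mathfrak{D}^{\alpha}_t}={_{0}I^{1-\alpha}_{t}}\frac{\mathrm{d}}{\mathrm{d}t}$ and the semigroup property ${_{0}I^{\alpha}_{t}}{_{0}I^{1-\alpha}_{t}}={_{0}I^{1}_{t}}$. On the right I get $-\frac{\lambda}{\Gamma(\alpha)}\int_0^t(t-s)^{\alpha-1}\mathcal{F}(s)\,\mathrm{d}s=-\frac{\lambda}{\Gamma(\alpha)}\mathcal{E}[(n,c)](t)$ by the definition \eqref{F:Lyapunov}. Rearranging gives \eqref{ineq:FracEnergy}.

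The two uniform bounds then follow from nonnegativity, which was shown right after \eqref{Fun:JCF}: $\mathcal{F}\ge0$ by Jensen's inequality for $n\log n$ together with $|\nabla c|^2/c\ge0$, and hence $\mathcal{E}\ge0$ because the kernel is positive. Dropping $\frac{\lambda}{\Gamma(\alpha)}\mathcal{E}\ge0$ gives $\mathcal{F}(t)\le\mathcal{F}(0)$. Dropping $\mathcal{F}(t)\ge0$ gives $\mathcal{E}[(n,c)](t)\le\frac{\Gamma(\alpha)}{\lambda}\mathcal{F}(0)$. Finiteness of $\mathcal{F}(0)$ comes from \eqref{Eq:Assuption}: $n_0\in C^0(\overline\Omega)$ makes $n_0\log(n_0/\bar n_0)$ bounded, and $c_0\in W^{1,\infty}$ with $c_0>0$ on $\overline\Omega$ bounds $|\nabla c_0|^2/c_0$.

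The main obstacle is justifying the identity ${_{0}I^{\alpha}_{t}}\,{_{0}^{C}\mathfrak{D}^{\alpha}_t}\mathcal{F}=\mathcal{F}-\mathcal{F}(0)$. It requires $\mathcal{F}$ to be absolutely continuous on $[0,t]$, whereas Theorem~\ref{Thm:Lwell-posed} gives H\"older regularity only on $(0,T]$ and mere continuity at $t=0$ in $L^\infty\times W^{1,p}$. I would first use that continuity, together with $c\ge C_*$ from Lemma~\ref{lem:LowerBound-c}, to get $\mathcal{F}\in C([0,T])$ and $\mathcal{F}(t)\to\mathcal{F}(0)$. I would then use the H\"older regularity on $(0,T]$ from Lemmas~\ref{Lem:timR} and \ref{Lem:Rnnn} to make sense of the Caputo derivative in the integrated form ${_{0}I^{1-\alpha}_{t}}$ applied to $\mathcal{F}'$ there. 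If the direct route fails, I would work with the Riemann--Liouville form ${_{0}D^{\alpha}_{t}}(\mathcal{F}-\mathcal{F}(0))\le-\lambda\mathcal{F}$, which only needs $\mathcal{F}\in C([0,T])$ to invert. Alternatively I would regularize by starting at $\varepsilon>0$ and letting $\varepsilon\to0$ by continuity. Finally, iterating the same argument on successive local intervals extends the inequality to all $t>0$.
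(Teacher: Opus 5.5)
Your proposal is correct and is essentially the paper's own proof: apply ${_{0}I^{\alpha}_{t}}$ to the dissipation inequality \eqref{est:dissap} of Lemma~\ref{lem:dissapation}, use ${_{0}I^{\alpha}_{t}}\,{_{0}^{C}\mathfrak{D}^{\alpha}_t}\mathcal{F}=\mathcal{F}-\mathcal{F}(0)$ and ${_{0}I^{\alpha}_{t}}\mathcal{F}=\mathcal{E}[(n,c)]/\Gamma(\alpha)$, then drop the nonnegative terms $\mathcal{E}$ and $\mathcal{F}$ in turn. Your remarks on the absolute continuity needed for the inversion identity and on the finiteness of $\mathcal{F}(0)$ are extra care the paper skips (it just cites the identity), but the closing interval-by-interval iteration is unnecessary and ill-suited to a memory operator: the dissipation inequality is assumed for all $t>0$, so ${_{0}I^{\alpha}_{t}}$ can be applied on $[0,t]$ directly.
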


\begin{proof}
Applying the Riemann-Liouvillee fractional integral ${{}_0I_t^\alpha}$ to the dissipation inequality \eqref{est:dissap} established in Lemma \ref{lem:dissapation}, we obtain
\begin{displaymath}
{}_0I_t^\alpha\bigl({}_0^C\mathfrak D_t^\alpha \mathcal F(t)\bigr)\le-\lambda\,{}_0I_t^\alpha\big(\mathcal{F}(t)\big).
\end{displaymath}
By Invoking the fundamental theorem of fractional calculus for Caputo derivatives, which states that for $0<\alpha<1$, ${{}_0I_t^\alpha}\bigl({}_0^C\mathfrak{D}_t^\alpha f(t)\bigr)=f(t)-f(0)$ (cf. Ref.~\refcite{Podlubny99}), we deduce that
\begin{displaymath}
\mathcal{F}(t)-\mathcal{F}(0)\le-\lambda\,{}_0I_t^\alpha\mathcal{F}(t),\quad t>0.
\end{displaymath}
According to \eqref{F:Lyapunov}, since ${}_0I_t^\alpha\mathcal{F}(t)=\frac1{\Gamma(\alpha)}\mathcal{E}[(n,c)](t)$, we obtain
\begin{displaymath}
\mathcal{F}(t)+\frac{\lambda}{\Gamma(\alpha)}\mathcal{E}(t)\le\mathcal{F}(0),\quad t>0,
\end{displaymath}
which is exactly \eqref{ineq:FracEnergy}. By virtue of the non-negativity of $\mathcal{E}[(n, c)](t)$ immediately gives $\mathcal{F}(t)\le\mathcal{F}(0)$. Moreover, since $\mathcal{F}(t)$ is also non-negative, rearranging \eqref{ineq:FracEnergy} directly implies
\begin{displaymath}
\mathcal{E}[(n,c)](t)\le\frac{\Gamma(\alpha)}{\lambda}\mathcal{F}(0),\quad \forall~ t>0.
\end{displaymath}
This completes the proof.
\end{proof}

\subsection{Global well-posedness}
According to Theorem \ref{Thm:Lwell-posed}, the well-posedness of local mild solutions is known. To extend globally, it suffices to show that the working norm remains uniformly bounded on the time interval $[0,\infty)$.

\begin{lemma}\label{Lem:bounddde}
Let $d=2$, $\gamma>0$, and let $\Omega\subset\mathbb R^2$ be a bounded domain with smooth boundary. Suppose $0<\chi<\frac{1}{2}$, and let $(n,c)$ be a global mild solution
of System~\eqref{Pro:FKS} on $[0,\infty)$. Then, for every finite $q>2$, there exists a constant $C=C(\|n_0\|_{L^\infty},\|c_0\|_{W^{1,\infty}},q,\Omega)>0$ such that
\begin{displaymath}
\sup_{t\ge0}\big(\|n(\cdot,t)\|_{L^\infty}+\|c(\cdot,t)\|_{W^{1,q}}\big)\le C.
\end{displaymath}
Here the dependence of $C$ on the fixed parameters $\alpha,\mathcal{D},\gamma,\chi$ and $C_*$ (lower bound of $c$) is suppressed.
\end{lemma}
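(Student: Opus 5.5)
The plan is a bootstrap driven by the uniform energy bound of Lemma~\ref{Lem:FracLyapunov}. The energy controls $n\log n$ in $L^1$, and this is fed into the mild formulation \eqref{sys:mild} using the $L^p$--$L^q$ estimates of Lemma~\ref{lem:MLbounds}. Throughout, the uniform lower bound $c\ge C_*$ from Lemma~\ref{lem:LowerBound-c}(ii) tames the singular sensitivity.

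\textbf{Step 1: time-independent a priori bounds.} From Lemma~\ref{Lem:FracLyapunov} we have $\mathcal{F}(t)\le\mathcal{F}(0)$ for all $t$. Together with mass conservation (Lemma~\ref{lem:MassCon-n}), this gives $\sup_{t}\int_\Omega n|\log n|\,\mathrm{d}\mathbf{x}\le C$. It also gives $\sup_t\int_\Omega|\nabla c|^2/c\le C$. Since $\|c\|_{L^1}$ is bounded uniformly in time (Lemma~\ref{lem:MassCon-n}, using that the fractional relaxation with $\gamma>0$ keeps the mass bounded), the inequality $|\nabla\sqrt c|^2=|\nabla c|^2/(4c)$ combined with Sobolev/Poincaré in $d=2$ yields $\sqrt c\in H^1$. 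Hence $c\in L^r$ for all finite $r$, uniformly in $t$.

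\textbf{Step 2: upgrade $n$ to $L^{q_0}$ for some $q_0>1$, uniformly in time.} In $d=2$, an $L\log L$ bound is the critical threshold. I would first try a direct energy-type estimate for $\int n^{k}$ with $k>1$, using the fractional convexity inequality applied to $\varphi(n)=n^{k}$ (as in Lemma~\ref{Lem:varieq}). The cross term $\int n^{k-1}\nabla n\cdot\nabla c/c$ is bounded by $\frac{1}{C_*}\|\nabla n^{k/2}\|_{L^2}\|n^{k/2}\nabla c\|_{L^2}$. This would then be closed by the two-dimensional Gagliardo--Nirenberg inequality with the $L\log L$ control (Biler--Hebisch--Nadzieja type: $\|w\|_{L^3}^3\le\varepsilon\|w\|_{H^1}^2\|w\log w\|_{L^1}+C$), after a first estimate of $\nabla c$ in $L^4$ from the $c$-equation via \eqref{eq:Ef}. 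The resulting fractional inequality ${}^C_0\mathfrak D^\alpha_t y\le -y+C$ gives a uniform bound through Lemma~\ref{Lem:Gronwall}, or by comparison with the Mittag-Leffler relaxation. \textbf{This is the main obstacle:} the memory term and the singular factor $1/c$ must be handled simultaneously, and the Gronwall lemma as stated yields exponentially growing bounds. I therefore expect to need a dissipative version, obtained by keeping $-\delta y$ on the right via Poincaré together with mass conservation.

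\textbf{Step 3: bootstrap via the mild formulation on unit windows.} With $\sup_t\|n\|_{L^{q_0}}\le C$, Lemma~\ref{lem:regularity-c}(ii) gives $\sup_t\|c\|_{W^{1,p}}$ for some $p>2$ satisfying $\frac1{q_0}-\frac1p<\frac12$. This estimate uses \eqref{eq:Ed}, \eqref{eq:Ef} and the decay $e^{-\gamma s t^\alpha}$ in the $\mathcal{A}_\gamma$ families, so that the time integrals converge on $(0,\infty)$ and do not grow with $T$. Then the $n$-equation, estimated through \eqref{eq:Eb}, gives
\[
\|n(t)\|_{L^\infty}\le C+\frac{C}{C_*}\int_0^t(t-s)^{\frac\alpha2-\frac{\alpha}{r}-1}\big(1+(t-s)^{\alpha}\big)^{-1}\|n\nabla c\|_{L^{r}}\,\mathrm{d}s,
\]
where the H\"older split is $\|n\nabla c\|_{L^r}\le\|n\|_{L^\infty}^{\vartheta}\|n\|_{L^{q_0}}^{1-\vartheta}\|\nabla c\|_{L^p}$. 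Since $\vartheta<1$, the quantity $\sup_{[0,T]}\|n\|_{L^\infty}$ satisfies $X\le C+CX^\vartheta$ with $C$ independent of $T$, and therefore $X\le C$.

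\textbf{Step 4: upgrade $c$ to $W^{1,q}$ for every finite $q$.} Reinserting $n\in L^\infty$ into Lemma~\ref{lem:regularity-c}(ii) with $q$ arbitrary gives $\sup_t\|c\|_{W^{1,q}}\le C$. Time-uniformity throughout relies on using the large-time decay of $E_{\alpha,\alpha}(-t^\alpha\mathcal{A}_\gamma)$ rather than the $1+t^{-\dots}$ form of Lemma~\ref{lem:MLbounds}.
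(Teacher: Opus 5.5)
\textbf{Steps 1, 3 and 4 are sound; Step 2 has a genuine gap.} Your Step 3 interpolation $X\le C+CX^{\vartheta}$ via the Duhamel formula is a legitimate substitute for the paper's Moser iteration. It needs the large-time decay of $E_{\alpha,\alpha}(-t^\alpha\mathcal A)$ on mean-zero data, which applies because $\nabla\cdot(n\nabla c/c)$ has zero mean.

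The gap in Step 2 is not the memory/Gr\"onwall issue you flag: the argument is circular. Testing the $n$-equation with $kn^{k-1}$ needs a time-uniform bound on $\nabla c$ in $L^4$; some $L^p$ with $p>2$ would already suffice via 2D Gagliardo--Nirenberg and mass conservation. You propose to get this from the $c$-equation via \eqref{eq:Ef}. In $d=2$ the resulting kernel $(t-s)^{\alpha-1}(t-s)^{-\frac{\alpha}{2}-\alpha(\frac1q-\frac1p)}$ is integrable at $s=t$ only if $\frac1q-\frac1p<\frac12$. So from $n\in L^1$ (or $L\log L$) you only get $\nabla c\in L^p$ for $p<2$. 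Step 1 gives no more, since $|\nabla c|^2/c\in L^1$ and $c\in L^m$ yield only $\nabla c\in L^r$ for $r<2$. Reaching $\nabla c\in L^4$ needs $n\in L^q$ with $q>4/3$, and even $\nabla c\in L^{2+\varepsilon}$ needs $q>1$. That is exactly what Step 2 is meant to produce.

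The Biler--Hebisch--Nadzieja inequality does not break the loop. It controls $\int n^3$-type terms, which is what the chemotactic term reduces to only when $\Delta c$ can be eliminated through an elliptic relation. Here, eliminating $\Delta c$ introduces ${}_{0}^{C}\mathfrak{D}^{\alpha}_t c$, for which you have no bound. The singular weight also produces an extra positive term $\chi(k-1)\int_\Omega n^k|\nabla c|^2/c^2$.

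\textbf{The missing idea is to test both equations at once with a weight that turns the production term of the $c$-equation into absorption.} The paper uses $\mathcal{Y}(t)=\int_\Omega n^2c^{-1/2}\,\mathrm{d}\mathbf{x}$. Because $\Phi(n,c)=n^2c^{-1/2}$ is jointly convex, the Caputo convexity argument gives ${}_{0}^{C}\mathfrak{D}^{\alpha}_t\mathcal{Y}\le\int_\Omega\big(2nc^{-1/2}\,{}_{0}^{C}\mathfrak{D}^{\alpha}_t n-\tfrac12 n^2c^{-3/2}\,{}_{0}^{C}\mathfrak{D}^{\alpha}_t c\big)\,\mathrm{d}\mathbf{x}$. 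After integrating by parts:
\begin{itemize}
\item the mixed term $(2+2\chi)\int_\Omega nc^{-3/2}\nabla n\cdot\nabla c$ is absorbed by $-2\int_\Omega c^{-1/2}|\nabla n|^2$ and $-(\frac34+\chi)\int_\Omega n^2c^{-5/2}|\nabla c|^2$; this is where the smallness of $\chi$ enters;
\item the source $+n$ in the $c$-equation, paired with $\partial_c\Phi<0$, yields the absorption $-\frac12\int_\Omega n^3c^{-3/2}$.
\end{itemize}
H\"older against $c\in L^{3/2}$ (your Step 1) turns this into $-b\mathcal{Y}^{3/2}$, so ${}_{0}^{C}\mathfrak{D}^{\alpha}_t\mathcal{Y}\le a\mathcal{Y}-b\mathcal{Y}^{3/2}$. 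The maximum-point comparison of Lemma~\ref{lem:Caputo-logistic} then gives $\sup_t\mathcal{Y}(t)\le C$ with no Gr\"onwall growth. H\"older against $c\in L^m$ then gives $n\in L^\sigma$ uniformly in time for every $\sigma<2$, which is more than enough to start your Step 3. No prior information on $\nabla c$ is needed. The superlinear absorption, rather than a Poincar\'e term, is what makes the bound uniform in time.
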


\begin{proof}
We prove the estimate on an arbitrary finite interval $[0,T]$, with constants independent of $T$. Since $T>0$ is arbitrary, the desired uniform-in-time estimate follows by letting $T\to\infty$. To this end, we proceed by breaking the proof into the following sequence of implications, with the outline presented in Fig. \ref{Fig:bootstrap-diagram}.

\begin{figure}[htbp]
\centering
\begin{tikzpicture}[
    node distance=1.0cm and 1.2cm,
    mainnode/.style={align=center, font=\footnotesize, inner sep=1pt, rounded corners=3pt,
                     draw=red!65!black, thick, dashed, fill=none,
                     text width=3.45cm, minimum height=1.36cm,
                     text centered},
    doublearrow/.style={->,double,double distance=1.2pt,draw=pink!65!black,line width=0.55pt,>=Stealth},
    arrowlabel/.style={font=\scriptsize,inner sep=1.5pt},
    >=Stealth,
    thick
]
\node[mainnode] (A)
{$\begin{aligned}
&\text{Mass conservation}\\
&\text{Lyapunov estimate}
\end{aligned}$};
\node[mainnode] (B) [right=of A]
{$\begin{aligned}
&c\in L^{\infty}(L^m),\,1\le m<\infty\\[1mm]
&\displaystyle \mathcal Y(t):=\int_\Omega n^2c^{-\frac{1}{2}}\,\mathrm d\mathbf x\le C
\end{aligned}$};
\node[mainnode] (C) [right=of B]
{$\begin{aligned}
&n\in L^\infty(L^s),\,1<s<2
\end{aligned}$};
\node[mainnode] (D) [below=0.95cm of C]
{$\begin{aligned}
&\chi\frac{\nabla c}{c}
\in L^{\infty}(L^{q_0}),\,q_0>2
\end{aligned}$};
\node[mainnode] (E) [left=of D]
{$n\in L^\infty(0,T;L^\infty(\Omega))$};
\node[mainnode] (F) [left=of E]
{$\begin{aligned}
&c\in L^\infty(W^{1,q}),\,2<q<\infty
\end{aligned}$};
\draw[->, draw=pink!65!black] (A) -- (B)
node[midway,above,arrowlabel] {$(i)$};
\draw[->, draw=pink!65!black] (B) -- (C)
node[midway,above,arrowlabel] {$(ii)$};
\draw[->, draw=pink!65!black] (C) -- (D)
node[midway,right,arrowlabel] {$(iii)$};
\draw[->, draw=pink!65!black] (D) -- (E)
node[midway,above,arrowlabel] {$(iv)$} node[midway, below] {\scriptsize Moser};
\draw[->, draw=pink!65!black] (E) -- (F)
node[midway,above,arrowlabel] {$(v)$};
\end{tikzpicture}
\caption{Schematic diagram of the bootstrap argument  used to prove uniform boundedness of $(n,c)$.}
\label{Fig:bootstrap-diagram}
\end{figure}

\begin{itemize}
    \item \emph{\textbf{Step $(i)$}}: $L^m$-bound for $c$, $1\leq m<\infty$.
\end{itemize}

We start from the a priori estimates already obtained above. By virtue of the dissipation property established in Lemma \ref{lem:dissapation}, for some $\lambda>0$, we have
\begin{equation}\label{eqt:hkhk}
\mathcal{F}(t)
\leq\mathcal{F}(0)E_\alpha(-\lambda t^\alpha)\leq\mathcal{F}(0)<\infty, \quad~ t>0.
\end{equation}
Recalling the definition of $\mathcal{F}(t)$ in \eqref{Fun:JCF}, the uniform bound on $\mathcal{F}(t)$ directly implies that the relative entropy term $\int_{\Omega}n\log(n/\bar{n})\mathrm{d}\mathbf{x}$ is bounded from \eqref{eqt:hkhk}, where $\bar{n}=\frac{1}{|\Omega|}\int_{\Omega}n\mathrm{d}\mathbf{x}$. By utilizing the properties of the logarithm, and combining it with the mass conservation $\|n(\mathbf{x},t)\|_{L^1}=\|n_0\|_{L^1}\equiv\tilde{M}$ from Lemma~\ref{lem:MassCon-n}, we can decompose the integral as
\begin{equation}\label{eq:NFKK}
\int_{\Omega}n\log\left(\frac{n}{\bar{n}}\right)\mathrm{d}\mathbf{x}
=\int_{\Omega}n\log n\,\mathrm{d}\mathbf{x}-\tilde{M}\log\bar{n}\leq C, \quad 0<t<T.
\end{equation}
Similarly, as both terms in $\mathcal F$ are nonnegative, this gives in particular
\begin{equation}\label{eq:entropy-fisher-bound}
\int_\Omega\frac{|\nabla c|^2}{c}\,d\mathbf{x}\le C,\qquad 0<t<T.
\end{equation}

We next extract from \eqref{Eq:MassCon-c} a uniform $L^1$-bound for $c$. To this end, set $C_c(t):=\int_\Omega c(\mathbf{x},t)\mathrm{d}\mathbf{x}$. Then \eqref{Eq:MassCon-c} can be written in the compact form
\begin{equation}\label{eq:Cc-Volterra}
C_c(t)+\gamma {_{0}I_{t}^{\alpha}} C_c(t)=C_c(0)+\tilde{M}\,{_{0}I_{t}^{\alpha}} 1(t),
\end{equation}
where ${_{0}I_{t}^{\alpha}}$ denotes the Riemann-Liouvillee fractional integral. Indeed, ${_{0}I_{t}^{\alpha}} 1(t)=\frac{t^\alpha}{\Gamma(1+\alpha)}$. Equivalently, \eqref{eq:Cc-Volterra} can be rearranged as
\begin{displaymath}
C_c(t)-C_c(0)
={_{0}I_{t}^{\alpha}}\big(\tilde{M}-\gamma C_c\big)(t).
\end{displaymath}
This is precisely the Volterra integral formulation of the scalar Caputo equation
\begin{displaymath}
{_0^C\mathfrak{D}_t^\alpha}C_c(t)+\gamma C_c(t)=\tilde{M},\qquad
C_c(0)=\int_\Omega c_0(\mathbf x)\,\mathrm{d}\mathbf{x}.
\end{displaymath}
Thus the spatial total mass of $c$ is governed by a one-dimensional fractional relaxation equation with a constant source term $\tilde{M}$.

For completeness, we derive the explicit representation. Taking the Laplace
transform of \eqref{eq:Cc-Volterra}, and using $\mathcal L\{{_{0}I_{t}^{\alpha}} f\}(z)=z^{-\alpha}\mathcal L\{f\}(z)$ (cf. Ref.~\refcite{Podlubny99}), we obtain
\begin{displaymath}
\widehat{C_c}(z)+\gamma z^{-\alpha}\widehat{C_c}(z)
=C_c(0)z^{-1}+\tilde{M} z^{-\alpha-1}.
\end{displaymath}
Equivalently,
\begin{equation}\label{eq:Cc-Laplace-2}
\widehat{C_c}(z)
=\frac{C_c(0)z^{\alpha-1}}{z^\alpha+\gamma}+\frac{\tilde{M} z^{-1}}{z^\alpha+\gamma}.
\end{equation}
By employing the standard Laplace transform identity for the Mittag-Leffler function, namely $\mathcal{L}\left\{t^{\beta-1}E_{\alpha,\beta}(-\gamma t^\alpha)\right\}(z)
=\frac{z^{\alpha-\beta}}{z^\alpha+\gamma}$ for any $\beta>0$, we infer from \eqref{eq:Cc-Laplace-2} that
\begin{equation}\label{eq:Cc-explicit}
C_c(t)=C_c(0)E_\alpha(-\gamma t^\alpha)+\tilde{M}t^\alpha E_{\alpha,\alpha+1}(-\gamma t^\alpha).
\end{equation}
The second term in \eqref{eq:Cc-explicit} is uniformly bounded. Indeed, by the
series definition of the Mittag-Leffler functions,
\begin{displaymath}
\begin{aligned}
\gamma t^\alpha E_{\alpha,\alpha+1}(-\gamma t^\alpha)
&=\gamma t^\alpha\sum_{j=0}^{\infty}\frac{(-\gamma t^\alpha)^j}{\Gamma(\alpha j+\alpha+1)}
=\sum_{k=1}^{\infty}\frac{(-1)^{k-1}(\gamma t^\alpha)^k}{\Gamma(\alpha k+1)}\\
&=1-\sum_{k=0}^{\infty}\frac{(-\gamma t^\alpha)^k}{\Gamma(\alpha k+1)}
=1-E_\alpha(-\gamma t^\alpha).
\end{aligned}
\end{displaymath}
Therefore,
\begin{equation}\label{eq:Cc-explicit-bounded-form}
C_c(t) = C_c(0)E_\alpha(-\gamma t^\alpha)+
    \frac{\tilde M}{\gamma}
    \big(1-E_\alpha(-\gamma t^\alpha)\big).
\end{equation}
Since $0<\alpha<1$, the function $E_\alpha(-\gamma t^\alpha)$ is completely
monotone on $t>0$, and in particular $0\le E_\alpha(-\gamma t^\alpha)\le1$ for $t\ge0$ (cf. Refs.~\refcite{Jin21book,Rudolf20}). Hence \eqref{eq:Cc-explicit-bounded-form} gives
\begin{displaymath}\label{eq:Cc-uniform-bound}
0\le C_c(t)\le C_c(0)+\frac{\tilde M}{\gamma},\qquad t>0.
\end{displaymath}
Since $c>0$, we have $\|c(\cdot,t)\|_{L^1(\Omega)}=C_c(t)$. Consequently,
\begin{equation}\label{eq:c-L1-uniform-used}
\sup_{0<t<T}\big\|c(\cdot,t)\big\|_{L^1(\Omega)}
\le C_c(0)+\frac{\tilde M}{\gamma}\le C.
\end{equation}
The estimate in \eqref{eq:entropy-fisher-bound} gives
\begin{displaymath}
\big\|\nabla\sqrt c\big\|_{L^2}^2
=\frac{1}{4}\int_\Omega\frac{|\nabla c|^2}{c}\,d\mathbf{x}\le C.
\end{displaymath}
Together with \eqref{eq:c-L1-uniform-used}, this yields
\begin{displaymath}
\sup_{0<t<T}\big\|\sqrt c(\cdot,t)\big\|_{H^1(\Omega)}\le C .
\end{displaymath}
Since $d=2$, the Sobolev embedding $H^1(\Omega)\hookrightarrow L^r(\Omega)$ holds for every finite $r\ge1$. Hence, for every finite $m\ge1$,
\begin{equation}\label{eq:c-Lm-uniform}
\sup_{0<t<T}\big\|c(\cdot,t)\big\|_{L^m(\Omega)}\le C_m .
\end{equation}

\begin{itemize}
    \item \emph{\textbf{Step $(ii)$}}: $L^s$-bound for $n$ with $s\in(1,2)$.
\end{itemize}

We now introduce the auxiliary weighted functional $\mathcal{Y}(t):=\int_\Omega n^2c^{-\frac{1}{2}}\,\mathrm{d}\mathbf{x}$. This quantity serves as an intermediate bootstrap functional, upgrading the entropy-level information to an $L^s$-bound for $n$ with some exponent $s>1$.

Consider the density $\Phi(n,c):=n^2c^{-\frac12}$. A direct computation gives $\Phi_{nn}=2c^{-\frac{1}{2}}$, $\Phi_{cc}=\frac{3}{4}n^2c^{-\frac{5}{2}}$, and $\Phi_{nc}=-nc^{-\frac{3}{2}}$. Moreover,
\begin{displaymath}
\Phi_{nn}\Phi_{cc}-\Phi_{nc}^2=\frac{1}{2}n^2c^{-3}\ge0.
\end{displaymath}
Thus $\Phi$ is convex on $\mathbb{R}\times(0,\infty)$. Applying the Caputo variational inequality in Lemma~\ref{Lem:varieq}, or equivalently the same convexity argument used in its proof, gives
\begin{equation}\label{eq:Y-Caputo-variational}
{_0^C\mathfrak{D}_t^\alpha}\mathcal{Y}(t)
\le\int_\Omega\left(2nc^{-\frac{1}{2}}{_0^C\mathfrak{D}_t^\alpha}n
-\frac{1}{2} n^2c^{-\frac{3}{2}}{_0^C\mathfrak{D}_t^\alpha}c\right)\mathrm{d}\mathbf{x}.
\end{equation}
Substituting System~\eqref{Pro:FKS} into \eqref{eq:Y-Caputo-variational} and
integrating by parts, we get
\begin{equation}\label{eq:Y-energy-before-Young}
\begin{aligned}
{_0^C\mathfrak{D}_t^\alpha}\mathcal{Y}(t)
&\le\mathcal{D}\Big[-2\int_\Omega c^{-\frac{1}{2}}\big|\nabla n\big|^2\,\mathrm{d}\mathbf{x}
+(2+2\chi)\int_\Omega nc^{-\frac32}\nabla n\cdot\nabla c\,\mathrm{d}\mathbf{x}\\
&\qquad
-\left(\frac{3}{4}+\chi\right)\int_\Omega n^2c^{-\frac{5}{2}}\big|\nabla c\big|^2\,\mathrm{d}\mathbf{x}
\Big]
+\frac{\gamma}{2}\int_\Omega n^2c^{-\frac{1}{2}}\,\mathrm{d}\mathbf{x}
-\frac{1}{2}\int_\Omega n^3c^{-\frac{3}{2}}\,\mathrm{d}\mathbf{x}.
\end{aligned}
\end{equation}
Let $A:=c^{-\frac{1}{4}}|\nabla n|$ and $B:=nc^{-\frac{5}{4}}|\nabla c|$. Then $nc^{-\frac32}|\nabla n||\nabla c|=AB$. By Young's inequality,
\begin{displaymath}
(2+2\chi)AB \le 2A^2+\frac{(2+2\chi)^2}{8}B^2.
\end{displaymath}
Therefore the gradient contribution in \eqref{eq:Y-energy-before-Young} is
bounded from above by
\begin{displaymath}
-\mathcal D
\left(\left(\frac{3}{4}+\chi\right)-\frac{(2+2\chi)^2}{8}\right)
\int_\Omega n^2c^{-\frac{5}{2}}|\nabla c|^2\,\mathrm{d}\mathbf{x} .
\end{displaymath}
Since $\left(\frac{3}{4}+\chi\right)-\frac{(2+2\chi)^2}{8}
=\frac{1}{4}-\frac{\chi^2}{2}>0$ for $0<\chi<\frac{1}{2}$, this contribution is non-positive. Hence
\begin{equation}\label{eq:Y-differential-raw}
{_0^C\mathfrak{D}_t^\alpha}\mathcal{Y}(t)
\le\frac{\gamma}{2}\mathcal{Y}(t)
-\frac{1}{2}\int_\Omega n^3c^{-\frac{3}{2}}\,\mathrm{d}\mathbf{x}.
\end{equation}

It remains to relate the last integral to $\mathcal{Y}(t)$. Since $\mathcal{Y}(t)=\int_\Omega\left(n^3c^{-3/2}\right)^{2/3}c^{1/2}\mathrm{d}\mathbf{x}$, H\"{o}lder's inequality and \eqref{eq:c-Lm-uniform} with $m=3/2$ imply
\begin{displaymath}
\mathcal{Y}(t)\le
\left(\int_\Omega n^3c^{-\frac{3}{2}}\,\mathrm{d}\mathbf{x}\right)^{2/3}
\left(\int_\Omega c^{\frac{3}{2}}\,\mathrm{d}\mathbf{x}\right)^{1/3}
\le
C\left(\int_\Omega n^3c^{-\frac{3}{2}}\,\mathrm{d}\mathbf{x}\right)^{2/3}.
\end{displaymath}
Consequently,
\begin{equation}\label{eq:cubic-lower}
\int_\Omega n^3c^{-\frac{3}{2}}\,\mathrm{d}\mathbf{x}
\ge C\mathcal{Y}(t)^{3/2}.
\end{equation}
Combining \eqref{eq:Y-differential-raw} and \eqref{eq:cubic-lower}, we obtain
\begin{displaymath}
{_0^C\mathfrak{D}_t^\alpha}\mathcal{Y}(t)
\le a\mathcal{Y}(t)-b\mathcal{Y}(t)^{3/2}
\end{displaymath}
for some $a,b>0$. The scalar comparison principle
Lemma~\ref{lem:Caputo-logistic} yields
\begin{equation}\label{eq:Y-weighted-uniform}
\sup_{0<t<T}\mathcal{Y}(t)\le C.
\end{equation}

Let $s\in(1,2)$. Since $n^s=\left(n^2c^{-1/2}\right)^{s/2}c^{s/4}$, H\"{o}lder's inequality gives
\begin{displaymath}
\int_\Omega n^s\,\mathrm{d}\mathbf{x}
\le \mathcal{Y}(t)^{\frac{s}{2}}
\left(\int_\Omega c^{\frac{s}{2(2-s)}}\,\mathrm{d}\mathbf{x}\right)^{(2-s)/2}.
\end{displaymath}
Using \eqref{eq:c-Lm-uniform} and \eqref{eq:Y-weighted-uniform}, we infer
\begin{equation}\label{eq:n-Ls-uniform}
\sup_{0<t<T}\big\|n(\cdot,t)\big\|_{L^s(\Omega)}\le C_s,\quad 1<s<2 .
\end{equation}

\begin{itemize}
    \item \emph{\textbf{Step $(iii)$}}: The estimate for $\mathbf{V}:=\chi\frac{\nabla c}{c}$.
\end{itemize}

We now improve the spatial regularity of $c$. By employing the mild formulation, as written with the operators defined in \eqref{eq:EDelta}, and applying $\mathcal{A}^{1/2}$ to it, we obtain
\begin{equation}\label{eq:Ahalf-c-mild}
\begin{aligned}
\mathcal{A}^{1/2}c(t)
=\mathcal{A}^{1/2}E_\alpha(-t^\alpha\mathcal{A}_\gamma)c_0+\int_0^t(t-\tau)^{\alpha-1}\mathcal{A}^{1/2}
E_{\alpha,\alpha}\bigl(-(t-\tau)^\alpha\mathcal{A}_\gamma\bigr)n(\tau)\,\mathrm{d}\tau.
\end{aligned}
\end{equation}
The initial term is bounded in $L^{q_0}(\Omega)$ for every finite $q_0>1$, because $c_0\in W^{1,\infty}(\Omega)$ and the Mittag-Leffler family is bounded on the corresponding fractional domain.

Fix $s\in(1,2)$ as in \eqref{eq:n-Ls-uniform}. Choose $q_0>2$, sufficiently
close to $2$, such that $\mu:=\frac{1}{s}-\frac{1}{{q_0}}<\frac{1}{2}$. This is possible because $s>1$. By the refined estimate from Ref.~\refcite{Bezerra24}, Lemma~2.7 and Remark~2.8 (ii), we note that for $1\leq p \leq q \leq\infty$ satisfying $\frac{1}{p}-\frac{1}{q}<\frac{1}{d}$, the following inequality holds
\begin{equation}\label{eqt:refEAA}
\big\|\mathcal{A}^{1/2} E_{\alpha,\alpha}\big(-t^{\alpha}\mathcal{A}_{\gamma}\big)n\big\|_{L^{q}}
\leq C\times\left\{
\begin{aligned}
& t^{-\frac{\alpha}{2}-\frac{\alpha d}{2}\left(\frac{1}{p}-\frac{1}{q}\right)}
    \big\|n(\mathbf{x},t)\big\|_{L^{p}}, &&\text{for}~ 0<t<1,\\
& t^{-2\alpha+\frac{\alpha}{2}+\frac{\alpha d}{2}\left(\frac{1}{p}-\frac{1}{q}\right)}
    \big\|n(\mathbf{x},t)\big\|_{L^{p}}, &&\text{for}~ t>1,
\end{aligned}\right.
\end{equation}
Consequently, we obtain
\begin{displaymath}
\left\|
\mathcal{A}^{1/2}E_{\alpha,\alpha}
\bigl(-\tau^\alpha\mathcal{A}_\gamma\bigr)f\right\|_{L^{q_0}}
\le C
\begin{cases}
\tau^{-\frac{\alpha}{2}-\alpha\mu}\big\|f\big\|_{L^s},
&0<\tau<1,\\[1mm]
\tau^{-2\alpha+\frac{\alpha}{2}+\alpha\mu}\big\|f\big\|_{L^s},
&\tau>1 .
\end{cases}
\end{displaymath}
After multiplication by the convolution factor $\tau^{\alpha-1}$, the kernels
become $\tau^{-1+\alpha(\frac12-\mu)}$ for $0<\tau<1$, and $\tau^{-1-\alpha(\frac12-\mu)}$ for $\tau>1$. Both are integrable because $\mu<\frac12$. Hence, using
\eqref{eq:n-Ls-uniform} in \eqref{eq:Ahalf-c-mild}, we obtain
\begin{equation}\label{eq:Ahalf-c-q0}
\sup_{0<t<T}\big\|\mathcal{A}^{1/2}c(\cdot,t)\big\|_{L^{q_0}}\le C .
\end{equation}
By the graph norm equivalence for the Neumann Laplacian, $\|\nabla c\|_{L^{q_0}}\le C\|\mathcal{A}^{1/2}c\|_{L^{q_0}}$, up to the harmless constant mode. Therefore $\sup_{0<t<T}\|\nabla c(\cdot,t)\|_{L^{q_0}}\le C$. Using the previously established lower bound $c\ge C_*>0$, we obtain
\begin{equation}\label{eq:V-Lq0-uniform}
\sup_{0<t<T}\big\|\mathbf{V}(\cdot,t)\big\|_{L^{q_0}}\le C,\qquad
\mathbf V:=\chi\frac{\nabla c}{c}.
\end{equation}

\begin{itemize}
    \item \emph{\textbf{Step $(iv)$}}: $L^\infty$-bootstrap for $n$ via Moser iteration.
\end{itemize}

The first equation in System~\eqref{Pro:FKS} can be rewritten as ${_0^C\mathfrak{D}_t^\alpha}n=\mathcal{D}\Delta n-\mathcal{D}\nabla\cdot(n\mathbf{V})$. For $r\ge1$, set $Y_r(t):=\|n(\cdot,t)\|_{L^r}^r=\int_\Omega n^r\,d\mathbf {x}$. Let $p\ge2$ and put $w=n^{p/2}$. Testing the equation for $n$ by $pn^{p-1}$, using the fractional convex inequality for $z\mapsto z^p$, and integrating by parts, we obtain
\begin{displaymath}
{_0^C\mathfrak{D}_t^\alpha}Y_p
+\frac{4\mathcal{D}(p-1)}{p}\big\|\nabla w\big\|_{L^2}^2
\le2\mathcal{D}(p-1)\int_\Omega w\big|\mathbf V\big|\big|\nabla w\big|\,\mathrm{d}\mathbf{x}.
\end{displaymath}
Young's inequality yields
\begin{equation}\label{eq:Moser-test-2}
{_0^C\mathfrak{D}_t^\alpha}Y_p
+C_1\big\|\nabla w\big\|_{L^2}^2
\le C_2p^2\int_\Omega w^2\big|\mathbf{V}\big|^2\,\mathrm{d}\mathbf{x},
\end{equation}
where $C_1,C_2>0$ are independent of $p\ge2$. By H\"{o}lder's inequality and \eqref{eq:V-Lq0-uniform},
\begin{equation}\label{eq:V-holder}
\int_\Omega w^2|\mathbf{V}|^2\,\mathrm{d}\mathbf{x}
\le C\big\|w\big\|_{L^{\frac{2q_0}{q_0-2}}}^2.
\end{equation}
Since $d=2$, the Gagliardo--Nirenberg inequality gives
\begin{equation}\label{eq:GN-w-r}
\big\|w\big\|_{L^{\frac{2q_0}{q_0-2}}}
\le C\big\|\nabla w\big\|_{L^2}^{\vartheta}\big\|w\big\|_{L^1}^{1-\vartheta}+C\big\|w\big\|_{L^1},
\end{equation}
where $\vartheta=1-\frac{q_0-2}{2q_0}=\frac{q_0+2}{2q_0}\in(0,1)$. Since $\|w\|_{L^1}=\int_\Omega n^{p/2}\mathrm{d}\mathbf{x}=Y_{p/2}(t)$, we infer from \eqref{eq:GN-w-r} that
\begin{equation}\label{eq:w-r-bound}
\big\|w\big\|_{L^{\frac{2q_0}{q_0-2}}}^2
\le C\big\|\nabla w\big\|_{L^2}^{2\vartheta}Y_{p/2}(t)^{2(1-\vartheta)}+CY_{p/2}(t)^2.
\end{equation}
Combining \eqref{eq:Moser-test-2}, \eqref{eq:V-holder}, and
\eqref{eq:w-r-bound}, we get
\begin{displaymath}
C_2p^2\int_\Omega w^2\big|\mathbf{V}\big|^2\,d\mathbf{x}
\le Cp^2\big\|\nabla w\big\|_{L^2}^{2\vartheta} Y_{p/2}(t)^{2(1-\vartheta)}+Cp^2Y_{p/2}(t)^2.
\end{displaymath}
By Young's inequality, for every $\varepsilon>0$,
\begin{displaymath}
Cp^2 \big\|\nabla w\big\|_{L^2}^{2\vartheta}Y_{p/2}(t)^{2(1-\vartheta)}
\le \varepsilon\big\|\nabla w\big\|_{L^2}^2
+C_\varepsilon p^{\frac{2}{1-\vartheta}}Y_{p/2}(t)^2.
\end{displaymath}
Thus, after enlarging the exponent if necessary, there exists $\beta:=\max\left\{2,\frac{2}{1-\vartheta}\right\}>0$
such that
\begin{displaymath}
C_2p^2\int_\Omega w^2\big|\mathbf{V}\big|^2\,d\mathbf x
\le\varepsilon\big\|\nabla w\big\|_{L^2}^2+Cp^\beta Y_{p/2}(t)^2.
\end{displaymath}
Choosing $\varepsilon>0$ sufficiently small, we obtain
\begin{equation}\label{eq:Moser-gradient-inequality}
{_0^C\mathfrak{D}_t^\alpha}Y_p
+\kappa\big\|\nabla w\big\|_{L^2}^2
\le Cp^\beta Y_{p/2}(t)^2 .
\end{equation}

We also use the two-dimensional Gagliardo--Nirenberg inequality
\begin{displaymath}
\big\|w\big\|_{L^2}\le C\big\|\nabla w\big\|_{L^2}^{1/2}\big\|w\big\|_{L^1}^{1/2}+C\big\|w\big\|_{L^1}.
\end{displaymath}
After squaring and applying Young's inequality, for any fixed $\delta>0$,
\begin{displaymath}
Y_p(t)=\big\|w\big\|_{L^2}^2\le\delta\big\|\nabla w\big\|_{L^2}^2+C_\delta Y_{p/2}(t)^2 .
\end{displaymath}
Equivalently,
\begin{equation}\label{eq:gradient-lower-Yp}
\big\|\nabla w\big\|_{L^2}^2\ge\delta^{-1}Y_p(t)-C_\delta\delta^{-1}Y_{p/2}(t)^2 .
\end{equation}
Combining \eqref{eq:Moser-gradient-inequality} and
\eqref{eq:gradient-lower-Yp}, and then fixing $\delta>0$, we arrive at
\begin{equation}\label{eq:Moser-recursive}
{_0^C\mathfrak{D}_t^\alpha}Y_p+\eta Y_p\le C p^\beta Y_{p/2}(t)^2, \qquad p\ge2 ,
\end{equation}
where $\eta>0$ and $C>0$ are independent of $p$.

We now perform the Moser iteration. Let $p_k:=s2^k$ and $A_k:=\sup_{0<t<T}\|n(\cdot,t)\|_{L^{p_k}(\Omega)}$. By \eqref{eq:n-Ls-uniform}, $A_0<\infty$. Since $p_k=s2^k\ge2s>2$ for $k\ge1$, we apply \eqref{eq:Moser-recursive} with $p=p_k$. The scalar fractional variation-of-constants formula gives
\begin{displaymath}
Y_{p_k}(t)\le Y_{p_k}(0)E_\alpha(-\eta t^\alpha)
+C p_k^\beta\int_0^t(t-\tau)^{\alpha-1}E_{\alpha,\alpha}(-\eta(t-\tau)^\alpha)Y_{p_{k-1}}(\tau)^2\,d\tau .
\end{displaymath}
Using $0\le E_\alpha(-\eta t^\alpha)\le1$ and $\int_0^\infty\tau^{\alpha-1}E_{\alpha,\alpha}(-\eta\tau^\alpha)\,d\tau=\frac1\eta$, we infer
\begin{displaymath}
\sup_{0<t<T}Y_{p_k}(t)\le Y_{p_k}(0)+C p_k^\beta\sup_{0<t<T}Y_{p_{k-1}}(t)^2.
\end{displaymath}
Since $p_k=2p_{k-1}$, $A_k^{p_k}\le Y_{p_k}(0)+Cp_k^\beta A_{k-1}^{p_k}$. Because $n_0\in L^\infty(\Omega)$, there exists $C_0\ge1$ such that $Y_{p_k}(0)=\|n_0\|_{L^{p_k}}^{p_k}\le C_0^{p_k}$. Therefore
\begin{displaymath}
A_k^{p_k}\le C_0^{p_k}+C(s2^k)^\beta A_{k-1}^{p_k}.
\end{displaymath}
Set $\widetilde{A}_k:=\max\{A_k,A_0,C_0,1\}$. Then
\begin{displaymath}
\widetilde{A}_k^{p_k}\le C\big(1+2^{k\beta}\big)\widetilde{A}_{k-1}^{p_k}.
\end{displaymath}
Taking the $p_k=s2^k$-th root gives
\begin{displaymath}
\widetilde{A}_k\le\left(C(1+2^{k\beta})\right)^{1/(s2^k)}\widetilde{A}_{k-1}.
\end{displaymath}
Thus
\begin{displaymath}
\widetilde{A}_k\le\widetilde{A}_0\prod_{j=1}^k\left(C(1+2^{j\beta})\right)^{1/(s2^j)} .
\end{displaymath}
Since $\sum_{j=1}^{\infty}\frac{\log(C(1+2^{j\beta}))}{s2^j}<\infty$, the product is finite. Hence $\sup_{k\ge0}A_k<\infty $. Let $M_*:=\sup_{k\ge0}A_k$. For fixed $t\in(0,T)$, one has $\|n(\cdot,t)\|_{L^{p_k}}\le M_*$, $k=0,1,2,\dots$. If there existed $\lambda>M_*$ such that $|\{x\in\Omega:n(x,t)>\lambda\}|>0$, then
\begin{displaymath}
\lambda\big|\{n(\cdot,t)>\lambda\}\big|^{1/p_k}
\le\big\|n(\cdot,t)\big\|_{L^{p_k}}\le M_*
\end{displaymath}
for all $k$. Letting $k\to\infty$ gives $\lambda\le M_*$, a contradiction.
Therefore
\begin{equation}\label{eq:n-Linfty-final}
    \sup_{0<t<T}\big\|n(\cdot,t)\big\|_{L^\infty}\le C .
\end{equation}

\begin{itemize}
    \item \emph{\textbf{Step $(v)$}}: $W^{1,q}$-regularity estimate for $c$.
\end{itemize}

It remains to estimate $c$ in $W^{1,q}(\Omega)$ for arbitrary finite $q>2$. By \eqref{eq:n-Linfty-final}, $\sup_{0<t<T}\|n(\cdot,t)\|_{L^q}\le C$. Applying again the mild representation of $c$ and the refined estimate
\eqref{eqt:refEAA}, now with $p=q$, we obtain
\begin{displaymath}
\left\|\mathcal{A}^{1/2}E_{\alpha,\alpha}\bigl(-\tau^\alpha\mathcal{A}_\gamma\bigr)f\right\|_{L^q}
\le C
\begin{cases}
\tau^{-\frac{\alpha}{2}}\big\|f\big\|_{L^q},
&0<\tau<1,\\[1mm]
\tau^{-\frac{3\alpha}{2}}\big\|f\big\|_{L^q},
&\tau>1.
\end{cases}
\end{displaymath}
After multiplication by $\tau^{\alpha-1}$, the kernels become $\tau^{-1+\frac{\alpha}{2}}$ for $0<\tau<1$, and $\tau^{-1-\frac{\alpha}{2}}$, $\tau>1$. Both are integrable. Hence $\sup_{0<t<T}\|\mathcal A^{1/2}c(\cdot,t)\|_{L^q}\le C$. Together with \eqref{eq:c-Lm-uniform} and the graph norm equivalence for the
Neumann Laplacian, this gives
\begin{displaymath}
\sup_{0<t<T}\big\|c(\cdot,t)\big\|_{W^{1,q}}\le C .
\end{displaymath}
Combining this estimate with \eqref{eq:n-Linfty-final}, we conclude that
\begin{displaymath}
\sup_{0<t<T}\left(\big\|n(\cdot,t)\big\|_{L^\infty}+\big\|c(\cdot,t)\big\|_{W^{1,q}}\right)\le C.
\end{displaymath}
The constant is independent of $T$. Since $T>0$ was arbitrary, letting
$T\to\infty$ proves
\begin{displaymath}
\sup_{t\ge0}\left(\big\|n(\cdot,t)\big\|_{L^\infty}+\big\|c(\cdot,t)\big\|_{W^{1,q}}\right)\le C.
\end{displaymath}
The proof is complete.
\end{proof}

\begin{remark}
The proof above is essentially two-dimensional. The main reason is that the
estimate $\mathcal{Y}(t):=\int_\Omega n^2c^{-\frac{1}{2}}\,\mathrm{d}\mathbf x\le C$ only provides an initial improvement of the integrability of $n$. In dimension three, the same strategy may still be adapted: indeed, one can choose $s\in\left(\frac{3}{2},\frac{12}{7}\right)$ and then choose $q_0>3$ such that $\frac1s-\frac1{q_0}<\frac13$. This would allow the refined smoothing estimate for the $c$-equation to yield
the subcritical drift estimate $\mathbf V:=\chi\frac{\nabla c}{c}\in L^\infty(0,T;L^{q_0}(\Omega))$, $q_0>3$, which is the condition needed to close the Moser iteration in three dimensions. For dimensions $d\ge4$, however, the present bootstrap mechanism no longer closes. More precisely, the weighted estimate for $\mathcal{Y}(t)$ does not provide an exponent $s>d/2$. Consequently, one cannot obtain, by this argument,
a subcritical drift bound of the form $\mathbf{V}\in L^\infty(0,T;L^{q_0}(\Omega))$, $q_0>d$. This subcritical drift estimate is precisely the ingredient required in the Moser iteration for the drift-diffusion equation satisfied by $n$. Thus the present proof should be regarded as a two-dimensional argument; a possible three-dimensional extension would require a separate treatment of the exponents, whereas dimensions $d\ge4$ cannot be covered by this bootstrap scheme.  Whether global bounded solutions continue to exist for arbitrary dimensions $d\ge4$ remains an interesting open problem.
\end{remark}

\begin{remark}
By virtue of the uniform bounds for $n$ and $c$ established in Lemma \ref{Lem:bounddde} for all $t>0$, and following the arguments in Lemmas \ref{Lem:timR} and \ref{Lem:Rnnn}, the regularity results derived in those lemmas remain valid for $t\in[\tau,\infty)$ with an arbitrarily small $\tau>0$.
\end{remark}

\vspace{0.5cm}
\begin{MyProof}[Proof of Theorem \ref{Thm:well-posed}]
The proof of this theorem is established by combining the theoretical results derived in the preceding sections. Specifically, the uniform upper bound obtained in Lemma \ref{Lem:bounddde} ensures the existence of a positive constant $M$, independent of $t$, such that
\begin{equation}\label{eq:proof-blowup}
\sup_{t>0}\Big(\big\|n(\mathbf{x},t)\big\|_{L^{\infty}}
+\big\|c(\mathbf{x},t)\big\|_{W^{1,p}}\Big)
\leq M, \quad~p>d.
\end{equation}
This establishes the bound mentioned in \eqref{eq:bounddd}. Furthermore, by coupling the mass conservation from Lemma \ref{lem:MassCon-n} and the local well-posedness from Theorem \ref{Thm:Lwell-posed} with the non-blowup criterion in \eqref{eq:proof-blowup}, a standard continuation argument guarantees that the system admits a unique global mild solution. Concurrently, leveraging the uniform boundedness from Lemma \ref{Lem:bounddde}, the mass conservation property established in Lemma \ref{lem:MassCon-n} and applying regularity proof techniques analogous to those in Lemmas \ref{Lem:timR} and \ref{Lem:Rnnn}, the regularity properties in \eqref{Eq:regularity} are readily verified. We omit the detailed steps for brevity, thereby completing the proof.
\end{MyProof}

\vspace{0.5cm}
\begin{MyProof}[Proof of Theorem \ref{Thm:golablel}]
The proof follows by combining the mass conservation property established in Lemma \ref{lem:MassCon-n} with the global well-posedness  provided in Theorem \ref{Thm:well-posed}.
\end{MyProof}

\section{Simulations with Non-Negativity/Positivity-Preserving PINNs}
\label{sec:SPINN}
In this section, we perform numerical simulations using the popular Physics-Informed Neural Networks (PINNs) to visualize the solution behavior of System \eqref{Pro:FKS} and to validate the theoretical results established above.

\subsection{Methodology: DNN architecture and implementations}
This subsection describes the numerical method used to solve the system \eqref{Pro:FKS}. The problem is highly nonlinear and strongly coupled, with a time-nonlocal structure. To handle these difficulties, a time-marching PINNs algorithm is employed. A distinguishing feature of the algorithm is that it preserves the non-negativity of $n(x,y,t)$ and the strict positivity of $c(x,y,t)$, which are properties established earlier in Lemmas \ref{lem:bound-n} and \ref{lem:regularity-c}.

Let  $\Omega\subset\mathbb{R}^{d}$ ($d=2$) be a bounded domain with smooth boundary. For $(x,y)\in\Omega$, the following variable transformations are introduced. These are motivated by Refs.~\refcite{Huang23,Liu18,Wang25}.
\begin{equation}\label{eq:transform}
\left\{\begin{aligned}
&n(\cdot,t)=\rho(\cdot,t)^2\geq0, \quad &\text{for}~~\rho(\cdot,t): \Omega\times\mathbb{R}_{+}\rightarrow \mathbb{R};\\
&c(\cdot,t)=\exp\big(v(\cdot,t)\big)>0, \quad &\text{for}~~v(\cdot,t): \Omega\times\mathbb{R}_{+}\rightarrow \mathbb{R}.
\end{aligned}\right.
\end{equation}
Substituting the transformation \eqref{eq:transform} into the original problem \eqref{Pro:FKS} leads to the following reformulated system in terms of $(\rho(\cdot,t),v(\cdot,t))$:
\begin{equation}\label{Pro:tranFKS}
\left\{\begin{aligned}
&{_{0}^{C}\mathfrak{D}^{\alpha}_t}
\big(\rho^2(\cdot,t)\big)=\mathcal{D}\Delta\big(\rho^2(\cdot,t)\big)
               -\mathcal{D}\chi\nabla\cdot\big(\rho^2(\cdot,t)\nabla v(\cdot,t)\big),
&& \Omega\times(0,T],\\
&{_{0}^{C}\mathfrak{D}^{\alpha}_t}
\big(e^{v(\cdot,t)}\big)=\mathcal{D}\Delta\big(e^{v(\cdot,t)}\big)-\gamma e^{v(\cdot,t)}+\rho^2(\cdot,t),
&& \Omega\times(0,T],\\
&\rho(\cdot,0)=\sqrt{n_0(x)}, \quad v(\cdot,0)=\ln\big(c_0(\cdot)\big),
&& \Omega\times\{0\},\\
&\nabla \rho(\cdot,t) \cdot \mathbf{\nu} = 0, \quad \nabla v(\cdot,t) \cdot \mathbf{\nu} = 0,
&& \partial\Omega \times (0,T].
\end{aligned}\right.
\end{equation}
Here, $\mathbf{\nu}$ denotes the outward normal vector. This formulation has two useful features.  It guarantees the non-negativity of $n(\cdot,t)$ and the strict positivity of $c(\cdot,t)$ by construction. It also replaces the logarithmic gradient term $\nabla c(\cdot,t)/c(\cdot,t)$ with the simpler linear gradient $\nabla v(\cdot,t)$, which is numerically more favorable.

To solve System \eqref{Pro:tranFKS} numerically, a Deep Neural Network (DNN) framework is employed. The algorithm preserves the non-negativity and positivity of the solution variables.

The numerical procedure begins with the defining of the solution domain $\Omega$. Collocation points are sampled from the interior and the boundary, denoted respectively by $\xi^{\imath n}_\ell$ ($\ell=1,2,\ldots,M_{\imath n}$) and $\xi^{bc}_\jmath$ ($\jmath=1,2,\ldots,M_{bc}$), respectively. These spatial coordinates are subsequently fed into the network's input layer. Accordingly, the detailed implementation steps are  described as follows.

\begin{itemize}
    \item \textit{\textbf{Step $(i)$}}: Time semi-discretization. The $L_1$ scheme is employed for the temporal semi-discretization of the coupled system \eqref{Pro:tranFKS}.
\end{itemize}

The $L_1$ scheme is a widely approach for discretizing the Caputo fractional derivative in time\cite{Angstmann21,Jin16}. To implement the scheme, we first partition the time interval $[0,T]$ uniformly into $N$ subintervals of length $\tau=T/N$, giving the grid points $t_j=j\tau$ for $j=0,1,\ldots,N$. The core principle of the $L_1$ scheme lies in approximating the integrand of the Caputo derivative via piecewise linear interpolation over the grid pints $t_j$. The discrete approximation of the Caputo derivative at $t=t_j$ is given by\cite{Gao14,Sun06}
\begin{equation}\label{eq:L1}
_{0}^{C}\mathfrak{D}_{t}^{\alpha}f(t_j)\approx\frac{\tau^{-\alpha}}{\Gamma(2-\alpha)}
\left(a_0f^j-\sum_{k=1}^{j-1}(a_{j-k-1}-a_{j-k})f^k-a_{j-1}f^0\right),
\end{equation}
where $f^j:=f(t_j)$, $\alpha\in(0,1)$, and the coefficients are defied as $a_k:=(k+1)^{1-\alpha}-k^{1-\alpha}$ for $k=0,1,\ldots,j-1$. In particular, $a_0=1$, and the sequence ${a_k}$ is positive and monotonically decreasing.

The $L_1$ scheme \eqref{eq:L1} is unconditionally stable for discretizing the Caputo derivative and attains a global convergence rate of $O(\tau^{2-\alpha})$ on uniform grids\cite{Gao14,Jin16}. As $\alpha\rightarrow1$, it reduces to the classical first-order backward Euler method. The non-local property of the time non-local derivatives is explicitly reflected in the summation term, which involves all previous time levels $f^k$. This differs from integer-order derivatives, which depend only on local information.

Let $\rho^j=\rho(\cdot,t_j)$ and $v^j=v(\cdot,t_j)$ denote the semi-discrete solutions at time $t_j$. Accordingly, $n^j=(\rho^j)^2$ and $c^j = \exp(v^j)$. Applying the $L_1$ scheme \eqref{eq:L1} to System \eqref{Pro:tranFKS} at $t=t_j$ for $j\in\{0,1,2,\ldots,N\}$ yields the following semi-discrete system
\begin{equation}\label{Pro:tranFKSdist}
\left\{\begin{aligned}
&\frac{\tau^{-\alpha}}{\Gamma(2-\alpha)}\left[\big(\rho^j\big)^2
  -\sum_{k=1}^{j-1}\big(a_{j-k-1}-a_{j-k}\big)\big(\rho^k\big)^2-a_{j-1}\big(\rho^0\big)^2\right]\\
&\qquad\qquad = \mathcal{D}\Delta(\rho^j)^2-\mathcal{D}\chi\nabla\cdot\big((\rho^j)^2\nabla v^j\big),              \\
&\frac{\tau^{-\alpha}}{\Gamma(2-\alpha)}\left[\exp\big(v^j\big)
  -\sum_{k=1}^{j-1}\big(a_{j-k-1}-a_{j-k}\big)\exp\big(v^k\big)-a_{j-1}\exp\big(v^0\big)\right] \\
&\qquad\qquad=
\mathcal{D}\Delta(\exp(v^j))-\gamma \exp(v^j)+(\rho^j)^2,                                           \\
&\rho^0=\sqrt{n_0}, \quad v^0=\ln(c_0), \quad \nabla \rho^j\cdot\mathbf{\nu}=0, \quad \nabla v^j\cdot\mathbf{\nu}=0.
\end{aligned}\right.
\end{equation}

To streamline the notation, we rewrite the semi-discrete system \eqref{Pro:tranFKSdist}  in the following operator form
\begin{equation}\label{Pro:tranFKSdistS}
\left\{
\begin{aligned}
&\mathcal{P}_{\rho}[\rho^{j},v^{j}]=0,\quad
&&\mathcal{P}_{v}[\rho^{j},v^{j}]=0,      \\
&\Xi[\rho^{0},v^{0}]=0,\quad
&&\widetilde{\mathcal{B}}[\rho^{j},v^{j}]=0.
\end{aligned}\right.
\end{equation}
Here, $\mathcal{P}_{\rho}$ and $\mathcal{P}_{v}$ represent the discrete differential operators corresponding to the PDEs for $\rho^j$ and $v^j$; $\widetilde{\mathcal{B}}$ encapsulates the boundary conditions, and $\Xi$ collects the initial conditions. All are formulated abstractly based on the semi-discrete system \eqref{Pro:tranFKSdist}.

With the semi-discrete scheme \eqref{Pro:tranFKSdist}, we employ the PINNs to solve the resulting system \eqref{Pro:tranFKSdist}, leading to a  a mesh-free, positivity-preserving,
multi-objective optimization-based time-marching PINNs algorithm. For an overall picture of the framework, we refer the reader to Fig. \ref{Fig:exampNN}. The detailed algorithmic procedure is presented as follows.

\begin{figure}[!ht]
 \centering
 \centerline{\includegraphics[width=0.89\textwidth]{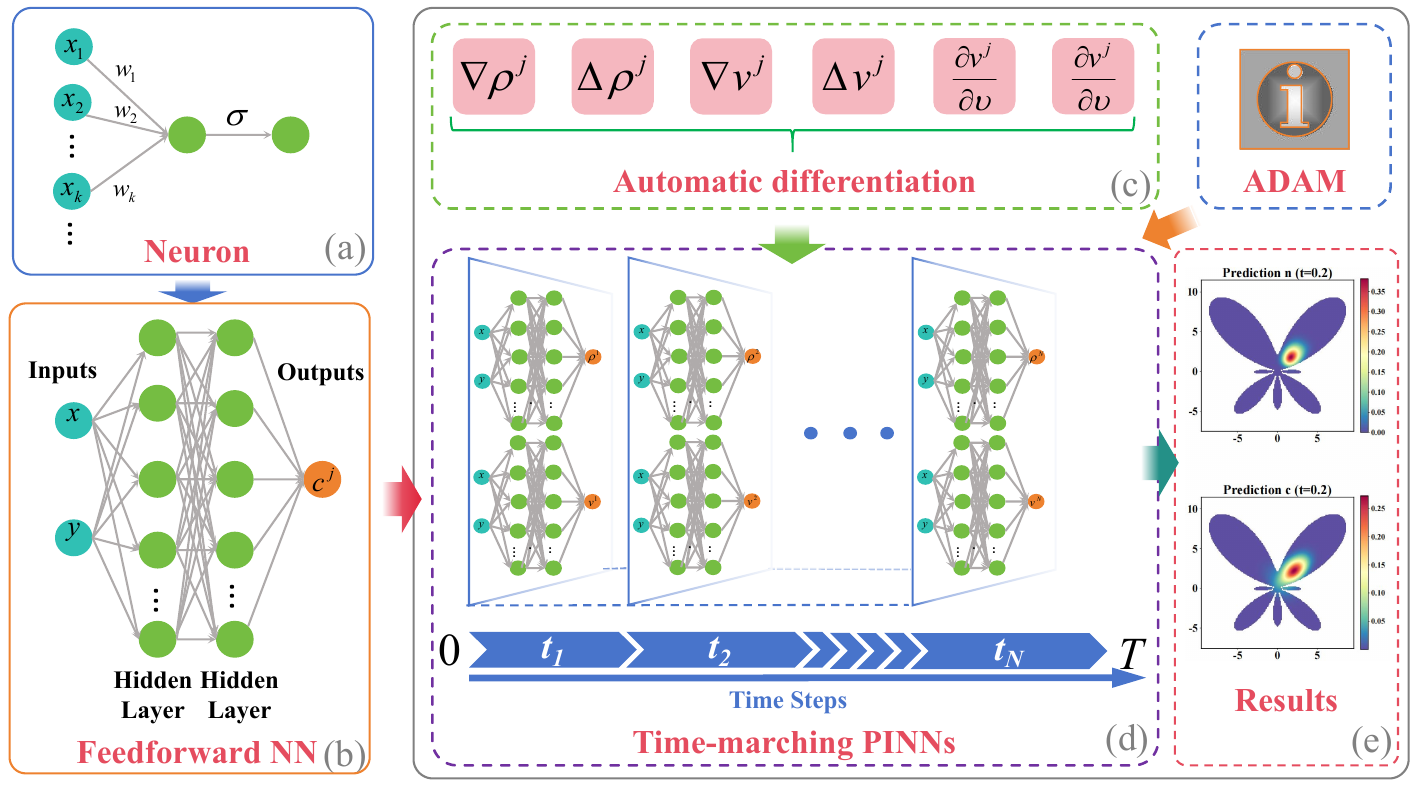}}
\caption{Schematic workflow of the multi-objective positivity/non-negativity-preserving time-marching PINNs. The panels illustrate: (a) a single neuron model; (b) the multi-layer perceptron (MLP) architecture; (c)  the automatic differentiation mechanism; (d)  the overall time‑marching PINN framework; and (e) snapshots of typical numerical results.}
\label{Fig:exampNN}
\end{figure}

\begin{itemize}
    \item \textit{\textbf{Step $(ii)$}}: Solution representation via Deep Neural Networks (DNNs).
\end{itemize}

At each time step $t_j$, the unknown functions $\rho(\cdot,t_j)$ and $v(\cdot,t_j)$ are approximated by two independent feedforward neural networks, denoted by $\mathcal{N}_{\theta_{j}}$ and $\mathcal{N}_{\phi_{j}}$, respectively. Unlike recurrent neural networks, feedforward neural networks are characterized by an acyclic connectivity pattern, where information flows strictly in one direction from the input to the output.

As illustrated in Fig. \ref{Fig:exampNN}, the network consists of $L$ sequential layers. Formally, let $\mathbf{x}^{(i)}$ denote the output vector of the $i$-th layer. The transition from one layer to the next is given by Ref.~\refcite{Bishop24}:
\begin{equation}
\mathbf{x}^{(i+1)}
=\sigma\left(\mathbf{W}^{(i)}\mathbf{x}^{(i)}+\mathbf{b}^{(i)}\right), \quad i = 0, 1, \ldots, L-1,
\end{equation}
where $\sigma$ is a non-linear activation function, $\mathbf{W}^{(i)}$ is the learnable weight matrix, and $\mathbf{b}^{(i)}$ is the bias vector. Specifically, the dimensions of these components are as follows:
\begin{itemize}
  \item For the input layer ($i=0$): $\mathbf{x}^{(0)}=\mathbf{x}\in\Omega\subset\mathbb{R}^d$, where $d$ being the spatial dimension. The weight matrix $\mathbf{W}^{(0)}\in\mathbb{R}^{m\times d}$ and $\mathbf{b}^{(0)}\in\mathbb{R}^m$, with $m$ being the fixed number of neurons in each hidden layer.
  \item For hidden layers ($1\leq i\leq L-2$): $\mathbf{x}^{(i)}\in\mathbb{R}^m$. with $\mathbf{W}^{(i)}\in\mathbb{R}^{m\times m}$ and $\mathbf{b}^{(i)}\in\mathbb{R}^m$.
  \item For the output layer ($i=L-1$): the final output $\mathbf{x}^{(L)}\in\mathbb{R}^1$ (representing $\rho^j$ or $v^j$) is produced  with $\mathbf{W}^{(L-1)}\in\mathbb{R}^{1\times m}$ and $\mathbf{b}^{(L-1)}\in\mathbb{R}$.
\end{itemize}
This architecture is commonly known as a multi-layer perceptron (MLP). The hidden layers are typically fully connected, meaning each neuron in one layer connects to every neuron in the next\cite{Bishop24}. This dense connectivity enables the network to approximate the complex nonlinear mappings required by the time-nonlocal system.

In this framework, the neural networks take as input the collocation points $\xi^{\imath n}_\ell\in\Omega$ ($\ell=1,2,\ldots,M_{\imath n}$) and $\xi^{bc}_\jmath\in\partial\Omega$ ($\jmath=1,2,\ldots,M_{bc}$), and output the corresponding field values $\rho^j$ and $v^j$ at each time step $t_j$, as illustrated in Fig. \ref{Fig:exampNN} (b). Mathematically, the neural network approximations are expressed as\cite{Raissi19}
\begin{align}
\rho(\mathbf{x},t_j)\approx\rho^j_{\theta_{j}}(\mathbf{x}):=\mathcal{N}_{\theta_{j}}
&=\zeta^{(L-1)}_{\theta_{j}}\circ\sigma\circ\zeta^{(L-2)}_{\theta_{j}}
\circ\sigma\circ\cdots\circ\zeta^{(0)}_{\theta_{j}}(\mathbf{x}),             \\
v(\mathbf{x},t_j)\approx v^j_{\phi_{j}}(\mathbf{x}):=\mathcal{N}_{\phi_{j}}
&=\zeta^{(L-1)}_{\phi_{j}}\circ\sigma\circ\zeta^{(L-2)}_{\phi_{j}}
\circ\sigma\circ\cdots\circ\zeta^{(0)}_{\phi_{j}}(\mathbf{x}),
\end{align}
where $\sigma$ is the activation function, applied element-wise to the output of each hidden layer to introduce the necessary non-linearity to the network. For PINNs, $\sigma$  is required to be at least $C^2$-continuous, so that second-order spatial derivatives such as the Laplacian $\Delta$ are well-defined. Here we use the Sigmoid Linear Unit function, defined as $\sigma(\cdot)=\verb"SiLU"(\cdot)$, given by $\text{SiLU}(x)=x\cdot\frac{1}{1+ e^{-x}}$. This function is infinitely differentiable and helps mitigate the vanishing gradient problem during backpropagation, making it a common choice for solving PDEs. The affine transformation at each layer $i$ is given as
\begin{align}
\zeta^{(i)}_{\theta_{j}}(\mathbf{x})=\mathbf{W}^{(i)}_{\theta_{j}}\mathbf{x}
+\mathbf{b}^{(i)}_{\theta_{j}},\quad
\zeta^{(i)}_{\phi_{j}}(\mathbf{x})=\mathbf{W}^{(i)}_{\phi_{j}}\mathbf{x}
+\mathbf{b}^{(i)}_{\phi_{j}},\quad i=0,1,\ldots,L-1.
\end{align}
For a hidden layer width $m$ and spatial dimension $d$, the weight matrices satisfy $\mathbf{W}^{(0)}\in\mathbb{R}^{m\times d}$, $\mathbf{W}^{(i)}\in\mathbb{R}^{m\times m}$ for $1<i< L-2$, and $\mathbf{W}^{(L-1)}\in\mathbb{R}^{1\times m}$. To maintain accuracy, we use the same network architecture (depth $L$ and width $m$) across all time steps.

Since the semi-discrete system \eqref{Pro:tranFKSdist} has a time-stepping structure, the solution is computed sequentially at each time step $t_j$ for $j=1, 2,\ldots, N$, starting from the initial state at $j=0$. We adopt a time-marching strategy based on PINNs\cite{Raissi19}, in which a separate neural  network is constructed at each time level $t_j$ to approximate $\rho^j(\mathbf{x})$ and $v^j(\mathbf{x})$. As depicted in Fig. \ref{Fig:exampNN} (d), this step-by-step procedure ensures that the historical values required by the $L_1$ scheme are already fixed when solving for the current time level.

\begin{itemize}
    \item \textit{\textbf{Step $(iii)$}}: Spatial derivative computation.
\end{itemize}

In the time-marching PINNs implementation, all required spatial derivatives, including $\nabla \rho^{j}$, $\Delta \rho^{j}$, $\nabla v^{j}$, $\Delta v^{j}$, and the normal derivatives $\frac{\partial \rho^{j}}{\partial \mathbf{\nu}}$ and $\frac{\partial v^{j}}{\partial \mathbf{\nu}}$ for $j\in\{0,1,2,\ldots,N\}$, are computed via the automatic differentiation (AD)\cite{Atilim18}, which is natively supported in modern deep learning frameworks. This approach avoids manual derivation or numerical discretization of spatial operators, and helps maintain high accuracy in enforcing the physical constraints in \eqref{Pro:tranFKSdist}.

\begin{itemize}
    \item \textit{\textbf{Step $(iv)$}}: Construction of the loss functions.
\end{itemize}

To train the neural networks at each time step $t_j$, a composite loss function is constructed. It consists of the residuals from the semi-discrete PDE system, boundary conditions, and initial conditions.

Specifically, at each $t_j$, we define the loss function as the weighted sum
\begin{equation}
\begin{aligned}
\mathcal{L}oss(\theta_{j},\phi_j;t_j)
=&\lambda_{PDE}\,\mathcal{L}oss_{RS}\big(\theta_{j},\phi_j;t_j\big)
  +\lambda_{BC}\,\mathcal{L}oss_{BC}\big(\theta_{j},\phi_j;t_j\big)\\
 &+\lambda_{IC}\,\mathcal{L}oss_{IC}\big(\theta_{0},\phi_0;t_0\big),
\end{aligned}
\end{equation}
where $\phi_{j}=\{\mathbf{W}_{\phi_j}^{(i)}, \mathbf{b}_{\phi_j}^{(i)}\}$ and $\theta_{j}=\{\mathbf{W}_{\theta_j}^{(i)}, \mathbf{b}_{\theta_j}^{(i)}\}$ ($j=1,2,\ldots,N$ and $i=0,1,\ldots,L-1$) are the trainable parameters of the networks at time level $t_j$. The weights $\lambda_{PDE}$, $\lambda_{BC}$ and $\lambda_{IC}$ are penalty parameters that balance the contributions of the PDE residuals, boundary conditions, and initial conditions, respectively.

The total loss $\mathcal{L}oss(\theta_{j},\phi_{j};t_j)$ is evaluated using collocation points sampled inside the domain $\Omega$ and on the boundary $\partial\Omega$, with $M_{\imath n}$ interior points and $M_{bc}$ boundary points. The individual loss components are defined below.

\begin{description}
  \item[$\bullet$] Residuals of the semi-discrete PDE system within the domain.
\end{description}

The interior loss at $t_j$ is defined as the sum of the residuals of the PDEs for the Myxobacteria density and the chemoattractant concentration
  \begin{equation}
     \mathcal{L}oss_{RS}\big(\theta_{j},\phi_{j};t_j\big)
     =\mathcal{L}oss^{\rho}_{RS}\big(\theta_{j},\phi_{j};t_j\big)
     +\mathcal{L}oss^{v}_{RS}\big(\theta_{j},\phi_{j};t_j\big),\quad j=1,2,\ldots,N,
  \end{equation}
where the individual residuals are computed as the Mean Squared Error (MSE) over the interior collocation points $\xi_{\ell}^{in}\in\Omega$:
\begin{equation*}
\left\{\begin{aligned}
     &\mathcal{L}oss^{\rho}_{RS}(\theta_{j},\phi_{j};t_j)
     =\frac{1}{M_{\imath n}}\sum_{\ell=1}^{M_{\imath n}}\big(\mathcal{P}_{\rho}[\mathcal{N}_{\theta_j}\odot\mathcal{N}_{\phi_j}](\xi^{\imath n}_{\ell})\big)^2,~ \xi^{\imath n}_{\ell}\in\Omega,~ \ell=1,\ldots,M_{\imath n},\\
     &\mathcal{L}oss^{v}_{RS}(\theta_{j},\phi_{j};t_j)
     =\frac{1}{M_{\imath n}}\sum_{\ell=1}^{M_{\imath n}}\big(\mathcal{P}_{v}[\mathcal{N}_{\theta_j}\odot\mathcal{N}_{\phi_j}](\xi^{\imath n}_{\ell})\big)^2,~ \xi^{\imath n}_{\ell}\in\Omega,~ \ell=1,\ldots,M_{\imath n}.
  \end{aligned}\right.
\end{equation*}
Here the symbol ``$\,\odot\,$''  indicates the mutual coupling between the two neural networks, which arises from the coupling between the PDEs in the system.

\begin{description}
  \item[$\bullet$] Boundary condition loss.
\end{description}

The Neumann boundary conditions on $\partial\Omega$ are enforced through the boundary loss
  \begin{equation}
     \mathcal{L}oss_{BC}\big(\theta_{j},\phi_j;t_j\big)
     =\mathcal{L}oss^{\rho}_{BC}\big(\theta_{j};t_j\big)
     +\mathcal{L}oss^{v}_{BC}\big(\phi_{j};t_j\big),\quad j=1,2,\ldots,N,
  \end{equation}
where the residuals are evaluated at boundary collocation points $\xi_{k}^{bc}\in\partial\Omega$ as
\begin{equation*}
\left\{\begin{aligned}
     &\mathcal{L}oss^{\rho}_{BC}(\theta_{j};t_j)
     =\frac{1}{M_{bc}}\sum_{\jmath=1}^{M_{bc}}
     \big(\widetilde{\mathcal{B}}[\mathcal{N}_{\theta_j}](\xi^{bc}_{\jmath})\big)^2,~ \xi^{bc}_{\jmath}\in\partial\Omega,~ \jmath=1,\ldots,M_{bc},\\
     &\mathcal{L}oss^{v}_{BC}(\phi_j;t_j)
     =\frac{1}{M_{bc}}\sum_{\jmath=1}^{M_{bc}}
     \big(\widetilde{\mathcal{B}}[\mathcal{N}_{\phi_j}](\xi^{bc}_{\jmath})\big)^2,~ \xi^{bc}_{\jmath}\in\partial\Omega,~ \jmath=1,\ldots,M_{bc}.
  \end{aligned}\right.
\end{equation*}

\begin{description}
  \item[$\bullet$] Initial condition loss.
\end{description}

To ensure consistency with the initial data $\rho^0$ and $v^0$, the initial loss is defined as
  \begin{equation}
     \mathcal{L}oss_{IC}(\theta_{j},\phi_j;t_0)
     =\mathcal{L}oss^{\rho}_{IC}(\theta_{j};t_0)+\mathcal{L}oss^{v}_{IC}(\phi_j;t_0),\quad j=1,2,\ldots,N,
  \end{equation}
with residuals
\begin{equation*}
\left\{\begin{aligned}
     &\mathcal{L}oss^{\rho}_{IC}(\theta_{j};t_0)
     =\frac{1}{M_{\imath n}}\sum_{\ell=1}^{M_{\imath n}}
     \big(\Xi[\mathcal{N}_{\theta_j}](\xi^{\imath n}_{\ell})-\rho^0(\xi^{\imath n}_{\ell})\big)^2,~ \xi^{\imath n}_{\ell}\in\Omega,~ \ell=1,\ldots,M_{\imath n},\\
     &\mathcal{L}oss^{v}_{IC}(\phi_{j};t_0)
     =\frac{1}{M_{\imath n}}\sum_{\ell=1}^{M_{bc}}
     \big(\Xi[\mathcal{N}_{\phi_j}](\xi^{\imath n}_{\ell})-v^0(\xi^{\imath n}_{\ell})\big)^2,~ \xi^{\imath n}_{\ell}\in\Omega,~ \ell=1,\ldots,M_{\imath n}.
  \end{aligned}\right.
\end{equation*}

\begin{description}
  \item[Step V:] Sequential time-stepping optimization with parameter transfer.
\end{description}

At each time level $t_j$, the optimal network parameters $\theta^*_j$ and $\phi^*_j$ are obtained by solving the minimization problem
\begin{equation}
\big(\theta^*_j,\phi^*_j\big)
=\operatorname{arg\,min}\limits_{(\theta_j,\phi_j)} \mathcal{L}oss\big(\theta_{j},\phi_j;t_j\big).
\end{equation}
This is carried out using the ADAM optimizer\cite{Adam-2014}, a stochastic gradient-based method widely used in deep learning.

The semi-discrete systems are solved sequentially in a time-marching manner. For each step $j\in\{1,2,\ldots,N\}$, the network parameters are initialized with the optimized values from the previous time level, i.e., $\theta^*_{j-1}$ and $\phi^*_{j-1}$ (with the initial condition used for $j=1$). This parameter transfer strategy exploits the temporal continuity of the solution, providing a good initial guess that facilitates efficient and stable convergence. After the loss is minimized iteratively via ADAM, the resulting approximations  $\rho^j$ and $v^j$ are stored as historical data for the subsequent steps.

In this time-marching PINNs framework, error accumulation is mitigated by training independent networks at each time step. The parameters from previous steps serve only as a warm start; they are fully re-optimized through the physics-constrained loss. This strategy confines approximation errors to individual time levels, while still benefiting from the stability of the $L_1$ discretization and the expressive power of deep neural networks.

\subsection{Simulations: 2D case with known source terms}
\label{subsec:example1}
To validate the numerical performance of the proposed algorithm, we consider a two-dimensional circular domain $\Omega=\{(x,y):x^{2}+y^2\leq R^2\}$ with radius $R>0$. Defining the auxiliary spatial function $\phi=(r^{2}-R^{2})^2$ where $r^2=x^2+y^2$. We employ the method of manufactured solutions by adding source terms $f(x,y,t)$ and $g(x,y,t)$ to the governing equations for $\rho$ and $v$ in \eqref{Pro:tranFKS}, respectively:
\begin{align*}
f(x,y,t)
&=e^{-\phi}\Big\{\Gamma(\alpha+1)-(1+t^\alpha)
\Big(\mathcal{D}-\frac{\mathcal{D}\chi}{3}\Big)\Big(16r^2(r^2-R^2)^2-16r^2+8R^2\Big)\Big\},\\
g(x,y,t)
&= e^{-\phi/3}\Gamma(\alpha+1)-\mathcal{D}\big(1+t^\alpha\big)e^{-\phi/3}
\Big(\frac{16}{9}r^2\big(r^2-R^2\big)^2-\frac{16}{3}r^2+\frac{8}{3}R^2\Big)\\
&\quad+\gamma e^{-\phi/3}(1+t^\alpha)-e^{-\phi}\big(1+t^\alpha\big).
\end{align*}
This construction yields exact solutions with weak temporal regularity ($c^{\alpha}(0,T)$, $0<\alpha<1$), given by
\begin{equation}
\left\{
\begin{aligned}
n(x,y,t)&=\rho(x,y,t)^2= e^{-\phi}\,\big(1+t^\alpha\big),\\
c(x,y,t)&=\exp(v(x,y,t))= e^{-\phi/3}\,\big(1+t^\alpha\big).
\end{aligned}
\right.
\end{equation}
These exact solutions are consistent with the prescribed initial and boundary conditions.
\begin{figure}[!ht]
 \centering
\begin{minipage}[c]{0.8\textwidth}
 \centerline{\includegraphics[width=1\textwidth]{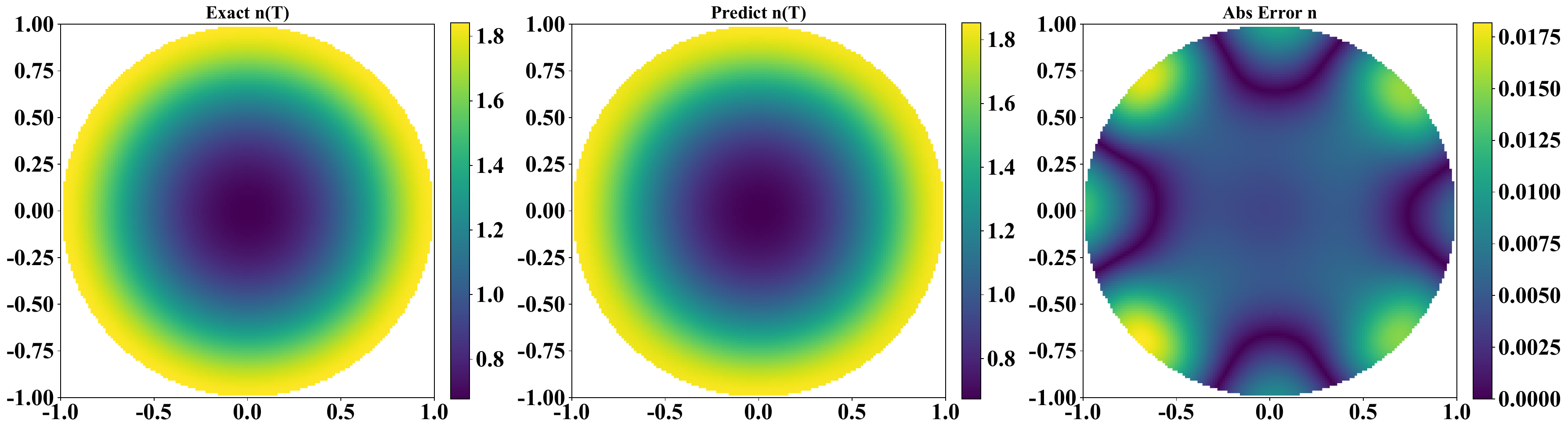}}
\end{minipage}
\begin{minipage}[c]{0.8\textwidth}
 \centerline{\includegraphics[width=1\textwidth]{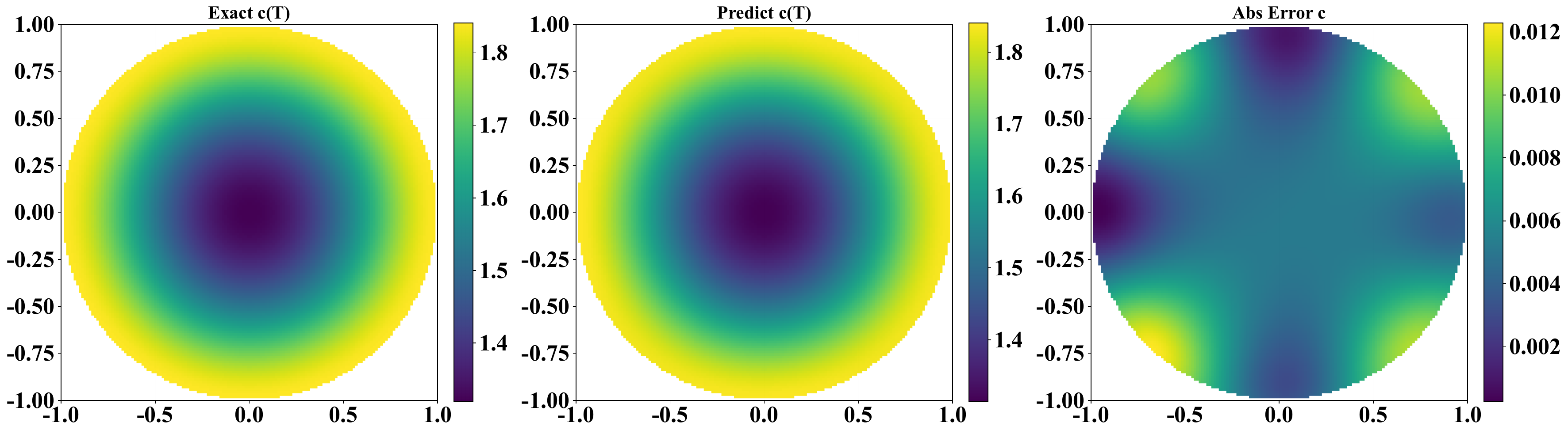}}
\end{minipage}
\caption{(Color online) Visual comparison between the reference and approximate solutions for $\alpha=0.25$ at the final time $T=0.5$.}
\label{Fig:exp1-refapprx25}
\end{figure}

\begin{figure}[!ht]
 \centering
 \centerline{\includegraphics[width=1\textwidth]{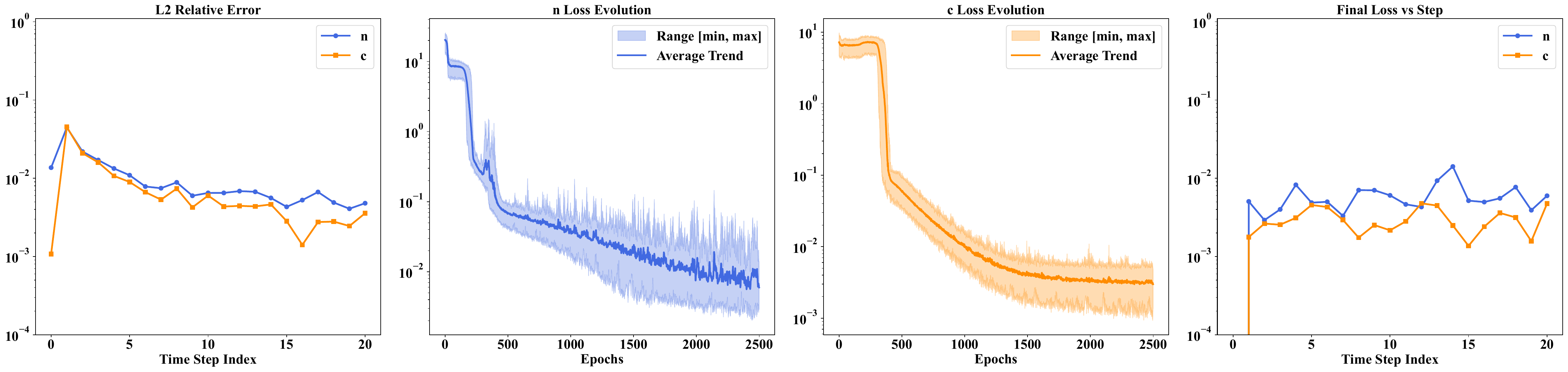}}
\caption{(Color online) Numerical performance of the positivity-preserving time-marching PINN for $\alpha=0.25$ at final time $T=0.5$. The panels show, from left to right: the relative $L^{2}$ error, the the loss evolution for $n(x,y,t)$ and $c(x,y,t)$ at each epoch, and the evolution of the total loss against the number of time steps.}
\label{Fig:exp1-numerp25}
\end{figure}

For the numerical experiments, the parameters are set as $R=1$, $\mathcal{D}=1.0$, $\chi=0.25$ (with $\chi\in(0,1/2)$), and $\gamma=1.0$. The time interval $[0, T]$ with $T=0.5$ is uniformly  divided into $N=20.0$ steps. We use $M_{\imath n}=1000.0$ interior collocation points and $M_{bc}=1000.0$ boundary points. The neural network has  $3$ hidden layers, each with $64$ neurons. At each time step, the model is trained for $2500$ iterations with a decreasing learning rate $lr=1e-3$. The loss weights are set to $\lambda_{PDE}=1.0$, $\lambda_{BC}=100.0$, and $\lambda_{IC}=100.0$. To evaluate the accuracy of the algorithm, we define the relative $L^{2}$ error
\begin{equation}
\text{Relative } L^2 \text{ Error}
=\frac{\|u_{predic}-u_{exact}\|_{L^{2}(\Omega)}}{\|u_{exact}\|_{L^{2}(\Omega)}},\quad u\in\{n,c\},
\end{equation}
for both solution components.

\begin{figure}[!ht]
 \centering
\begin{minipage}[c]{1\textwidth}
 \centerline{\includegraphics[width=0.8\textwidth]{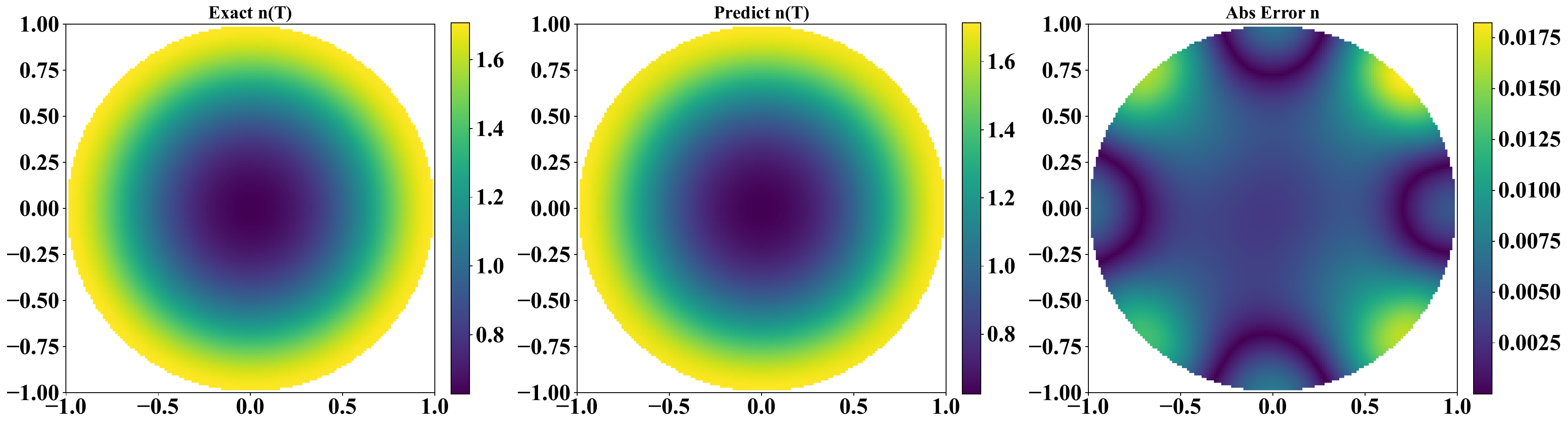}}
\end{minipage}
\begin{minipage}[c]{1\textwidth}
 \centerline{\includegraphics[width=0.8\textwidth]{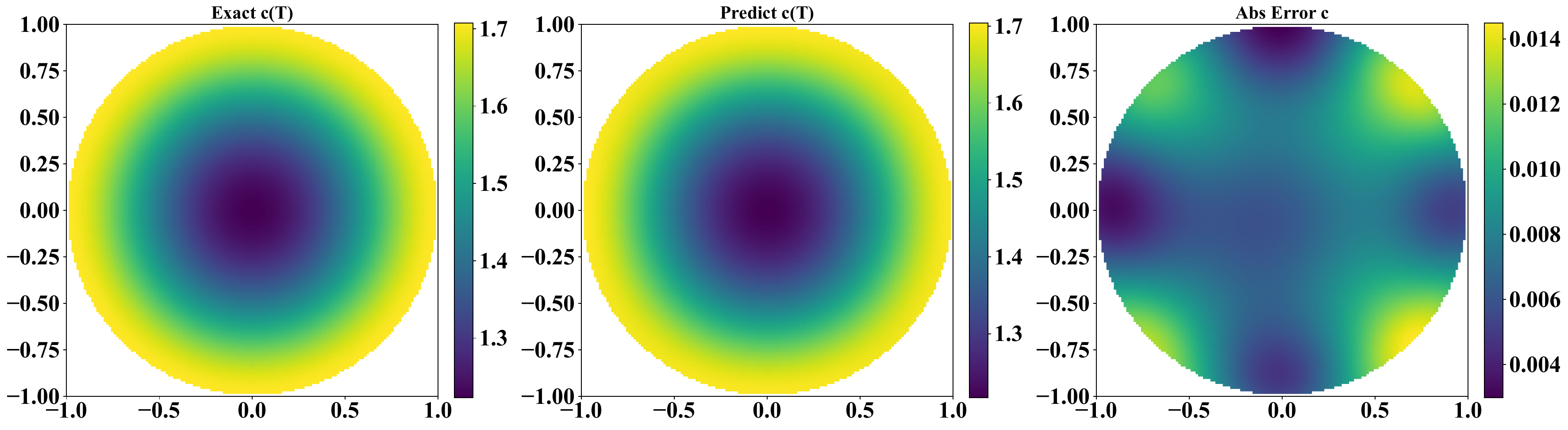}}
\end{minipage}
\caption{{\rm(}Color online{\rm)} Visual comparison of the reference solution and the approximate solution for $\alpha=0.50$ at the final time $T=0.5$.}
\label{Fig:exp1-refapprx50}
\end{figure}

\begin{figure}[!ht]
 \centering
 \centerline{\includegraphics[width=1\textwidth]{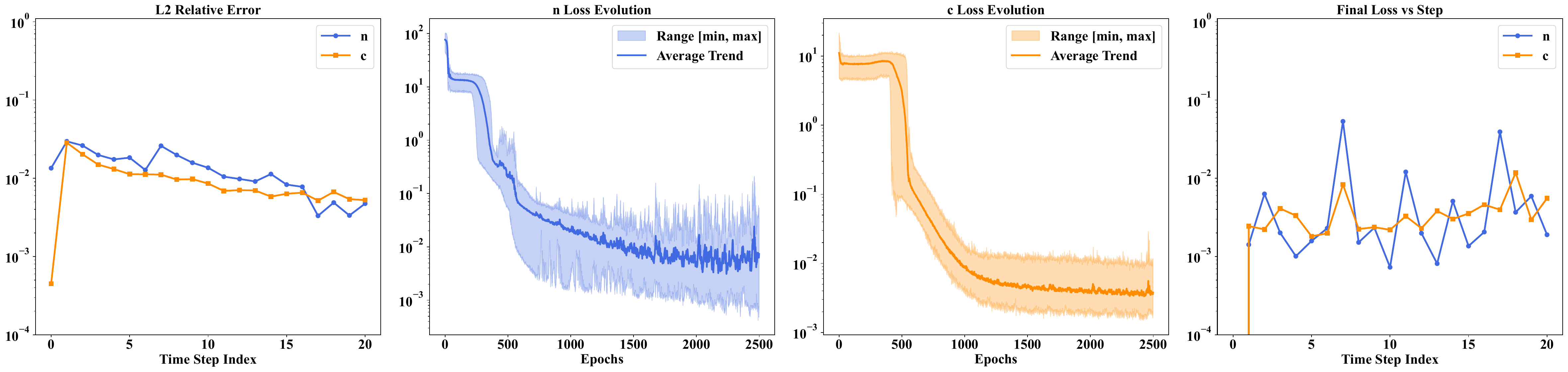}}
\caption{(Color online) The numerical performance of the non-negativity/positivity-preserving time-marching PINNs algorithm when $\alpha=0.50$ and $T=0.5$. From left to right, the relative $L^{2}$  error, the training loss for $n$ and $c$ at each time step, and the final loss versus time steps.}
\label{Fig:exp1-numerp50}
\end{figure}

The numerical results on the unit disk are presented in Figures \ref{Fig:exp1-refapprx25}, \ref{Fig:exp1-numerp25}, \ref{Fig:exp1-refapprx50}, \ref{Fig:exp1-numerp50}, \ref{Fig:exp1-refapprx75}, and \ref{Fig:exp1-numerp75}. These experiments demonstrate that a compact feedforward neural network is capable of approximating the solutions with satisfactory accuracy. Three main observations can be highlighted:
\begin{itemize}
  \item \textbf{Good accuracy}: The predicted solutions agree well with the exact ones, with absolute errors generally within 2\% and the relative $L^2$ error remaining stable across all time steps. This confirms that the proposed PINNs framework can effectively capture the solution behavior even with relatively limited network capacity.
  \item \textbf{Stable Training}: The training processes for both $n(x,y,t)$ and $c(x,y,t)$ are smooth, with the loss functions decreasing steadily toward convergence. The independent network architecture and parameter transfer strategy appear to contribute positively to training stability, without introducing noticeable oscillations or overshooting.
  \item \textbf{Robustness}: The algorithm performs consistently well across different values of the fractional order $\alpha$, indicating that the time-marching scheme, combined with the $L_1$ discretization, is adaptable to various degrees of nonlocal memory effects.
\end{itemize}

Overall, these results indicate that the proposed method is reliable and computationally efficient, and offers sufficient accuracy for the present test cases.

\begin{figure}[!ht]
 \centering
\begin{minipage}[c]{0.8\textwidth}
 \centerline{\includegraphics[width=1\textwidth]{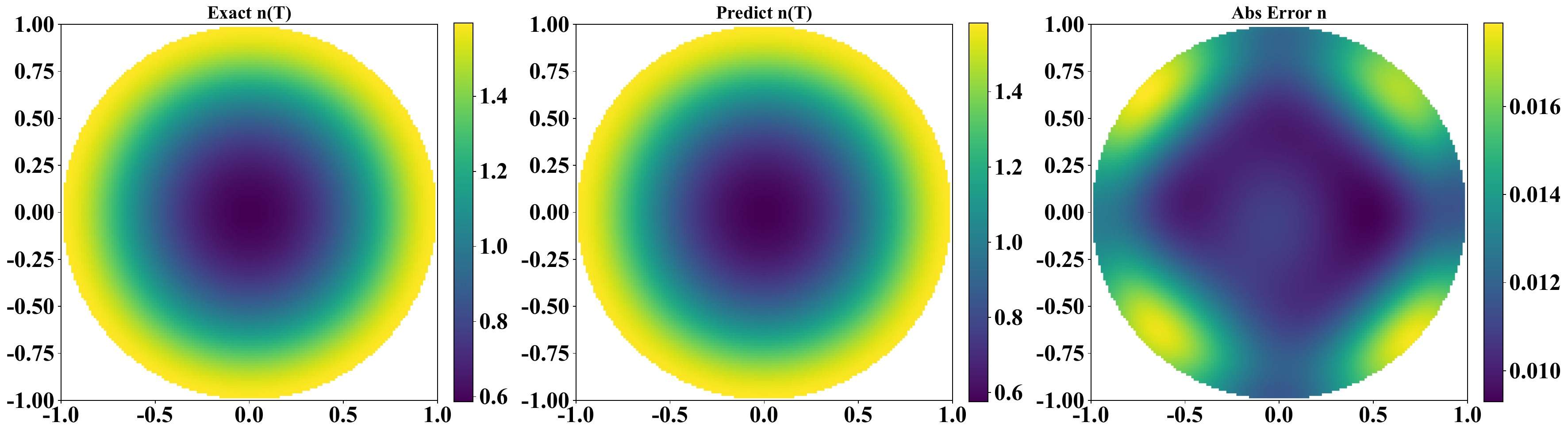}}
\end{minipage}
\begin{minipage}[c]{0.8\textwidth}
 \centerline{\includegraphics[width=1\textwidth]{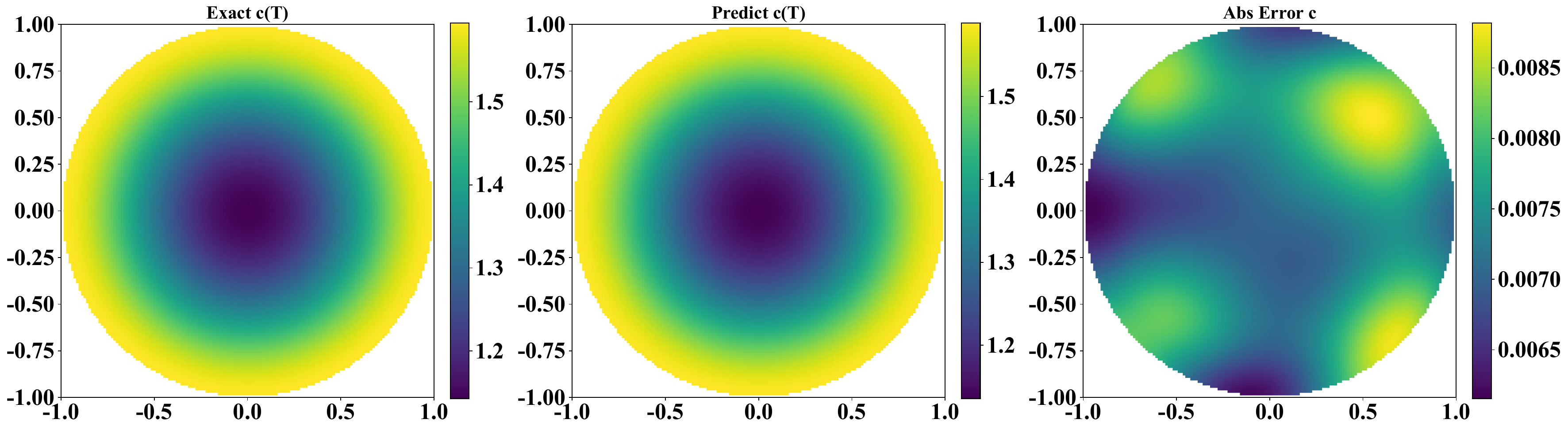}}
\end{minipage}
\caption{{\rm(}Color online{\rm)} Visual comparison of the reference solution and the approximate solution for $\alpha=0.75$ at the final time $T=0.5$.}
\label{Fig:exp1-refapprx75}
\end{figure}

\begin{figure}[!ht]
 \centering
 \centerline{\includegraphics[width=1\textwidth]{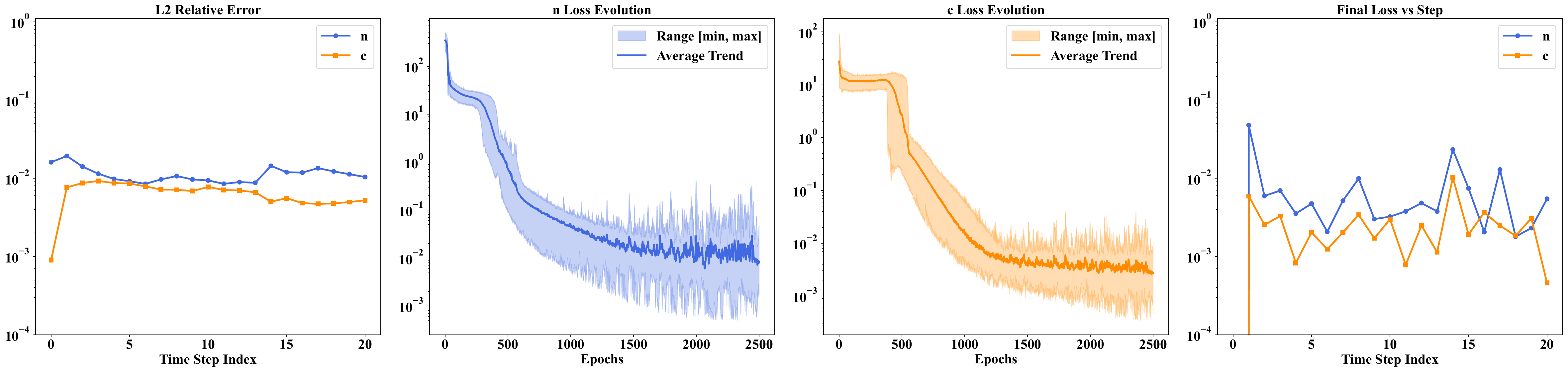}}
\caption{{\rm(}Color online{\rm)} The numerical performance of the non-negativity/positivity-preserving time-marching PINNs algorithm when $\alpha=0.75$ and $T=0.5$. From left to right, the figures illustrate the $L^{2}$ relative error, the total loss trends of $n$ and $c$ at each time step, and the final loss versus time steps.}
\label{Fig:exp1-numerp75}
\end{figure}

\subsection{Simulations: 2D case with Gaussian initial data}
To evaluate the numerical performance of the proposed time-marching PINNs on complex geometries, we consider a two-dimensional bounded domain $\Omega\subset\mathbb{R}^2$, often referred to as the ``butterfly" domain. Its boundary is parameterized in polar coordinates $(r,\phi)$ by
\begin{equation}\label{domain:butter}
\partial\Omega:=\Bigg\{(r,\phi)\big|r(\phi) = 3\times\Big|\;e^{\sin(\phi)}-2\cos(4\phi)+\sin^5\Big(\frac{2\phi-\pi}{24}\Big)\Big|, \phi \in [0, 2\pi]\Bigg\}.
\end{equation}
This highly irregular and non-convex region is chosen intentionally to illustrate the flexibility of deep neural networks in handling complex spatial domains. Unlike traditional grid-based methods, the present approach does not require mesh generation, which becomes particularly advantageous for such intricate geometries.

\begin{figure}[htbp]
 \centering
 \centerline{\includegraphics[width=0.36\textwidth]{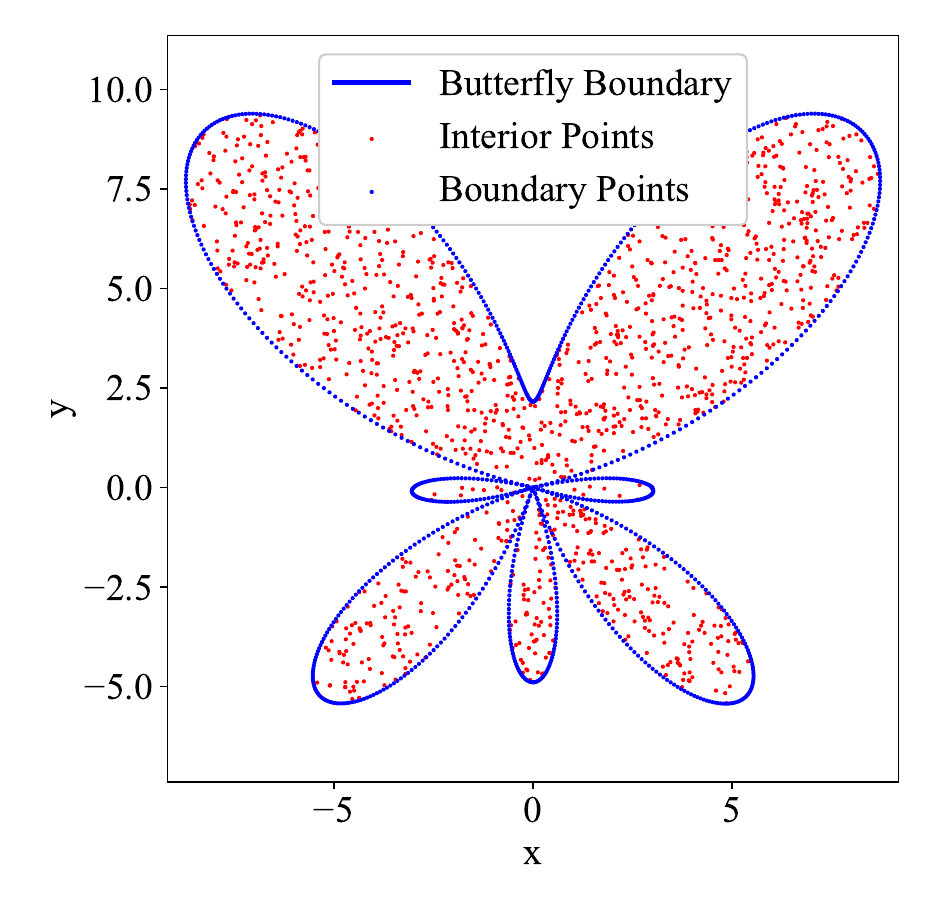}}
\caption{{\rm(}Color online{\rm)} Numerical visualization of the 2D butterfly-shaped computational domain, showing the distribution of interior collocation points and boundary sampling points.}
\label{Fig:Domain}
\end{figure}

\begin{figure}[!ht]
 \centering
 \centerline{\includegraphics[width=0.88\textwidth]{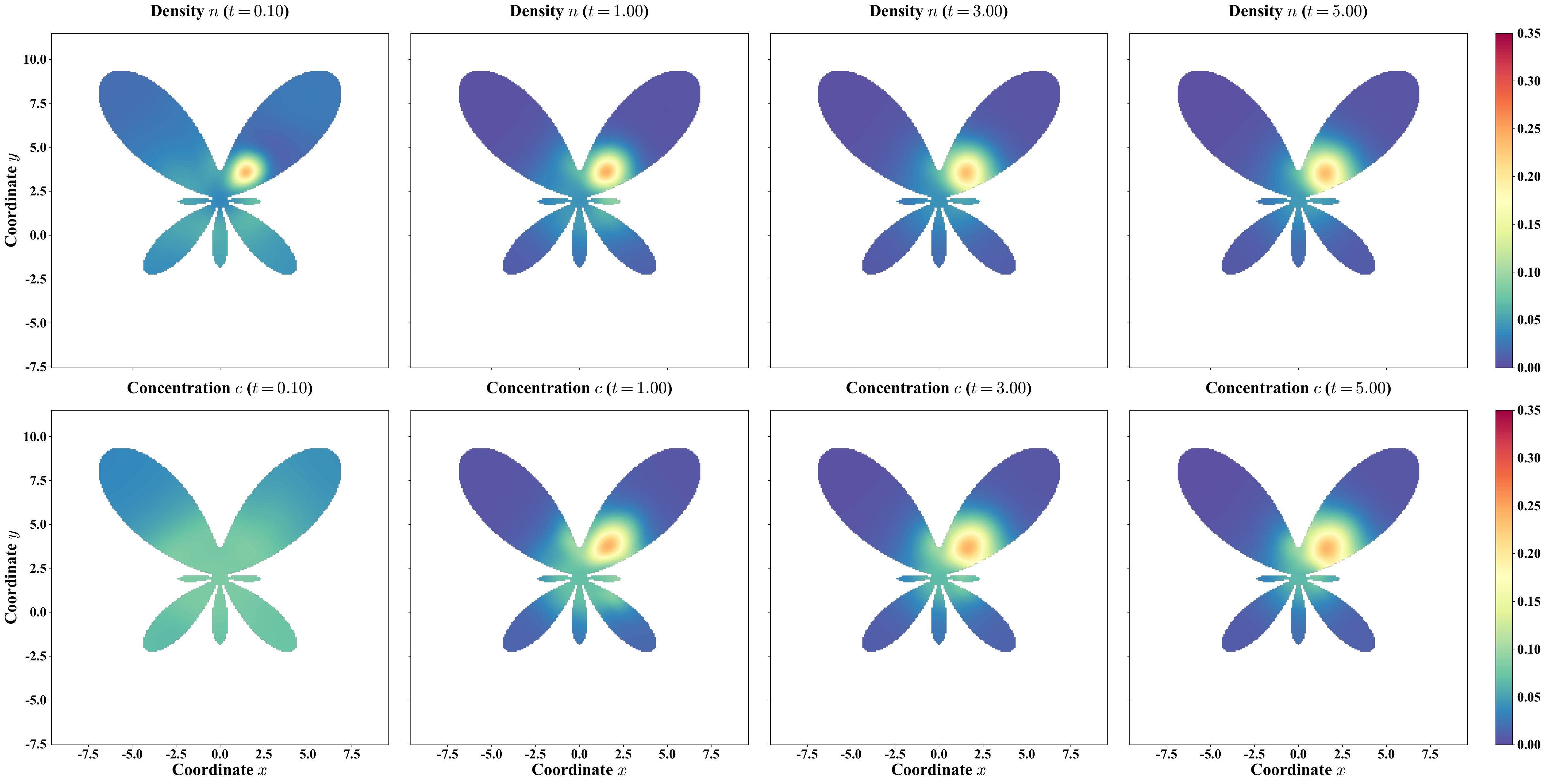}}
\caption{{\rm(}Color online{\rm)}  Evolution of solutions on the butterfly-shaped domain computed by the time-marching PINNs for $\alpha=0.25$. The simulation parameters are $N=50$, $T=5$, $\mathcal{D}=1$, $\chi=0.25$, $\gamma=1$. The neural network consists of five hidden layers with $100$ neurons each, employs $\text{SiLU}(\cdot)$ activation functions, and is trained for $1500$ iterations per time step with a decreasing learning rate of $1e-3$.}
\label{Fig:exampFKS25}
\end{figure}

\begin{figure}[!ht]
 \centering
 \centerline{\includegraphics[width=1\textwidth]{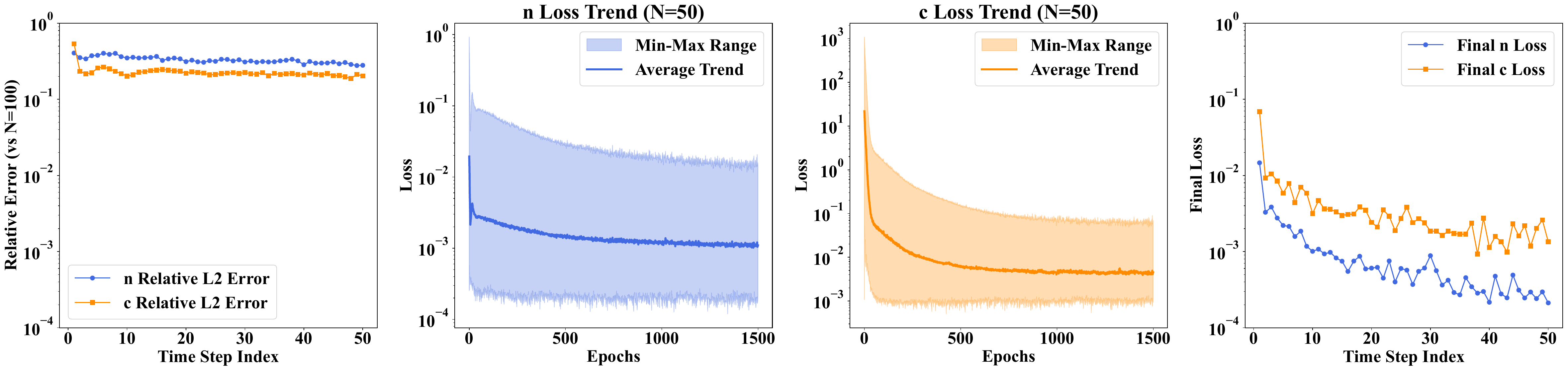}}
\caption{{\rm(}Color online{\rm)} The numerical performance of the non-negativity/positivity-preserving time-marching PINNs algorithm with $\alpha=0.25$. From left to right, the figures illustrate the $L^{2}$ relative error, the total Loss trends of $n$ and $c$ at each time step, and the final loss versus time steps.}
\label{Fig:exampLoss25}
\end{figure}

\begin{figure}[!ht]
 \centering
 \centerline{\includegraphics[width=0.69\textwidth]{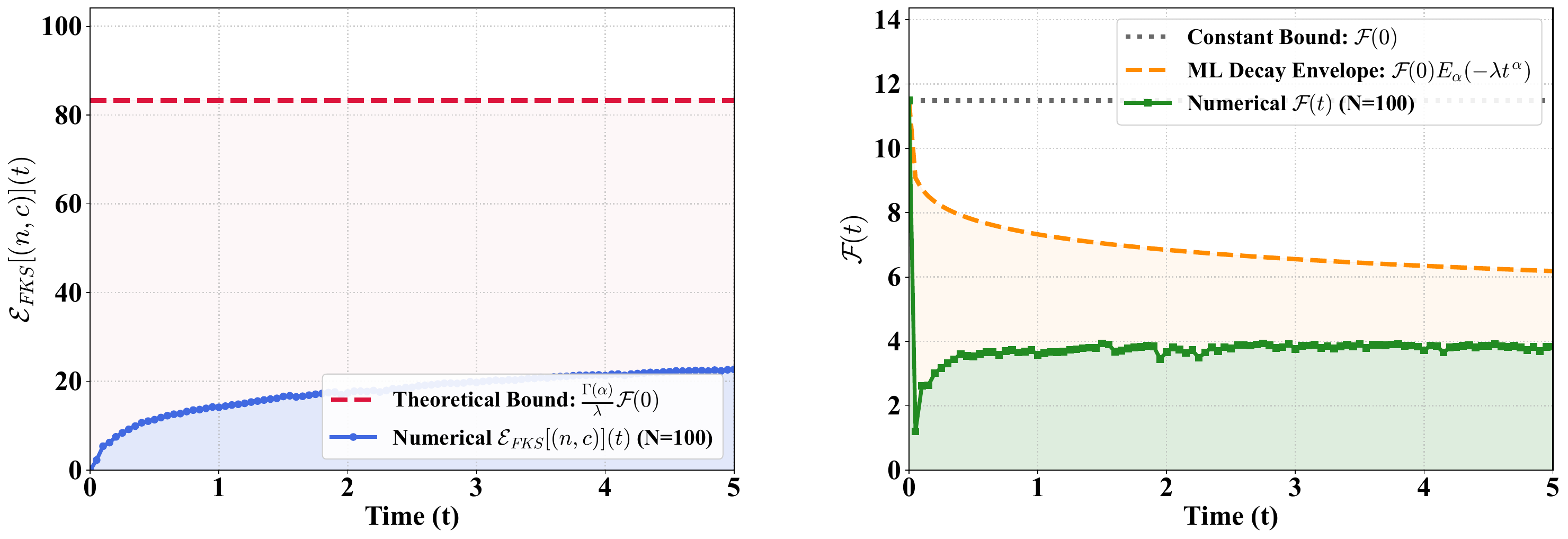}}
\caption{{\rm(}Color online{\rm)} The theoretical and numerical results of the energy functions with $\alpha=0.25$. From left to right, the figures illustrate the evolution of the novel Lyapunov function, defined by \eqref{F:Lyapunov}, and the jointly convex function $\mathcal{F}(t)$ (defined in \eqref{Fun:JCF}) over time $t$.}
\label{Fig:LF25}
\end{figure}

To ensure mathematical consistency between the initial data and the homogeneous Neumann boundary conditions, a quadratic masking factor $\Psi(x,y)$ is introduced. The initial distributions $n(x,y,t)$ and $c(x,y,t)$ are defined as
\begin{equation}
\left\{\begin{aligned}
n(x,y,0)
&=\pi\left|\cos(\pi x)\cos(\pi y)\right|\exp\left(-0.75\left[(x-2)^2+(y-2)^2\right]\right)\cdot\Psi(x,y),\\
c(x,y,0)
&=\pi\left|\cos(\pi x)\cos(\pi y)\right|\exp\left(-0.25\left[(x-2)^2+(y-2)^2\right]\right)\cdot\Psi(x,y).
\end{aligned}\right.
\end{equation}
The compatibility factor $\Psi(x,y)$ is given by
\begin{equation}
\Psi(x,y):=\left(1-\sigma^2(x,y)\right)^2,\quad
\sigma(x,y):=\frac{\sqrt{x^2+y^2}}{r(\operatorname{atan2}(y,x))},
\end{equation}
where $r(\phi)$ is the radial distance at angle $\phi$, as defined in \eqref{domain:butter}, and $\phi=\operatorname{atan2}(y,x)$ is the four-quadrant inverse tangent (available in  PyTorch code \verb"torch.atan2(y,x)").

By construction, the factor $\Psi(x,y)$ vanishes quadratically at the boundary, where $\sigma=1$.  This ensures two key properties:
\begin{itemize}
  \item Boundary value vanishing: $\Psi(1) = 0$, so both $n(x,y,t)$ and $c(x,y,t)$ vanish on $\partial\Omega$.
  \item Gradient vanishing: $\frac{\partial \Psi}{\partial\sigma}\big|_{\sigma=1}=0$, which guarantees that the normal derivatives $\frac{\partial n(x,y,t)}{\partial \mathbf{\nu}}$ and $\frac{\partial c(x,y,t)}{\partial\mathbf{\nu}}$ are zero regardless of the boundary curvature.
\end{itemize}
This construction provides a robust foundation for the convergence of the time-marching PINNs framework on irregular domains.

\begin{figure}[!ht]
 \centering
 \centerline{\includegraphics[width=0.88\textwidth]{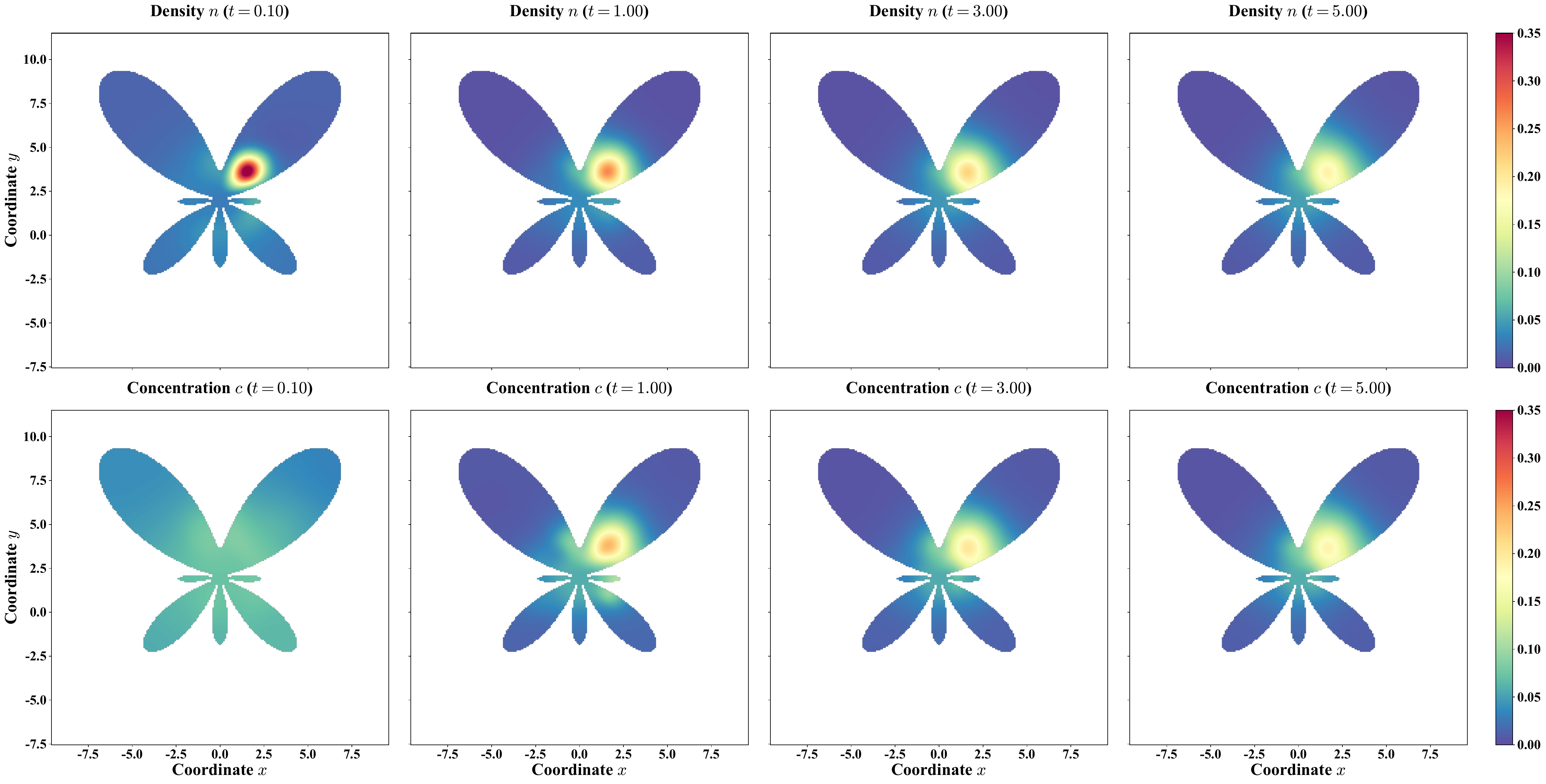}}
\caption{(Color online) Spatiotemporal evolution of the solutions on a bounded butterfly domain with $\alpha = 0.50$ and $N = 50$. All other parameters remain consistent with those in Fig.~\ref{Fig:exampFKS25}.}
\label{Fig:exampFKS50}
\end{figure}

\begin{figure}[!ht]
 \centering
 \centerline{\includegraphics[width=1\textwidth]{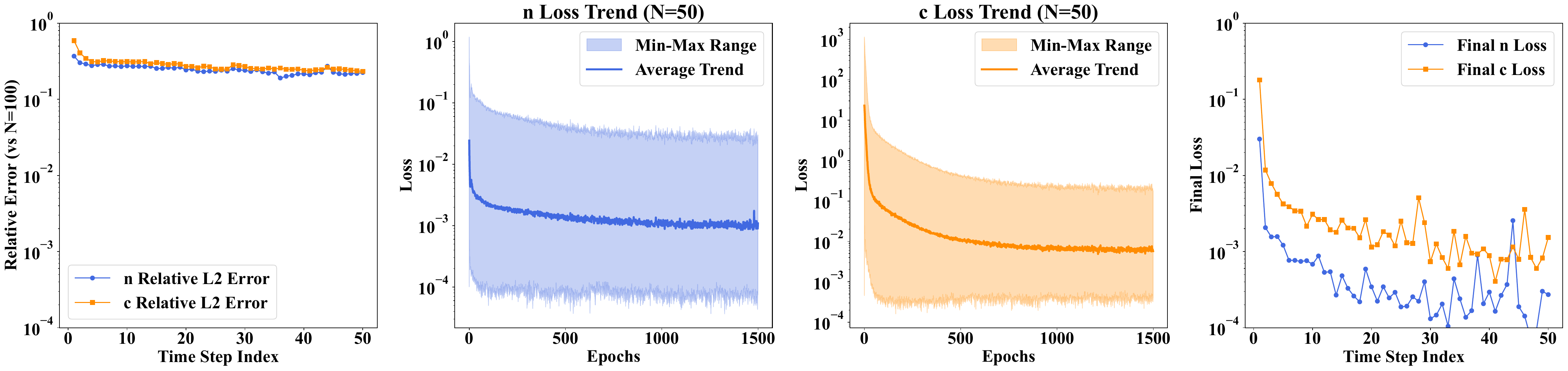}}
\caption{{\rm(}Color online{\rm)} Numerical performance of the non-negativity/positivity-preserving time-marching PINNs algorithm for $\alpha=0.50$. All other parameters are consistent with those in Fig.~\ref{Fig:exampLoss25}.}
\label{Fig:exampLoss50}
\end{figure}

\begin{figure}[!ht]
 \centering
 \centerline{\includegraphics[width=0.69\textwidth]{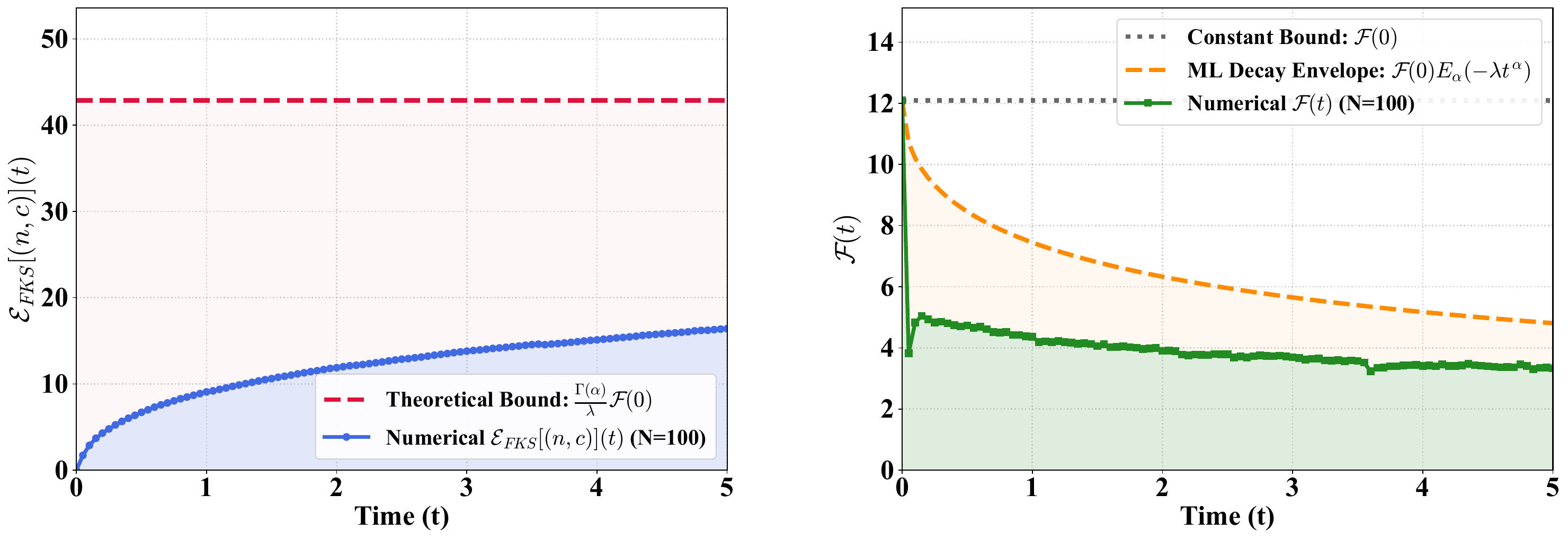}}
\caption{{\rm(}Color online{\rm)} The theoretical and numerical results of the energy functions with $\alpha=0.5$. From left to right, the figures illustrate the evolution of the novel Lyapunov function, defined by \eqref{F:Lyapunov}, and the jointly convex function $\mathcal{F}(t)$ (defined in \eqref{Fun:JCF}) over time $t$.}
\label{Fig:LF50}
\end{figure}

This numerical experiment aims to visualize the dynamical evolution of the system on a complex  geometry, specifically the butterfly-shaped domain \eqref{domain:butter}), and to provide numerical evidence supporting global existence. The network architecture and parameter settings are kept consistent with those in Subsection \ref{subsec:example1} to ensure compatibility and robustness.

Since no analytical solution is available, a high-fidelity reference solution ($n_{\text{Ref}, N=100}$, $c_{\text{Ref}, N=100}$) is generated using the proposed time-marching PINNs algorithm with $N=100$ time steps, $1500$ iterations per step, and $3$ hidden layers each with $64$ neurons. To evaluate the accuracy of the algorithm, the relative $L^2$ error is defined as
\begin{equation}
\text{Relative } L^2 \text{ Error} = \frac{\|u_{predic}-u_{Ref.,N=100}\|_{L^{2}(\Omega)}}{\|u_{Ref.,N=100}\|_{L^{2}(\Omega)}},\quad u\in\{n,c\}.
\end{equation}

The numerical results are presented in Figures \ref{Fig:exampFKS25} -- \ref{Fig:LF75}. Several observations can be drawn:
\begin{itemize}
    \item The evolution of $(n, c)$ shown in Figures  \ref{Fig:exampFKS25}, \ref{Fig:exampFKS50} and \ref{Fig:exampFKS75})  reveals that different values of $\alpha$ lead to different diffusion rates. In particular, a larger $\alpha$ corresponds to faster spreading. This is consistent with the modeling assumptions: since a larger $\alpha$ implies shorter waiting times, it results in higher mobility, which accelerates the dispersion of both myxobacteria and slime.

    \item The numerical performance illustrated in Figures \ref{Fig:exampLoss25}, \ref{Fig:exampLoss50}, and \ref{Fig:exampLoss75} demonstrates the stable training dynamics of our proposed deep learning algorithm. These results highlight the robustness of the DNN framework, especially for problems defined on non-trivial domains. While there is still room for quantitative improvement in the relative $L^2$ error, the current accuracy is sufficient for capturing the essential dynamics and meets the practical needs of the simulation.

    \item Figures \ref{Fig:LF25}, \ref{Fig:LF50}, and \ref{Fig:LF75} provide numerical verification of the new energy functionals defined in (see \eqref{F:Lyapunov} and \eqref{Fun:JCF}). The observed behavior is in good agreement with the theoretical results in Lemmas \ref{lem:dissapation} and \ref{Lem:FracLyapunov}. Specifically, the Lyapunov functional $\mathcal{E}$ remains uniformly bounded,  confirming the estimate $\mathcal{E}[(n,c)](t)\le\frac{\Gamma(\alpha)}{\lambda}\mathcal{F}(0)$ for $t>0$. Meanwhile, the convex functional $\mathcal{F}(t)$ exhibits the expected dissipation property, satisfying $\mathcal{F}(t)\leq\mathcal{F}(0) E_\alpha(-\lambda t^\alpha)$ for $t>0$.
\end{itemize}

In summary, the proposed method reproduces the expected diffusion behavior and respects the theoretical energy constraints on irregular geometries. The flexibility of the architecture also suggests that it can be extended to more complex coupled systems, offering a promising numerical tool for such problems.

\begin{figure}[!ht]
 \centering
 \centerline{\includegraphics[width=0.88\textwidth]{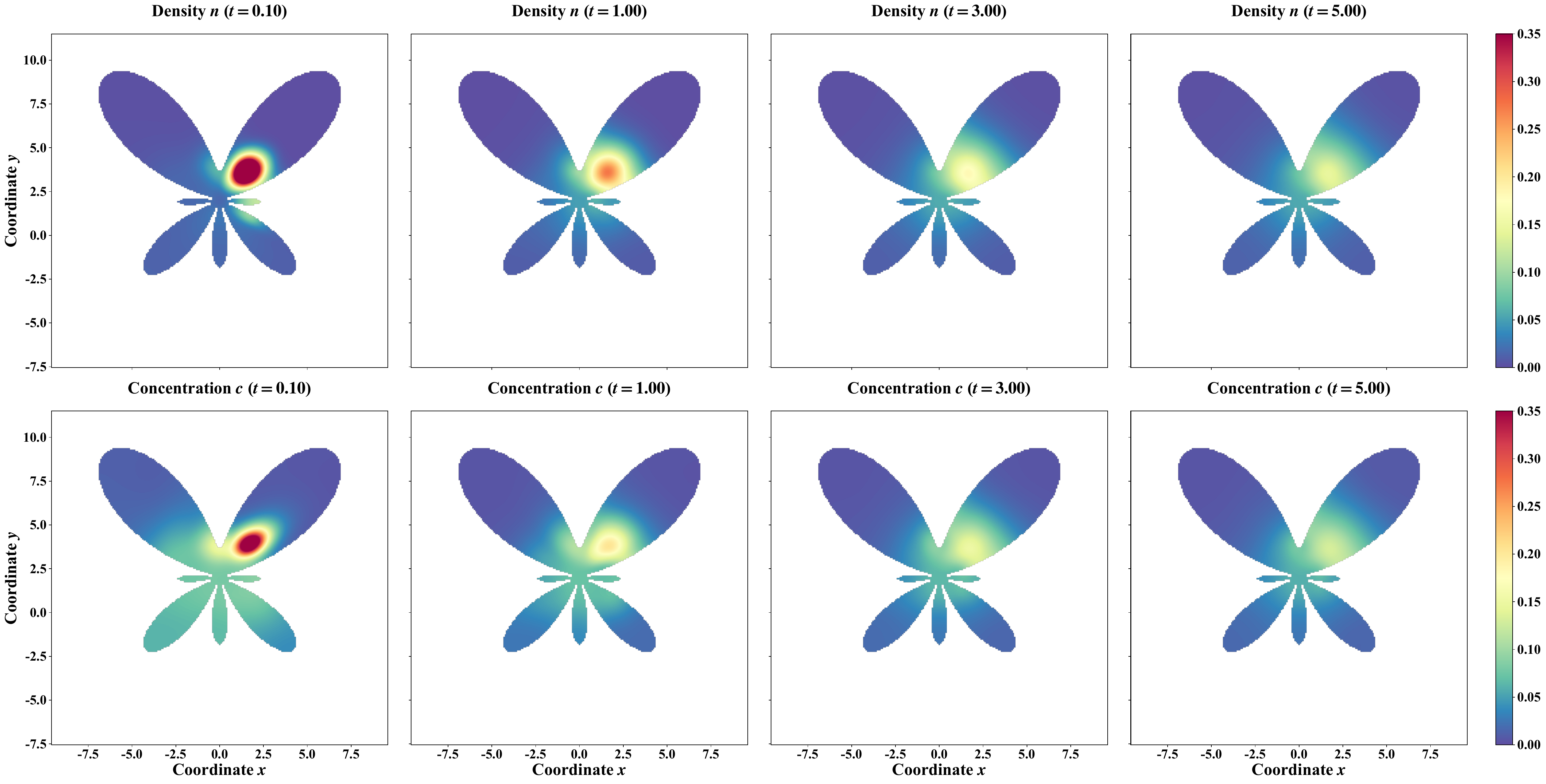}}
\caption{{\rm(}Color online{\rm)} The dynamical evolutions of solutions on the bounded butterfly domain with $\alpha=0.75$. All other parameters are set the same as Fig.~\ref{Fig:exampFKS25}.}
\label{Fig:exampFKS75}
\end{figure}

\begin{figure}[!ht]
 \centering
 \centerline{\includegraphics[width=1\textwidth]{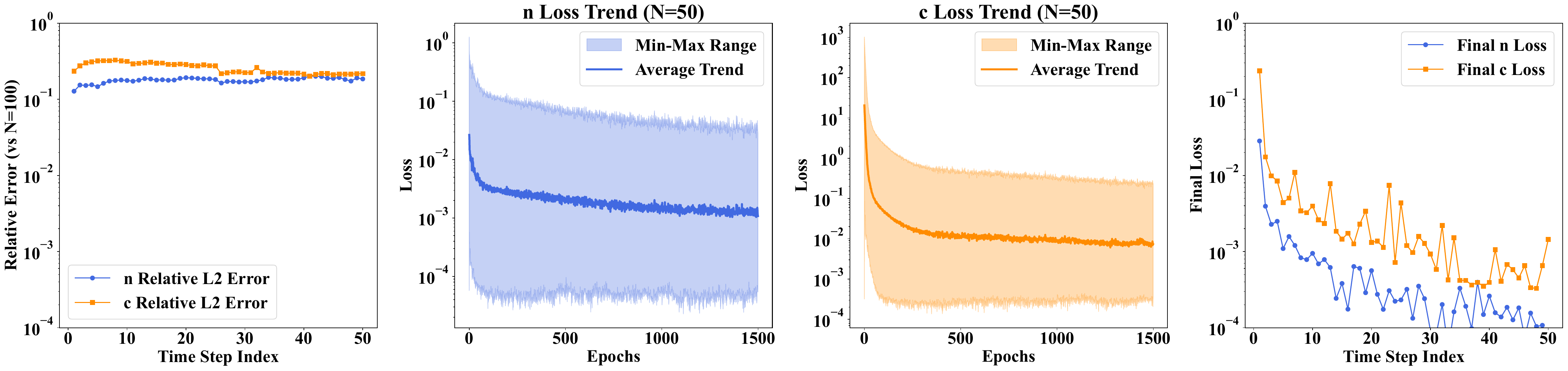}}
\caption{{\rm(}Color online{\rm)} The numerical performance of the non-negativity/positivity-preserving time-marching PINNs algorithm with $\alpha=0.75$. All other parameters are set the same as Fig.~\ref{Fig:exampLoss25}.}
\label{Fig:exampLoss75}
\end{figure}

\begin{figure}[!ht]
 \centering
 \centerline{\includegraphics[width=0.69\textwidth]{Figures/Example2_Lyapunov_Functional_050.pdf}}
\caption{{\rm(}Color online{\rm)} The theoretical and numerical results of the energy functions with $\alpha=0.75$. From left to right, the figures illustrate the evolution of the novel Lyapunov function, defined by \eqref{F:Lyapunov}, and the jointly convex function $\mathcal{F}(t)$ (defined in \eqref{Fun:JCF}) over time $t$.}
\label{Fig:LF75}
\end{figure}

\section{Concluding Remarks}
\label{sec:conclusions}
In this paper, we establish a comprehensive framework to accurately characterize the global dynamic behavior of chemotactic gliding-diffusion and aggregation in myxobacteria. To this end, we synergistically integrate stochastic modeling, rigorous mathematical analysis, and deep learning-based simulations. We first established a robust physical foundation by constructing a lattice-based discrete agent model at the microscopic scale to capture actual kinetic behavior on rough soil surfaces, with our assumptions validated through comparative experimental data. Bridging biological insights with stochastic modeling, we rigorously derived the macroscopic model \eqref{Pro:FKS} from a subordinated Langevin equation, ensuring both physical and mathematical consistency. On the theoretical side, we establish a definitive solution theory by incorporating an analytical framework based on a novel Lyapunov function, innovative fractional convexity inequality, and generalized Sobolev spaces. This analytical framework successfully overcomes the inherent difficulties of time-nonlocal operators and establishes global well-posedness, mass conservation, and novel regularity results. To address numerical challenges, we design a multi-objective, positivity/non-negativity-preserving time-marching PINNs algorithm with independent network architectures and temporal semi-discretization. The method demonstrates exceptional robustness and high generalizability across a broad class of KS-type systems. Numerical benchmarks on complex geometries, most notably the `butterfly-shaped' domain, not only verify the accuracy of our scheme but also provide empirical validation of the global existence and long-term stability of the solutions.

In conclusion, this research offers insights into these complex processes by synergistically combining multiscale modeling, rigorous mathematical analysis, and computational methods, and thus provides a promising framework for future investigations of biological dynamics on rough surfaces and irregular geometries.

As an initial exploratory effort, this study opens several promising avenues for future research:
\begin{itemize}
    \item \textbf{Experimentally}, engaging in collaborating with microbiologists to integrate empirical data will further strengthen the model's reliability and predictive capacity.
    \item \textbf{Mathematically}, several directions are particularly worth pursuing. A natural next step is to investigate global existence and possible blow-up in higher-dimensional domains ($d\ge4$), and to establish sharper conditions on the chemotactic sensitivity coefficient $\chi$ that guarantee global boundedness. Other promising extensions include coupling the system with fluid dynamics to model more realistic environments (e.g., Refs.~\refcite{deAnna2020,Ma25,Tao13}), incorporating source terms to describe cell proliferation and death (e.g., Ref.~\refcite{Hillen13}), or considering the combined influence of both (e.g., Refs.~\refcite{Dai22,Dai23}). It should be noted that when source terms are present, mass conservation is lost, which introduces substantial additional difficulties in the analysis.
    \item \textbf{Numerically}, while the proposed deep neural network approach is effective, the non-local operators currently incur high computational costs. Future efforts will focus on optimizing network architecture to reduce  overhead without sacrificing physical fidelity.
    \item \textbf{Broad applications}, extending the current two-dimensional framework to three-dimensional complex geometries or multi-species interacting networks stands as a natural and impactful next step.
\end{itemize}

\appendix
\section{Auxiliary Results and Lemmas}
In this appendix, we collect several auxiliary lemmas and supporting results, together with their proofs, that are used repeatedly in the main text.

\subsection{Continuum Limit of the Transition Probabilities}\label{Der:drift_coefficient}
In the absence of environmental heterogeneity, the particle performs an unbiased nearest-neighbor random walk with transition probabilities $p_{r}=p_{l}=1/2$. This corresponds to isotropic local exploration without any directional preference. Biologically, when the surrounding slime field or chemoattractant distribution is spatially homogeneous, the cells do not possess sufficient directional information and therefore move randomly.

Chemotactic sensing is modeled as a weak perturbation of aforementioned symmetric motion rather than a deterministic steering mechanism. In other words, the environmental signal does not generate additional transition probability, but instead redistributes the directional preference between rightward and leftward motion. Following the standard framework of weakly biased random walks (see, e.g.,~\refcite{Hillen09,Stevens00}), we write the transition probabilities as symmetric perturbations around the unbiased state
\begin{equation}\label{eq:transP}
p_{r}=\cfrac{1}{2}+\varepsilon B(x,t),\quad
p_{l}=\cfrac{1}{2}-\varepsilon B(x,t),\quad \varepsilon>0,
\end{equation}
where $B(x,t)$ describes the local directional bias induced by the surrounding signal field, while $\varepsilon$ measures the strength of this bias and thus serves as the chemotactic sensitivity coefficient. This structure automatically preserves probability normalization, $p_{r}+p_{l}=1$, guarantees that the unbiased state is recovered whenever the local environment is spatially symmetric, namely when $B(x,t)=0$.

In the context of Myxobacterial aggregation, cells compare the signal intensities sensed within their local neighborhood before selecting a preferred moving direction (see \eqref{eq:Probbl}). Motivated by this mechanism, we define the directional bias through the normalized local contrast
\begin{equation}\label{eq:bias}
    B(x,t)=\frac{v(x+\delta_x,t)-v(x-\delta_x,t)}{v(x-\delta_x,t)+v(x+\delta_x,t)},
\end{equation}
where $v(x,t)>0$ denotes the local slime concentration or chemoattractant field. Here, the numerator measures the directional asymmetry of the perceived signal, while the denominator represents the overall background intensity within the sensing range. Consequently, the cellular response depends on the relative environmental contrast rather than the absolute concentration level, which is consistent with the experimentally observed adaptive sensing behavior in many biological aggregation processes.

According to \eqref{eq:bias}, the model possesses the following natural biological interpretations:
\begin{itemize}
    \item If $v(x+\delta_x,t)>v(x-\delta_x,t)$, then $B(x,t)>0$, meaning that the right-hand side contains a stronger slime signal, and the Myxobacteria are therefore more likely to glide toward the right;
    \item If $v(x+\delta_x,t)<v(x-\delta_x,t)$, then $B(x,t)<0$, indicating a stronger signal on the left-hand side, and the cells preferentially move leftward;
    \item If $v(x+\delta_x,t)=v(x-\delta_x,t)$, then $B(x,t)=0$, and the system naturally returns to the unbiased random walk $p_{r}=p_{l}=1/2$.
\end{itemize}
By gathering \eqref{eq:transP} and \eqref{eq:bias} and setting $\varepsilon=\frac{\chi}{4}$, the transition probabilities can be rewritten as
\begin{equation}\label{eq:prpl-def}
\left\{
\begin{aligned}
 p_r(x,t)=\frac{1}{2}+\frac{\chi}{4}\frac{v(x+\delta_x,t)-v(x-\delta_x,t)}{v(x-\delta_x,t)+v(x+\delta_x,t)},\\
 p_l(x,t)=\frac{1}{2}-\frac{\chi}{4}\frac{v(x+\delta_x,t)-v(x-\delta_x,t)}{v(x-\delta_x,t)+v(x+\delta_x,t)}.
\end{aligned}\right.
\end{equation}
where $\chi>0$ is the chemotactic sensitivity coefficient characterizing the strength of directional response to environmental heterogeneity. By construction, $p_r(x,t)+p_l(x,t)=1$. Let the random displacement during a single transition be denoted by  $\xi\in\{+\delta_x, -\delta_x\}$. Then the first two Kramers--Moyal coefficients (see, e.g., Ref.~\refcite{Risken89}) are given by
\begin{displaymath}
\left\{
\begin{aligned}
 m_1(x,t) &:= \mathbb E[\xi\mid x,t] = \delta_x\,\big( p_r(x,t) - p_l(x,t)\big), \\
 m_2(x,t) &:= \mathbb E[\xi^2\mid x,t] = (\delta_x)^2\,\big( p_r(x,t) + p_l(x,t)\big)=(\delta_x)^2.
\end{aligned}\right.
\end{displaymath}
Hence, the asymmetric component $p_r-p_l$ generates the effective directional drift, while the symmetric component $p_r+p_l$ determines the diffusive spreading.

Assuming that $v$ is sufficiently smooth, Taylor expansion around $x$ gives
\begin{displaymath}
v(x\pm\delta_x,t)=v(x,t)\pm\delta_x\partial_x v(x,t)+\mathcal{O}(\delta_x^2).
\end{displaymath}
Substituting these expansions into \eqref{eq:prpl-def}, we obtian
\begin{displaymath}
\begin{aligned}
 p_r-p_l
 &= \frac{\chi}{2}\frac{v(x+\delta_x,t)-v(x-\delta_x,t)}{v(x+\delta_x,t)+v(x-\delta_x,t)}
 = \frac{\chi}{2}\frac{2\delta_x\,\partial_xv(x)+\mathcal{O}(\delta_x^3)}{2v(x)+\mathcal{O}(\delta_x^2)} \\
 &= \frac{\chi}{2}\delta_x\,\partial_x\ln v(x)+\mathcal{O}(\delta_x^3).
\end{aligned}
\end{displaymath}
Therefore,
\begin{displaymath}
 m_1(x,t)=\frac{\chi}{2}\delta_x^2\,\partial_x\ln v(x,t)+\mathcal{O}(\delta_x^4),\qquad
 m_2(x,t)=(\delta_x)^2.
\end{displaymath}
Under the anomalous diffusive scaling $\delta_{x}^2\sim \tau^{\alpha}$ with $\alpha\in(0,1)$, the effective drift and diffusion coefficients are formally identified as\cite{Risken89}
\begin{displaymath}
D^{(1)}(x,t)=\lim_{\tau\to0}\frac{m_1(x,t)}{\tau^{\alpha}},\qquad
D^{(2)}(x,t)=\lim_{\tau\to0}\frac{m_2(x,t)}{2\tau^{\alpha}}.
\end{displaymath}
Introducing the generalized diffusion coefficient $\mathcal{D}=\frac{\delta_x^2}{2\tau^\alpha}$, we obtain
\begin{displaymath}
D^{(1)}(x,t)=\chi\mathcal{D}\,\partial_x\ln v(x,t),
\qquad
D^{(2)}(x,t)=\mathcal{D}.
\end{displaymath}
According to the standard Kramers-Moyal formalism for continuous diffusion limits (see, e.g., Ch.~4 in Ref.~\refcite{Risken89}), truncation at second order yields the Fokker-Planck equation
\begin{equation}\label{eq:FPE-KM}
\frac{\partial P(x,t)}{\partial t}
=-\frac{\partial}{\partial x}\big(D^{(1)}(x,t)P(x,t)\big)+\frac{\partial^2}{\partial x^2}\big(D^{(2)}(x,t)P(x,t)\big),
\end{equation}
which is statistically equivalent to the It\^o Langevin equation
\begin{equation}\label{eq:Langevin-KM}
\dot{x}(t)=D^{(1)}(x,t)+\sqrt{2D^{(2)}(x,t)}\,\dot{\mathcal{W}}(t).
\end{equation}
Substituting the expressions for $D^{(1)}$ and $D^{(2)}$ into \eqref{eq:Langevin-KM}, we arrive at
\begin{displaymath}
\dot{x}(t)=\chi\mathcal{D}\,\partial_x\ln v(x,t)+\sqrt{2\mathcal{D}}\,\dot{\mathcal{W}}(t).
\end{displaymath}
Therefore, the continuum limit of the weakly biased random walk naturally generates a macroscopic drift directed along the logarithmic gradient of the slime field, which quantitatively describes the tendency of Myxobacteria to aggregate toward regions with stronger environmental signals.

\subsection{Key Supporting Lemmas}
\begin{lemma}\label{lem:convexl1}
For $\alpha\in(0,1)$, and $f(t)\in AC(\bar{D})$, there holds
\begin{equation}\label{eq:m1mL1}
{_{0}^{C}\mathfrak{D}^{\alpha}_t}|f(t)|
\leq \mathrm{sgn}\big(f(t)\big)\cdot{_{0}^{C}\mathfrak{D}^{\alpha}_t}f(t),
\end{equation}
where $\mathrm{sgn}(\cdot)$ denotes the standard sign function, and $AC(\cdot)$ the class of absolutely continuous functions.
\end{lemma}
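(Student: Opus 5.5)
The plan is to use the difference-quotient representation of the Caputo derivative that the paper already quoted in the proof of Lemma \ref{Lem:varieq}. For $f\in AC[0,T]$ and $t>0$,
\begin{displaymath}
{_{0}^{C}\mathfrak{D}^{\alpha}_t}g(t)=\frac{g(t)-g(0)}{\Gamma(1-\alpha)t^{\alpha}}+\frac{\alpha}{\Gamma(1-\alpha)}\int_0^t\frac{g(t)-g(s)}{(t-s)^{\alpha+1}}\,\mathrm{d}s .
\end{displaymath}
This identity comes from integrating by parts in the defining integral $\frac{1}{\Gamma(1-\alpha)}\int_0^t (t-s)^{-\alpha}g'(s)\,\mathrm{d}s$, written as $-\frac{1}{\Gamma(1-\alpha)}\int_0^t (t-s)^{-\alpha}\partial_s\big(g(t)-g(s)\big)\,\mathrm{d}s$. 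The boundary term at $s=t$ vanishes, and the integral converges, at least for a.e.\ $t$ and in particular where $g$ is Hölder continuous. I will apply the formula to both $g=f$ and $g=|f|$. Since $|f|$ is again absolutely continuous, with $|f|'=\mathrm{sgn}(f)f'$ a.e., both Caputo derivatives exist for a.e.\ $t$, and the inequality is understood in that a.e.\ sense.

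The key step is a pointwise elementary inequality. Fix $t$ and set $\sigma:=\mathrm{sgn}(f(t))$, with the convention $\mathrm{sgn}(0)=0$. For every $s\in[0,t]$,
\begin{displaymath}
|f(t)|-|f(s)|\le \sigma\big(f(t)-f(s)\big).
\end{displaymath}
This holds because $\sigma f(t)=|f(t)|$ and $\sigma f(s)\le |f(s)|$, the latter using $|\sigma|\le1$. It is exactly the subgradient inequality for the convex function $|\cdot|$, with $\sigma\in\partial|\cdot|(f(t))$.

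I then apply this inequality with $s=0$ in the first term and under the integral against the positive kernel $(t-s)^{-\alpha-1}$ in the second. Since $\sigma$ does not depend on $s$, it factors out of both terms, which gives
\begin{displaymath}
{_{0}^{C}\mathfrak{D}^{\alpha}_t}|f(t)|\le \sigma\,{_{0}^{C}\mathfrak{D}^{\alpha}_t}f(t).
\end{displaymath}
When $f(t)=0$, the inequality reads ${_{0}^{C}\mathfrak{D}^{\alpha}_t}|f|\le0$. This is consistent with the same computation, because every term $-|f(s)|$ is nonpositive.

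I expect the main obstacle to be justifying the integration by parts, that is, convergence of the hypersingular integral near $s=t$ for a merely absolutely continuous $f$. I would handle it by truncation. I would integrate by parts on $[0,t-\epsilon]$, which produces the boundary term $\epsilon^{-\alpha}\big(g(t)-g(t-\epsilon)\big)$ plus the truncated integral, and apply the pointwise inequality to the truncated expressions for $|f|$ and $f$. Since the truncated expressions are linear in $g$, the inequality also covers the boundary terms. I would then let $\epsilon\to0$. At Lebesgue points $t$ of $f'$, both $g(t)-g(t-\epsilon)=O(\epsilon)$ and $\epsilon^{1-\alpha}\to0$, so the boundary terms vanish. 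The truncated Caputo integrals converge to the full ones by dominated convergence, since $(t-s)^{-\alpha}|g'(s)|$ is integrable. This gives \eqref{eq:m1mL1} for a.e.\ $t$.
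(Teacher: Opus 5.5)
Your proposal is correct and follows the paper's proof: the paper also inserts the subgradient inequality $|f(t)|-|f(s)|\le \mathrm{sgn}(f(t))\,(f(t)-f(s))$, taken at $s=0$ and under the positive kernel $(t-s)^{-\alpha-1}$, into the hypersingular representation of the Caputo derivative and then factors out $\mathrm{sgn}(f(t))$. The only difference is that the paper cites Lemma 2.10 of Jin's monograph for that representation, whereas your truncation argument justifies it directly and makes explicit that, for merely absolutely continuous $f$, the inequality holds for a.e.\ $t$.
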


\begin{proof}
For the sign function $\mathrm{sgn}(\cdot)$ the sub-gradient property states that $|b|-|a|\geq\text{sgn}(a)(b-a)$ for all $a,b\in\mathbb{R}$. Taking $a=f(t)$ and $b=f(0)$ yields
\begin{equation}\label{sub-gradient1}
\frac{|f(t)|-|f(0)|}{\Gamma(1-\alpha)t^{\alpha}}\leq \frac{1}{\Gamma(1-\alpha)}\,\mathrm{sgn}\big(f(t)\big)\,\frac{f(t)-f(0)}{t^{\alpha}},\quad \forall~t>0.
\end{equation}
and similarly,
\begin{equation}\label{sub-gradient2}
\frac{|f(t)|-|f(s)|}{\Gamma(1-\alpha)(t-s)^{\alpha+1}}\leq \frac{1}{\Gamma(1-\alpha)}\,\mathrm{sgn}\big(f(t)\big)\,\frac{f(t)-f(s)}{(t-s)^{\alpha+1}},\quad \forall~s\in[0,t]\subset D.
\end{equation}

For $\alpha\in(0,1)$, Lemma 2.10 in Ref. \refcite{Jin21book} gives the following representation of the Caputo fractional derivative for $|f(t)|$,
\begin{equation}\label{Caputo}
{_{0}^{C}\mathfrak{D}^{\alpha}_t}\big|f(t)\big|
= \frac{1}{\Gamma(1-\alpha)}\left(\frac{|f(t)|-|f(0)|}{t^\alpha}+\alpha\int_0^t \frac{|f(t)|-|f(s)|}{(t-s)^{\alpha+1}}\,\mathrm{d}s \right).
\end{equation}
Substituting inequalities \eqref{sub-gradient1} and \eqref{sub-gradient2} into \eqref{Caputo} and carrying out straightforward sign manipulation leads to
\begin{displaymath}
{_{0}^{C}\mathfrak{D}^{\alpha}_t}\big|f(t)\big|
\leq \text{sgn}\big(f(t)\big)\cdot\frac{1}{\Gamma(1-\alpha)}\left(\frac{f(t)-f(0)}{t^\alpha}
+\alpha\int_0^t\frac{f(t)-f(s)}{(t-s)^{\alpha+1}}\,\mathrm{d}s\right).
\end{displaymath}
The expression inside the brackets coincides with the definition of ${_{0}^{C}\mathfrak{D}^{\alpha}_t}f(t)$. Therefore, the pointwise inequality \eqref{eq:m1mL1} is established.
\end{proof}

\begin{lemma}\label{Der:Lemma}
Let $\Omega\subset\mathbb{R}^d$ ($d\geq2$) be a bounded domain with smooth boundary, and let $\mathcal{A}=-\Delta$ denote the Neumann Laplacian on $L^p(\Omega)$ with $1<p<\infty$. Define $\mathcal{P}_{\alpha}(t)=t^{\alpha-1}E_{\alpha,\alpha}(-t^{\alpha}\mathcal{A})$. Then, for all $t>0$, there exist a constant $C>0$ such that
\begin{equation}\label{AE:estemat0}
\Bigl\|\mathcal{A}_\gamma^{\sigma}\mathcal{P}_\alpha^\gamma(t)\Bigr\|_{L^p\to L^p}
\le C\,t^{\alpha(1-\sigma)-m-1}, \qquad \sigma\in\{0,1/2,1\}.
\end{equation}
Furthermore, for $\gamma>0$ define $\mathcal{A}_\gamma:=\mathcal{A}+\gamma I$ and $\mathcal{P}^{\gamma}_{\alpha}(t)=t^{\alpha-1}E_{\alpha,\alpha}(-t^{\alpha}\mathcal{A}_\gamma)$. Then, for $t>0$, there exist additional constant $C>0$ such that
\begin{equation}\label{eq:LLv}
\Bigl\|\partial_t^{m}\mathcal{A}_\gamma^{\sigma}\mathcal{P}_\alpha^\gamma(t)\Bigr\|_{L^p\to L^p}
\le C\,t^{\alpha(1-\sigma)-m-1}(1+t^\alpha)^{-1},
\end{equation}
where $m=0,1$ and $\sigma\in\{0,1/2,1\}$.
\end{lemma}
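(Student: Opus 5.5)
The plan is to prove both families of bounds from the spectral (functional-calculus) representation of the resolvent families on $L^p(\Omega)$. We then pass the resulting scalar estimates to operator norms via the bounded $H^\infty$-calculus of the sectorial operators $\mathcal{A}$ and $\mathcal{A}_\gamma$.

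Concretely, we write
\[
\mathcal{P}_\alpha^\gamma(t)=\frac{1}{2\pi i}\int_{\Gamma_{\theta}} e^{zt}\,(z^\alpha+\mathcal{A}_\gamma)^{-1}\,\mathrm{d}z .
\]
Here $\Gamma_\theta$ is a Hankel contour with $\theta\in(\pi/2,\pi)$, chosen so that $z^\alpha$ stays in the resolvent sector of $-\mathcal{A}_\gamma$; this is possible because $\alpha\theta<\pi-\omega$ and $\mathcal{A}_\gamma$ is sectorial of angle $\omega<\pi/2$. This is just the inverse Laplace transform of the identity $\mathfrak{L}\{t^{\alpha-1}E_{\alpha,\alpha}(-t^\alpha\mathcal{A}_\gamma)\}=(z^\alpha+\mathcal{A}_\gamma)^{-1}$ already used to derive \eqref{sys:mild}. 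Differentiating under the integral $m$ times in $t$ produces a factor $z^m$. Applying $\mathcal{A}_\gamma^\sigma$ uses the moment-type bound
\[
\|\mathcal{A}_\gamma^\sigma(\lambda+\mathcal{A}_\gamma)^{-1}\|\le C|\lambda|^{\sigma-1},\qquad \sigma\in[0,1],
\]
valid on the sector. For $\sigma=0$ and $\sigma=1$ this is the standard resolvent estimate; for $\sigma=1/2$ it follows from the moment inequality (or interpolation) together with Remark~\ref{rmk:space}(v) for the identification with $W^{1,p}$.

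Next we parametrize $\Gamma_\theta$ by $z=re^{\pm i\theta}$ for $r\ge 1/t$, together with an arc $|z|=1/t$. This gives
\[
\|\partial_t^m\mathcal{A}_\gamma^\sigma\mathcal{P}_\alpha^\gamma(t)\|\le C\int_{\Gamma_\theta}|e^{zt}|\,|z|^{m}\,|z|^{\alpha(\sigma-1)}\,|\mathrm{d}z| .
\]
The substitution $s=|z|t$ then yields $t^{\alpha(1-\sigma)-m-1}$ times a convergent integral. This proves \eqref{AE:estemat0} and the part of \eqref{eq:LLv} without the damping factor, uniformly for all $t>0$. The same argument with $\mathcal{A}$ in place of $\mathcal{A}_\gamma$ gives the first estimate, restricted to the mean-zero part. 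On constants $\mathcal{A}$ acts as $0$, which matters only for $\sigma=0$; there the scalar function $t^{\alpha-1}E_{\alpha,\alpha}(0)\le Ct^{\alpha-1}$ is consistent with the claimed bound.

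The main obstacle is the extra factor $(1+t^\alpha)^{-1}$ in \eqref{eq:LLv}, i.e.\ the large-time decay, which requires exploiting $\gamma>0$. For $t\le1$ the factor is comparable to $1$, so the estimate above suffices. For $t\ge1$ we plan to use the lower spectral bound $\mathcal{A}_\gamma\ge\gamma$: the resolvent $(z^\alpha+\mathcal{A}_\gamma)^{-1}$ is holomorphic in a neighbourhood of $z=0$ along the whole sector. Hence we can deform the contour to pass through (or around) the origin, taking the arc $|z|=\epsilon$ with $\epsilon$ fixed instead of $1/t$. Expanding $(z^\alpha+\mathcal{A}_\gamma)^{-1}=\mathcal{A}_\gamma^{-1}-z^\alpha\mathcal{A}_\gamma^{-2}+O(|z|^{2\alpha})$ near $0$, the analytic leading term integrates to zero against $e^{zt}$. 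The first nonanalytic term $z^\alpha$ gives, by Watson's lemma on the Hankel contour, a contribution of order $t^{-\alpha-1-m}$, which equals $t^{\alpha(1-\sigma)-m-1}\cdot t^{-\alpha}$ times $t^{-\alpha(1-\sigma)}\le 1$. The remainder on the outer rays decays exponentially. We therefore obtain the $t^{-\alpha}$ gain, i.e.\ the factor $(1+t^\alpha)^{-1}$. Combining the two regimes completes the proof. If the contour expansion proves delicate in operator norm, the fallback is to use the subordination formula \eqref{eq:EDelta} with $e^{-\gamma s t^\alpha}$ decay and the identity \eqref{eq:identity}, differentiating in $t$ under the integral.
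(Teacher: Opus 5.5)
Your proposal is correct. Its skeleton is the same as the paper's: the Hankel-contour representation of $\partial_t^m\mathcal{A}_\gamma^\sigma\mathcal{P}_\alpha^\gamma(t)$, sectorial resolvent bounds, the contour with arc $|z|=1/t$, and the moment inequality for $\sigma=1/2$. The difference is in how you obtain the factor $(1+t^\alpha)^{-1}$.

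\textbf{The paper's route.} It never splits into time regimes. It keeps the $1/t$-arc contour for every $t>0$ and uses the $\gamma$-improved bound $\|(z^\alpha+\mathcal{A}_\gamma)^{-1}\|\le C(|z|^\alpha+\gamma)^{-1}$. For $\sigma=1$ it writes $\mathcal{A}_\gamma(z^\alpha+\mathcal{A}_\gamma)^{-1}=I-z^\alpha(z^\alpha+\mathcal{A}_\gamma)^{-1}$ and drops $\int_\Gamma e^{zt}z^m\,\mathrm{d}z=0$. The integrand is then bounded by $|e^{zt}||z|^{m+\alpha\sigma}(|z|^\alpha+\gamma)^{-1}$, and the damping comes out of the arc and the rays in a single computation. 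The case $\sigma=1/2$ follows from the moment inequality applied to the final operators.

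\textbf{Your route.} You split into $t\le1$ and $t\ge1$. For large $t$ you expand the resolvent around the invertible $\mathcal{A}_\gamma$, kill the analytic part by Cauchy's theorem, and read off $t^{-1-\alpha-m}$ from the $z^\alpha$ term. This gives more than the lemma: the decay $t^{-1-\alpha-m}$ holds for every $\sigma$. The paper's bound is sharp only for $\sigma=1$; at large $t$ it gives $t^{-1-m}$ for $\sigma=0$ and $t^{-1-m-\alpha/2}$ for $\sigma=1/2$. The price is an asymptotic step that needs care in operator norm.

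\textbf{Two cautions on executing it.}
\begin{itemize}
\item $(z^\alpha+\mathcal{A}_\gamma)^{-1}$ is not holomorphic at $z=0$, because $z^\alpha$ has a branch point there. It is only bounded near $0$.
\item You should not keep an arc of fixed radius $\epsilon$ as $t\to\infty$. On that arc $|e^{zt}|$ reaches $e^{\epsilon t}$, and the non-analytic remainder $O(|z|^{2\alpha})$ can only be estimated in norm. Either collapse the contour onto the two rays through the origin, which is allowed because the integrand is bounded there, or keep the radius at $1/t$.
\end{itemize}

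\textbf{A cleaner replacement for the Watson step.} Use the exact identity
\[
\mathcal{A}_\gamma^\sigma(z^\alpha+\mathcal{A}_\gamma)^{-1}=\mathcal{A}_\gamma^{\sigma-1}-z^\alpha\mathcal{A}_\gamma^{\sigma-1}(z^\alpha+\mathcal{A}_\gamma)^{-1},\qquad \sigma\in\{0,\tfrac12,1\}.
\]
The first term integrates to zero against $e^{zt}z^m$. The second is bounded by $C|z|^\alpha$ on the $1/t$-arc contour, since $\mathcal{A}_\gamma^{\sigma-1}$ is bounded for $\sigma\le1$. This gives $Ct^{-1-\alpha-m}$ for all $t>0$ in one line. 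It is essentially the paper's computation with one additional subtraction, and it also makes your split into regimes unnecessary.
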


\begin{proof}
We first prove the estimates for $\mathcal{P}_\alpha(t)$. By the Hankel contour representation of the Mittag-Leffler function (see, e.g., Ref.~\refcite{Podlubny99}), $E_{\alpha,\alpha}(\tau)
=\frac{1}{2\pi i}\int_{\Gamma}\frac{e^\zeta}{\zeta^\alpha-\tau}\,\mathrm{d}\zeta$, where $\Gamma$ is a sectorial contour contained in $\Sigma_{\theta}:=\{z\in\mathbb{C}:|\arg(z)|<\theta,z\neq0\}$ with $\theta\in(\pi/2,\pi)$. Since the Neumann Laplacian $\mathcal A=-\Delta$ on $L^p(\Omega)$ is sectorial, this representation extends to $\mathcal{A}$. By substituting $\tau=-t^\alpha\mathcal{A}$, performing the variable transformation $\zeta=z t$, and combining these with the result $\mathcal{L}\left\{t^{\alpha-1} E_{\alpha,\alpha}(-\lambda t^\alpha)\right\}=\frac{1}{z^{\alpha}+\lambda}$, we obtain
\begin{displaymath}
\mathcal P_\alpha(t)
=\frac{1}{2\pi i}\int_\Gamma
e^{z t}\big(z^\alpha+\mathcal{A}\big)^{-1}\,\mathrm{d}z.
\end{displaymath}

Since $\mathcal{A}$ is sectorial, there exists $\theta\in(\pi/2,\pi)$ such that $\|(z+\mathcal{A})^{-1}\|_{L^p\to L^p}\le \frac{C}{|z|}$ for all $z\notin\Sigma_{\theta}$. We choose $\Gamma$ to be the sectorial contour $\Gamma=\{z\in \mathbb{C}:|\arg(z)|=\theta, r=|z|\geq 1/t\}\cup\{z\in\mathbb{C}:r=|z|=1/t,|\arg (z)|\leq\theta\}$ oriented counter-clockwise. For $z\in\Gamma\subset\Sigma_\theta$, the term $z^\alpha$ also lies in a sector avoiding the negative real axis, and thus the resolvent bound yields $\|(z^{\alpha}+\mathcal{A})^{-1}\|_{L^p\to L^p}\le C|z|^{-\alpha}$. Moreover, $\Re(z)\le-c|z|$ on the branches of $\Gamma$, leading to  $|e^{z t}|\leq e^{-ct|z|}$. Following standard estimates by the Laplace transform method (see, e.g., Refs.~\refcite{Jin21book,Ma26,Ma23,Ma23b}), for $\sigma=0$ and $\sigma=1$ the estimate in \eqref{AE:estemat0} can be directly proved analogously to the proof of Theorem 6.4 in Ref.~\refcite{Jin21book}. Furthermore, for $1<p<\infty$, by the interpolation inequality (moment inequality) of sectorial operators (see e.g., Proposition~6.6.4 in Ref.~\refcite{Haase06}), we have $\|\mathcal{A}^{1/2}u\|_{L^p}\le C\|u\|_{L^p}^{1/2}\|\mathcal{A}u\|_{L^p}^{1/2}$. Applying this, we derive
\begin{align*}
\big\|\mathcal P_\alpha(t)\big\|_{L^p\to W^{1,p}}
\le C \left(t^{\alpha-1}\right)^{1/2}\left(t^{-1}\right)^{1/2}
\le C t^{\alpha/2-1}.
\end{align*}
This completes the estimate of \eqref{AE:estemat0}.

We now proceed to prove \eqref{eq:LLv}. For $m\in\{0,1\}$ and $\sigma\in\{0,\frac12,1\}$, the Hankel contour representation yields
\begin{displaymath}
\partial_t^{m}\mathcal{A}_\gamma^{\sigma}\mathcal{P}_\alpha^\gamma(t)
=\frac{1}{2\pi i}\int_\Gamma e^{zt}z^m\mathcal{A}_\gamma^\sigma \bigl(z^\alpha+\mathcal{A}_\gamma\bigr)^{-1} \,\mathrm{d}z,
\end{displaymath}
where $\Gamma$ is a standard sectorial contour, as denoted previously. We first consider the cases $\sigma=0$ and $\sigma=1$. Since $\mathcal{A}_\gamma$ is sectorial and $\sigma(\mathcal{A}_\gamma)\subset[\gamma,\infty)$, the resolvent estimate $\|(z^\alpha+\mathcal{A}_\gamma)^{-1}\|_{L^p\to L^p}\le C(|z|^\alpha+\gamma)^{-1}$ holds uniformly for all $z\in\Gamma$. Moreover, the contour $\Gamma$ can be chosen such that $\Re(z)\le -c|z|$ for $z\in\Gamma$, and hence $|e^{zt}|\le e^{-ct|z|}$. For $\sigma=0$, the resolvent estimate directly gives $\left\|\mathcal{A}_\gamma^0(z^\alpha+\mathcal{A}_\gamma)^{-1}\right\|_{L^p\to L^p}\le C(|z|^\alpha+\gamma)^{-1}$. For $\sigma=1$, using the identity $\mathcal{A}_\gamma(z^\alpha+\mathcal{A}_\gamma)^{-1}=I-z^\alpha(z^\alpha+\mathcal{A}_\gamma)^{-1}$, and observing that $\int_\Gamma e^{zt}z^m\mathrm{d}z=0$ by Cauchy's theorem, we obtain
\begin{displaymath}
\partial_t^m\mathcal{A}_\gamma\mathcal{P}_\alpha^\gamma(t)
= -\frac1{2\pi i}\int_\Gamma e^{zt}z^{m+\alpha}(z^\alpha+\mathcal{A}_\gamma)^{-1}\,\mathrm{d}z.
\end{displaymath}
Therefore, both cases $\sigma=0$ and $\sigma=1$, the integrand admits the unified bound $|e^{zt}||z|^{m+\alpha\sigma}\bigl(|z|^\alpha+\gamma\bigr)^{-1}$. Consequently,
\begin{displaymath}
\bigl\|\partial_t^{m}\mathcal{A}_\gamma^\sigma \mathcal{P}_\alpha^\gamma(t)\bigr\|_{L^p\to L^p}
\le C \int_{1/t}^\infty e^{tr\cos(\theta)}\frac{r^{m+\alpha\sigma}}{r^\alpha+\gamma}\,\mathrm{d}r+\int_{-\theta}^{\theta}e^{t\Re(z)}\frac{|z|^{m+\alpha\sigma}}{|z|^{\alpha}+\gamma}|\mathrm{d}z|:=I_1+I_2,
\end{displaymath}
where $I_1$ corresponds to the integration over the branches and $I_2$ corresponds to the circular arc.

For the circular arc, we have $|z|=1/t$ and $|\mathrm{d}z|=\frac{1}{t}\mathrm{d}\varphi$. Furthermore, $\Re(z)\le|z|\cos\theta$, so $|e^{zt}|\le e^{\cos\theta}\le C$. Thus, we have
\begin{displaymath}
I_2 = \int_{-\theta}^{\theta} \big|e^{zt}\big| \frac{|z|^{m+\alpha\sigma}}{|z|^\alpha+\gamma} |\mathrm{d}z|
\le C \int_{-\theta}^{\theta} \frac{t^{-(m+\alpha\sigma)}}{t^{-\alpha}+\gamma} \frac{1}{t} \,\mathrm{d}\varphi
\le C t^{-m-\alpha\sigma-1} \frac{1}{t^{-\alpha}+\gamma}.
\end{displaymath}
Since $\frac{1}{t^{-\alpha}+\gamma}=\frac{t^\alpha}{1+\gamma t^\alpha}\le C t^\alpha(1+t^\alpha)^{-1}$, we obtain
\begin{displaymath}
I_2 \le C t^{\alpha(1-\sigma)-m-1} (1+t^\alpha)^{-1}.
\end{displaymath}

For the branches, we have $\Re(z)\le -c|z|$ which implies $|e^{zt}|\le e^{-ctr}$. Thus, making the substitution $s=tr$, we obtain
\begin{displaymath}
I_1 \le C t^{\alpha(1-\sigma)-m-1} \int_1^\infty e^{-cs} \frac{s^{m+\alpha\sigma}} {s^\alpha+\gamma t^\alpha}\,\mathrm{d}s.
\end{displaymath}
To estimate the integral, we distinguish two cases.
\begin{itemize}
  \item For $0<t\le1$: Since $s^\alpha+\gamma t^\alpha\ge s^\alpha$, it follows that
  \begin{displaymath}
   \int_1^\infty e^{-cs}\frac{s^{m+\alpha\sigma}}{s^\alpha+\gamma t^\alpha}\,\mathrm{d}s
   \le \int_1^\infty e^{-cs}s^{m+\alpha(\sigma-1)}\,\mathrm{d}s\leq C.
  \end{displaymath}
  The last inequality holds for $m+\alpha(\sigma-1)\ge-\alpha>-1$, ensuring the above Gamma-type integral converges and yielding$\int_0^\infty e^{-cs}\frac{s^{m+\alpha\sigma}}{s^\alpha+\gamma t^\alpha}\,\mathrm{d}s\le C$.
  \item For $t>1$: We have $s^\alpha+\gamma t^\alpha\ge\gamma t^\alpha$, which gives
  \begin{displaymath}
   \int_1^\infty e^{-cs}\frac{s^{m+\alpha\sigma}}{s^\alpha+\gamma t^\alpha}\,\mathrm{d}s
   \le \frac1{\gamma t^\alpha}\int_1^\infty e^{-cs} s^{m+\alpha\sigma}\,\mathrm{d}s\le Ct^{-\alpha}.
  \end{displaymath}
  \end{itemize}
Combining the above estimates, we conclude that $\int_1^\infty e^{-cs}\frac{s^{m+\alpha\sigma}}{s^\alpha+\gamma t^\alpha}\mathrm{d}s\le C(1+t^\alpha)^{-1}$. Hence, for $\sigma\in\{0,1\}$, the following holds
\begin{displaymath}
\bigl\|\partial_t^{m}\mathcal{A}_\gamma^\sigma\mathcal{P}_\alpha^\gamma(t)\bigr\|_{L^p\to L^p}
\le C t^{\alpha(1-\sigma)-m-1}(1+t^\alpha)^{-1}.
\end{displaymath}

It remains to consider the case $\sigma=\frac{1}{2}$. By the moment inequality for sectorial operators, $\|\mathcal{A}_\gamma^{1/2}u\|_{L^p}\le C\|u\|_{L^p}^{1/2}\|\mathcal{A}_\gamma u\|_{L^p}^{1/2}$, we obtain
\begin{align*}
\bigl\|\partial_t^{m}\mathcal{A}_\gamma^{1/2}\mathcal{P}_\alpha^\gamma(t)\bigr\|_{L^p\to L^p}
&= \bigl\|\mathcal{A}_\gamma^{1/2}\big(\partial_t^m\mathcal{P}_\alpha^\gamma(t)\big)\bigr\|_{L^p\to L^p} \\
&\le C \bigl\|\partial_t^m\mathcal{P}_\alpha^\gamma(t)\bigr\|_{L^p\to L^p}^{1/2}\bigl\|\mathcal{A}_\gamma \partial_t^m \mathcal{P}_\alpha^\gamma(t)\bigr\|_{L^p\to L^p}^{1/2}.
\end{align*}
Substituting the estimates already established for $\sigma=0$ and $\sigma=1$, we arrive at
\begin{align*}
\bigl\|\partial_t^{m}\mathcal{A}_\gamma^{1/2}\mathcal{P}_\alpha^\gamma(t)\bigr\|_{L^p\to L^p}
&\le C \Bigl(t^{\alpha-m-1}(1+t^\alpha)^{-1}\Bigr)^{1/2}\Bigl(t^{-m-1}(1+t^\alpha)^{-1}\Bigr)^{1/2} \\
&= C t^{\alpha/2-m-1}(1+t^\alpha)^{-1}.
\end{align*}
Consequently, for all $m\in\{0,1\}$ and $\sigma\in\{0,\frac12,1\}$,
\begin{displaymath}
\bigl\| \partial_t^{m}\mathcal{A}_\gamma^\sigma\mathcal{P}_\alpha^\gamma(t)\bigr\|_{L^p\to L^p}
\le C t^{\alpha(1-\sigma)-m-1}(1+t^\alpha)^{-1}.
\end{displaymath}
This completes the proof.
\end{proof}

\begin{lemma}[A scalar comparison principle]\label{lem:Caputo-logistic}
Let $\alpha\in(0,1)$, $a,b>0$, and $\theta>0$. Suppose that $Y\ge0$ is
sufficiently regular so that
\begin{displaymath}
{_0^C D_t^\alpha}Y(t)
=\frac{Y(t)-Y(0)}{\Gamma(1-\alpha)t^\alpha}+
\frac{\alpha}{\Gamma(1-\alpha)}\int_0^t\frac{Y(t)-Y(s)}{(t-s)^{\alpha+1}}\,\mathrm{d}s,\qquad t>0.
\end{displaymath}
Assume moreover that
${_0^C\mathcal{D}_t^\alpha}Y(t)\le aY(t)-bY(t)^{1+\theta}$ for $t>0$.
Then
\begin{displaymath}
\sup_{t\ge0}Y(t)
\le\max\left\{Y(0),\left(\frac ab\right)^{1/\theta}\right\}.
\end{displaymath}
\end{lemma}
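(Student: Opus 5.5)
The plan is a maximum-principle argument at the time of the maximum, based on the pointwise representation of the Caputo derivative assumed in the statement. Set $K:=\max\{Y(0),(a/b)^{1/\theta}\}$. Since ``sufficiently regular'' will be read as including continuity of $Y$ on $[0,\infty)$ (which is how the lemma is used for $\mathcal{Y}(t)$ in Lemma~\ref{Lem:bounddde}), it suffices to show $\max_{[0,T]}Y\le K$ for every finite $T>0$. Then letting $T\to\infty$ gives the claimed bound on $\sup_{t\ge0}Y(t)$.

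Fix $T>0$ and argue by contradiction. Suppose $\max_{[0,T]}Y>K$. By continuity, choose $t^*\in[0,T]$ with $Y(t^*)=\max_{[0,T]}Y$. Since $Y(t^*)>K\ge Y(0)$, we have $t^*>0$. Moreover, $Y(t^*)\ge Y(s)$ for all $s\in[0,t^*]$, because $t^*$ maximizes $Y$ over the larger interval $[0,T]$.

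Now insert $t=t^*$ into the representation of ${_0^C D_t^\alpha}Y$.
\begin{itemize}
\item The boundary term $\frac{Y(t^*)-Y(0)}{\Gamma(1-\alpha)(t^*)^\alpha}$ is strictly positive, since $Y(t^*)>Y(0)$.
\item The memory integral $\frac{\alpha}{\Gamma(1-\alpha)}\int_0^{t^*}\frac{Y(t^*)-Y(s)}{(t^*-s)^{\alpha+1}}\,\mathrm{d}s$ has a nonnegative integrand. It is well defined (possibly $+\infty$, which only helps) by the assumed regularity.
\end{itemize}
Hence ${_0^C D_t^\alpha}Y(t^*)>0$.

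On the other hand, $Y(t^*)>K\ge(a/b)^{1/\theta}$ gives $a-bY(t^*)^\theta<0$. Together with $Y(t^*)>0$, this yields
\[
aY(t^*)-bY(t^*)^{1+\theta}=Y(t^*)\bigl(a-bY(t^*)^{\theta}\bigr)<0.
\]
The assumed differential inequality at $t^*$ would then force ${_0^C D_t^\alpha}Y(t^*)<0$, which is a contradiction. Therefore $\max_{[0,T]}Y\le K$.

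The only delicate point is regularity. The argument needs two things at the maximizing time $t^*$: the existence of the maximizer, which requires continuity of $Y$, and the validity of the pointwise representation at $t^*$, including convergence of the singular integral near $s=t^*$. Both are covered by the hypothesis that $Y$ is sufficiently regular for the representation to hold for all $t>0$. If one preferred weaker assumptions, one would instead mollify, or compare with $K+\varepsilon$ and use the first crossing time, but that is not needed here.
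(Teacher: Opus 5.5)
Your proposal is correct and follows the paper's proof essentially verbatim: take $K=\max\{Y(0),(a/b)^{1/\theta}\}$, suppose a maximizer $t^*\in(0,T]$ of $Y$ on $[0,T]$ has $Y(t^*)>K$, read off from the representation formula that ${_0^C D_t^\alpha}Y(t^*)\ge 0$ (you get strict positivity from the boundary term, the paper only nonnegativity), and contradict $aY(t^*)-bY(t^*)^{1+\theta}<0$. Your explicit remark that continuity of $Y$ is needed for the maximizer to exist is a point the paper uses only tacitly.
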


\begin{proof}
Set $K:=\max\left\{Y(0),\left(\frac ab\right)^{1/\theta}\right\}$. We prove that $Y(t)\le K$ for all $t\ge0$. Suppose otherwise. Then there exist
$T>0$ and $\tilde{t}\in[0,T]$ such that
\begin{displaymath}
    Y(\,\tilde{t}\,)=\max_{0\le t\le T}Y(t)>K.
\end{displaymath}
Since $Y(0)\le K$, we have $\tilde{t}>0$. Moreover, $Y(\,\tilde{t}\,)\ge Y(s)$, $0\le s\le \tilde{t}$. Therefore, by the above representation formula,
\begin{displaymath}
{_0^C D_t^\alpha}Y(\,\tilde{t}\,)
=\frac{Y(\,\tilde{t}\,)-Y(0)}{\Gamma(1-\alpha)\tilde{t}^\alpha}+
\frac{\alpha}{\Gamma(1-\alpha)}
\int_0^{\tilde{t}}\frac{Y(\,\tilde{t}\,)-Y(s)}{(\tilde{t}-s)^{\alpha+1}}\,\mathrm{d}s\ge0.
\end{displaymath}
On the other hand,
\begin{displaymath}
Y(\,\tilde{t}\,)>\left(\frac{a}{b}\right)^{1/\theta}
\end{displaymath}
implies
\begin{displaymath}
aY(\,\tilde{t}\,)-bY(\,\tilde{t}\,)^{1+\theta}
=Y(\,\tilde{t}\,)\left(a-bY(\,\tilde{t}\,)^\theta\right)<0,
\end{displaymath}
which contradicts the assumed differential inequality. Hence $Y(t)\le K$ for all $t\ge0$.
\end{proof}

\section*{Declarations}
\subsection*{Conflict of interest}
The authors declared that they have no conflict of interest.

\section*{Acknowledgments}
The first author would like to express sincere gratitude to Prof. Tiejun Li for his insightful guidance on AI4SC. Fugui Ma is supported by the Peking University Boya Postdoctoral Fellowship. Lei Wu is supported by NSF under grant DMS-2405161.

\bibliography{References}

@article{Adler1966,
  title = {Chemotaxis in bacteria: {M}otile {E}scherichia coli
            migrate in bands that are influenced by oxygen and organic nutrients.},
  volume = {153},
  ISSN = {1095-9203},
  url = {http://dx.doi.org/10.1126/science.153.3737.708},
  DOI = {10.1126/science.153.3737.708},
  number = {3737},
  journal = {Science},
  publisher = {American Association for the Advancement of Science (AAAS)},
  author = {Adler, Julius},
  year = {1966},
  month = aug,
  pages = {708--716}
}

@book {Adams03,
    AUTHOR = {Adams, Robert A. and Fournier, John J. F.},
     TITLE = {Sobolev Spaces},
    SERIES = {},
    VOLUME = {140},
   EDITION = {Second},
 PUBLISHER = {Elsevier/Academic Press, Amsterdam},
      YEAR = {2003},
     PAGES = {},
      ISBN = {},
   MRCLASS = {},
  MRNUMBER = {},
}

@article {Adam-2014,
    AUTHOR = {Adam, Kingma DP Ba J},
     TITLE = {A method for stochastic optimization},
   JOURNAL = {arXiv:1412.6980},
  FJOURNAL = {arXiv:1412.6980},
    VOLUME = {},
      YEAR = {2014},
     PAGES = {},
      ISSN = {},
   MRCLASS = {},
  MRNUMBER = {},
MRREVIEWER = {},
       DOI = {},
       URL = {},
}

@article {Aida05,
    AUTHOR = {Aida, Masashi and Osaki, Koichi and Tsujikawa, Tohru and Yagi,
              Atsushi and Mimura, Masayasu},
     TITLE = {Chemotaxis and growth system with singular sensitivity function},
   JOURNAL = {Nonlinear Anal.-Real World Appl.},
  FJOURNAL = {Nonlinear Analysis. Real World Applications. An International
              Multidisciplinary Journal},
    VOLUME = {6},
      YEAR = {2005},
    NUMBER = {2},
     PAGES = {323--336},
      ISSN = {1468-1218,1878-5719},
   MRCLASS = {92C17 (35K50 35K55)},
  MRNUMBER = {2111657},
MRREVIEWER = {Evelyn\ Sander},
       DOI = {10.1016/j.nonrwa.2004.08.011},
       URL = {https://doi.org/10.1016/j.nonrwa.2004.08.011},
}

@article{Atilim18,
  author  = {Atilim Gunes Baydin and Barak A. Pearlmutter and Alexey Andreyevich Radul and Jeffrey Mark Siskind},
  title   = {Automatic differentiation in machine learning: A Survey},
  journal = {J. Mach. Learn. Res.},
  year    = {2018},
  volume  = {18},
  number  = {153},
  pages   = {1--43},
}

@article {Angstmann21,
    AUTHOR = {Angstmann, Christopher N. and Erickson, Austen M. and Henry,
              Bruce I. and McGann, Anna V. and Murray, John M. and Nichols,
              James A.},
     TITLE = {A general framework for fractional order compartment models},
   JOURNAL = {SIAM Rev.},
  FJOURNAL = {SIAM Review},
    VOLUME = {63},
      YEAR = {2021},
    NUMBER = {2},
     PAGES = {375--392},
      ISSN = {0036-1445},
   MRCLASS = {60K40 (34A08 60G22 92C45 92D30)},
  MRNUMBER = {4253793},
       DOI = {10.1137/21M1398549},
}

@book {Applebaum09,
    AUTHOR = {Applebaum, David},
     TITLE = {L\'{e}vy Processes And Stochastic Calculus},
    SERIES = {},
    VOLUME = {116},
   EDITION = {Second},
 PUBLISHER = {Cambridge University Press, Cambridge},
      YEAR = {2009},
     PAGES = {xxx+460},
      ISBN = {978-0-521-73865-1},
   MRCLASS = {60-02 (60G44 60G51 60H05 60H30 60J35 60J60 60J65)},
  MRNUMBER = {2512800},
MRREVIEWER = {Dora Sele\v{s}i},
       DOI = {10.1017/CBO9780511809781},
}

@article {Alikhanov10,
    AUTHOR = {Alikhanov, A. A.},
     TITLE = {A priori estimates for solutions of boundary value problems
              for equations of fractional order},
   JOURNAL = {Differ. Uravn.},
  FJOURNAL = {Differentsial\cprime nye Uravneniya},
    VOLUME = {46},
      YEAR = {2010},
    NUMBER = {5},
     PAGES = {658--664},
      ISSN = {0374-0641},
   MRCLASS = {35R11 (26A33 35B45 35K20 35K57)},
  MRNUMBER = {2797545},
MRREVIEWER = {V.\ S.\ Rabinovich},
       DOI = {10.1134/S0012266110050058},
}

@article {Al-Refai22,
    AUTHOR = {Al-Refai, Mohammed and Luchko, Yuri},
     TITLE = {Comparison principles for solutions to the fractional
              differential inequalities with the general fractional
              derivatives and their applications},
   JOURNAL = {J. Differential Equations},
  FJOURNAL = {Journal of Differential Equations},
    VOLUME = {319},
      YEAR = {2022},
     PAGES = {312--324},
      ISSN = {0022-0396},
   MRCLASS = {34A08 (26A33 33E12 34A40 35S10 45K05)},
  MRNUMBER = {4389754},
MRREVIEWER = {Chunyan Luo},
       DOI = {10.1016/j.jde.2022.02.054},
}

@article {Arumugam21,
    AUTHOR = {Arumugam, Gurusamy and Tyagi, Jagmohan},
     TITLE = {Keller-{S}egel chemotaxis models: A review},
   JOURNAL = {Acta Appl. Math.},
  FJOURNAL = {Acta Applicandae Mathematicae},
    VOLUME = {171},
      YEAR = {2021},
     PAGES = {6, 82},
      ISSN = {0167-8019,1572-9036},
   MRCLASS = {92C17 (35A01 35B44 35D30 35K40 65M06 65M08)},
  MRNUMBER = {4188348},
       DOI = {10.1007/s10440-020-00374-2},
       URL = {https://doi.org/10.1007/s10440-020-00374-2},
}

@book {Amann19,
    AUTHOR = {Amann, Herbert},
     TITLE = {Linear and quasilinear parabolic problems. {V}ol. {II}},
    SERIES = {Monographs in Mathematics},
    VOLUME = {106},
      NOTE = {Function spaces},
 PUBLISHER = {Birkh\"{a}user/Springer, Cham},
      YEAR = {2019},
     PAGES = {xiv+464},
      ISBN = {978-3-030-11762-7; 978-3-030-11763-4},
   MRCLASS = {46-02 (35Kxx 42B35 46E35 46E40 46F05 46M35 46N20)},
  MRNUMBER = {3930629},
MRREVIEWER = {Jos\'{e}\ Luis\ Ansorena},
       DOI = {10.1007/978-3-030-11763-4},
       URL = {https://doi.org/10.1007/978-3-030-11763-4},
}

@article {Auscher01,
    AUTHOR = {Auscher, P. and Tchamitchian, Ph.},
     TITLE = {Square roots of elliptic second order divergence operators on
              strongly {L}ipschitz domains: {$L^p$} theory},
   JOURNAL = {Math. Ann.},
  FJOURNAL = {Mathematische Annalen},
    VOLUME = {320},
      YEAR = {2001},
    NUMBER = {3},
     PAGES = {577--623},
      ISSN = {0025-5831,1432-1807},
   MRCLASS = {47F05 (35B45 42B25)},
  MRNUMBER = {1846778},
MRREVIEWER = {Xuan\ Thinh\ Duong},
       DOI = {10.1007/PL00004487},
       URL = {https://doi.org/10.1007/PL00004487},
}

@article {Bellomo22,
    AUTHOR = {Bellomo, N. and Outada, N. and Soler, J. and Tao, Y. and
              Winkler, M.},
     TITLE = {Chemotaxis and cross-diffusion models in complex environments:
              Models and analytic problems toward a multiscale vision},
   JOURNAL = {Math. Models Methods Appl. Sci.},
  FJOURNAL = {Mathematical Models and Methods in Applied Sciences},
    VOLUME = {32},
      YEAR = {2022},
    NUMBER = {4},
     PAGES = {713--792},
      ISSN = {0218-2025,1793-6314},
   MRCLASS = {35A01 (35B40 35B44 35K55 35K57 35Q35 35Q92 92C17)},
  MRNUMBER = {4421216},
       DOI = {10.1142/S0218202522500166},
       URL = {https://doi.org/10.1142/S0218202522500166},
}

@article {Bezerra24,
    AUTHOR = {Bezerra, Mario and Cuevas, Claudio and Viana, Arl\'{u}cio},
     TITLE = {Local and global solutions for a subdiffusive
              parabolic-parabolic {K}eller-{S}egel system},
   JOURNAL = {Z. Angew. Math. Phys.},
  FJOURNAL = {Zeitschrift f\"{u}r Angewandte Mathematik und Physik. ZAMP.
              Journal of Applied Mathematics and Physics. Journal de
              Math\'{e}matiques et de Physique Appliqu\'{e}es},
    VOLUME = {75},
      YEAR = {2024},
    NUMBER = {5},
     PAGES = {Paper No. 172, 30},
      ISSN = {0044-2275,1420-9039},
   MRCLASS = {35A01 (35B44 35K55 35Q92 35R11 92C17)},
  MRNUMBER = {4790970},
       DOI = {10.1007/s00033-024-02316-6},
       URL = {https://doi.org/10.1007/s00033-024-02316-6},
}

@book {Bishop24,
    AUTHOR = {Bishop, Christopher M. and Bishop, Hugh},
     TITLE = {Deep Learning: Foundations and Concepts},
 PUBLISHER = {Springer, Cham},
      YEAR = {2024},
     PAGES = {xx+649},
      ISBN = {978-3-031-45467-7; 978-3-031-45468-4},
   MRCLASS = {68-01 (68T07)},
  MRNUMBER = {4719738},
       DOI = {10.1007/978-3-031-45468-4},
}

@article{Bouvard2022,
  title = {Direct measurement of the aerotactic response in a bacterial suspension},
  volume = {106},
  ISSN = {2470-0053},
  url = {http://dx.doi.org/10.1103/PhysRevE.106.034404},
  DOI = {10.1103/physreve.106.034404},
  number = {3},
  journal = {Phys. Rev. E},
  publisher = {American Physical Society (APS)},
  author = {Bouvard,  J. and Douarche,  C. and Mergaert,  P. and Auradou,  H. and Moisy,  F.},
  year = {2022},
  PAGES = {034404},
}

@book {Brezis11,
    AUTHOR = {Brezis, Haim},
     TITLE = {Functional Analysis, {S}obolev Spaces And Partial Differential Equations},
    SERIES = {Universitext},
 PUBLISHER = {Springer, New York},
      YEAR = {2011},
     PAGES = {xiv+599},
      ISBN = {978-0-387-70913-0},
   MRCLASS = {35-01 (46-01 46E35 46N20 47F05)},
  MRNUMBER = {2759829},
}

@article {Bregman67,
    AUTHOR = {Br\`egman, L. M.},
     TITLE = {A relaxation method of finding a common point of convex sets
              and its application to the solution of problems in convex
              programming},
   JOURNAL = {U.S.S.R. Comput. Math. Math. Phys.},
  FJOURNAL = {\v{Z}urnal Vy\v{c}islitel\cprime no\u{\i} Matematiki i
              Matemati\v{c}esko\u{\i} Fiziki},
    VOLUME = {7},
      YEAR = {1967},
     PAGES = {200--217},
      ISSN = {0044-4669},
   MRCLASS = {90.60 (65.00)},
  MRNUMBER = {215617},
       DOI = {https://doi.org/10.1016/0041-5553(67)90040-7},
MRREVIEWER = {P.\ Johansen},
}

@article{BERG1972,
  title = {Chemotaxis in {E}scherichia coli analysed by three-dimensional tracking},
  volume = {239},
  ISSN = {1476-4687},
  url = {http://dx.doi.org/10.1038/239500a0},
  DOI = {10.1038/239500a0},
  number = {5374},
  journal = {Nature},
  publisher = {Springer Science and Business Media LLC},
  author = {Berg,  Howard C. and Brown, Douglas A.},
  year = {1972},
  month = oct,
  pages = {500--504}
}

@article{Budrene1995,
  title = {Dynamics of formation of symmetrical patterns by chemotactic bacteria},
  volume = {376},
  ISSN = {1476-4687},
  url = {http://dx.doi.org/10.1038/376049a0},
  DOI = {10.1038/376049a0},
  number = {6535},
  journal = {Nature},
  publisher = {Springer Science and Business Media LLC},
  author = {Budrene, Elena O. and Berg, Howard C.},
  year = {1995},
  month = jul,
  pages = {49--53}
}

@article {Burenkov02,
    AUTHOR = {Burenkov, V. I. and Davies, E. B.},
     TITLE = {Spectral stability of the {N}eumann {L}aplacian},
   JOURNAL = {J. Differential Equations},
  FJOURNAL = {Journal of Differential Equations},
    VOLUME = {186},
      YEAR = {2002},
    NUMBER = {2},
     PAGES = {485--508},
      ISSN = {0022-0396,1090-2732},
   MRCLASS = {35P15 (35J15 47B25 47F05)},
  MRNUMBER = {1942219},
MRREVIEWER = {B.\ Hellwig},
       DOI = {10.1016/S0022-0396(02)00033-5},
       URL = {https://doi.org/10.1016/S0022-0396(02)00033-5},
}

@article {Bezerra22,
    AUTHOR = {Bezerra, Mario and Cuevas, Claudio and Silva, Clessius and
              Soto, Herme},
     TITLE = {On the fractional doubly parabolic {K}eller-{S}egel system
              modelling chemotaxis},
   JOURNAL = {Sci. China Math.},
  FJOURNAL = {Science China. Mathematics},
    VOLUME = {65},
      YEAR = {2022},
    NUMBER = {9},
     PAGES = {1827--1874},
      ISSN = {1674-7283,1869-1862},
   MRCLASS = {35R11 (35A01 35B40 35K55 92C17)},
  MRNUMBER = {4470356},
       DOI = {10.1007/s11425-020-1846-x},
       URL = {https://doi.org/10.1007/s11425-020-1846-x},
}

@article{chung1996,
  title={Logarithmic Harnack inequalities},
  author={Chung, Fan RK and Yau, Shing-Tung},
  journal={Math. Res. Lett.},
  volume={3},
  number={6},
  pages={793--812},
  year={1996}
}

@article {Costa23,
    AUTHOR = {Costa, Masterson and Cuevas, Claudio and Silva, Clessius and
              Soto, Herme},
     TITLE = {Well-posedness and blow-up of the fractional {K}eller-{S}egel
              model on domains},
   JOURNAL = {Math. Nachr.},
  FJOURNAL = {Mathematische Nachrichten},
    VOLUME = {296},
      YEAR = {2023},
    NUMBER = {12},
     PAGES = {5569--5592},
      ISSN = {0025-584X},
   MRCLASS = {35K51 (35B44 35R11)},
  MRNUMBER = {4694570},
       DOI = {10.1002/mana.202200235},
       URL = {},
}

@article {Chertock08,
    AUTHOR = {Chertock, Alina and Kurganov, Alexander},
     TITLE = {A second-order positivity preserving central-upwind scheme for
              chemotaxis and haptotaxis models},
   JOURNAL = {Numer. Math.},
  FJOURNAL = {Numerische Mathematik},
    VOLUME = {111},
      YEAR = {2008},
    NUMBER = {2},
     PAGES = {169--205},
      ISSN = {0029-599X},
   MRCLASS = {92C17 (65M06 76M12)},
  MRNUMBER = {2456829},
       DOI = {10.1007/s00211-008-0188-0},
}

@article {Chen21,
    AUTHOR = {Chen, Lin and Kong, Fanze and Wang, Qi},
     TITLE = {Global and exponential attractor of the repulsive
              {K}eller-{S}egel model with logarithmic sensitivity},
   JOURNAL = {European J. Appl. Math.},
  FJOURNAL = {European Journal of Applied Mathematics},
    VOLUME = {32},
      YEAR = {2021},
    NUMBER = {4},
     PAGES = {599--617},
      ISSN = {0956-7925,1469-4425},
   MRCLASS = {35B35 (35K51 35K57 92C17)},
  MRNUMBER = {4283031},
       DOI = {10.1017/s0956792520000194},
       URL = {https://doi.org/10.1017/s0956792520000194},
}

@article {Chen20,
    AUTHOR = {Chen, Lin and Kong, Fanze and Wang, Qi},
     TITLE = {Stationary ring and concentric-ring solutions of the
              {K}eller-{S}egel model with quadratic diffusion},
   JOURNAL = {SIAM J. Math. Anal.},
  FJOURNAL = {SIAM Journal on Mathematical Analysis},
    VOLUME = {52},
      YEAR = {2020},
    NUMBER = {5},
     PAGES = {4565--4615},
      ISSN = {0036-1410,1095-7154},
   MRCLASS = {35K51 (35A01 35B40 35K57 35Q92 92C17)},
  MRNUMBER = {4154313},
       DOI = {10.1137/19M1298998},
       URL = {https://doi.org/10.1137/19M1298998},
}

@article{Cremer2019,
  title = {Chemotaxis as a navigation strategy to boost range expansion},
  volume = {575},
  ISSN = {1476-4687},
  url = {http://dx.doi.org/10.1038/s41586-019-1733-y},
  DOI = {10.1038/s41586-019-1733-y},
  number = {7784},
  journal = {Nature},
  publisher = {Springer Science and Business Media LLC},
  author = {Cremer,  Jonas and Honda,  Tomoya and Tang,  Ying and Wong-Ng,  Jerome and Vergassola,  Massimo and Hwa,  Terence},
  year = {2019},
  month = nov,
  pages = {658--663}
}

@article {Deuschel90,
    AUTHOR = {Deuschel, Jean-Dominique and Stroock, Daniel W.},
     TITLE = {Hypercontractivity and spectral gap of symmetric diffusions
              with applications to the stochastic {I}sing models},
   JOURNAL = {J. Funct. Anal.},
  FJOURNAL = {Journal of Functional Analysis},
    VOLUME = {92},
      YEAR = {1990},
    NUMBER = {1},
     PAGES = {30--48},
      ISSN = {0022-1236,1096-0783},
   MRCLASS = {58G32 (58G25 60J60 60K35 82B44)},
  MRNUMBER = {1064685},
MRREVIEWER = {G\'{e}rard\ Ben Arous},
       DOI = {10.1016/0022-1236(90)90066-T},
       URL = {https://doi.org/10.1016/0022-1236(90)90066-T},
}

@article{Dawid2000,
  title = {Biology And Global Distribution Of Myxobacteria In Soils},
  volume = {24},
  ISSN = {1574-6976},
  DOI = {10.1111/j.1574-6976.2000.tb00548.x},
  number = {4},
  journal = {FEMS Microbiol. Rev.},
  publisher = {Oxford University Press},
  author = {Dawid,  Wolfgang},
  year = {2000},
  month = oct,
  pages = {403--427}
}

@article{deAnna2020,
  title = {Chemotaxis under flow disorder shapes microbial dispersion in porous media},
  volume = {17},
  ISSN = {1745-2481},
  url = {},
  DOI = {10.1038/s41567-020-1002-x},
  number = {1},
  journal = {Nat. Phys.},
  publisher = {Springer Science and Business Media LLC},
  author = {de Anna,  Pietro and Pahlavan,  Amir A. and Yawata,  Yutaka and Stocker,  Roman and Juanes,  Ruben},
  year = {2020},
  month = sep,
  pages = {68--73}
}

@article {Dai22,
    AUTHOR = {Dai, Feng and Xiang, Tian},
     TITLE = {Boundedness and asymptotic stabilization in a two-dimensional
              {K}eller-{S}egel-{N}avier-{S}tokes system with sub-logistic source},
   JOURNAL = {Math. Models Methods Appl. Sci.},
  FJOURNAL = {Mathematical Models and Methods in Applied Sciences},
    VOLUME = {32},
      YEAR = {2022},
    NUMBER = {11},
     PAGES = {2237--2294},
      ISSN = {0218-2025},
   MRCLASS = {35A09 (35B40 35K55 35Q35 92C17)},
  MRNUMBER = {4526585},
       DOI = {10.1142/S0218202522500531},
}

@article {Dai23,
    AUTHOR = {Dai, Feng and Liu, Bin},
     TITLE = {How far do indirect signal production mechanisms influence
              regularity in the three-dimensional
              {K}eller-{S}egel-{N}avier-{S}tokes system?},
   JOURNAL = {Math. Models Methods Appl. Sci.},
  FJOURNAL = {Mathematical Models and Methods in Applied Sciences},
    VOLUME = {33},
      YEAR = {2023},
    NUMBER = {14},
     PAGES = {2823--2877},
      ISSN = {0218-2025,1793-6314},
   MRCLASS = {35B65 (35B40 35D30 35K55 35K57 35Q30 92C17)},
  MRNUMBER = {4683275},
       DOI = {10.1142/S0218202523500628},
       URL = {https://doi.org/10.1142/S0218202523500628},
}

@book {Deng20,
    AUTHOR = {Deng, Weihua and Hou, Ru and Wang, Wanli and Xu, Pengbo},
     TITLE = {Modeling Anomalous Diffusion--From Statistics To Mathematics},
 PUBLISHER = {World Scientific Publishing Co. Pte. Ltd., Hackensack, NJ},
      YEAR = {2020},
     PAGES = {xi+254},
      ISBN = {978-981-121-299-4},
   MRCLASS = {60-02 (35R11 44A10 60G51 60K05 60K50 82C31)},
  MRNUMBER = {4316253},
}

@article {Epshteyn19,
    AUTHOR = {Epshteyn, Yekaterina and Xia, Qing},
     TITLE = {Efficient numerical algorithms based on difference potentials
              for chemotaxis systems in 3{D}},
   JOURNAL = {J. Sci. Comput.},
  FJOURNAL = {Journal of Scientific Computing},
    VOLUME = {80},
      YEAR = {2019},
    NUMBER = {1},
     PAGES = {26--59},
      ISSN = {0885-7474},
   MRCLASS = {65M06 (35K45 35K57 65M08 92C17)},
  MRNUMBER = {3954435},
       DOI = {10.1007/s10915-019-00928-z},
}

@article {Epshteyn08,
    AUTHOR = {Epshteyn, Yekaterina and Kurganov, Alexander},
     TITLE = {New interior penalty discontinuous {G}alerkin methods for the
              {K}eller-{S}egel chemotaxis model},
   JOURNAL = {SIAM J. Numer. Anal.},
  FJOURNAL = {SIAM Journal on Numerical Analysis},
    VOLUME = {47},
      YEAR = {2008},
    NUMBER = {1},
     PAGES = {386--408},
      ISSN = {0036-1429},
   MRCLASS = {92C17 (35K57 65M60 92-08)},
  MRNUMBER = {2475945},
MRREVIEWER = {Bisso Saley},
       DOI = {10.1137/07070423X},
}

@article {Estrada18,
    AUTHOR = {Estrada-Rodriguez, Gissell and Gimperlein, Heiko and Painter,
              Kevin J.},
     TITLE = {Fractional {P}atlak-{K}eller-{S}egel equations for chemotactic
              superdiffusion},
   JOURNAL = {SIAM J. Appl. Math.},
  FJOURNAL = {SIAM Journal on Applied Mathematics},
    VOLUME = {78},
      YEAR = {2018},
    NUMBER = {2},
     PAGES = {1155--1173},
      ISSN = {0036-1399,1095-712X},
   MRCLASS = {92C17 (26A33 35K40 35Q92 35R11)},
  MRNUMBER = {3784126},
MRREVIEWER = {Robert\ Willie},
       DOI = {10.1137/17M1142867},
       URL = {https://doi.org/10.1137/17M1142867},
}

@article {Fang23,
    AUTHOR = {Fang, Xing and Qiao, Leijie and Zhang, Fengyang and Sun,
              Fuming},
     TITLE = {Explore deep network for a class of fractional partial
              differential equations},
   JOURNAL = {Chaos Solitons Fractals},
  FJOURNAL = {Chaos, Solitons \& Fractals},
    VOLUME = {172},
      YEAR = {2023},
     PAGES = {113528},
      ISSN = {0960-0779},
   MRCLASS = {65M32 (68T07)},
  MRNUMBER = {4586680},
       DOI = {10.1016/j.chaos.2023.113528},
}

@article {Filbet06,
    AUTHOR = {Filbet, Francis},
     TITLE = {A finite volume scheme for the {P}atlak-{K}eller-{S}egel
              chemotaxis model},
   JOURNAL = {Numer. Math.},
  FJOURNAL = {Numerische Mathematik},
    VOLUME = {104},
      YEAR = {2006},
    NUMBER = {4},
     PAGES = {457--488},
      ISSN = {0029-599X},
   MRCLASS = {92B05 (35B45 65M06 76M12)},
  MRNUMBER = {2249674},
MRREVIEWER = {Gabriela Marinoschi},
       DOI = {10.1007/s00211-006-0024-3},
}

@article {Fujiwara67,
    AUTHOR = {Fujiwara, Daisuke},
     TITLE = {Concrete characterization of the domains of fractional powers
              of some elliptic differential operators of the second order},
   JOURNAL = {Proc. Japan Acad.},
  FJOURNAL = {Proceedings of the Japan Academy},
    VOLUME = {43},
      YEAR = {1967},
     PAGES = {82--86},
      ISSN = {0021-4280},
   MRCLASS = {47.65},
  MRNUMBER = {216336},
MRREVIEWER = {K.\ Gustafson},
       URL = {http://projecteuclid.org/euclid.pja/1195521686},
}

@book {Grisvard11,
    AUTHOR = {Grisvard, Pierre},
     TITLE = {Elliptic Problems In Nonsmooth Domains},
    SERIES = {Classics in Applied Mathematics},
    VOLUME = {69},
      NOTE = {Reprint of the 1985 original [MR0775683],
              With a foreword by Susanne C. Brenner},
 PUBLISHER = {SIAM, Philadelphia, PA},
      YEAR = {2011},
     PAGES = {xx+410},
      ISBN = {978-1-611972-02-3},
   MRCLASS = {35J25 (01A75 35-02)},
  MRNUMBER = {3396210},
       DOI = {10.1137/1.9781611972030.ch1},
       URL = {https://doi.org/10.1137/1.9781611972030.ch1},
}

@article{Gross1975,
  title={Logarithmic {S}obolev inequalities},
  author={Gross, Leonard},
  journal={American Journal of Mathematics},
  volume={97},
  number={4},
  pages={1061--1083},
  year={1975},
  publisher={JSTOR}
}

@article {Gao14,
    AUTHOR = {Gao, Guang-hua and Sun, Zhi-zhong and Zhang, Hong-wei},
     TITLE = {A new fractional numerical differentiation formula to
              approximate the {C}aputo fractional derivative and its
              applications},
   JOURNAL = {J. Comput. Phys.},
  FJOURNAL = {Journal of Computational Physics},
    VOLUME = {259},
      YEAR = {2014},
     PAGES = {33--50},
      ISSN = {0021-9991},
   MRCLASS = {65D25 (26A33 34A08)},
  MRNUMBER = {3148558},
MRREVIEWER = {Vasileios Drakopoulos},
       DOI = {10.1016/j.jcp.2013.11.017},
}

@article {Gyrya11,
    AUTHOR = {Gyrya, Pavel and Saloff-Coste, Laurent},
     TITLE = {Neumann and {D}irichlet heat kernels in inner uniform domains},
   JOURNAL = {Ast\'{e}risque},
  FJOURNAL = {Ast\'{e}risque},
    NUMBER = {336},
      YEAR = {2011},
     PAGES = {viii+144},
      ISSN = {0303-1179,2492-5926},
      ISBN = {978-2-85629-306-5},
   MRCLASS = {35K08 (31C25 35K05 35K20 58J35 60J45 60J60)},
  MRNUMBER = {2807275},
MRREVIEWER = {Yehuda\ Pinchover},
}

@article {Hillen09,
    AUTHOR = {Hillen, T. and Painter, K. J.},
     TITLE = {A user's guide to {PDE} models for chemotaxis},
   JOURNAL = {J. Math. Biol.},
  FJOURNAL = {Journal of Mathematical Biology},
    VOLUME = {58},
      YEAR = {2009},
    NUMBER = {1-2},
     PAGES = {183--217},
      ISSN = {0303-6812,1432-1416},
   MRCLASS = {92C17 (35K57)},
  MRNUMBER = {2448428},
       DOI = {10.1007/s00285-008-0201-3},
       URL = {https://doi.org/10.1007/s00285-008-0201-3},
}

@article {Hillen13,
    AUTHOR = {Hillen, Thomas and Painter, Kevin J. and Winkler, Michael},
     TITLE = {Convergence of a cancer invasion model to a logistic
              chemotaxis model},
   JOURNAL = {Math. Models Methods Appl. Sci.},
  FJOURNAL = {Mathematical Models and Methods in Applied Sciences},
    VOLUME = {23},
      YEAR = {2013},
    NUMBER = {1},
     PAGES = {165--198},
      ISSN = {0218-2025,1793-6314},
   MRCLASS = {35Q92 (35B33 35B45 35K51 35K57 92C17 92C50)},
  MRNUMBER = {2997470},
       DOI = {10.1142/S0218202512500480},
       URL = {https://doi.org/10.1142/S0218202512500480},
}

@article{Karniadakis2021,
  title = {Physics-informed machine learning},
  volume = {3},
  ISSN = {2522-5820},
  DOI = {10.1038/s42254-021-00314-5},
  number = {6},
  journal = {Nat. Rev. Phys.},
  publisher = {Springer Science and Business Media LLC},
  author = {Karniadakis,  George Em and Kevrekidis,  Ioannis G. and Lu,  Lu and Perdikaris,  Paris and Wang,  Sifan and Yang,  Liu},
  year = {2021},
  month = may,
  pages = {422--440}
}

@article {Guo22,
    AUTHOR = {Guo, Ling and Wu, Hao and Yu, Xiaochen and Zhou, Tao},
     TITLE = {Monte {C}arlo f{PINN}s: deep learning method for forward and
              inverse problems involving high dimensional fractional partial
              differential equations},
   JOURNAL = {Comput. Methods Appl. Mech. Engrg.},
  FJOURNAL = {Computer Methods in Applied Mechanics and Engineering},
    VOLUME = {400},
      YEAR = {2022},
     PAGES = {115523},
      ISSN = {0045-7825},
   MRCLASS = {65C05 (68T07)},
  MRNUMBER = {4482903},
MRREVIEWER = {Krzysztof Joachim Bartoszek},
       DOI = {10.1016/j.cma.2022.115523},
}

@article {Guo19,
    AUTHOR = {Guo, Li and Li, Xingjie Helen and Yang, Yang},
     TITLE = {Energy dissipative local discontinuous {G}alerkin methods for
              {K}eller-{S}egel chemotaxis model},
   JOURNAL = {J. Sci. Comput.},
  FJOURNAL = {Journal of Scientific Computing},
    VOLUME = {78},
      YEAR = {2019},
    NUMBER = {3},
     PAGES = {1387--1404},
      ISSN = {0885-7474},
   MRCLASS = {65M60 (92C17)},
  MRNUMBER = {3934671},
MRREVIEWER = {Sebastian Franz},
       DOI = {10.1007/s10915-018-0813-8},
}

@book {Gripenberg90,
    AUTHOR = {Gripenberg, G. and Londen, S.-O. and Staffans, O.},
     TITLE = {Volterra Integral and Functional Equations},
    SERIES = {Encyclopedia Of Mathematics And Its Applications},
    VOLUME = {34},
 PUBLISHER = {Cambridge University Press, Cambridge},
      YEAR = {1990},
     PAGES = {},
      ISBN = {0-521-37289-5},
   MRCLASS = {45D05 (34K05 45-02 47D03 47H20)},
  MRNUMBER = {1050319},
MRREVIEWER = {J.\ \v{S}vejda},
       DOI = {10.1017/CBO9780511662805},
}

@article {Henry2010,
    AUTHOR = {Henry,  B. I. and Langlands,  T. A. M. and Straka,  P.},
     TITLE = {Fractional {F}okker-{P}lanck equations for subdiffusion with space- and time-dependent forces},
   JOURNAL = {Phy. Rev. Lett.},
    VOLUME = {105},
    number = {17},
      YEAR = {2010},
     PAGES = {170602},
      ISSN = {1079-7114},
       DOI = {10.1103/physrevlett.105.170602},
}

@article {Huang23,
    AUTHOR = {Huang, Xueling and Shen, Jie},
     TITLE = {Bound/positivity preserving {SAV} schemes for the
              {P}atlak-{K}eller-{S}egel-{N}avier-{S}tokes system},
   JOURNAL = {J. Comput. Phys.},
  FJOURNAL = {Journal of Computational Physics},
    VOLUME = {480},
      YEAR = {2023},
     PAGES = {112034},
      ISSN = {0021-9991},
   MRCLASS = {76Z05 (35Q92 92C17)},
  MRNUMBER = {4557797},
       DOI = {10.1016/j.jcp.2023.112034},
}

@article {Huang25,
    AUTHOR = {Huang, Xueling and Goubet, Olivier and Shen, Jie},
     TITLE = {Numerical analysis of a semi-implicit {E}uler scheme for the
              {K}eller-{S}egel model},
   JOURNAL = {ESAIM Math. Model. Numer. Anal.},
  FJOURNAL = {ESAIM. Mathematical Modelling and Numerical Analysis},
    VOLUME = {60},
      YEAR = {2026},
    NUMBER = {2},
     PAGES = {517--540},
      ISSN = {2822-7840,2804-7214},
   MRCLASS = {65M12 (35B09 35K20 92C17)},
  MRNUMBER = {5049166},
       DOI = {10.1051/m2an/2026012},
       URL = {https://doi.org/10.1051/m2an/2026012},
}

@article {Hu23,
    AUTHOR = {Hu, Jingwei and Zhang, Xiangxiong},
     TITLE = {Positivity-preserving and energy-dissipative finite difference
              schemes for the {F}okker-{P}lanck and {K}eller-{S}egel
              equations},
   JOURNAL = {IMA J. Numer. Anal.},
  FJOURNAL = {IMA Journal of Numerical Analysis},
    VOLUME = {43},
      YEAR = {2023},
    NUMBER = {3},
     PAGES = {1450--1484},
      ISSN = {0272-4979},
   MRCLASS = {65M06},
  MRNUMBER = {4597190},
       DOI = {10.1093/imanum/drac014},
       URL = {https://doi.org/10.1093/imanum/drac014},
}

@article {Hou18,
    AUTHOR = {Hou, Qianqian and Liu, Cheng-Jie and Wang, Ya-Guang and Wang,
              Zhian},
     TITLE = {Stability of boundary layers for a viscous hyperbolic system
              arising from chemotaxis: {O}ne-dimensional case},
   JOURNAL = {SIAM J. Math. Anal.},
  FJOURNAL = {SIAM Journal on Mathematical Analysis},
    VOLUME = {50},
      YEAR = {2018},
    NUMBER = {3},
     PAGES = {3058--3091},
      ISSN = {0036-1410,1095-7154},
   MRCLASS = {35K45 (35B25 35B35 35B40 35B44 35K57 35Q92 92C17)},
  MRNUMBER = {3814021},
       DOI = {10.1137/17M112748X},
       URL = {https://doi.org/10.1137/17M112748X},
}

@book {Haase06,
    AUTHOR = {Haase, Markus},
     TITLE = {The Functional Calculus for Sectorial Operators},
    SERIES = {Operator Theory: Advances and Applications},
    VOLUME = {169},
 PUBLISHER = {Birkh\"{a}user Verlag, Basel},
      YEAR = {2006},
     PAGES = {xiv+392},
      ISBN = {978-3-7643-7697-0; 3-7643-7697-X},
   MRCLASS = {47A60 (30E05 44A15 46B70 47A55 47D03 47E05 47F05)},
  MRNUMBER = {2244037},
MRREVIEWER = {Christian\ Le Merdy},
       DOI = {10.1007/3-7643-7698-8},
       URL = {https://doi.org/10.1007/3-7643-7698-8},
}

@article {Felix13,
    AUTHOR = {H\"{o}fling, Felix and Franosch, Thomas},
     TITLE = {Anomalous transport in the crowded world of biological cells},
   JOURNAL = {Rep. Progr. Phys.},
  FJOURNAL = {Reports on Progress in Physics},
    VOLUME = {76},
      YEAR = {2013},
    NUMBER = {4},
     PAGES = {046602},
      ISSN = {0034-4885,1361-6633},
   MRCLASS = {92C37},
  MRNUMBER = {3042495},
       DOI = {10.1088/0034-4885/76/4/046602},
       URL = {https://doi.org/10.1088/0034-4885/76/4/046602},
}

@article{Islam2015,
  title = {The mysterious nature of bacterial surface (gliding) motility: A focal adhesion-based mechanism in {M}yxococcus xanthus},
  volume = {46},
  ISSN = {1084-9521},
  url = {http://dx.doi.org/10.1016/j.semcdb.2015.10.033},
  DOI = {10.1016/j.semcdb.2015.10.033},
  journal = {Semin. Cell Dev. Biol.},
  publisher = {Elsevier BV},
  author = {Islam,  Salim T. and Mignot,  T\^am},
  year = {2015},
  pages = {143--154}
}

@book {Jin21book,
    AUTHOR = {Jin, Bangti},
     TITLE = {Fractional Differential Equations: {A}n Approach Via Fractional
              Derivatives},
    SERIES = {Applied Mathematical Sciences},
    VOLUME = {206},
 PUBLISHER = {Springer, Cham},
      YEAR = {2021},
     PAGES = {xiv+368},
      ISBN = {978-3-030-76042-7; 978-3-030-76043-4},
   MRCLASS = {35R11 (26A33 34A08)},
  MRNUMBER = {4290515},
       DOI = {10.1007/978-3-030-76043-4},
       URL = {},
}

@article {Jin16,
    AUTHOR = {Jin, Bangti and Lazarov, Raytcho and Zhou, Zhi},
     TITLE = {An analysis of the {L}1 scheme for the subdiffusion equation
              with nonsmooth data},
   JOURNAL = {IMA J. Numer. Anal.},
  FJOURNAL = {IMA Journal of Numerical Analysis},
    VOLUME = {36},
      YEAR = {2016},
    NUMBER = {1},
     PAGES = {197--221},
      ISSN = {0272-4979},
   MRCLASS = {65M06 (35R11 65M15)},
  MRNUMBER = {3463438},
MRREVIEWER = {Benito M. Chen-Charpentier},
       DOI = {10.1093/imanum/dru063},
}

@article {Jin20,
    AUTHOR = {Jin, Hai-Yang and Wang, Zhi-An},
     TITLE = {Critical mass on the {K}eller-{S}egel system with
              signal-dependent motility},
   JOURNAL = {Proc. Amer. Math. Soc.},
  FJOURNAL = {Proceedings of the American Mathematical Society},
    VOLUME = {148},
      YEAR = {2020},
    NUMBER = {11},
     PAGES = {4855--4873},
      ISSN = {0002-9939},
   MRCLASS = {35K51 (35B44 35K57 35Q92 92C17)},
  MRNUMBER = {4143400},
MRREVIEWER = {Pan Zheng},
       DOI = {10.1090/proc/15124},
}

@article{Jain2025,
author = {Jain, Rikesh and Le, Nguyen-Hung and Bertaux, Lionel and et al.},
  title = {Fatty acid metabolism and the oxidative stress response support bacterial predation},
  journal = {Proc. Natl. Acad. Sci. USA.},
  volume = {122},
  year = {2025},
  number = {5},
   PAGES = {e2420875122},
  ISSN = {1091--6490},
  DOI = {10.1073/pnas.2420875122},
}

@article {Jiang24,
    AUTHOR = {Jiang, Renjin and Lin, Fanghua},
     TITLE = {Riesz transform on exterior {L}ipschitz domains and
              applications},
   JOURNAL = {Adv. Math.},
  FJOURNAL = {Advances in Mathematics},
    VOLUME = {453},
      YEAR = {2024},
     PAGES = {Paper No. 109852, 48},
      ISSN = {0001-8708,1090-2082},
   MRCLASS = {42B37 (35J15 35J25)},
  MRNUMBER = {4778175},
       DOI = {10.1016/j.aim.2024.109852},
       URL = {https://doi.org/10.1016/j.aim.2024.109852},
}

@article {Kato72,
    AUTHOR = {Kato, Tosio},
     TITLE = {Schr\"{o}dinger operators with singular potentials},
   JOURNAL = {Israel J. Math.},
  FJOURNAL = {Israel Journal of Mathematics},
    VOLUME = {13},
      YEAR = {1972},
     PAGES = {135--148 (1973)},
      ISSN = {0021-2172},
   MRCLASS = {47F05 (35J10)},
  MRNUMBER = {333833},
MRREVIEWER = {Teruo\ Ikebe},
       DOI = {10.1007/BF02760233},
       URL = {https://doi.org/10.1007/BF02760233},
}

@article {Keller70,
    AUTHOR = {Keller, Evelyn F. and Segel, Lee A.},
     TITLE = {Initiation of slime mold aggregation viewed as an instability},
   JOURNAL = {J. Theoret. Biol.},
  FJOURNAL = {J. Theor. Biol.},
    VOLUME = {26},
      YEAR = {1970},
    NUMBER = {3},
     PAGES = {399--415},
      ISSN = {0022-5193,1095-8541},
   MRCLASS = {92C17 (35Q92 82C24 92C45)},
  MRNUMBER = {3925816},
       DOI = {10.1016/0022-5193(70)90092-5},
       URL = {https://doi.org/10.1016/0022-5193(70)90092-5},
}

@article {Kavallaris09,
    AUTHOR = {Kavallaris, Nikos I. and Souplet, Philippe},
     TITLE = {Grow-up rate and refined asymptotics for a two-dimensional
              {P}atlak-{K}eller-{S}egel model in a disk},
   JOURNAL = {SIAM J. Math. Anal.},
  FJOURNAL = {SIAM Journal on Mathematical Analysis},
    VOLUME = {40},
      YEAR = {2008/09},
    NUMBER = {5},
     PAGES = {1852--1881},
      ISSN = {0036-1410,1095-7154},
   MRCLASS = {35K57 (35B40 92C17)},
  MRNUMBER = {2471903},
MRREVIEWER = {Gabriela\ Marinoschi},
       DOI = {10.1137/080722229},
       URL = {https://doi.org/10.1137/080722229},
}

@article{Keller1971,
  title = {Traveling bands of chemotactic bacteria: A theoretical analysis},
  volume = {30},
  ISSN = {0022-5193},
  url = {http://dx.doi.org/10.1016/0022-5193(71)90051-8},
  DOI = {10.1016/0022-5193(71)90051-8},
  number = {2},
  journal = {J. Theor. Biol.},
  publisher = {Elsevier BV},
  author = {Keller,  Evelyn F. and Segel,  Lee A.},
  year = {1971},
  month = feb,
  pages = {235--248}
}

@article{Kearns2010,
  title = {A field guide to bacterial swarming motility},
  volume = {8},
  ISSN = {1740-1534},
  url = {http://dx.doi.org/10.1038/nrmicro2405},
  DOI = {10.1038/nrmicro2405},
  number = {9},
  journal = {Nat. Rev. Microbiol.},
  publisher = {Springer Science and Business Media LLC},
  author = {Kearns,  Daniel B.},
  year = {2010},
  pages = {634--644}
}

@book {Kilbas06,
    AUTHOR = {Kilbas, Anatoly A. and Srivastava, Hari M. and Trujillo, Juan
              J.},
     TITLE = {Theory And Applications Of Fractional Differential Equations},
    SERIES = {},
    VOLUME = {204},
 PUBLISHER = {Elsevier Science B.V., Amsterdam},
      YEAR = {2006},
     PAGES = {},
      ISBN = {},
   MRCLASS = {},
  MRNUMBER = {},
MRREVIEWER = {},
}

@book {Kuhlwen68,
    AUTHOR = {K\"{u}hlwen, H. and Reichenbach, H. and Heunert, H.H. and Kuczka, H.},
     TITLE = {Schwarmentwicklung und Morphogenese bei {M}yxobakterien—Archangium, Myxococcus, Chodrococcus, Chondromyces},
    SERIES = {},
    VOLUME = {},
 PUBLISHER = {Encyclopaedia Cinematographica, C893, G. Wolf, ed., Institut f\"{u}r den Wissenschaftlichen Film, G\"{o}ttingen, },
      YEAR = {1968},
     PAGES = {},
      ISBN = {},
   MRCLASS = {},
  MRNUMBER = {},
MRREVIEWER = {},
}

@book {Kuhlwen71,
    AUTHOR = {K\"{u}hlwen, H.},
     TITLE = {Polyangium fuscum (Myxobacteriales). Cystenkeimung und Schwarmentwicklung},
    SERIES = {},
    VOLUME = {},
 PUBLISHER = {Encyclopaedia Cinematographica, E1582, G. Wolf, ed., Institut f\"{u}r den Wissenschaftlichen Film, G\"{o}ttingen, },
      YEAR = {1971},
     PAGES = {},
      ISBN = {},
   MRCLASS = {},
  MRNUMBER = {},
MRREVIEWER = {},
}

@article{Keegstra2022,
  title = {The ecological roles of bacterial chemotaxis},
  volume = {20},
  ISSN = {1740-1534},
  url = {},
  DOI = {10.1038/s41579-022-00709-w},
  number = {8},
  JOURNAL = {Nat. Rev. Microbiol.},
  fjournal = {Nature Reviews Microbiology},
  publisher = {Springer Science and Business Media LLC},
  author = {Keegstra,  Johannes M. and Carrara,  Francesco and Stocker,  Roman},
  year = {2022},
  month = mar,
  pages = {491--504}
}

@article{Kaiser2003,
  title = {Coupling cell movement to multicellular development in myxobacteria},
  volume = {1},
  ISSN = {1740-1534},
  url = {},
  DOI = {10.1038/nrmicro733},
  number = {1},
  journal = {Nat. Rev. Microbiol.},
  publisher = {Springer Science and Business Media LLC},
  author = {Kaiser,  Dale},
  year = {2003},
  month = oct,
  pages = {45--54}
}

@article{Kiskowski2004,
  title = {Role of streams in myxobacteria aggregate formation},
  volume = {1},
  ISSN = {1478-3975},
  url = {},
  DOI = {10.1088/1478-3967/1/3/005},
  number = {3},
  journal = {Phys. Biol.},
  publisher = {IOP Publishing},
  author = {Kiskowski,  Maria A and Jiang,  Yi and Alber,  Mark S},
  year = {2004},
  month = oct,
  pages = {173--183}
}

@book {Klafter15,
    AUTHOR = {Klafter, J. and Sokolov, I. M.},
     TITLE = {First Steps in Random Walks: From Tools to Applications},
 PUBLISHER = {Oxford University Press, Oxford},
      YEAR = {2015},
     PAGES = {vi+152},
      ISBN = {978-0-19-875409-1},
   MRCLASS = {60G50 (60G15 60G35 60J10 82B43 82C41)},
  MRNUMBER = {3444134},
}

@article {Lindemulder26,
    AUTHOR = {Lindemulder, Nick and Lorist, Emiel and Roodenburg, Floris B.
              and Veraar, Mark C.},
     TITLE = {Functional calculus on weighted {S}obolev spaces for the
              {L}aplacian on rough domains},
   JOURNAL = {J. Differential Equations},
  FJOURNAL = {Journal of Differential Equations},
    VOLUME = {454},
      YEAR = {2026},
     PAGES = {Paper No. 113884, 71},
      ISSN = {0022-0396,1090-2732},
   MRCLASS = {47A60 (35K20 46E35)},
  MRNUMBER = {4999633},
       DOI = {10.1016/j.jde.2025.113884},
       URL = {https://doi.org/10.1016/j.jde.2025.113884},
}

@article {Liu18,
    AUTHOR = {Liu, Jian-Guo and Wang, Li and Zhou, Zhennan},
     TITLE = {Positivity-preserving and asymptotic preserving method for
              2{D} {K}eller-{S}egal equations},
   JOURNAL = {Math. Comp.},
  FJOURNAL = {Mathematics of Computation},
    VOLUME = {87},
      YEAR = {2018},
    NUMBER = {311},
     PAGES = {1165--1189},
      ISSN = {0025-5718},
   MRCLASS = {65M06 (35B25 35K45 35Q92 65M12)},
  MRNUMBER = {3766384},
MRREVIEWER = {Isabella Cravero},
       DOI = {10.1090/mcom/3250},
}

@article {Lankeit17,
    AUTHOR = {Lankeit, Johannes and Winkler, Michael},
     TITLE = {A generalized solution concept for the {K}eller-{S}egel system
              with logarithmic sensitivity: {G}lobal solvability for large
              nonradial data},
   JOURNAL = {NoDea-Nonlinear Differ. Equ. Appl.},
  FJOURNAL = {NoDEA. Nonlinear Differential Equations and Applications},
    VOLUME = {24},
      YEAR = {2017},
    NUMBER = {4},
     PAGES = {49},
      ISSN = {1021-9722,1420-9004},
   MRCLASS = {35K51 (35A01 35D30 35K59 92C17)},
  MRNUMBER = {3674184},
       DOI = {10.1007/s00030-017-0472-8},
       URL = {https://doi.org/10.1007/s00030-017-0472-8},
}

@article {Langlands10,
    AUTHOR = {Langlands, T. A. M. and Henry, B. I.},
     TITLE = {Fractional chemotaxis diffusion equations},
   JOURNAL = {Phys. Rev. E},
  FJOURNAL = {Physical Review E. Statistical, Nonlinear, and Soft Matter
              Physics},
    VOLUME = {81},
      YEAR = {2010},
    NUMBER = {5},
     PAGES = {051102, 12},
      ISSN = {1539-3755},
   MRCLASS = {35R11 (35K57 35Q92 92C17)},
  MRNUMBER = {2736241},
       DOI = {10.1103/PhysRevE.81.051102},
       URL = {},
}

@article{Livne2024,
  title = {Collective condensation and auto-aggregation of {E}scherichia coli in uniform acidic environments},
  volume = {7},
  ISSN = {2399-3642},
  url = {http://dx.doi.org/10.1038/s42003-024-06698-1},
  DOI = {10.1038/s42003-024-06698-1},
  number = {1},
  journal = {Commun. Biol.},
  author = {Livne,  Nir and Koler,  Moriah and Vaknin,  Ady},
  year = {2024},
  PAGES = {1028},
}

@article{Livne2025,
  title = {Pattern formation in {E}. coli through negative chemotaxis: Instability, condensation, and merging},
  volume = {7},
  ISSN = {2643-1564},
  url = {http://dx.doi.org/10.1103/PhysRevResearch.7.023095},
  DOI = {10.1103/physrevresearch.7.023095},
  number = {2},
  journal = {Phys. Rev. Res.},
  publisher = {American Physical Society (APS)},
  author = {Livne, Nir and Vaknin, Ady and Agam, Oded},
  year = {2025},
  PAGES = {023095},
}

@article {Li14,
    AUTHOR = {Li, Jingyu and Li, Tong and Wang, Zhi-An},
     TITLE = {Stability of traveling waves of the {K}eller-{S}egel system
              with logarithmic sensitivity},
   JOURNAL = {Math. Models Methods Appl. Sci.},
  FJOURNAL = {Mathematical Models and Methods in Applied Sciences},
    VOLUME = {24},
      YEAR = {2014},
    NUMBER = {14},
     PAGES = {2819--2849},
      ISSN = {0218-2025,1793-6314},
   MRCLASS = {35K40 (35B35 35C07 35K59 46N60 92C17)},
  MRNUMBER = {3269780},
       DOI = {10.1142/S0218202514500389},
       URL = {https://doi.org/10.1142/S0218202514500389},
}

@article {Lankeit16,
    AUTHOR = {Lankeit, Johannes},
     TITLE = {A new approach toward boundedness in a two-dimensional
              parabolic chemotaxis system with singular sensitivity},
   JOURNAL = {Math. Methods Appl. Sci.},
  FJOURNAL = {Mathematical Methods in the Applied Sciences},
    VOLUME = {39},
      YEAR = {2016},
    NUMBER = {3},
     PAGES = {394--404},
      ISSN = {0170-4214,1099-1476},
   MRCLASS = {35K40 (35A01 35Q92 92C17)},
  MRNUMBER = {3454184},
       DOI = {10.1002/mma.3489},
       URL = {https://doi.org/10.1002/mma.3489},
}

@article {McClenny23,
    AUTHOR = {McClenny, Levi D. and Braga-Neto, Ulisses M.},
     TITLE = {Self-adaptive physics-informed neural networks},
   JOURNAL = {J. Comput. Phys.},
  FJOURNAL = {Journal of Computational Physics},
    VOLUME = {474},
      YEAR = {2023},
     PAGES = {111722},
      ISSN = {0021-9991},
   MRCLASS = {68T07 (65N99)},
  MRNUMBER = {4513793},
       DOI = {10.1016/j.jcp.2022.111722},
       URL = {https://doi.org/10.1016/j.jcp.2022.111722},
}

@book {McBride93,
    AUTHOR = {McBride, M. J. and Hartzell, P. and Zusman, D. R.},
     TITLE = {Myxobacteria II},
    SERIES = {},
    VOLUME = {},
   EDITION = {},
 PUBLISHER = {ASM Press, Washington DC},
      YEAR = {1993},
     PAGES = {},
      ISBN = {},
   MRCLASS = {},
  MRNUMBER = {},
}

@article {Mendez01,
    AUTHOR = {Mendez, Osvaldo and Mitrea, Marius},
     TITLE = {Complex powers of the {N}eumann {L}aplacian in {L}ipschitz
              domains},
   JOURNAL = {Math. Nachr.},
  FJOURNAL = {Mathematische Nachrichten},
    VOLUME = {223},
      YEAR = {2001},
     PAGES = {77--88},
      ISSN = {0025-584X,1522-2616},
   MRCLASS = {35J05 (47F05)},
  MRNUMBER = {1817850},
MRREVIEWER = {H.\ Triebel},
       DOI = {10.1002/1522-2616(200103)223:1<77::AID-MANA77>3.3.CO;2-4},
       URL =
              {https://doi.org/10.1002/1522-2616(200103)223:1<77::AID-MANA77>3.3.CO;2-4},
}

@article {Ma23,
    AUTHOR = {Ma, Fugui and Zhao, Lijing and Deng, Weihua and Wang, Yejuan},
     TITLE = {Analyses of the contour integral method for time fractional
              normal-subdiffusion transport equation},
   JOURNAL = {J. Sci. Comput.},
  FJOURNAL = {Journal of Scientific Computing},
    VOLUME = {97},
      YEAR = {2023},
    NUMBER = {2},
     PAGES = {45},
      ISSN = {0885-7474},
   MRCLASS = {65D30 (30E20 33E12)},
  MRNUMBER = {4649445},
MRREVIEWER = {Kai Diethelm},
       DOI = {10.1007/s10915-023-02359-3},
       URL = {},
}

@article {Ma23b,
    AUTHOR = {Ma, Fugui and Zhao, Lijing and Wang, Yejuan and Deng, Weihua},
     TITLE = {The contour integral method for {F}eynman-{K}ac equation with
              two internal states},
   JOURNAL = {Comput. Math. Appl.},
  FJOURNAL = {Computers \& Mathematics with Applications. An International
              Journal},
    VOLUME = {151},
      YEAR = {2023},
     PAGES = {80--100},
      ISSN = {0898-1221,1873-7668},
   MRCLASS = {65M70 (35R60)},
  MRNUMBER = {4649911},
       DOI = {10.1016/j.camwa.2023.09.037},
       URL = {https://doi.org/10.1016/j.camwa.2023.09.037},
}

@article {Ma25,
    AUTHOR = {Ma, Fugui and Tian, Wenyi and Deng, Weihua},
     TITLE = {Mathematical modeling and analysis for the chemotactic
              diffusion in porous media with incompressible
              {N}avier-{S}tokes equations over bounded domain},
   JOURNAL = {J. Differential Equations},
  FJOURNAL = {Journal of Differential Equations},
    VOLUME = {436},
      YEAR = {2025},
     PAGES = {113305},
      ISSN = {0022-0396},
   MRCLASS = {35Q92 (35A01 35A02 35B44 35D30 35Q35 35R11)},
  MRNUMBER = {4891582},
       DOI = {10.1016/j.jde.2025.113305},
       URL = {},
}

@article {Ma25b,
    AUTHOR = {Ma, Fugui and Tian, Wenyi and Deng, Weihua},
     TITLE = {Corrigendum to ``{M}athematical modeling and analysis for the
              chemotactic diffusion in porous media with incompressible
              {N}avier-{S}tokes equations over bounded domain'' [{J}.
              {D}iffer. {E}qu. 436 (2025) 113305]},
   JOURNAL = {J. Differential Equations},
  FJOURNAL = {Journal of Differential Equations},
    VOLUME = {443},
      YEAR = {2025},
     PAGES = {113656},
      ISSN = {0022-0396,1090-2732},
   MRCLASS = {35Q92 (35A01 35A02 35B44 35D30 35Q35 35R11)},
  MRNUMBER = {4941176},
       DOI = {10.1016/j.jde.2025.113656},
       URL = {https://doi.org/10.1016/j.jde.2025.113656},
}

@article {Ma26,
    AUTHOR = {Ma, Fugui},
     TITLE = {Scaling crossover of the generalized {J}effreys-type law},
   JOURNAL = {arXiv:2510.07930},
  FJOURNAL = {},
    VOLUME = {},
      YEAR = {2025},
  archivePrefix={arXiv},
  primaryClass={math.NA},
       DOI = {arXiv:2510.07930},
}

@article {Meerschaert13,
    AUTHOR = {Meerschaert, M. M. and Straka, P.},
     TITLE = {Inverse stable subordinators},
   JOURNAL = {Math. Model. Nat. Phenom.},
  FJOURNAL = {Mathematical Modelling of Natural Phenomena},
    VOLUME = {8},
      YEAR = {2013},
    NUMBER = {2},
     PAGES = {1--16},
      ISSN = {0973-5348},
   MRCLASS = {60E07 (26A33 35R11 60G22 60J60)},
  MRNUMBER = {3049524},
MRREVIEWER = {Marcin Magdziarz},
       DOI = {10.1051/mmnp/20138201},
       URL = {},
}

@article {Magdziarz2008,
    AUTHOR = {Magdziarz,  Marcin and Weron,  Aleksander and Klafter,  Joseph},
     TITLE = {Equivalence of the fractional {F}okker-{P}lanck and subordinated {L}angevin equations:
           the case of a time-dependent force},
   JOURNAL = {Phy. Rev. Lett.},
    VOLUME = {101},
      YEAR = {2008},
     PAGES = {210601},
      ISSN = {1079-7114},
       DOI = {10.1103/physrevlett.101.210601},
}

@article{Mauriello2010,
  title = {Gliding motility revisited: How do the myxobacteria move without flagella?},
  volume = {74},
  ISSN = {1098-5557},
  url = {},
  DOI = {10.1128/mmbr.00043-09},
  number = {2},
  journal = {Microbiol. Mol. Biol. R.},
  publisher = {American Society for Microbiology},
  author = {Mauriello,  Emilia M. F. and Mignot,  T\^{a}m and Yang,  Zhaomin and Zusman,  David R.},
  year = {2010},
  month = jun,
  pages = {229--249}
}

@article{Malla2025,
  author = {Malla,  Tek Narsingh and Aldama,  Luis and Leon,  Viridiana and et al.},
  title = {Observation of early events in the photoactivation of {M}yxobacterial phytochrome using time-resolved serial femtosecond crystallography},
  journal = {Commun. Chem.},
  volume = {8},
  year = {2025},
  number = {1},
  PAGES = {183},
  ISSN = {2399-3669},
  url = {http://dx.doi.org/10.1038/s42004-025-01578-z},
  DOI = {10.1038/s42004-025-01578-z},
}

@article{Meyer2014,
  title = {Active {B}rownian agents with concentration-dependent chemotactic sensitivity},
  volume = {89},
  ISSN = {1550-2376},
  url = {http://dx.doi.org/10.1103/PhysRevE.89.022711},
  DOI = {10.1103/physreve.89.022711},
  number = {2},
  journal = {Phys. Rev. E},
  publisher = {American Physical Society (APS)},
  author = {Meyer,  Marcel and Schimansky-Geier,  Lutz and Romanczuk,  Pawel},
  year = {2014},
  PAGES = {022711},
}

@article{Mittal2003,
  title = {Motility of {E}scherichia coli cells in clusters formed by chemotactic aggregation},
  volume = {100},
  ISSN = {1091-6490},
  url = {http://dx.doi.org/10.1073/pnas.2233626100},
  DOI = {10.1073/pnas.2233626100},
  number = {23},
  journal = {Proc. Natl. Acad. Sci. USA.},
  publisher = {Proceedings of the National Academy of Sciences},
  author = {Mittal,  Nikhil and Budrene,  Elena O. and Brenner,  Michael P. and van Oudenaarden,  Alexander},
  year = {2003},
  month = nov,
  pages = {13259--13263}
}

@article{Magdziarz07,
 author = {Magdziarz,  Marcin and Weron,  Aleksander and Weron,  Karina},
  title = {Fractional {F}okker-{P}lanck dynamics: Stochastic representation and computer simulation},
  journal = {Phys. Rev. E},
  year = {2007},
  volume = {75},
  pages = {016708},
  ISSN = {1550-2376},
  url = {http://dx.doi.org/10.1103/physreve.75.016708},
  DOI = {10.1103/physreve.75.016708},
  number = {1},
}

@article{Nan2011,
  title = {Myxobacteria gliding motility requires cytoskeleton rotation powered by proton motive force},
  volume = {108},
  ISSN = {1091-6490},
  url = {http://dx.doi.org/10.1073/pnas.1018556108},
  DOI = {10.1073/pnas.1018556108},
  number = {6},
  journal = {Proc. Natl. Acad. Sci. USA.},
  publisher = {Proceedings of the National Academy of Sciences},
  author = {Nan,  Beiyan and Chen,  Jing and Neu,  John C. and Berry,  Richard M. and Oster,  George and Zusman,  David R.},
  year = {2011},
  pages = {2498--2503}
}

@article {Othmer97,
    AUTHOR = {Othmer, Hans G. and Stevens, Angela},
     TITLE = {Aggregation, blowup, and collapse: the {ABC}s of taxis in
              reinforced random walks},
   JOURNAL = {SIAM J. Appl. Math.},
  FJOURNAL = {SIAM Journal on Applied Mathematics},
    VOLUME = {57},
      YEAR = {1997},
    NUMBER = {4},
     PAGES = {1044--1081},
      ISSN = {0036-1399},
   MRCLASS = {92B05 (60J15)},
  MRNUMBER = {1462051},
MRREVIEWER = {Istv\'{a}n Ratk\'{o}},
       DOI = {10.1137/S0036139995288976},
}

@book {Podlubny99,
    AUTHOR = {Podlubny, Igor},
     TITLE = {Fractional Differential Equations},
    SERIES = {Mathematics in Science and Engineering},
    VOLUME = {198},
      NOTE = {},
 PUBLISHER = {Academic Press, Inc., San Diego, CA},
      YEAR = {1999},
     PAGES = {xxiv+340},
      ISBN = {0-12-558840-2},
   MRCLASS = {26A33 (34K05)},
  MRNUMBER = {1658022},
MRREVIEWER = {Anatoly Kilbas},
}

@article {Pang21,
    AUTHOR = {Pang, Peter Y. H. and Wang, Yifu and Yin, Jingxue},
     TITLE = {Asymptotic profile of a two-dimensional
              chemotaxis-{N}avier-{S}tokes system with singular sensitivity
              and logistic source},
   JOURNAL = {Math. Models Methods Appl. Sci.},
  FJOURNAL = {Mathematical Models and Methods in Applied Sciences},
    VOLUME = {31},
      YEAR = {2021},
    NUMBER = {3},
     PAGES = {577--618},
      ISSN = {0218-2025},
   MRCLASS = {35K55 (35B40 35Q30 35Q92 76D05 92C17)},
  MRNUMBER = {4260208},
       DOI = {10.1142/S0218202521500135},
}

@article {Qiu21,
    AUTHOR = {Qiu, Changxin and Liu, Qingyuan and Yan, Jue},
     TITLE = {Third order positivity-preserving direct discontinuous
              {G}alerkin method with interface correction for chemotaxis
              {K}eller-{S}egel equations},
   JOURNAL = {J. Comput. Phys.},
  FJOURNAL = {Journal of Computational Physics},
    VOLUME = {433},
      YEAR = {2021},
     PAGES = {110191},
      ISSN = {0021-9991},
   MRCLASS = {65M60 (35Q92 65M12 92C17)},
  MRNUMBER = {4218537},
       DOI = {10.1016/j.jcp.2021.110191},
}

@article {Quan20,
    AUTHOR = {Quan, Chaoyu and Tang, Tao and Yang, Jiang},
     TITLE = {How to define dissipation-preserving energy for
              time-fractional phase-field equations},
   JOURNAL = {CSIAM Trans. Appl. Math.},
  FJOURNAL = {CSIAM Transactions on Applied Mathematics},
    VOLUME = {1},
      YEAR = {2020},
     PAGES = {478-490},
      ISSN = {},
   MRCLASS = {},
  MRNUMBER = {},
       DOI = {10.4208/csiam-am.2020-0024 },
}

@article {Raissi19,
    AUTHOR = {Raissi, M. and Perdikaris, P. and Karniadakis, G. E.},
     TITLE = {Physics-informed neural networks: a deep learning framework
              for solving forward and inverse problems involving nonlinear
              partial differential equations},
   JOURNAL = {J. Comput. Phys.},
  FJOURNAL = {Journal of Computational Physics},
    VOLUME = {378},
      YEAR = {2019},
     PAGES = {686--707},
      ISSN = {0021-9991},
   MRCLASS = {65M70 (68T05)},
  MRNUMBER = {3881695},
       DOI = {10.1016/j.jcp.2018.10.045},
}

@book {Risken89,
    AUTHOR = {Risken, H.},
     TITLE = {The {F}okker-{P}lanck equation},
    SERIES = {Springer Series in Synergetics},
    VOLUME = {18},
   EDITION = {Second},
      NOTE = {Methods of solution and applications},
 PUBLISHER = {Springer-Verlag, Berlin},
      YEAR = {1989},
     PAGES = {xiv+472},
      ISBN = {3-540-50498-2},
   MRCLASS = {82-02 (60J65 82A31)},
  MRNUMBER = {987631},
       DOI = {10.1007/978-3-642-61544-3},
       URL = {https://doi.org/10.1007/978-3-642-61544-3},
}

@book {Reichenbach68,
    AUTHOR = {Reichenbach, H. Heunert and Kuczka, H.},
     TITLE = {Archangium violaceum ({M}yxobacteriales)—{S}chwarmentwicklung und Bildung von Protocysten},
   JOURNAL = {Encyclopaedia Cinematographica, E777, G. Wolf, ed., Institut f\"{u}r den Wissenschaftlichen Film, G\"{o}ttingen},
    VOLUME = {},
      YEAR = {1968},
     PAGES = {},
      ISSN = {},
   MRCLASS = {},
  MRNUMBER = {},
       DOI = {},
}

@book {Rudolf20,
    AUTHOR = {Gorenflo, Rudolf and Kilbas, Anatoly A. and Mainardi,
              Francesco and Rogosin, Sergei},
     TITLE = {Mittag-{L}effler functions, related topics and applications},
    SERIES = {Springer Monographs in Mathematics},
      NOTE = {Second edition [of 3244285]},
 PUBLISHER = {Springer, Berlin},
      YEAR = {[2020] \copyright 2020},
     PAGES = {xvi+540},
      ISBN = {978-3-662-61550-8; 978-3-662-61549-2},
   MRCLASS = {33-02 (26A33 33C60 33E12 34A08 44A20 45K05 60G22)},
  MRNUMBER = {4179587},
       DOI = {10.1007/978-3-662-61550-8},
       URL = {https://doi.org/10.1007/978-3-662-61550-8},
}

@article {Sun06,
    AUTHOR = {Sun, Zhi-zhong and Wu, Xiaonan},
     TITLE = {A fully discrete difference scheme for a diffusion-wave
              system},
   JOURNAL = {Appl. Numer. Math.},
  FJOURNAL = {Applied Numerical Mathematics. An IMACS Journal},
    VOLUME = {56},
      YEAR = {2006},
    NUMBER = {2},
     PAGES = {193--209},
      ISSN = {0168-9274},
   MRCLASS = {65M06},
  MRNUMBER = {2200938},
MRREVIEWER = {Kenneth H. Karlsen},
       DOI = {10.1016/j.apnum.2005.03.003},
}

@article{Saggu2023,
  title = {Myxobacteria: {B}iology and bioactive secondary metabolites},
  volume = {174},
  ISSN = {0923-2508},
  DOI = {10.1016/j.resmic.2023.104079},
  number = {7},
  journal = {Res. Microbiol.},
  publisher = {Elsevier BV},
  author = {Saggu, Sandeep Kaur and Nath,  Amar and Kumar,  Shiv},
  year = {2023},
  month = sep,
  pages = {104079}
}

@article{Sozinova2005,
  title = {A three-dimensional model of myxobacterial aggregation by contact-mediated interactions},
  volume = {102},
  ISSN = {1091-6490},
  url = {},
  DOI = {10.1073/pnas.0504259102},
  number = {32},
  journal = {Proc. Natl. Acad. Sci. U. S. A.},
  publisher = {Proceedings of the National Academy of Sciences},
  author = {Sozinova,  Olga and Jiang,  Yi and Kaiser,  Dale and Alber,  Mark},
  year = {2005},
  month = aug,
  pages = {11308--11312}
}

@article {Stevens00,
    AUTHOR = {Stevens, Angela},
     TITLE = {A stochastic cellular automaton modeling gliding and
              aggregation of myxobacteria},
   JOURNAL = {SIAM J. Appl. Math.},
  FJOURNAL = {SIAM Journal on Applied Mathematics},
    VOLUME = {61},
      YEAR = {2000},
    NUMBER = {1},
     PAGES = {172--182},
      ISSN = {0036-1399},
   MRCLASS = {92C17 (60K99 92D50)},
  MRNUMBER = {1776392},
MRREVIEWER = {John G. Milton},
       DOI = {10.1137/S0036139998342053},
       URL = {},
}

@article {Stevens00b,
    AUTHOR = {Stevens, Angela},
     TITLE = {The derivation of chemotaxis equations as limit dynamics of
              moderately interacting stochastic many-particle systems},
   JOURNAL = {SIAM J. Appl. Math.},
  FJOURNAL = {SIAM Journal on Applied Mathematics},
    VOLUME = {61},
      YEAR = {2000},
    NUMBER = {1},
     PAGES = {183--212},
      ISSN = {0036-1399},
   MRCLASS = {92C17 (60K99 92D50)},
  MRNUMBER = {1776393},
MRREVIEWER = {John G. Milton},
       DOI = {10.1137/S0036139998342065},
}

@article {Saito07,
    AUTHOR = {Saito, Norikazu},
     TITLE = {Conservative upwind finite-element method for a simplified
              {K}eller-{S}egel system modelling chemotaxis},
   JOURNAL = {IMA J. Numer. Anal.},
  FJOURNAL = {IMA Journal of Numerical Analysis},
    VOLUME = {27},
      YEAR = {2007},
    NUMBER = {2},
     PAGES = {332--365},
      ISSN = {0272-4979},
   MRCLASS = {65M15 (35K50 65M60 92C17)},
  MRNUMBER = {2317007},
MRREVIEWER = {Andreas Veeser},
       DOI = {10.1093/imanum/drl018},
}

@article {Sulman19,
    AUTHOR = {Sulman, M. and Nguyen, T.},
     TITLE = {A positivity preserving moving mesh finite element method for
              the {K}eller-{S}egel chemotaxis model},
   JOURNAL = {J. Sci. Comput.},
  FJOURNAL = {Journal of Scientific Computing},
    VOLUME = {80},
      YEAR = {2019},
    NUMBER = {1},
     PAGES = {649--666},
      ISSN = {0885-7474},
   MRCLASS = {65M60 (65M50 92C17)},
  MRNUMBER = {3954456},
MRREVIEWER = {Jason M. Graham},
       DOI = {10.1007/s10915-019-00951-0},
}

@article{Salek2019,
  title = {Bacterial chemotaxis in a microfluidic {T}-maze reveals strong phenotypic heterogeneity in chemotactic sensitivity},
  volume = {10},
  ISSN = {2041-1723},
  url = {http://dx.doi.org/10.1038/s41467-019-09521-2},
  DOI = {10.1038/s41467-019-09521-2},
  number = {1},
  journal = {Nat. Commun.},
  publisher = {Springer Science and Business Media LLC},
  author = {Salek,  M. Mehdi and Carrara,  Francesco and Fernandez,  Vicente and Guasto,  Jeffrey S. and Stocker,  Roman},
  year = {2019},
  PAGES = {1877},
}

@article{Scheidweiler2024,
  title = {Spatial structure, chemotaxis and quorum sensing shape bacterial biomass accumulation in complex porous media},
  volume = {15},
  ISSN = {2041-1723},
  url = {http://dx.doi.org/10.1038/s41467-023-44267-y},
  DOI = {10.1038/s41467-023-44267-y},
  number = {1},
  journal = {Nat. Commun.},
  publisher = {Springer Science and Business Media LLC},
  author = {Scheidweiler,  David and Bordoloi,  Ankur Deep and Jiao, Wenqiao and et al.},
  year = {2024},
  PAGES = {191},
}

@article {Stinner11,
    AUTHOR = {Stinner, Christian and Winkler, Michael},
     TITLE = {Global weak solutions in a chemotaxis system with large
              singular sensitivity},
   JOURNAL = {Nonlinear Anal.-Real World Appl.},
  FJOURNAL = {Nonlinear Analysis. Real World Applications. An International
              Multidisciplinary Journal},
    VOLUME = {12},
      YEAR = {2011},
    NUMBER = {6},
     PAGES = {3727--3740},
      ISSN = {1468-1218,1878-5719},
   MRCLASS = {35K51 (35A20 35K91 92C17)},
  MRNUMBER = {2833007},
       DOI = {10.1016/j.nonrwa.2011.07.006},
       URL = {https://doi.org/10.1016/j.nonrwa.2011.07.006},
}

@book {Taira16,
    AUTHOR = {Taira, Kazuaki},
     TITLE = {Analytic Semigroups And Semilinear Initial Boundary Value
              Problems},
    SERIES = {London Mathematical Society Lecture Note Series},
    VOLUME = {434},
   EDITION = {Second},
 PUBLISHER = {Cambridge University Press, Cambridge},
      YEAR = {2016},
     PAGES = {xvi+331},
      ISBN = {978-1-316-62086-1},
   MRCLASS = {47A05 (35J25 35S15 46N20 47F05 47G30)},
  MRNUMBER = {3444791},
       DOI = {10.1017/CBO9781316729755},
}

@article {Tao13,
    AUTHOR = {Tao, Youshan and Wang, Zhi-An},
     TITLE = {Competing effects of attraction vs. repulsion in chemotaxis},
   JOURNAL = {Math. Models Methods Appl. Sci.},
  FJOURNAL = {Mathematical Models and Methods in Applied Sciences},
    VOLUME = {23},
      YEAR = {2013},
    NUMBER = {1},
     PAGES = {1--36},
      ISSN = {0218-2025,1793-6314},
   MRCLASS = {35K51 (35B35 35B40 35B44 35K57)},
  MRNUMBER = {2997466},
       DOI = {10.1142/S0218202512500443},
       URL = {https://doi.org/10.1142/S0218202512500443},
}

@article{Uar2025,
  title = {Self-generated chemotaxis of mixed cell populations},
  volume = {122},
  ISSN = {1091-6490},
  url = {http://dx.doi.org/10.1073/pnas.2504064122},
  DOI = {10.1073/pnas.2504064122},
  number = {34},
  journal = {Proc. Natl. Acad. Sci. USA.},
  publisher = {Proceedings of the National Academy of Sciences},
  author = {U\c{c}ar,  Mehmet Can and Alsberga,  Zane and Alanko,  Jonna and Sixt,  Michael and Hannezo,  Edouard},
  year = {2025},
  PAGES = {e2504064122},
}

@article{Velicer2003,
  title = {Evolution of novel cooperative swarming in the bacterium {M}yxococcus xanthus},
  volume = {425},
  ISSN = {1476-4687},
  url = {},
  DOI = {10.1038/nature01908},
  number = {6953},
  journal = {Nature},
  publisher = {Springer Science and Business Media LLC},
  author = {Velicer,  Gregory J. and Yu,  Yuen-tsu N.},
  year = {2003},
  month = sep,
  pages = {75--78}
}

@article {Velazquez04a,
    AUTHOR = {Vel\'{a}zquez, J. J. L.},
     TITLE = {Point dynamics in a singular limit of the {K}eller-{S}egel
              model. {I}. {M}otion of the concentration regions},
   JOURNAL = {SIAM J. Appl. Math.},
  FJOURNAL = {SIAM Journal on Applied Mathematics},
    VOLUME = {64},
      YEAR = {2004},
    NUMBER = {4},
     PAGES = {1198--1223},
      ISSN = {0036-1399,1095-712X},
   MRCLASS = {35K45 (35B25 35B40 35Q80 92B05)},
  MRNUMBER = {2068667},
MRREVIEWER = {Thomas\ P.\ Witelski},
       DOI = {10.1137/S0036139903433888},
       URL = {https://doi.org/10.1137/S0036139903433888},
}

@article {Velazquez04b,
    AUTHOR = {Vel\'{a}zquez, J. J. L.},
     TITLE = {Point dynamics in a singular limit of the {K}eller-{S}egel
              model. {II}. {F}ormation of the concentration regions},
   JOURNAL = {SIAM J. Appl. Math.},
  FJOURNAL = {SIAM Journal on Applied Mathematics},
    VOLUME = {64},
      YEAR = {2004},
    NUMBER = {4},
     PAGES = {1224--1248},
      ISSN = {0036-1399,1095-712X},
   MRCLASS = {35K45 (35B25 35B40 35Q80 92B05)},
  MRNUMBER = {2068668},
MRREVIEWER = {Thomas\ P.\ Witelski},
       DOI = {10.1137/S003613990343389X},
       URL = {https://doi.org/10.1137/S003613990343389X},
}

@article {Wang12,
    AUTHOR = {Wang, Rong-Nian and Chen, De-Han and Xiao, Ti-Jun},
     TITLE = {Abstract fractional {C}auchy problems with almost sectorial
              operators},
   JOURNAL = {J. Differential Equations},
  FJOURNAL = {Journal of Differential Equations},
    VOLUME = {252},
      YEAR = {2012},
    NUMBER = {1},
     PAGES = {202--235},
      ISSN = {0022-0396,1090-2732},
   MRCLASS = {34A08 (33E12 34G20 35K20 35K90 35K91)},
  MRNUMBER = {2852204},
MRREVIEWER = {Ralph\ Chill},
       DOI = {10.1016/j.jde.2011.08.048},
       URL = {https://doi.org/10.1016/j.jde.2011.08.048},
}

@article{Wolgemuth2002,
  title = {How Myxobacteria glide},
  volume = {12},
  ISSN = {0960-9822},
  url = {},
  DOI = {10.1016/s0960-9822(02)00716-9},
  number = {5},
  journal = {Curr. Bio.},
  publisher = {Elsevier BV},
  author = {Wolgemuth,  Charles and Hoiczyk,  Egbert and Kaiser,  Dale and Oster,  George},
  year = {2002},
  month = mar,
  pages = {369--377}
}

@article {Winkler10,
    AUTHOR = {Winkler, Michael},
     TITLE = {Absence of collapse in a parabolic chemotaxis system with
              signal-dependent sensitivity},
   JOURNAL = {Math. Nachr.},
  FJOURNAL = {Mathematische Nachrichten},
    VOLUME = {283},
      YEAR = {2010},
    NUMBER = {11},
     PAGES = {1664--1673},
      ISSN = {0025-584X},
   MRCLASS = {35K51 (35A01 35A02 35B45 35Q92 92C17)},
  MRNUMBER = {2759803},
MRREVIEWER = {Narcisa C. Apreutesei},
       DOI = {10.1002/mana.200810838},
}

@article {Winkler11,
    AUTHOR = {Winkler, Michael},
     TITLE = {Global solutions in a fully parabolic chemotaxis system with
              singular sensitivity},
   JOURNAL = {Math. Methods Appl. Sci.},
  FJOURNAL = {Mathematical Methods in the Applied Sciences},
    VOLUME = {34},
      YEAR = {2011},
    NUMBER = {2},
     PAGES = {176--190},
      ISSN = {0170-4214,1099-1476},
   MRCLASS = {35K51 (35A01 35A09 35D30 92C17)},
  MRNUMBER = {2778870},
MRREVIEWER = {Temur\ Jangveladze},
       DOI = {10.1002/mma.1346},
       URL = {https://doi.org/10.1002/mma.1346},
}

@article {Winkler22,
    AUTHOR = {Winkler, Michael},
     TITLE = {Unlimited growth in logarithmic {K}eller-{S}egel systems},
   JOURNAL = {J. Differential Equations},
  FJOURNAL = {Journal of Differential Equations},
    VOLUME = {309},
      YEAR = {2022},
     PAGES = {74--97},
      ISSN = {0022-0396,1090-2732},
   MRCLASS = {35K40 (35B44 35K58 92C17)},
  MRNUMBER = {4344985},
       DOI = {10.1016/j.jde.2021.11.026},
       URL = {https://doi.org/10.1016/j.jde.2021.11.026},
}

@article {Winkler16,
    AUTHOR = {Winkler, Michael},
     TITLE = {The two-dimensional {K}eller-{S}egel system with singular
              sensitivity and signal absorption: {G}lobal large-data solutions
              and their relaxation properties},
   JOURNAL = {Math. Models Methods Appl. Sci.},
  FJOURNAL = {Mathematical Models and Methods in Applied Sciences},
    VOLUME = {26},
      YEAR = {2016},
    NUMBER = {5},
     PAGES = {987--1024},
      ISSN = {0218-2025,1793-6314},
   MRCLASS = {35K40 (35B40 35B65 35D30 35K59 35Q92 92C17)},
  MRNUMBER = {3464427},
MRREVIEWER = {Youshan\ Tao},
       DOI = {10.1142/S0218202516500238},
       URL = {https://doi.org/10.1142/S0218202516500238},
}

@article {Winkler14,
    AUTHOR = {Winkler, Michael},
     TITLE = {Global asymptotic stability of constant equilibria in a fully
              parabolic chemotaxis system with strong logistic dampening},
   JOURNAL = {J. Differential Equations},
  FJOURNAL = {Journal of Differential Equations},
    VOLUME = {257},
      YEAR = {2014},
    NUMBER = {4},
     PAGES = {1056--1077},
      ISSN = {0022-0396,1090-2732},
   MRCLASS = {35K51 (35B35 35B40 35Q92 92C17)},
  MRNUMBER = {3210023},
MRREVIEWER = {Philippe\ Lauren\c{c}ot},
       DOI = {10.1016/j.jde.2014.04.023},
       URL = {https://doi.org/10.1016/j.jde.2014.04.023},
}

@article {Winkler2010,
    AUTHOR = {Winkler, Michael},
     TITLE = {Aggregation vs. global diffusive behavior in the
              higher-dimensional {K}eller-{S}egel model},
   JOURNAL = {J. Differential Equations},
  FJOURNAL = {Journal of Differential Equations},
    VOLUME = {248},
      YEAR = {2010},
    NUMBER = {12},
     PAGES = {2889--2905},
      ISSN = {0022-0396,1090-2732},
   MRCLASS = {35K51 (35B35 35B40 92C17)},
  MRNUMBER = {2644137},
MRREVIEWER = {Bisso\ Saley},
       DOI = {10.1016/j.jde.2010.02.008},
       URL = {https://doi.org/10.1016/j.jde.2010.02.008},
}

@article {Wang25,
    AUTHOR = {Wang, Kun and Liu, Enlong and Feng, Xinlong},
     TITLE = {Optimal error estimate of unconditionally
              positivity-preserving, mass-conserving and energy stable
              method for the {K}eller-{S}egel chemotaxis model},
   JOURNAL = {Math. Comp.},
  FJOURNAL = {Mathematics of Computation},
    VOLUME = {94},
      YEAR = {2025},
    NUMBER = {356},
     PAGES = {2761--2793},
      ISSN = {0025-5718},
   MRCLASS = {65M06 (35Q35 65M15 92C17)},
  MRNUMBER = {4940542},
       DOI = {10.1090/mcom/4041},
}

@article {Zhou17,
    AUTHOR = {Zhou, Guanyu and Saito, Norikazu},
     TITLE = {Finite volume methods for a {K}eller-{S}egel system: {D}iscrete
              energy, error estimates and numerical blow-up analysis},
   JOURNAL = {Numer. Math.},
  FJOURNAL = {Numerische Mathematik},
    VOLUME = {135},
      YEAR = {2017},
    NUMBER = {1},
     PAGES = {265--311},
      ISSN = {0029-599X},
   MRCLASS = {65M08 (35B44 35K59 35M33 65M15 92C17)},
  MRNUMBER = {3592113},
MRREVIEWER = {G\"{u}nter Karl Franz B\"{a}rwolff},
       DOI = {10.1007/s00211-016-0793-2},
       URL = {https://doi.org/10.1007/s00211-016-0793-2},
}

@article {Wang22,
    AUTHOR = {Wang, Shufen and Zhou, Simin and Shi, Shuxun and Chen, Wenbin},
     TITLE = {Fully decoupled and energy stable {BDF} schemes for a class of
              {K}eller-{S}egel equations},
   JOURNAL = {J. Comput. Phys.},
  FJOURNAL = {Journal of Computational Physics},
    VOLUME = {449},
      YEAR = {2022},
     PAGES = {110799},
      ISSN = {0021-9991},
   MRCLASS = {65M60 (65L06 65M12)},
  MRNUMBER = {4339000},
       DOI = {10.1016/j.jcp.2021.110799},
}

@article {Jiang25,
    AUTHOR = {Jiang, Ziwen and Wang, Lizhen},
     TITLE = {Mild solutions to the {C}auchy problem for time-space
              fractional {K}eller-{S}egel-{N}avier-{S}tokes system},
   JOURNAL = {Fract. Calc. Appl. Anal.},
  FJOURNAL = {Fractional Calculus and Applied Analysis. An International
              Journal for Theory and Applications},
    VOLUME = {28},
      YEAR = {2025},
    NUMBER = {3},
     PAGES = {1503--1538},
      ISSN = {1311-0454,1314-2224},
   MRCLASS = {35Q35 (35B30 35Q92 35S16)},
  MRNUMBER = {4914435},
       DOI = {10.1007/s13540-025-00400-w},
       URL = {https://doi.org/10.1007/s13540-025-00400-w},
}
\bibliographystyle{ws-m3as}

\end{document}